\documentclass[11pt, DIV=11]{scrarticle}
\usepackage[utf8]{luainputenc}
\usepackage[T1]{fontenc}
\usepackage[UKenglish]{babel}
\usepackage{amsmath, amsfonts, amsthm, amssymb, mathtools, stmaryrd}

\usepackage{tikz}
\usepackage{tikz-cd}
\usepackage{tkz-graph}
\usetikzlibrary{matrix,arrows,backgrounds,positioning,fit,cd,babel}

\usepackage[pdfusetitle,hidelinks]{hyperref}
\usepackage[capitalise,noabbrev]{cleveref}

\usepackage{csquotes}
\usepackage[osf,sc]{mathpazo}
\addtokomafont{disposition}{\normalfont\bfseries} \usepackage{relsize}
\usepackage{microtype}

\usepackage{thm-restate}
\usepackage{subcaption}
\usepackage{enumitem}
\usepackage{comment}

\usepackage{todonotes}

\linespread{1.06}
\recalctypearea

\usepackage[style=numeric-comp,maxbibnames=99,backend=biber,bibencoding=utf8,
sorting=none]{biblatex} \addbibresource{literature.bib}

\AtEveryBibitem{
	\clearfield{urldate}
	\clearfield{urlyear}	
	\clearfield{urlmonth}
	\clearfield{issn}	
	\clearfield{month}
	\clearfield{day}		
	\clearfield{language}
	\clearfield{Language}
	\clearfield{langid}
	\clearlist{language}
}
\DeclareSourcemap{
	\maps[datatype=bibtex]{
		\map[overwrite]{
			\step[fieldsource=doi, final]
			\step[fieldset=url, null]
			\step[fieldset=eprint, null]
			\step[fieldset=isbn, null]
		}  
	}
}
\DeclareDelimFormat{finalnamedelim}{\ifnumgreater{\value{liststop}}{2}{\finalandcomma}{}\addspace\&\space}

\DefineBibliographyExtras{UKenglish}{\def\finalandcomma{\addcomma}} 

\usepackage{orcidlink}
\usepackage{booktabs}

\theoremstyle{definition}
\newtheorem{definition}{Definition}[section]

\newtheorem{example}[definition]{Example}

\theoremstyle{plain}
\newtheorem{lemma}[definition]{Lemma}
\newtheorem{corollary}[definition]{Corollary}
\newtheorem{theorem}[definition]{Theorem}
\newtheorem{conjecture}[definition]{Conjecture}

\newtheorem{observation}[definition]{Observation}
\newtheorem{fact}[definition]{Fact}
\Crefname{fact}{Fact}{Facts}
\Crefname{conjecture}{Conjecture}{Conjectures}

\theoremstyle{remark}
\newtheorem{claim}[definition]{Claim}
\Crefname{claim}{Claim}{Claims}
\setlist[enumerate, 1]{font=\upshape, noitemsep, nolistsep}
\setlist[enumerate, 2]{font=\upshape, noitemsep, nolistsep}
\setlist[itemize, 1]{noitemsep, nolistsep,font=\upshape}
\setlist[itemize, 2]{noitemsep, nolistsep,font=\upshape}

\DeclareMathOperator{\tw}{tw}

\DeclareMathOperator{\soe}{soe}

\DeclareMathOperator{\cl}{cl}
\DeclareMathOperator{\dist}{dist}
\DeclareMathOperator{\id}{id}
\DeclareMathOperator{\ed}{ed}
\DeclareMathOperator{\dd}{dd}

\usepackage{todonotes}

\newenvironment{claimproof}[1][Proof of Claim]{\begin{proof}[#1] }{ \end{proof}}

\usetikzlibrary{matrix,arrows,backgrounds,positioning,fit,cd,babel}

\title{Distinguishing Graphs by Counting\\Homomorphisms from Sparse Graphs}
\author{Daniel Neuen \orcidlink{0000-0002-4940-0318} \\ \small Technische Universität Dresden \and Tim Seppelt \orcidlink{0000-0002-6447-0568} \\ \small IT-Universitetet i K{\o}benhavn}

\renewcommand{\phi}{\varphi}
\renewcommand{\epsilon}{\varepsilon}

\usepackage{abstract}

\begin{document}
	\maketitle

	\begin{abstract}
		Lovász (1967) showed that two graphs $G$ and $H$ are isomorphic if, and only if, they are \emph{homomorphism indistinguishable} over all graphs, i.e., $G$ and $H$ admit the same number of number of homomorphisms from every graph $F$.
Subsequently, a substantial line of work studied homomorphism indistinguishability over restricted graph classes.
For example, homomorphism indistinguishability over minor-closed graph classes $\mathcal{F}$ such as the class of planar graphs, the class of graphs of treewidth $\leq k$, pathwidth $\leq k$, or treedepth $\leq k$, was shown to be equivalent to quantum isomorphism and equivalences with respect to counting logic fragments, respectively.

Via such characterisations, the distinguishing power of e.g.\ logical or quantum graph isomorphism relaxations can be studied with graph-theoretic means.
In this vein, 
Roberson (2022) conjectured that homomorphism indistinguishability over every graph class excluding some minor is not the same as isomorphism.
We prove this conjecture for all vortex-free graph classes. 
In particular, homomorphism indistinguishability over graphs of bounded Euler genus is not the same as isomorphism.
As a negative result, we show that Roberson's conjecture fails when generalised to graph classes excluding a topological minor.

Furthermore, we show homomorphism distinguishing closedness for several graph classes including all topological-minor-closed and union-closed classes of forests, and show that homomorphism indistinguishability over graphs of genus $\leq g$ (and other parameters) forms a strict hierarchy.

 	\end{abstract}

	\section{Introduction}
	
\Textcite{lovasz_operations_1967} showed that two graphs $G$ and $H$ are isomorphic if, and only if, they admit the same number of homomorphisms from every graph $F$.
Decades later, \textcite{dvorak_recognizing_2010} proved that two graphs  $G$ and $H$  satisfy the same sentences in the $k$-variable fragment $\mathsf{C}^k$ of first-order logic with counting quantifiers if, and only if, they admit the same number of homomorphisms from every graph $F$ of treewidth $< k$.
Finally, a celebrated theorem of \textcite{mancinska_quantum_2020} asserts that two graphs  $G$ and $H$  are quantum isomorphic, if and only if, they admit the same number of homomorphisms from every planar graph~$F$.

These results motivate the following notion: Two graphs $G$ and $H$ are \emph{homomorphism indistinguishable} over a graph class~$\mathcal{F}$, in symbols $G \equiv_{\mathcal{F}} H$, if they admit the same number of homomorphisms from every graph $F \in \mathcal{F}$.
In addition to the initially mentioned results, 
homomorphism indistinguishability over graph classes such as 
the class of graphs of pathwidth $\leq k$, treedepth $\leq k$, the class of outerplanar graphs, the class of graphs of maximum degree $\leq d$ etc.\ has been characterised in terms of graph isomorphism relaxations from areas ranging from finite model theory 
\cite{dvorak_recognizing_2010,dell_lovasz_2018,grohe_counting_2020,fluck_going_2024,schindling_homomorphism_2025,montacute_pebble-relation_2024,dawar_lovasz-type_2021,atserias_expressive_2021,schindling_homomorphism_2025}, 
optimisation \cite{dell_lovasz_2018,grohe_homomorphism_2025,atserias_sherali-adams_2012,grohe_pebble_2015,malkin_sheraliadams_2014,roberson_lasserre_2024,kar_npa_2025},
category theory \cite{montacute_pebble-relation_2024,dawar_lovasz-type_2021,abramsky_discrete_2022,schindling_homomorphism_2025},
algebraic graph theory \cite{dell_lovasz_2018,grohe_homomorphism_2025,rattan_weisfeiler-leman_2023},
and invariant theory \cite{cai_planar_2024,young_converse_2025,cai_vanishing_2025}
to 
machine learning \cite{xu_how_2019,morris_weisfeiler_2019,zhang_beyond_2024,gai_homomorphism_2025,cerny_caterpillar_2025} and
quantum information theory \cite{mancinska_quantum_2020,kar_npa_2025,seppelt_quantum_2025}.\footnote{For further references, visit the \emph{Homomorphism Indistinguishability Zoo} at \href{https://tseppelt.github.io/homind-database}{tseppelt.github.io/homind-database}.}

The plenitude of such characterisations motivates studying how properties of the homomorphism indistinguishability relation $\equiv_{\mathcal{F}}$ are related to properties of the graph class $\mathcal{F}$ \cite{seppelt_logical_2024}.
Concretely, the computational complexity \cite{boker_complexity_2019,seppelt_algorithmic_2024,cerny_homomorphism_2025} and the distinguishing power \cite{roberson_oddomorphisms_2022} of $\equiv_{\mathcal{F}}$ have been studied from this perspective. 
This paper is concerned with connections between notions of sparsity of $\mathcal{F}$ and the distinguishing power of $\equiv_{\mathcal{F}}$.

\subsection{Weak Roberson Conjecture: Separation from Isomorphism}

A systematic study of the distinguishing power of $\equiv_{\mathcal{F}}$ was initiated by Roberson in \cite{roberson_oddomorphisms_2022}.
Arguably, the most fundamental question in this context is whether $\equiv_{\mathcal{F}}$ is the same as the isomorphism relation when $\mathcal{F}$ is some proper graph class.
For example, it is not hard to see \cite{lovasz_operations_1967} that two graphs are homomorphism indistinguishable over all connected graphs if, and only if, they are isomorphic.
A more involved example is given by \textcite{dvorak_recognizing_2010}, who showed that two graphs $G$ and $H$ are homomorphism indistinguishable over all $2$-degenerate graphs if, and only if, they are isomorphic.

In contrast, there exist, for every $k \geq 0$, non-isomorphic graphs $G$ and $H$ that are homomorphism indistinguishability over the class of graphs of treewidth $\leq k$.
This is a consequence of the seminal result of \textcite{cai_optimal_1992} that the logic $\mathsf{C}^{k+1}$, and equivalently the $k$\nobreakdash-dimensional Weisfeiler--Leman algorithm, fails to distinguish all pairs of non-isomorphic graphs.
Similarly, the known characterisation in terms of quantum isomorphisms \cite{mancinska_quantum_2020,atserias_quantum_2019} implies that homomorphism indistinguishability over the class of planar graphs is weaker than the isomorphism relation.
Inspired by these two examples, Roberson proposed the following conjecture:

\begin{conjecture}[\protect{\textcite[Conjecture 5]{roberson_oddomorphisms_2022}}]\label{conj:weak-roberson}
    For every $k \geq 0$,
    there exist non-isomorphic graphs $G$ and $H$ that are homomorphism indistinguishable over all graphs of Hadwiger number~$\leq k$.
\end{conjecture}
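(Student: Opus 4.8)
The plan is to build the pair $(G,H)$ from the Cai--Fürer--Immerman (CFI) construction \cite{cai_optimal_1992}, taken over a base graph that is rich enough to force indistinguishability over \emph{all} $K_{k+1}$-minor-free graphs rather than only over graphs of bounded treewidth. The small cases are already known by elementary considerations: graphs of Hadwiger number $\leq 2$ are exactly the forests, so $C_6$ and the disjoint union of two triangles are non-isomorphic and homomorphism indistinguishable over them; graphs of Hadwiger number $\leq 3$ are exactly the graphs of treewidth $\leq 2$, so any pair of non-isomorphic $\mathsf{C}^3$-equivalent graphs (e.g.\ the $4 \times 4$ rook's graph and the Shrikhande graph) works by \textcite{dvorak_recognizing_2010}; and $k \in \{0,1\}$ is trivial. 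So assume $k \geq 4$.

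Fix a connected base graph $B$ having $K_{k+1}$ as a minor; in particular $B$ contains a cycle. Let $G \coloneqq \mathrm{CFI}(B)$ and let $H \coloneqq \widetilde{\mathrm{CFI}}(B)$ be obtained from $G$ by twisting an edge of $B$ that lies on a cycle, so that $G \not\cong H$ by the standard CFI argument. It remains to prove $\hom(F,G) = \hom(F,H)$ for every $K_{k+1}$-minor-free graph $F$. Here I would use the usual CFI calculus --- as employed by \textcite{dvorak_recognizing_2010} and recast through the oddomorphism framework of \textcite{roberson_oddomorphisms_2022} --- which expresses $\hom(F,G) - \hom(F,H)$ as a signed sum over homomorphisms from $F$ into a suitable quotient of $B$, the signs recording how each homomorphism interacts with the twist. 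The crucial step is to show that this sum vanishes unless $F$ can \emph{trace the twist around $B$}, and, for an appropriate choice of $B$, that tracing the twist forces $B$ --- hence $K_{k+1}$ --- to appear as a minor of $F$. For base graphs of bounded treewidth this "tracing" condition is known to require $F$ to have treewidth at least $\mathrm{tw}(B)$, which merely recovers the classical $\mathsf{C}^{\mathrm{tw}(B)+1}$-equivalence of $G$ and $H$; the genuinely new content is the sharper minor-theoretic statement. Granting it, every $K_{k+1}$-minor-free $F$ annihilates all terms, giving $G \equiv_{\mathcal{F}} H$ over the class $\mathcal{F}$ of $K_{k+1}$-minor-free graphs, which proves \cref{conj:weak-roberson}.

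The main obstacle is exactly this minor-witnessing claim, and it is what dictates the choice of $B$: since the CFI twist is $\mathbb{Z}_2$-homological in nature and not manifestly minor-monotone, $B = K_{k+1}$ itself may be too "thin", and it seems preferable to take $B$ highly connected and well-linked with a $K_{k+1}$ minor (for example a large wall or grid-like graph) so that detecting its twisted Tseitin structure genuinely requires realising all of $B$. Establishing the claim then requires controlling the shape of an arbitrary $K_{k+1}$-minor-free $F$, and hence invoking the graph minor structure theorem: $F$ decomposes over a tree of bounded adhesion into pieces almost-embeddable in a surface of bounded Euler genus with boundedly many apices and vortices, and one argues that a twist confined to such a piece can be traced only if that piece already contains a large clique minor. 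Carrying the bookkeeping across the tree decomposition --- and in particular ruling out that the bounded apex sets or the vortices ever help $F$ detect the twist --- is the delicate part; even though the target class $\mathcal{F}$ is itself not vortex-free, I expect this step to reuse the vortex-handling machinery developed for the vortex-free case, now applied to the structure of $F$.
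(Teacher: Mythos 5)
The statement you are proving is \cref{conj:weak-roberson}, which the paper explicitly does \emph{not} prove: it is stated as a conjecture, the conclusion records that it ``remains unresolved'', and the paper's actual contribution (\cref{thm:main-vortex-free-hadwiger}) is the special case of graph classes of bounded \emph{vortex-free} Hadwiger number. Your proposal has a genuine gap at exactly the point where the full conjecture is open. The ``minor-witnessing claim'' --- that any $K_{k+1}$-minor-free $F$ annihilates all terms in the CFI calculus, i.e.\ that tracing the twist forces a $K_{k+1}$ minor in $F$ --- is not a technical step you can defer; it \emph{is} the conjecture, repackaged. In the oddomorphism language of \textcite{roberson_oddomorphisms_2022} (which is the rigorous form of your ``signed sum over homomorphisms that interact with the twist''), your claim is precisely that no $K_{k+1}$-minor-free graph admits a weak oddomorphism to a suitable base graph $B$ with a $K_{k+1}$ minor. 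You give no argument for this, and your sketch of one via the graph minor structure theorem breaks down at the vortices: you write that you ``expect this step to reuse the vortex-handling machinery developed for the vortex-free case'', but no such machinery exists --- the vortex-free case is by definition the case in which vortices do not occur, and the paper poses even the single-vortex planar case as an open question. The available tools cover clique-sums (the separator lemma, \cref{lem:separators} and \cref{lem:oddo-to-kn-clique-sum}), bounded genus (the Kuratowski-graph counting argument, \cref{cor:p-k-closed}), and apices (deletion distance, \cref{lem:deletion-distance-oddo}); vortices are the one structural ingredient with no known analogue, which is precisely why the theorem that can actually be proved is restricted to vortex-free Hadwiger number.

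Two smaller remarks. First, your base cases are fine but could be subsumed: the paper in fact proves the conjecture for $K_5$-minor-free graphs (\cref{thm:k5}), so only $k\ge 5$ is genuinely open. Second, your overall architecture (CFI graphs over a base graph $G$, reduced via weak oddomorphisms to a purely combinatorial statement about $\mathcal{F}$ and $G$) is the same as the paper's route to its partial result --- the paper takes $G$ to be $(2k+1)K_5$ joined to $k+1$ universal vertices and shows no graph of bounded vortex-free Hadwiger number admits an oddomorphism to it. So the framing is sound; what is missing is the entire combinatorial core for the full class of $K_{k+1}$-minor-free graphs.
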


Here, the \emph{Hadwiger number} of a graph $F$ is the largest integer $k \in \mathbb{N}$ such that $F$ contains the $k$-vertex complete graph $K_k$ as a minor.
In other words, \cref{conj:weak-roberson} asserts that, for every graph class $\mathcal{F}$ excluding some minor, there exist non-isomorphic graphs $G$ and $H$ such that $G \equiv_{\mathcal{F}} H$.
Thus, the conjecture vastly generalises the aforementioned examples of planar graphs and graphs of bounded treewidth.
We stress that, prior to this work, no further examples were known to support \cref{conj:weak-roberson}.\footnote{In \cite{roberson_oddomorphisms_2022}, a proof of \cref{conj:weak-roberson} for $k=5$ due to Richter, Roberson, and Thomassen was announced. However, this result is to our knowledge still unpublished. Our \cref{thm:main-vortex-free-hadwiger} covers this case, see also \cref{thm:k5}.}

As our first main result, we prove \cref{conj:weak-roberson} for all minor-closed graph classes that do not exhibit a certain ingredient of the Robertson--Seymour structure theorem \cite{robertson_graph_2004} for minor-free graphs.
By this seminal theorem, for every graph $H$, every $H$-minor-free graph can be obtained by clique-sums of graphs that can be almost embedded on a surface $\Sigma$ of bounded Euler genus. 
Intuitively, a graph is \emph{almost embeddable} on $\Sigma$ if it can be obtained from a  graph embeddable on $\Sigma$ by attaching bounded-pathwidth graphs (so-called \emph{vortices}) at a bounded number of faces and adding a bounded number of vertices with arbitrary neighborhood (so-called \emph{apices}).
Our main theorem applies to every minor-closed graph class omitting vortices.

\begin{theorem}[label=thm:main-vortex-free-hadwiger]
    For every $k \geq 0$,
    there exist non-isomorphic graphs $G$ and $H$ that are homomorphism indistinguishable over all graphs of vortex-free Hadwiger number $\leq k$.
\end{theorem}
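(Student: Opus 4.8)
The plan is to use the Cai--Fürer--Immerman (CFI) construction, whose relevance to the distinguishing power of $\equiv_{\mathcal{F}}$ was first exploited by Roberson~\cite{roberson_oddomorphisms_2022}. Write $\mathcal{F}_k$ for the class of graphs of vortex-free Hadwiger number $\leq k$. For a base graph $X = X_k$ to be chosen below, let $G := \mathrm{CFI}(X,0)$ and $H := \mathrm{CFI}(X,1)$ be the CFI graphs over $X$ with an even, respectively odd, number of twists. Whenever $X$ is connected and non-bipartite --- which we shall arrange --- the standard CFI analysis gives $G \not\cong H$, so the whole content of the theorem is the assertion $G \equiv_{\mathcal{F}_k} H$.

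The first step is a Lovász-type characterisation of homomorphism indistinguishability for CFI pairs, refining Roberson's oddomorphism criterion: for every graph $F$,
\[
 \hom(F, \mathrm{CFI}(X,0)) = \hom(F, \mathrm{CFI}(X,1))
 \iff
 F \text{ admits no \emph{oddomorphism} to } X .
\]
Here an oddomorphism is a homomorphism $\varphi\colon F \to X$ that is ``odd'' in the combinatorial sense of~\cite{roberson_oddomorphisms_2022}; informally, $F$ maps onto $X$ so as to detect the parity of the number of twists, equivalently the edge-labelling of $X$ recording the twist pulls back along $\varphi$ to something that is not a coboundary on $F$. Only the implication ``no oddomorphism from $F$ $\Rightarrow$ equal homomorphism counts'' is needed, and it follows from the usual gadget bookkeeping, matching homomorphisms from $F$ into the two CFI graphs bijectively once their induced twist-labellings agree up to coboundary. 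This reduces $G \equiv_{\mathcal{F}_k} H$ to the purely structural statement that no $F \in \mathcal{F}_k$ oddomorphs to $X_k$.

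The heart of the proof --- and the step I expect to be the main obstacle --- is a \emph{monotonicity lemma}: if $F$ admits an oddomorphism to $X$, then the vortex-free Hadwiger number of $F$ is at least that of $X$. The point is that oddomorphisms are considerably more flexible than minor models, so this is not immediate from minor-closedness. One may assume the oddomorphism $\varphi$ is surjective; the oddness of $\varphi$ forces the fibres to wind around $X$ along a subgraph carrying a non-trivial $\mathbb{Z}/2\mathbb{Z}$-homology class of any surface into which $X$ near-embeds, and from this winding one should be able to extract a vortex-free near-embedding, apex set, and clique-sum decomposition of $F$ of at least the Euler genus, apex number, and adhesion needed to realise $X$. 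I would prove this by induction along a Robertson--Seymour-type decomposition of $F$, peeling off clique separators and apices and pushing $\varphi$ through them --- here using that CFI pairs over a base of large treewidth are highly Weisfeiler--Leman equivalent, so that such low-complexity features cannot be exploited --- and treating the remaining surface part by a homological argument. Vortex-freeness of $F$ is indispensable at exactly this point: a vortex is a bounded-pathwidth strip capable of routing the odd winding while contributing nothing to the Hadwiger number, which is precisely why the analogue fails over topological-minor-closed classes and yields the paper's negative result.

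Finally one instantiates the construction. Take $X_k := K_{k+1}$ (for $k \leq 1$ the relation $\equiv_{\mathcal{F}_k}$ is trivial and any non-isomorphic $G, H$ of the same order work); $K_{k+1}$ is connected, non-bipartite, and has vortex-free Hadwiger number $> k$ since it already has Hadwiger number $k+1$. By the monotonicity lemma, any $F$ that oddomorphs to $K_{k+1}$ has vortex-free Hadwiger number $> k$ and hence lies outside $\mathcal{F}_k$; equivalently, no $F \in \mathcal{F}_k$ oddomorphs to $X_k$, so by the characterisation $\hom(F, G) = \hom(F, H)$ for every $F \in \mathcal{F}_k$. Thus $G \not\cong H$ yet $G \equiv_{\mathcal{F}_k} H$. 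Specialising $\mathcal{F}_k$ to graphs of bounded Euler genus --- the case with no apices, clique-sums, or vortices, where the monotonicity lemma reduces to ``oddomorphisms do not decrease Euler genus'' and the base graph may be taken to be a complete graph of sufficiently large genus --- recovers the claimed separation of homomorphism indistinguishability over bounded-genus graphs from isomorphism.
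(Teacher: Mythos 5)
Your high-level plan (CFI pairs plus Roberson's oddomorphism criterion, reducing $G \equiv_{\mathcal{H}_k} H$ to the non-existence of an oddomorphism from a graph in $\mathcal{H}_k$ to the base graph) is exactly the paper's route, via Lemma~\ref{lem:strictness-via-oddo} --- but both of the remaining steps are wrong. Your instantiation $X_k = K_{k+1}$ fails outright: you argue that $K_{k+1}$ has vortex-free Hadwiger number $> k$ ``since it already has Hadwiger number $k+1$,'' but the two parameters are very different. Deleting any $k$ of the $k+1$ vertices of $K_{k+1}$ leaves $K_1$, so $K_{k+1} \in \mathcal{C}_{k,k} \subseteq \mathcal{H}_k$; hence $K_{k+1}$ itself lies in $\mathcal{H}_k$ and admits the identity oddomorphism to itself (Example~\ref{ex:identity}), which refutes your claim ``no $F \in \mathcal{H}_k$ oddomorphs to $X_k$'' already for $F = X_k$. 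The paper's target graph is $(2k+1)K_5$ with $k+1$ universal vertices added, which is $(k+1)$-connected and contains a $(2k+1)$-Kuratowski minor. (Also, $\equiv_{\mathcal{H}_k}$ is not trivial for small $k$: $\mathcal{H}_0$ is precisely the class of planar graphs, so $\equiv_{\mathcal{H}_0}$ is quantum isomorphism.)

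Your proposed ``monotonicity lemma'' --- that an oddomorphism $F \to X$ forces the vortex-free Hadwiger number of $F$ to be at least that of $X$ --- is strictly stronger than anything the paper proves. Since $\mathcal{H}_k$ is minor-closed and componental, that lemma would make $\mathcal{H}_k$ closed under weak oddomorphisms (Observation~\ref{lem:oddo-woddo}) and hence homomorphism distinguishing closed by Theorem~\ref{thm:rob62}; but the paper explicitly states it is \emph{unable} to show this, and your sketch (a homological winding argument plus an induction along a Robertson--Seymour-type decomposition) does not come close to a proof. What the paper actually establishes is much weaker and tailored to a single $(k+1)$-connected target: it peels clique-sums of arity $\leq k$ via Corollary~\ref{lem:oddo-to-kn-clique-sum} (resting on the separator Lemma~\ref{lem:separators}), then bounds apices and genus via Lemma~\ref{lem:genus-apices-k-kuratowski} and Corollary~\ref{cor:p-k-closed}; no parameter-monotonicity statement comes out. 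Finally, your intuition for why the analogue fails beyond minor-closed classes is off: Theorem~\ref{thm:k5-top-minor} has nothing to do with vortices routing odd windings; it follows from the contractor argument of Lemma~\ref{lem:contractors-for-minors}, exploiting that maximum-degree-$3$ graphs exclude $K_5$ as a topological minor yet every graph is a minor of such a graph.
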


The parameter \emph{vortex-free Hadwiger number} was introduced by 
\Textcite{thilikos_killing_2022}, who also gave a characterisation of this parameter in terms of a parametric family of forbidden minors analogous to the grid minor theorem for treewidth \cite{robertson_graph_1986}. 
The most notable special case of  \cref{thm:main-vortex-free-hadwiger} are graphs of bounded genus.
It implies that, for every $g \geq 0$, there exist non-isomorphic graphs $G$ and $H$ that are homomorphism indistinguishable over all graphs that can be embedded on a (orientable or non-orientable) surface of genus $\leq g$.
Another special case of \cref{thm:main-vortex-free-hadwiger} are graph classes that exclude a single-crossing minor \cite{RobertsonS91}.

It is natural to also ask what happens beyond classes excluding a minor such as classes excluding a topological minor or classes of bounded expansion, which are both well-established notions in the theory of sparse graphs \cite{nesetril_sparsity_2012,siebertz_generalized_2025}.
For example, it is known that classes of bounded degree yield homomorphism indistinguishability relations that are weaker than isomorphism \cite{roberson_oddomorphisms_2022}, and characterisations have recently been obtained in \cite{cai_vanishing_2025}.
Note that graphs of maximum degree~$3$ contain every graph as a minor.
Maybe surprisingly, as our second main result, we show that variants of \cref{conj:weak-roberson} fail for many natural notions that go beyond excluding a minor or bounding the degree, such as excluding a topological minor.

\begin{theorem}[restate=thmKfive,label=thm:k5-top-minor]
    Two graphs are isomorphic if, and only if, they are homomorphism indistinguishable over all graphs excluding $K_5$ as a topological minor.
\end{theorem}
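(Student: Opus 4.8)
The ``only if'' direction is immediate: isomorphic graphs admit the same number of homomorphisms from every graph. For the converse I would show that the class $\mathcal F$ of graphs excluding $K_5$ as a topological minor is homomorphism distinguishing, i.e.\ that $G \equiv_{\mathcal F} H$ implies $G \cong H$; by Lovász's theorem \cite{lovasz_operations_1967} it is equivalent to show that the \emph{homomorphism distinguishing closure} of $\mathcal F$ --- the class $\cl(\mathcal F)$ of all graphs $F$ such that $G \equiv_{\mathcal F} H$ implies $\hom(F,G) = \hom(F,H)$ for all $G,H$ --- is the class of all graphs. The starting point is an elementary observation: every subdivision of $K_5$ has exactly five branch vertices, each of degree at least $4$, so a graph with at most four vertices of degree at least $4$ contains no subdivision of $K_5$. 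Hence $\mathcal F$ contains every graph with at most four vertices of degree exceeding $3$; in particular it contains all subcubic graphs and, since subdivisions of $K_5$ are nonplanar, all planar graphs.

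The plan is to prove $\cl(\mathcal F) = \{\text{all graphs}\}$ by induction on the number $b(F)$ of vertices of $F$ of degree at least $4$ (its \emph{bad} vertices). If $b(F) \le 4$ then $F \in \mathcal F \subseteq \cl(\mathcal F)$, which is the base case. For the inductive step, given $F$ with $b(F) \ge 5$, pick a bad vertex $v$ and split it: replace $v$ by several new vertices, partitioning the edges incident to $v$ among them so that each new vertex has degree at most $3$. The resulting graph, and all auxiliary graphs built below in the same vein, then have strictly fewer bad vertices than $F$, hence lie in $\cl(\mathcal F)$ by the induction hypothesis. The task is to recover $\hom(F,\cdot)$ --- and thereby conclude $\hom(F,G)=\hom(F,H)$ whenever $G\equiv_{\mathcal F}H$ --- from the homomorphism counts of such graphs.

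The over-counting created by splitting $v$ (the copies of $v$ may now receive different images) has to be corrected. The idea is to work with a whole family of gadgets parametrised by a multiplicity: instead of a single split one attaches, for each bad vertex, a controlled number of parallel low-degree branches, so that the quantities one actually wants --- the contributions in which the copies of each bad vertex are mapped consistently --- appear as the solution of a linear system whose remaining entries are homomorphism counts from graphs of smaller bad-vertex number (hence equal for $G$ and $H$). Carrying out the bookkeeping is aided by the fact that $\mathcal F$ already contains all subcubic and all planar graphs, which permits the weighted-graph manipulations underlying Dvořák's theorem on $2$-degenerate graphs \cite{dvorak_recognizing_2010}.

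The main obstacle is exactly this inductive step. A bad vertex cannot be eliminated for free: merely splitting $v$ into non-adjacent copies over-counts homomorphisms, and correcting this by re-identifying the copies returns the graph $F$, so no single homomorphism-count identity can help --- indeed $\hom(F,\cdot)$ is never a $\mathbb Q$-linear combination of other homomorphism-count functions, as these are linearly independent. The real content is to design a family of $K_5$-topological-minor-free gadgets whose homomorphism counts determine $\hom(F,\cdot)$ through a nonsingular system; establishing nonsingularity of that system, verifying that every gadget graph indeed excludes $K_5$ as a topological minor, and ensuring that every auxiliary graph has strictly fewer bad vertices than $F$ so that the recursion terminates are where the difficulty concentrates.
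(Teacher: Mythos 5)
Your overall strategy is the right one --- reduce to showing that the homomorphism distinguishing closure of the class $\mathcal F$ of $K_5$-topological-minor-free graphs contains all graphs, using that $\mathcal F$ contains all subcubic graphs and that every graph is obtained from a subcubic graph by contracting edges --- and this is essentially the route the paper takes. However, there is a genuine gap exactly where you say the difficulty concentrates: you never construct the family of gadgets or the nonsingular linear system that recovers $\hom(F,\cdot)$ after splitting a bad vertex, so the inductive step is asserted rather than proved. Your own observation that $\hom(F,\cdot)$ is never a universal $\mathbb{Q}$-linear combination of other homomorphism-count functions correctly identifies why a naive identity cannot work, but you do not supply the way around this obstruction.

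The missing ingredient is the contractor theorem of Lov\'asz and Szegedy (\cref{lem:lovasz-szegedy}): for any two \emph{fixed} graphs $G$ and $H$ there is a finite $\mathbb{Q}$-linear combination of series-parallel bilabelled graphs $\boldsymbol{S}$ whose homomorphism matrices sum to the identity matrix simultaneously for $G$ and for $H$. The coefficients depend on $G$ and $H$, which is how the argument evades the linear-independence obstruction you raise. Since the gadgets are series-parallel and attached along a single edge, gluing them onto a pattern graph amounts to iterated $2$-sums with triangles followed by taking subgraphs; this is why the paper works not with $\mathcal F$ directly but with the subclass $\mathcal D_3^*$ of graphs obtained from subcubic graphs by repeated $2$-sums with triangles, which is closed under exactly these operations and still excludes $K_5$ as a topological minor. \cref{lem:contractors-for-minors} then shows that $\cl(\mathcal D_3^*)$ is minor-closed, hence contains all graphs, and the theorem follows from $\mathcal D_3^* \subseteq \mathcal F$. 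To complete your write-up you would need to replace the unspecified ``family of gadgets parametrised by a multiplicity'' by this contractor argument (or an equivalent construction) and verify that the gadget-augmented graphs remain in the class; without that, the recursion at the heart of your proof does not get off the ground.
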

\begin{figure}
	\centering
	\begin{tikzpicture}[
		box/.style={text width=4cm, rounded corners, font=\small, align=center, anchor=center},
		good/.style={box, fill=green!20},
		unknown/.style={box, fill=yellow!20},
		bad/.style={box, fill=red!20}
		]
		
		  \matrix (mat) [matrix of nodes, nodes=good, 
		  	column sep=1cm, 
		  	row sep=.5cm
		 ] 
		{
			& & \node [bad] (hajos) {excluded topological minor (\ref{thm:k5-top-minor})}; \\
			\node [bad] (adm) {bounded $\infty$\nobreakdash-admis\-sibility (\ref{lem:d3-star})}; &
			\node [bad] (ltw) {bounded local treewidth (\ref{lem:d3-star})}; &
			\node [unknown] (hadwiger) {excluded minor (\cref{conj:weak-roberson})}; \\
			&
			\node [good] (genus) {bounded Euler genus  (\ref{thm:genus-strictly-refining})}; &
			\node [good] (vortexfree) {bounded vortex-free Hadwiger number  (\ref{thm:main-vortex-free-hadwiger})}; \\
			&
			\node [good] (planar) {planar \cite{atserias_quantum_2019,mancinska_quantum_2020,roberson_oddomorphisms_2022}}; 
			
			&
			\node [good] (treewidth) {bounded treewidth \cite{cai_optimal_1992,dvorak_recognizing_2010,dell_lovasz_2018,neuen_homomorphism-distinguishing_2024}};  \\
			\node [good] (degree) {bounded degree \cite{roberson_oddomorphisms_2022}}; &
			\node [good] (trees) {forests \cite{dvorak_recognizing_2010,dell_lovasz_2018,roberson_oddomorphisms_2022}}; & \\
		};

		\draw [->] (trees) -- (planar);
		\draw [->] (trees) -| (treewidth);
		\draw [->] (treewidth) -- (vortexfree);
		\draw (planar) edge [->] (genus);
		\draw (genus) edge [->] (vortexfree) ;
		\draw (vortexfree) edge [->] (hadwiger);
		\draw (hadwiger) edge [->] (hajos);
		\draw (degree) edge [->] (adm);
		\draw [->] (adm) |- (hajos);

		\draw [->] (genus) -- (ltw);
         \draw  (degree) edge [bend left, ->] (ltw);
	\end{tikzpicture}
	\caption{Properties of a graph class $\mathcal{F}$ that do (green) or do not (red) guarantee that there exist non-isomorphic graphs $G$ and $H$ that are homomorphism indistinguishable over $\mathcal{F}$.}
	\label{fig:graph-classes}
\end{figure}
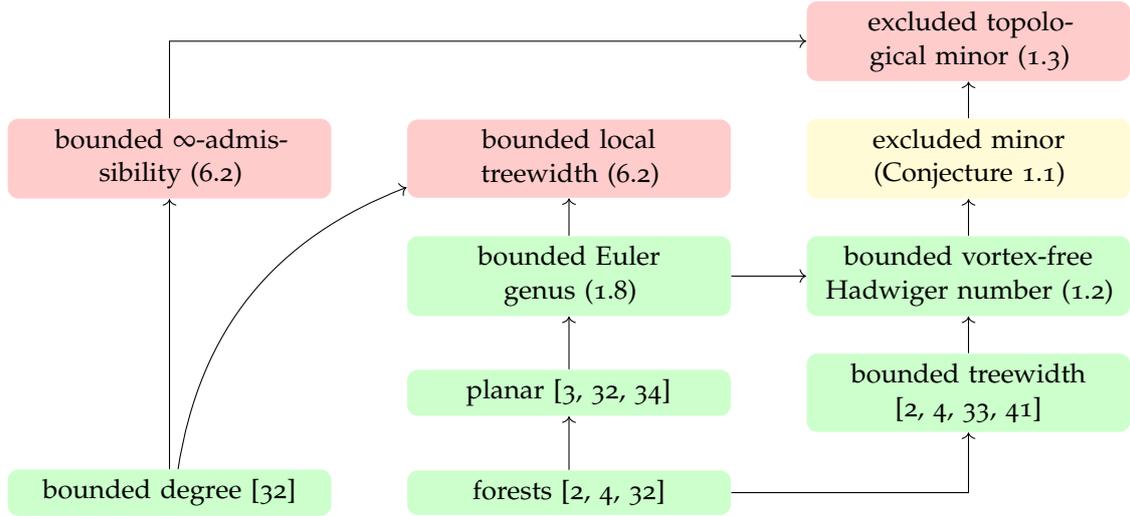
\cref{thm:k5-top-minor} gives further evidence for the special role that minor-closed graph classes play in the realm of homomorphism indistinguishability, e.g., as a demarcation line in \cref{fig:graph-classes}.
In fact, this is further highlighted by our approach to prove \cref{thm:k5-top-minor}:
We say a graph class $\mathcal{F}$ is \emph{closed under $2$\nobreakdash-sums with triangles} if, for every $F \in \mathcal{F}$ and every edge $e = vw \in E(F)$, we have $F \oplus_e K_3 \in \mathcal{F}$.
Here, $F \oplus_e K_3$ denotes the graph obtained from $F$ by adding a fresh vertex $x_e$ connected to exactly $v,w$ (i.e., we glue a triangle to the edge $e$).
Note that many natural graph classes are closed under $2$-sums with triangles, e.g., planar graphs, graphs of treewidth $\leq k$ for all $k \geq 2$, and the class of graphs excluding $K_5$ as a topological minor.
As our main technical insight, we prove that, for a class $\mathcal{F}$ closed under subgraphs and $2$-sums with triangles, we can recover homomorphism counts $\hom(F',\star )$ for every graph $F'$ that is a minor of some graph $ F \in \mathcal{F}$, given only the homomorphism counts $\hom(F,\star)$ from graphs $F \in \mathcal{F}$, see \cref{lem:contractors-for-minors}.

At this point, we directly obtain \cref{thm:k5-top-minor} by observing that the graphs of maximum degree~$3$ exclude $K_5$ as a topological minor, and every graph is a minor of a graph of maximum degree~$3$.
Using the same argument, we also obtain classes of bounded $\infty$\nobreakdash-admissibility (see \cite{dvorak_constant-factor_2013,siebertz_generalized_2025}) and bounded local treewidth (see \cite{eppstein_subgraph_1995,grohe_local_2003}) over which homomorphism indistinguishability is the same as isomorphism; see \cref{fig:graph-classes}.

We stress that, to the best of our knowledge, the only previously known examples of proper graph classes $\mathcal{F}$ for which $\equiv_{\mathcal{F}}$ is isomorphism are connected graphs \cite{lovasz_operations_1967} and $2$-degenerate graphs \cite{dvorak_recognizing_2010}.
In sparsity theory \cite{nesetril_sparsity_2012}, neither of these is typically considered to be sparse, even though $2$-degenerate graphs only have a linear number of edges.

\subsection{Strong Roberson Conjecture: Separating Homomorphism Indistinguishability Relations}

Once a  homomorphism indistinguishability relation $\equiv_{\mathcal{F}}$ is separated from isomorphism,
the question arises of how $\equiv_{\mathcal{F}}$ relates to other homomorphism indistinguishability relations~$\equiv_{\mathcal{F}'}$.
Clearly, if $\mathcal{F} \supseteq \mathcal{F}'$, then the relation $\equiv_{\mathcal{F}}$ refines $\equiv_{\mathcal{F}'}$.
When does the converse hold?

Answers to this question have been used to pinpoint the distinguishing power of graph isomorphism relaxations from finite model theory \cite{fluck_going_2024,adler_monotonicity_2024,neuen_homomorphism-distinguishing_2024,schindling_homomorphism_2025},
optimisation \cite{roberson_lasserre_2024},
and machine learning \cite{zhang_beyond_2024,gai_homomorphism_2025,cerny_caterpillar_2025}.

The central definition for answering this question is the following:
A graph class $\mathcal{F}$ is \emph{homomorphism distinguishing closed} \cite{roberson_oddomorphisms_2022}
if, for all $F' \not\in \mathcal{F}$, there exist graphs $G$ and $H$ that are homomorphism indistinguishable over $\mathcal{F}$ and satisfy $\hom(F', G) \neq \hom(F', H)$, i.e.\ $G $ and $H$
admit a different of homomorphisms from $F'$.
In other words, homomorphism distinguishing closed graph classes $\mathcal{F}$ are maximal in the sense that, for every $F' \not\in \mathcal{F}$, the relation $\equiv_{\mathcal{F} \cup \{F'\}}$ is strictly finer than $\equiv_{\mathcal{F}}$.

This definition is significant since,
if $\mathcal{F}$ is homomorphism distinguishing closed,
then it holds that $\equiv_{\mathcal{F}}$ refines $\equiv_{\mathcal{F}'}$ if, and only if, $\mathcal{F} \supseteq \mathcal{F}'$.
Hence, in this case, the relations $\equiv_{\mathcal{F}}$  and  $\equiv_{\mathcal{F}'}$ 
can be compared simply by comparing the graph classes $\mathcal{F}$ and $\mathcal{F}'$.
The aforementioned articles \cite{fluck_going_2024,adler_monotonicity_2024,schindling_homomorphism_2025,roberson_lasserre_2024,zhang_beyond_2024,gai_homomorphism_2025,cerny_caterpillar_2025} made use of precisely this property.
Hence, it is desirable to exhibit homomorphism distinguishing closed graph classes.
To that end, \textcite{roberson_oddomorphisms_2022} proposed the following conjecture.

\begin{conjecture}[\protect{\textcite[Conjecture 4]{roberson_oddomorphisms_2022}}]\label{conj:strong-roberson}
    Every minor-closed and union-closed\footnote{A graph class $\mathcal{F}$ is \emph{union-closed} if $F_1, F_2 \in \mathcal{F}$ implies that their disjoint union $F_1 + F_2$ is in $\mathcal{F}$. It is not hard to see that every homomorphism distinguishing closed graph class is necessarily union-closed \cite{roberson_oddomorphisms_2022}.} graph class $\mathcal{F}$ is homomorphism distinguishing closed.
\end{conjecture}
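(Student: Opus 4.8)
\ Fix a minor-closed and union-closed class $\mathcal{F}$; we may assume $\mathcal{F}$ is proper, as otherwise there is no $F' \notin \mathcal{F}$ to handle, and by the Robertson--Seymour theorem $\mathcal{F}$ is then characterised by a finite set of forbidden minors $M_1, \dots, M_t$. Given $F' \notin \mathcal{F}$, some $M := M_i$ is a minor of $F'$; fix a model of $M$ in $F'$, i.e.\ pairwise disjoint connected branch sets $B_x \subseteq V(F')$ indexed by $x \in V(M)$ with an edge of $F'$ between $B_x$ and $B_y$ whenever $xy \in E(M)$. The plan is to split the task into (a) constructing a single pair $G_M \equiv_{\mathcal{F}} H_M$ with $\hom(M, G_M) \neq \hom(M, H_M)$, and (b) \emph{lifting} this separation along the fixed model to obtain $G \equiv_{\mathcal{F}} H$ with $\hom(F', G) \neq \hom(F', H)$.

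For step (a) I would build on the construction behind \cref{thm:main-vortex-free-hadwiger}: take a highly connected host in which $M$ embeds robustly, replace its vertices and edges by Cai--F\"urer--Immerman-type gadgets, and let $G_M$ and $H_M$ differ by a single local twist. The point is that $\hom(M, \cdot)$ detects the twist while $\equiv_{\mathcal{F}}$ does not: by the structure theorem every $F \in \mathcal{F}$ decomposes as a bounded clique-sum of pieces almost embeddable in a bounded-genus surface with a bounded number of apices and a bounded number of bounded-width vortices, and such a piece interacts with the gadgetised host only through a bounded, ``simple'' interface, hence cannot separate $G_M$ from $H_M$. Compared with \cref{thm:main-vortex-free-hadwiger}, the genuinely new ingredient is that vortices are allowed, so the gadgets must be designed to survive a bounded-pathwidth vortex glued along a face; I expect this --- together with strengthening the existential statement of \cref{conj:weak-roberson} into the uniform assertion that $G_M \not\cong H_M$ is undetectable by \emph{every} $F \in \mathcal{F}$ --- to be the principal obstacle on this side. (Alternatively one could try to phrase (a) through the linear-algebraic ``homomorphism tensor'' description of $\equiv_{\mathcal{F}}$ and produce $G_M,H_M$ for which exactly the equations indexed by $\mathcal{F}$ are solvable, but the two routes face the same difficulty.)

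Step (b) calls for ``contractor'' gadgets in the spirit of \cref{lem:contractors-for-minors}: quantum graphs that, when substituted into $F'$ along the branch sets $B_x$ --- contracting each $B_x$ and running an inclusion--exclusion over the remaining vertices and edges of $F'$ --- express $\hom(F', \cdot)$ as a linear combination of $\hom(M', \cdot)$ over minors $M'$ of $F'$ in which $M$ occurs with nonzero coefficient; applying this to the pair from (a) then separates $F'$, provided the combination does not accidentally cancel on $(G_M, H_M)$, which one can try to force by choosing $G_M, H_M$ generically among the CFI-type pairs. The crucial difference from \cref{lem:contractors-for-minors} is that there one may assume closure under subgraphs and $2$-sums with triangles, which keeps every intermediate graph inside the class, whereas a general minor-closed class need not contain the required contractors at all. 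Making the two constructions compatible --- implementing the contractions and the inclusion--exclusion using only graphs over which $G_M$ and $H_M$ stay indistinguishable --- is where the argument must be most delicate, and is precisely why \cref{conj:strong-roberson} is at present known only in special cases such as bounded treewidth, planar graphs, and unions of forests, where $\equiv_{\mathcal{F}}$ admits an external characterisation (counting logic, quantum isomorphism, degree statistics, respectively) that bypasses step (b) entirely.
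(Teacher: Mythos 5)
The statement you are trying to prove is \cref{conj:strong-roberson}, which is an \emph{open conjecture}: the paper does not prove it, attributes it to Roberson, and explicitly lists it as unresolved in the conclusion. The paper only establishes special cases (\cref{thm:forest-main,thm:strong-roberson,thm:tw2}, etc.). So there is no ``paper proof'' to match, and your proposal is, as you partly acknowledge, a strategy sketch with the hard parts left open rather than a proof.

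Two concrete gaps. First, your step (b) as stated cannot work: an identity $\hom(F',\cdot)=\sum_{M'}c_{M'}\hom(M',\cdot)$ with each $M'$ a proper minor of $F'$ is impossible, because homomorphism-count functions $\hom(F,\cdot)$ for pairwise non-isomorphic $F$ are linearly independent (this is the Lov\'asz/Erd\H{o}s--Lov\'asz--Spencer fact underlying the whole theory). Contractors in \cref{lem:contractors-for-minors} go the \emph{other} way: they express counts from a contracted graph via counts from larger graphs obtained by gluing series-parallel gadgets, and they require closure under $2$-sums with $K_3$, which a general minor-closed class (e.g.\ forests) does not have. The reduction to the finitely many forbidden minors can instead be salvaged cleanly: by the result cited in the paper (\cite[Theorem~8]{seppelt_logical_2024}), $\cl(\mathcal{F})$ is minor-closed whenever $\mathcal{F}$ is, so $\mathcal{F}=\cl(\mathcal{F})$ if and only if no minimal forbidden minor $M_i$ of $\mathcal{F}$ lies in $\cl(\mathcal{F})$. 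Second, after this reduction your step (a) \emph{is} the entire open problem, not a ``principal obstacle'' one expects to overcome by adapting \cref{thm:main-vortex-free-hadwiger}: via \cref{thm:rob62} and Roberson's oddomorphism criterion, exhibiting $G_M\equiv_{\mathcal F}H_M$ with $\hom(M,G_M)\neq\hom(M,H_M)$ amounts to showing that no $F\in\mathcal{F}$ admits a weak oddomorphism to a suitable base graph containing $M$, and handling vortices here is precisely what the paper cannot do (its results are restricted to vortex-free Hadwiger number, and even the weaker \cref{conj:weak-roberson} remains open in general). Your sketch therefore reduces the conjecture to itself rather than proving it.
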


Clearly, \cref{conj:strong-roberson} implies \cref{conj:weak-roberson}.
Note, however, that \cref{conj:weak-roberson} admits a certain monotonicity that  \cref{conj:strong-roberson}  lacks: 
A proof of \cref{conj:weak-roberson} for some $k \in \mathbb{N}$ immediately yields a proof for all $k' \leq k$.
In the case of \cref{conj:strong-roberson},
if a graph class $\mathcal{F}$ is  homomorphism distinguishing closed,
then its sub- and super-classes may or may not be homomorphism distinguishing closed, see \cite[Section~6]{seppelt_homomorphism_2024}.

 \cref{conj:strong-roberson} is known to be true for the class of planar graphs \cite{roberson_oddomorphisms_2022}, 
forests \cite{roberson_oddomorphisms_2022},
disjoint unions of paths (and cycles) \cite{roberson_oddomorphisms_2022},
graphs of treewidth $\leq k$ \cite{neuen_homomorphism-distinguishing_2024},
treedepth $\leq k$ \cite{fluck_going_2024},
pathwidth $\leq k$ \cite{seppelt_homomorphism_2024},
graphs with bounded-depth pebble forest covers \cite{adler_monotonicity_2024,schindling_homomorphism_2025}, and essentially finite graph classes \cite{seppelt_logical_2024}, see \cite[Section~6]{seppelt_homomorphism_2024}.
Most of these instances are motivated by the study of the expressive power of counting logic fragments \cite{neuen_homomorphism-distinguishing_2024,fluck_going_2024,adler_monotonicity_2024,seppelt_homomorphism_2024,schindling_homomorphism_2025}, and in fact the proofs for homomorphism distinguishing closedness often heavily rely on known characterisations.
This is a clear drawback towards verifying \cref{conj:strong-roberson}, since  not all minor-classed classes $\mathcal{F}$ are likely to admit natural characterisations for $\equiv_{\mathcal{F}}$.
In contrast, we obtain several new results in the realm of \cref{conj:strong-roberson} that do not rely on such characterisations, and reprove some of the existing results using more direct and simpler arguments.

First, we resolve \cref{conj:strong-roberson} for all classes of forests.
In fact, our results for classes $\mathcal{F}$ of forests even hold under the weaker assumption that $\mathcal{F}$ is only closed under topological minors.

\begin{theorem}\label{thm:forest-main}
    \label{it:tree} Every union-closed class of forests closed under taking topological minors is homomorphism distinguishing closed.
\end{theorem}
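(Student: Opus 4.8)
The plan is to verify the definition of homomorphism distinguishing closedness directly: given a graph $F' \notin \mathcal{F}$, I must produce graphs $G, H$ with $G \equiv_{\mathcal{F}} H$ but $\hom(F', G) \neq \hom(F', H)$. The first reduction is to observe that if $F'$ has an isolated vertex or is disconnected, we can handle components separately using union-closedness of $\mathcal{F}$ and the standard multiplicativity $\hom(F_1 + F_2, \star) = \hom(F_1,\star)\cdot\hom(F_2,\star)$, so it suffices to treat the case where $F'$ is connected. If $F'$ contains a cycle, then since $\mathcal{F}$ consists only of forests, \emph{no} forest admits a homomorphism that is surjective on edges onto $F'$ in the relevant sense; more usefully, any graph $G$ with no short cycles will satisfy $\hom(F', G)$ determined by local tree-like structure — the cleanest route is to pick $G, H$ to be a pair that is $\equiv_{\mathcal{F}}$-equivalent (e.g.\ Cai--Fürer--Immerman-type gadgets, or the pairs from \cite{dvorak_recognizing_2010}) but distinguished by \emph{some} graph containing a cycle; standard constructions give such pairs whenever $\mathcal{F}$ omits a tree of bounded pathwidth. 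So the genuinely new case is: $F'$ is a \textbf{tree} that does not lie in $\mathcal{F}$.

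For the tree case, the key structural fact is that $\mathcal{F}$ is closed under topological minors, hence in particular under subtrees (taking subgraphs that are trees) and under suppressing degree-$2$ vertices. Since $F' \notin \mathcal{F}$ but $\mathcal{F}$ is topological-minor-closed, $F'$ is a \emph{minimal} obstruction in the sense that every proper topological minor of $F'$ lies in $\mathcal{F}$ (if not, replace $F'$ by a minimal one; a graph distinguishing the smaller obstruction also lies outside $\mathcal{F}$). Now I would use the technology of \emph{contractors} / \emph{homomorphism tensors} in the spirit of \cite{roberson_oddomorphisms_2022, neuen_homomorphism-distinguishing_2024}: one builds, for the tree $F'$ with a chosen edge or vertex, a gadget graph $G$ together with a perturbation $H$ obtained by a local swap at a bounded-size separator, such that $\hom(T, G) = \hom(T, H)$ for every tree $T$ that is "simpler" than $F'$ (quantified via the pathwidth or the number of leaves), while $\hom(F', G) \neq \hom(F', H)$. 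Concretely, since every tree in $\mathcal{F}$ is a topological minor of $F'$ or is incomparable to it, and the gadget is designed so its homomorphism-count response is governed exactly by the branching structure of $F'$, equality holds precisely for the trees not "above" $F'$.

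The main obstacle, and where the topological-minor-closedness is essential, is the following: trees in $\mathcal{F}$ may be \emph{larger} than $F'$ — e.g.\ long subdivisions of $F'$, or trees with many extra leaves hanging off — yet still not have $F'$ as a topological minor, so $G$ and $H$ must agree on all of these. The trick is that suppressing degree-$2$ vertices and deleting pendant paths are exactly the operations under which $\mathcal{F}$ is closed, so a tree $T \in \mathcal{F}$ cannot have more "essential branching" than $F'$ allows; one encodes the gadget's distinguishing response so that it factors through the \emph{topological type} of $T$ (its branch vertices and the connectivity among them), ignoring subdivision lengths and pendant decorations. Making this precise requires a careful inductive construction on the structure of $F'$ — rooting $F'$ at a leaf, and building $G$ by replacing each branch vertex with a small CFI-style gadget and each edge with a "channel" that is transparent to homomorphisms from subdivision-irrelevant trees — and then verifying by a branch-decomposition / separator argument that the only tree whose homomorphism count sees the parity twist built into $G$ versus $H$ is one having $F'$ as a topological minor. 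I would isolate this verification as a self-contained lemma: \emph{for the constructed pair $(G,H)$ and any tree $T$, $\hom(T,G) = \hom(T,H)$ iff $F'$ is not a topological minor of $T$} — which, combined with the two easy reductions above, yields the theorem.
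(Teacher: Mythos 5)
There is a genuine gap: the entire mathematical content of the theorem is concentrated in your final ``self-contained lemma'' (for the constructed pair $(G,H)$ and any tree $T$, $\hom(T,G)=\hom(T,H)$ iff $F'$ is not a topological minor of $T$), and you never construct the pair nor prove the lemma --- you only assert that a ``careful inductive construction'' with CFI-style gadgets and transparent channels would work. That assertion is precisely what has to be proved, and it is not routine. The paper's route makes this concrete: it takes $G,H$ to be the CFI graphs of $F'$ itself, invokes Roberson's criterion (\cref{thm:rob62}, built on \cite[Theorem~3.13]{roberson_oddomorphisms_2022}) that a tree $T$ distinguishes this pair iff $T$ admits a weak oddomorphism to $F'$, and then proves the one genuinely new combinatorial fact (\cref{lem:top-minor-tree}): if a tree admits an oddomorphism to $G$, then $G$ is a topological minor of that tree. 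This is proved by induction on $|V(F)|$ using the cut vertex lemma (\cref{lem:cutvertex}); note also that only this one direction is needed, whereas your ``iff'' is stronger than necessary and its converse direction (topological-minor containment forces a different count) is doubtful for, e.g., even subdivisions.

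A second, more localised error: your reduction to a \emph{minimal} obstruction does not work. If you replace $F'\notin\mathcal{F}$ by a minimal topological-minor obstruction $F''$ and produce $G,H$ with $G\equiv_{\mathcal F}H$ and $\hom(F'',G)\neq\hom(F'',H)$, this does not yield $\hom(F',G)\neq\hom(F',H)$, which is what homomorphism distinguishing closedness demands for the original $F'$. You must handle every $F'\notin\mathcal{F}$ directly (as the paper does by building the CFI pair from $F'$ itself). The remaining reductions --- disconnected $F'$ via multiplicativity (which needs the care of \cite[Lemma~3.14]{roberson_oddomorphisms_2022}, since a product of counts can agree even when a factor differs) and cyclic $F'$ via the known closedness of the class of all forests --- are salvageable but are not where the difficulty lies.
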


For example, this generalises the known result that the class of forests of maximum degree $d$ is homomorphism distinguishing closed \cite{roberson_oddomorphisms_2022}, which strengthened a previous result from \cite{grohe_homomorphism_2025}.
Moreover, as a direct corollary, we precisely determine the distinguishing power of the graph neural network architecture proposed in \cite[B.3]{cerny_caterpillar_2025} since the distinguishing power can be characterised by $\equiv_{\mathcal{F}}$ for certain classes $\mathcal{F}$ of forests. 

Furthermore, we develop a combinatorial toolkit for proving that graph classes constructed via standard operations from sparsity theory such as taking clique-sums or adding apices are homomorphism distinguishing closed.
As a corollary, we extend the list of known homomorphism distinguishing closed graph classes, which includes the following examples:

\begin{theorem}\label{thm:strong-roberson}
   For $k \geq 0$, the following graph classes are homomorphism distinguishing closed.
   \begin{enumerate}
       \item\label{it:cactus} the class of cactus graphs,
       \item\label{it:outerplanar} the class of outerplanar graphs, 
\item\label{it:k33} the class of $K_{3,3}$-minor-free graphs,
       \item\label{it:k5} the class of $K_{5}$-minor-free graphs,
       \item\label{it:apex-planar} the class of disjoint unions of $k$-apex planar graphs,
       \item\label{it:vertex-cover} the class of disjoint unions of graphs of vertex cover number $\leq k$, and,
       \item\label{it:fvs} the class of disjoint unions of graphs of feedback vertex set number $\leq k$.
   \end{enumerate}
\end{theorem}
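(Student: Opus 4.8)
The plan is to exhibit each of the seven classes as generated from a short list of \emph{basic} homomorphism distinguishing closed classes by operations that preserve this property, and to isolate the required preservation statements as a reusable toolkit. The basic classes are the forests, the planar graphs, the edgeless graphs, and the graphs of maximum degree at most~$2$ (equivalently, disjoint unions of paths and cycles). Homomorphism distinguishing closedness is known for the forests, the planar graphs, and the disjoint unions of paths and cycles \cite{roberson_oddomorphisms_2022}; for the edgeless graphs it is immediate, as for any $F'$ containing an edge the pair $K_n$ versus $nK_1$ --- which agree on all homomorphism counts from edgeless graphs since both have $n$ vertices --- satisfies $\hom(F', K_n) > 0 = \hom(F', nK_1)$ once $n \geq \chi(F')$. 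The operations are: (i)~closing a finite family of classes under clique-sums along cliques of bounded size; (ii)~adding at most $k$ apex vertices; and (iii)~passing to the union-closure.

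The technical core is three toolkit lemmas. The \emph{clique-sum lemma}: if $\mathcal{F}_1, \dots, \mathcal{F}_m$ are homomorphism distinguishing closed and closed under subgraphs, then the closure of $\mathcal{F}_1 \cup \dots \cup \mathcal{F}_m$ under clique-sums along cliques of size at most~$t$ is homomorphism distinguishing closed; as clique-sums of order~$0$ are disjoint unions, this closure is automatically union-closed, and we moreover permit finitely many additional graphs (such as $K_5$ or the Wagner graph) to occur as pieces of the decomposition. The \emph{apex lemma}: if $\mathcal{F}$ is homomorphism distinguishing closed and closed under adding isolated vertices, then the union-closure of $\{F : \exists\, S \subseteq V(F),\ |S| \leq k,\ F - S \in \mathcal{F}\}$ is homomorphism distinguishing closed. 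The \emph{union lemma}: under mild hypotheses the union-closure of a homomorphism distinguishing closed class is again homomorphism distinguishing closed, exploiting that $\hom(F_1 + F_2, \cdot) = \hom(F_1, \cdot)\,\hom(F_2, \cdot)$ factorises over connected components.

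Granting these lemmas, the seven items follow from classical structure theory. The cactus graphs are the closure of the maximum-degree-$\leq 2$ graphs under clique-sums along cliques of size at most~$1$, since the blocks of a cactus are cycles and single edges (item~\ref{it:cactus}). By Wagner's theorems, the $K_{3,3}$-minor-free graphs, respectively the $K_5$-minor-free graphs, are the closure of the planar graphs together with the additional piece $K_5$, respectively the Wagner graph $V_8$, under clique-sums along cliques of size at most~$2$, respectively~$3$ (items~\ref{it:k33} and~\ref{it:k5}). The $k$-apex planar graphs, the graphs of vertex cover number $\leq k$, and the graphs of feedback vertex set number $\leq k$ are precisely the graphs obtained by adding at most $k$ apices to a planar graph, an edgeless graph, and a forest respectively, so items~\ref{it:apex-planar}, \ref{it:vertex-cover} and~\ref{it:fvs} follow from the apex and union lemmas. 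Finally, the outerplanar graphs, being $\{K_4, K_{2,3}\}$-minor-free, also decompose into cycles via clique-sums along cliques of size at most~$2$, but subject to the extra constraint --- which is exactly what distinguishes them from the treewidth-$\leq 2$ graphs --- that no edge is glued along more than twice; handling this requires a block-structured refinement of the clique-sum lemma tracking the ``ear'' structure of outerplanar graphs, so that the construction yields precisely the outerplanar graphs (item~\ref{it:outerplanar}).

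The principal obstacle is the clique-sum lemma. To separate a graph $F' \notin \mathcal{F}$ over~$\mathcal{F}$, one must locate a clique-sum separation of~$F'$ whose pieces certify $F' \notin \mathcal{F}$, produce distinguishing pairs $(G_i, H_i)$ for the individual pieces, and assemble them into a single pair $(G, H)$ that remains homomorphism indistinguishable over all of~$\mathcal{F}$. Making the assembly work forces one to pass from plain homomorphism indistinguishability to a \emph{labelled} version indexed by the vertices of the separating cliques --- equivalently, to work with homomorphism tensors --- and to show that this refined relation is a congruence with respect to clique-gluing, which in turn dictates how the building-block pairs must be chosen. Propagating this consistently along the entire decomposition, while simultaneously coping with the arbitrary (non-clique) neighbourhoods of apex vertices and with the restricted ``ear'' sums of outerplanar graphs, is the delicate part; once the labelled framework is in place, the apex and union lemmas are comparatively direct tensor-power arguments.
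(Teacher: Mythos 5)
The organising idea---decompose each class into clique-sums or apex additions over simpler base classes and prove that these operations preserve homomorphism distinguishing closedness---matches the paper's plan, and the Wagner-theorem decompositions you invoke for items~\ref{it:k33} and~\ref{it:k5} and the apex characterisations for items~\ref{it:apex-planar}--\ref{it:fvs} are all correct. However, your central clique-sum lemma is false as stated, and the paper's own machinery produces the counterexample. Take $\mathcal{F}_1 = \mathcal{D}_3$, the class of graphs of maximum degree at most~$3$ (homomorphism distinguishing closed by \cite{roberson_oddomorphisms_2022}, and subgraph-closed), and take $t = 2$. The $2$-sum closure $\mathcal{G}$ of $\mathcal{F}_1$ is closed under subgraphs and under $2$-sums with $K_3$, so by \cref{lem:contractors-for-minors} its homomorphism distinguishing closure $\cl(\mathcal{G})$ is minor-closed. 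Since $\mathcal{G} \supseteq \mathcal{D}_3$ and every graph is a minor of some $3$-regular graph, $\cl(\mathcal{G})$ contains all graphs. Yet $\mathcal{G}$ is a proper class: $K_{10}$ is $9$-connected and $9$-regular, hence neither a nontrivial $(\leq 2)$-sum nor of maximum degree~$\leq 3$, so $K_{10} \notin \mathcal{G}$. Thus $\mathcal{G} \ne \cl(\mathcal{G})$, and your clique-sum lemma fails already at $t = 2$ for a single well-behaved base class.

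The missing hypothesis is minor-closedness of the base class, and even granting it the paper only establishes the result for $1$-sums (\cref{cor:one-cliquesum}) and $2$-sums (\cref{cor:two-cliquesum}) in full generality; the $K_5$-minor-free case, nominally a $3$-sum, is handled ad hoc by exploiting that $K_5$ is $4$-connected via \cref{lem:oddo-to-kn-clique-sum}, which only applies when the obstruction is sufficiently connected. More fundamentally, the paper's proofs sit inside a very different architecture than yours: rather than assembling distinguishing pairs for the pieces of a decomposition into a distinguishing pair for the whole (your ``labelled homomorphism tensor congruence'' step, which is not developed and which the counterexample above shows cannot work in the generality you claim), the paper takes the CFI construction as the sole source of distinguishing pairs, invokes Roberson's criterion (\cref{thm:rob62}) reducing homomorphism distinguishing closedness to closure under weak oddomorphisms, and then proves the latter combinatorial closure property via the separator lemma (\cref{lem:separators}). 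This difference also surfaces in your apex lemma: the paper's \cref{lem:deletion-distance-oddo} assumes $\mathcal{F}$ is closed under weak oddomorphisms, a formally stronger hypothesis than homomorphism distinguishing closedness. Finally, your treatment of item~\ref{it:outerplanar} is far too optimistic---rather than a ``block-structured refinement of the clique-sum lemma,'' the paper needs a dedicated structural theorem (\cref{thm:k2h-main}), showing that any treewidth-$\leq 2$ graph admitting an oddomorphism to $K_{2,h}$ already contains $K_{2,h}$ as a minor, proved via an Eulerian parity argument and the existence of non-cut vertices of low degree.
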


\cref{it:outerplanar} has an immediate corollary via results of \cite{roberson_lasserre_2024,dvorak_recognizing_2010,dell_lovasz_2018}
and thus answers a question of \textcite{roberson_lasserre_2024}.

\begin{corollary}\label{cor:lasserre}
	There exist graphs $G$ and $H$ such that the first-level Lasserre relaxation of the graph isomorphism integer program for $G$ and $H$ is feasible but $G$ and $H$ are distinguished by the $2$\nobreakdash-dimensional Weisfeiler--Leman algorithm.
\end{corollary}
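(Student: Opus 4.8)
The plan is to derive \cref{cor:lasserre} by combining \cref{it:outerplanar} with two known characterisations. Let $\mathcal{O}$ denote the class of outerplanar graphs. First, \textcite{roberson_lasserre_2024} showed that the first-level Lasserre relaxation of the graph isomorphism integer program (with non-negativity constraints) for $G$ and $H$ is feasible if, and only if, $G \equiv_{\mathcal{O}} H$. Second, by \textcite{dvorak_recognizing_2010} and \textcite{dell_lovasz_2018}, the $2$-dimensional Weisfeiler--Leman algorithm distinguishes $G$ and $H$ if, and only if, $\hom(F,G) \neq \hom(F,H)$ for some graph $F$ of treewidth at most $2$ (equivalently, $G$ and $H$ are not $\mathsf{C}^{3}$-equivalent).

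Next I would pick a separating graph $F'$ lying in the class of graphs of treewidth at most $2$ but not in $\mathcal{O}$. The bipartite graph $K_{2,3}$ does the job: it is series-parallel, hence of treewidth $2$, but it is one of the two forbidden minors for outerplanarity, so $K_{2,3} \notin \mathcal{O}$. (Note that the other forbidden minor, $K_4$, has treewidth $3$ and would not serve this purpose.) Since $\mathcal{O}$ is homomorphism distinguishing closed by \cref{it:outerplanar}, there exist graphs $G$ and $H$ with $G \equiv_{\mathcal{O}} H$ and $\hom(K_{2,3}, G) \neq \hom(K_{2,3}, H)$. For this pair, the first characterisation shows that the first-level Lasserre relaxation is feasible, while the second shows that $2$-dimensional Weisfeiler--Leman distinguishes $G$ and $H$, since $K_{2,3}$ has treewidth $2$; this is precisely the statement of the corollary.

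Since all three ingredients are in place, no genuine difficulty remains; the delicate part is purely one of conventions---ensuring that ``$2$-dimensional Weisfeiler--Leman'' is matched to treewidth at most $2$ (i.e.\ to $\mathsf{C}^{3}$, not $\mathsf{C}^{2}$), and that the level-$1$ Lasserre relaxation is captured, up to homomorphism indistinguishability, exactly by $\mathcal{O}$ and not by some strictly smaller class. Both are guaranteed by the cited works, and any graph of treewidth at most $2$ outside $\mathcal{O}$ could replace $K_{2,3}$ in the argument.
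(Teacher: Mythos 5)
Your proposal is correct and matches the paper's own argument exactly: both combine the Lasserre characterisation of \textcite{roberson_lasserre_2024} via outerplanar homomorphism indistinguishability, the treewidth-$\leq 2$ characterisation of $2$-dimensional Weisfeiler--Leman, and the homomorphism distinguishing closedness of outerplanar graphs from \cref{it:outerplanar}, with $K_{2,3}$ as the separating graph. No differences worth noting.
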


Indeed,
it was shown in \cite{roberson_lasserre_2024} that the first-level Lasserre SDP relaxation of the graph isomorphism integer program for graphs $G$ and $H$ is feasible iff $G$ and $H$ are homomorphism indistinguishable over all outerplanar graphs.
Since the $2$-dimensional Weisfeiler--Leman algorithms distinguishes $G$ and $H$ iff there is a graph $F$ of treewidth $\leq 2$ such that $\hom(F, G) \neq \hom(F, H)$,
\Cref{cor:lasserre} follows from \cref{it:outerplanar} of \cref{thm:strong-roberson} as the complete bipartite graph $K_{2,3}$ has treewidth $2$ and is not outerplanar.

Finally, we consider
the class $\mathcal{E}_g$ of graphs of Euler genus $\leq g$
and the class $\mathcal{H}_k$ of graphs of vortex-free Hadwiger number $\leq k$
for $g, k \geq 0$.
Although we are unable to show that $\mathcal{E}_g$ and $\mathcal{H}_k$ are homomorphism distinguishing closed, 
we prove that their homomorphism indistinguishability relations are distinct. 

\begin{theorem}[restate=thmGenusStrictlyRefining,label=thm:genus-strictly-refining]
    For every $g \geq 0$,
    there exist graphs $G$ and $H$ such that $G \equiv_{\mathcal{E}_g} H$
    and $G \not\equiv_{\mathcal{E}_{g+1}} H$.
\end{theorem}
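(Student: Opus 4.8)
The plan is to take $(G,H)$ to be a Cai--Fürer--Immerman (CFI) pair \cite{cai_optimal_1992} over a base graph of Euler genus \emph{exactly} $g+1$, and to control which graphs distinguish the pair via the oddomorphism calculus of \cite{roberson_oddomorphisms_2022} (equivalently, via the CFI analysis behind \cref{thm:main-vortex-free-hadwiger}). As base graph $X$ I would take any $3$-connected graph of Euler genus exactly $g+1$; concretely $X = K_{3,n}$ with $n = 2g+3$ works, since a short Euler-characteristic count using bipartiteness gives $\operatorname{eg}(K_{3,n}) \geq \lceil (n-2)/2\rceil$, a bound met by Ringel's embeddings, so that $\operatorname{eg}(X) = \lceil (n-2)/2\rceil = \lceil(2g+1)/2\rceil = g+1$. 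Let $G$ and $H$ be, respectively, the untwisted and twisted CFI graphs over $X$. As $X$ is connected with every edge lying on a cycle, $G \not\cong H$.

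For the indistinguishability over $\mathcal{E}_g$ I would invoke the core output of the CFI analysis: the CFI pair over a connected graph $X$ is homomorphism indistinguishable over every graph class that excludes $X$ as a minor; equivalently, $\hom(F,G) \neq \hom(F,H)$ forces $X$ to be a minor of $F$. Since Euler genus is minor-monotone and $\operatorname{eg}(X) = g+1$, no graph of Euler genus at most $g$ has $X$ as a minor, so $\hom(F,G) = \hom(F,H)$ for every $F \in \mathcal{E}_g$, i.e.\ $G \equiv_{\mathcal{E}_g} H$.

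For the separation it suffices to exhibit a single graph $F^\star$ with $\operatorname{eg}(F^\star) \leq g+1$ and $\hom(F^\star,G) \neq \hom(F^\star,H)$, since then $F^\star \in \mathcal{E}_{g+1}$ witnesses $G \not\equiv_{\mathcal{E}_{g+1}} H$. Because $G \not\cong H$, by Lovász's theorem \cite{lovasz_operations_1967} some graph distinguishes $G$ and $H$, and by the preceding paragraph every such graph has $X$ as a minor and hence Euler genus at least $g+1$; the point is to find a distinguishing graph of Euler genus \emph{exactly} $g+1$. I would take $F^\star$ to be $X$ itself whenever the identity map already satisfies the parity conditions of an oddomorphism onto $X$, and otherwise the graph obtained from $X$ by subdividing a suitable set of edges and, where the parity bookkeeping demands it, routing a parallel length-three path alongside some edges of $X = K_{3,n}$. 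The crucial point is that, for $X = K_{3,n}$, both of these modifications can be performed inside a genus-$(g+1)$ embedding of $X$ without raising the genus, so $\operatorname{eg}(F^\star) = \operatorname{eg}(X) = g+1$; by construction $F^\star$ admits an oddomorphism onto $X$, hence distinguishes $G$ from $H$.

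The principal obstacle is exactly this last step: producing a distinguishing graph of Euler genus \emph{precisely} $g+1$, not larger. It rests entirely on the observation that the only modifications of the base graph needed to fulfil the parity requirements of an oddomorphism are Euler-genus-preserving for $K_{3,n}$---edge subdivisions and parallel attachments of low-degree paths---so that, starting already at genus $g+1$, one never overshoots. A secondary and more routine point is to verify, in the indistinguishability direction, that a minimal distinguishing graph genuinely yields a minor model of $X$ (its vertex-preimages under the oddomorphism are connected); since $X$ is $3$-connected this poses no difficulty.
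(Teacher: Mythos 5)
Your separation direction is fine in spirit (and simpler than you make it: by \cref{ex:identity} the identity map is always an oddomorphism, so the base graph itself always distinguishes its CFI pair; no subdivisions or parallel paths are needed). The gap is in the indistinguishability direction. The claim you invoke as ``the core output of the CFI analysis'' --- that $\hom(F,G)\neq\hom(F,H)$ for a CFI pair over $X$ forces $X$ to be a \emph{minor} of $F$ --- is not a known theorem. What is known (\cite[Theorem~3.13]{roberson_oddomorphisms_2022}) is that $\hom(F,G)\neq\hom(F,H)$ if and only if $F$ admits a \emph{weak oddomorphism} to $X$, and a weak oddomorphism does not in general yield a minor model: its fibres need not be connected, and upgrading oddomorphisms to minor containment is essentially the content of Roberson's open \cref{conj:strong-roberson}. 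Indeed, if your claim held for every connected $X$, taking $X=K_{k+1}$ would immediately prove \cref{conj:weak-roberson} in full, which this paper explicitly cannot do. Your closing remark that the fibres are connected ``since $X$ is $3$-connected'' is exactly the non-routine step; nothing in the oddomorphism calculus gives it.

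The paper's proof avoids this by not trying to recover $X$ as a minor at all. It takes as base graph $F$ the one-point union of $g+1$ copies of $K_5$ through a hub vertex, and uses the provable fact (\cref{thm:minor-disjoint-union}, \cref{cor:p-k-closed}) that any graph admitting a weak oddomorphism to $F$ must contain $g+1$ \emph{vertex-disjoint} Kuratowski minors --- because planarity is preserved under weak oddomorphisms (\cref{lem:oddo-planar-degree}) and disjoint unions split along oddomorphisms (\cref{lem:oddo-connected}) --- and hence has genus $\geq g+1$ by the additivity of genus \cite{battle_additivity_1962}. This weaker conclusion (a genus lower bound rather than a minor model of $X$) is all that is needed, but it hinges on the base graph containing many disjoint non-planar pieces. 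Your choice $X=K_{3,2g+3}$ defeats this machinery: deleting the $3$-vertex side leaves independent vertices, so $K_{3,n}$ contains at most one Kuratowski graph as a minor and lies in $\mathcal{P}^{(2)}$ for every $n$; no argument in the paper (or elsewhere, to the best of current knowledge) shows that graphs of genus $\leq g$ admit no weak oddomorphism to it. You would need to either prove that the class of $K_{3,n}$-minor-free graphs is closed under weak oddomorphisms --- an open problem --- or switch to a base graph of the paper's ``many disjoint Kuratowski pieces'' type.
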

\begin{theorem}[restate=thmHadStrictlyRefining,label=thm:had-strictly-refining]
    For every $k \geq 0$,
    there exist graphs $G$ and $H$ such that $G \equiv_{\mathcal{H}_k} H$
    and $G \not\equiv_{\mathcal{H}_{k+1}} H$.
\end{theorem}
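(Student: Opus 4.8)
The plan is to obtain the separation from (a parametrised strengthening of) the construction behind \cref{thm:main-vortex-free-hadwiger}. That theorem already produces, for each $k$, non-isomorphic $G$ and $H$ with $G \equiv_{\mathcal{H}_k} H$; what the hierarchy statement additionally needs is that the \emph{same} pair is distinguished by the homomorphism count from some graph of vortex-free Hadwiger number at most $k+1$. Granting this, we are done: $G \equiv_{\mathcal{H}_k} H$ by construction, whereas a distinguishing graph $F^\star$ of vortex-free Hadwiger number $\le k+1$ lies in $\mathcal{H}_{k+1}$ and hence witnesses $G \not\equiv_{\mathcal{H}_{k+1}} H$.

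Concretely, I expect the construction of \cref{thm:main-vortex-free-hadwiger} to be of Cai--Fürer--Immerman type~\cite{cai_optimal_1992} over a base graph $B$, where for the weak-Roberson statement one only uses that $B \notin \mathcal{H}_k$ together with enough connectivity/representativity of $B$ to make the homomorphism-indistinguishability argument go through. For the present statement I would run the construction with a base graph $B$ whose vortex-free Hadwiger number is \emph{exactly} $k+1$ (not merely larger than $k$) while still being spread out enough for that same argument, so that step~(1) gives non-isomorphic $G, H$ with $G \equiv_{\mathcal{H}_k} H$. For step~(2), since $G \not\cong H$, Lovász's theorem~\cite{lovasz_operations_1967} furnishes a distinguishing graph, but I want a controlled one: in a CFI construction the canonical choice is a bounded-size modification of $B$ (one gadget contracted, or a single split vertex), and a direct parity/twist computation shows it already separates $\hom(\cdot, G)$ from $\hom(\cdot, H)$. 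For step~(3), bound its vortex-free Hadwiger number: this parameter is minor-monotone, $F^\star$ is obtained from $B$ by operations that should not increase it, and the parametric forbidden-minor characterisation of \textcite{thilikos_killing_2022} lets one certify vortex-free Hadwiger number $\le k+1$ of $F^\star$. Then $F^\star \in \mathcal{H}_{k+1} \setminus \mathcal{H}_k$ is as required.

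The main obstacle is step~(3): the tightness bookkeeping. The weak-Roberson direction is comfortable with $B$, and hence with the distinguishing graph, having vortex-free Hadwiger number far above $k$, whereas here the distinguishing graph must sit exactly one level above the threshold. This simultaneously constrains the choice of $B$ (it must realise vortex-free Hadwiger number $k+1$ precisely while retaining the connectivity the hom-indistinguishability argument needs --- such graphs exist, e.g.\ among large highly-connected graphs on a fixed surface, but one must check the construction tolerates them) and the admissible gadgetry (it must not push $F^\star$ out of $\mathcal{H}_{k+1}$). Should a base graph with vortex-free Hadwiger number \emph{exactly} $k+1$ and the required spread be unavailable, one can still obtain, from base graphs of unbounded vortex-free Hadwiger number, pairs showing that the hierarchy $(\equiv_{\mathcal{H}_j})_{j \ge 0}$ admits infinitely many strict steps; upgrading this to strictness at \emph{every} level is exactly what forces the tight choice of $B$. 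The identical strategy, with Euler genus and $\mathcal{E}_g$ replacing vortex-free Hadwiger number and $\mathcal{H}_k$, yields \cref{thm:genus-strictly-refining}.
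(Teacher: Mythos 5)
Your high-level strategy --- CFI over a base graph $B$ sitting between $\mathcal{H}_k$ and $\mathcal{H}_{k+1}$ --- matches the paper, and you correctly identify step~(3), the tightness bookkeeping, as where the work lies. But there is a simplification you miss and a genuine gap you leave open.

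The simplification: in the oddomorphism framework you do not need a bounded modification of $B$ as the distinguisher. The identity is an oddomorphism $B \to B$ (\cref{ex:identity}), so Roberson's criterion (\cref{lem:strictness-via-oddo}) already guarantees that $B$ itself separates the CFI pair. The theorem thus reduces cleanly to: exhibit a connected $B \in \mathcal{H}_{k+1}$ such that no graph of $\mathcal{H}_k$ admits a weak oddomorphism to $B$. Your ``one gadget contracted / split vertex'' step and the worry about whether the modification stays in $\mathcal{H}_{k+1}$ both evaporate.

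The gap: the condition on $B$ that makes step~(1) go through is \emph{not} ``vortex-free Hadwiger number exactly $k+1$ and spread out enough.'' It is the strictly stronger assertion that no graph in $\mathcal{H}_k$ has a weak oddomorphism to $B$, which does not follow from $B \notin \mathcal{H}_k$; the two would coincide only if $\mathcal{H}_k$ were already known to be closed under weak oddomorphisms, i.e.\ if \cref{conj:strong-roberson} were settled for $\mathcal{H}_k$, which it is not. Your ``one must check the construction tolerates them'' is thus the entire proof, and the base graphs you gesture at (highly-connected graphs on a surface) offer no obvious route to it. The paper instead takes $B$ to be $(2k+1)K_5$ together with $k+1$ universal vertices. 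The universals give $(k+1)$-connectivity, so \cref{lem:oddo-to-kn-clique-sum} (built on \cref{lem:separators}) reduces a putative oddomorphism from $\mathcal{H}_k = \mathcal{C}_{k,k}^{\oplus k}$ to one from the piece class $\mathcal{C}_{k,k}$; \cref{lem:genus-apices-k-kuratowski} places $\mathcal{C}_{k,k} \subseteq \mathcal{P}^{(2k+1)}$; and \cref{cor:p-k-closed} (closure of $\mathcal{P}^{(2k+1)}$ under weak oddomorphisms) forces $B \in \mathcal{P}^{(2k+1)}$, contradicted by the $(2k+1)K_5$ subgraph. Membership $B \in \mathcal{H}_{k+1}$ is then read off the $(k+1)$-clique-sum decomposition of $B$ into copies of $K_{k+6}$, each $k+1$ apices away from $K_5 \in \mathcal{E}_1$. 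Without this Kuratowski/oddomorphism-closure argument (or a replacement) your sketch does not establish $G \equiv_{\mathcal{H}_k} H$, which is the heart of the statement.
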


This yields two new provably infinite hierarchies of graph isomorphism relaxations.
Observe that \cref{thm:main-vortex-free-hadwiger} is a direct corollary of \cref{thm:had-strictly-refining}, since $G$, $H$ are in particular non-isomorphic.

\subsection{Analysing CFI Graphs via Oddomorphisms}

Having stated our main results, we proceed to describing the techniques underlying their proofs.
The central challenge that has to be overcome is that, for most of the graph classes~$\mathcal{F}$ considered in our results,
no characterisations of the homomorphism indistinguishability relations $\equiv_{\mathcal{F}}$ are known.
Since it is unreasonable to expect that the homomorphism indistinguishability relations of all minor-closed graph classes admit practical characterisations,
we do not attempt to establish such characterisations towards  \cref{conj:strong-roberson,conj:weak-roberson} but work directly with homomorphism counts.

Spelled out, proving that $\mathcal{F}$ is homomorphism distinguishing closed amounts to constructing, for every graph $G \not\in \mathcal{F}$,
two graphs $G_0$ and $G_1$ such that $G_0 \equiv_{\mathcal{F}} G_1$ and $\hom(G, G_1) \neq \hom(G, G_1)$.
The main source for such highly similar graphs $G_0$ and $G_1$ is the CFI construction of \textcite{cai_optimal_1992},
who devised it to show that the $k$\nobreakdash-dimensional Weisfeiler--Leman algorithm does not distinguish all non-isomorphic graphs.
Given a connected graph $G$, the CFI construction yields two non-isomorphic graphs, the \emph{even CFI graph} $G_0$ and the \emph{odd CFI graph} $G_1$.
Originally,  \textcite{cai_optimal_1992} showed that, if $G$ is sufficiently connected, then $G_0$ and $G_1$ are not distinguished by the $k$\nobreakdash-dimensional Weisfeiler--Leman algorithm.
Subsequently, this argument was refined \cite{dawar_power_2007,atserias_affine_2009,atserias_power_2007,neuen_homomorphism-distinguishing_2024}
to the statement that, if the treewidth of $G$ is greater than $k$, 
then $G_0$ and $G_1$ are not distinguished by the $k$-dimensional Weisfeiler--Leman algorithm, and equivalently \cite{dvorak_recognizing_2010,dell_lovasz_2018}, they are homomorphism indistinguishable over all graphs of treewidth at most $k$.
Furthermore, it was shown in \cite{arkhipov_extending_2012,atserias_quantum_2019,mancinska_quantum_2020,roberson_oddomorphisms_2022} that, if $G$ is non-planar, then $G_0$ and $G_1$ are quantum isomorphic, i.e., homomorphism indistinguishable over all planar graphs.

These results already indicate that CFI graphs are a rather universal construction in light of \cref{conj:strong-roberson,conj:weak-roberson}.
\textcite{roberson_oddomorphisms_2022} formalised this observation by giving a combinatorial criterion for the CFI graphs $G_0$ and $G_1$ of some base graph $G$ to be homomorphism indistinguishable over a graph class $\mathcal{F}$.
More precisely,
\textcite[Theorem 3.13]{roberson_oddomorphisms_2022} 
showed that a graph~$F$ and a connected graph~$G$  satisfy that $\hom(F, G_0) \neq \hom(F, G_1)$ if, and only if, there exists a \emph{weak oddomorphism} $\phi \colon F\to G$.
A weak oddomorphism is a homomorphism satisfying certain parity constraints, see \cref{def:oddomorphism,fig:oddomorphism}.
Thus, in order to show that $\mathcal{F}$ is homomorphism distinguishing closed, it suffices to show that $\mathcal{F}$ is \emph{closed under weak oddomorphisms}, i.e., if $\phi \colon  F \to G$ is a weak oddomorphism and $F \in \mathcal{F}$, then $G \in \mathcal{F}$; see \cref{thm:rob62}.
Graph classes known to be closed under weak oddomorphisms include the class of planar graphs \cite{roberson_oddomorphisms_2022},
the class of graphs of maximum degree $\leq d$ \cite{roberson_oddomorphisms_2022},
treewidth $\leq k$ \cite{neuen_homomorphism-distinguishing_2024},
treedepth $\leq k$ \cite{fluck_going_2024}, and
pathwidth $\leq k$ \cite{seppelt_homomorphism_2024}.

These results were established mostly via a model-theoretic or algebraic analysis of CFI graphs and using characterisations of what it means for two graphs to be homomorphism indistinguishable over the respective graph classes.
The technical contribution of this paper is a set of purely combinatorial 
tools for establishing that a graph class is closed under weak oddomorphisms.

\begin{lemma}\label{lem:oddo-overview}
    Let $\mathcal{F}$ be closed under weak oddomorphisms and taking subgraphs.
    Let $d \geq 0$. 
    The following graph classes are closed under weak oddomorphisms:
    \begin{enumerate}
        \item\label{it:overview-oddo1} the class of graphs of deletion distance $\leq d$ to $\mathcal{F}$, 
        \item\label{it:overview-oddo2}  the class of graphs of elimination distance $\leq d$ to $\mathcal{F}$,
        \item\label{it:overview-oddo3}  the closure $\mathcal{F}^{\oplus 1}$ of $\mathcal{F}$ under $1$-sums, and 
        \item\label{it:overview-oddo4}  if $\mathcal{F}$ is minor-closed,
        the closure $\mathcal{F}^{\oplus 2}$ of $\mathcal{F}$ under $2$\nobreakdash-sums.
    \end{enumerate}
\end{lemma}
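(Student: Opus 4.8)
The plan is to reduce each of the four closure constructions to a single, reusable principle: if $\varphi\colon F\to G$ is a weak oddomorphism and $G$ decomposes in a structurally controlled way (by deleting a small vertex set, by an elimination order, or by a $1$- or $2$-sum), then the preimages of the pieces under $\varphi$, together with the preimages of the separators, assemble into a decomposition of $F$ of the same shape whose constituent parts are themselves (domains of) weak oddomorphisms into the corresponding parts of $G$. Since $\mathcal{F}$ is closed under weak oddomorphisms and under subgraphs, each such part lands in $\mathcal{F}$, and then the relevant closure operation (deletion distance, elimination distance, $1$-sum, or minor-closed $2$-sum closure) puts $F$ into the target class. So the skeleton is: (i) a structural lemma describing how a weak oddomorphism behaves on a separator or apex set of $G$, and (ii) four short derivations, one per item, invoking (i).

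For the technical core I would first nail down how weak oddomorphisms interact with vertex deletion. The key observation should be that if $X\subseteq V(G)$ and $\varphi\colon F\to G$ is a weak oddomorphism, then restricting to $F':=F-\varphi^{-1}(X)$ and $G':=G-X$ yields a weak oddomorphism $F'\to G'$ — one must check that the parity conditions of \cref{def:oddomorphism} are preserved under this restriction, which is plausible because they are local conditions on fibres and on edges, and deleting whole fibres $\varphi^{-1}(X)$ together with $X$ removes exactly the edges incident to them. Granting this, \labelcref{it:overview-oddo1} is immediate: if $G$ has deletion distance $\leq d$ to $\mathcal{F}$, pick $X$ with $|X|\le d$ and $G-X\in\mathcal{F}$; then $F-\varphi^{-1}(X)\in\mathcal{F}$ by closure under weak oddomorphisms, and $|\varphi^{-1}(X)|\ge$ is not bounded — so here I instead want to delete in $F$ a set mapping onto $X$ of size exactly $|X|$. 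This requires a lemma that a weak oddomorphism is in a suitable sense "surjective with small fibres over any fixed small set," or more precisely that we can choose one vertex per fibre; I expect that the definition forces $\varphi$ to be a surjection onto $V(G)$ (homomorphisms from a graph witnessing $\hom(F,G_0)\neq\hom(F,G_1)$ are typically surjective), so picking a single preimage of each vertex of $X$ gives a set $X'\subseteq V(F)$ with $|X'|=|X|$ and such that $F-X'$ still admits a weak oddomorphism onto $G-X$ (again using that restriction preserves the parity conditions after also removing the rest of the fibres — a point to verify carefully). Then $F-X'\in\mathcal{F}$, so $F$ has deletion distance $\le d$ to $\mathcal{F}$.

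For \labelcref{it:overview-oddo2}, elimination distance is defined recursively: $\operatorname{ed}(G)\le d$ iff every connected component has a vertex whose removal drops the bound, or $G\in\mathcal F$ at depth $0$. I would induct on $d$ using exactly the deletion step above together with the fact that a weak oddomorphism respects connected components — $\varphi$ maps each component of $F$ into a component of $G$, and conversely (by surjectivity) hits every component of $G$ — so the recursion transfers component-wise. For \labelcref{it:overview-oddo3}, if $G=G_1\oplus_1 G_2$ with cut vertex $u$, set $v\in\varphi^{-1}(u)$ and consider $F_i:=\varphi^{-1}(V(G_i))$; the subgraph of $F$ induced on $F_i$ maps to $G_i$, and one checks it is a weak oddomorphism (parity conditions localise to the blocks since the only shared vertex is $u$), so each $F_i\in\mathcal F$; gluing them shows $F$ is built by $1$-sums from $\mathcal F$, i.e.\ $F\in\mathcal F^{\oplus 1}$. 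Item \labelcref{it:overview-oddo4} is the same with a two-vertex separator $\{u_1,u_2\}=\varphi(\{v_1,v_2\})$, except that the two pieces overlap in the (possibly non-edge) pair $\{v_1,v_2\}$; here the hypothesis that $\mathcal F$ is minor-closed is used to add the edge $v_1v_2$ to each piece if needed (contracting a path through the other side realises it as a minor), so that the pieces are genuine $2$-summands lying in $\mathcal F$.

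The main obstacle I anticipate is precisely the verification that "restricting a weak oddomorphism to the preimage of an induced subgraph, after deleting entire fibres over the complement, is again a weak oddomorphism." Weak oddomorphisms carry parity data attached both to vertices (fibre sizes, or a designated orientation/odd-set structure as in \cref{fig:oddomorphism}) and to edges, and it is conceivable that deleting a fibre changes the parity bookkeeping at a neighbouring retained vertex. I expect the resolution is that the conditions are phrased so that only edges within a fibre or between two specific fibres matter, so removing a whole fibre removes a matched set of contributions and parities are preserved; but this needs the exact definition in \cref{def:oddomorphism}, and if it does not go through verbatim one may need to pass to the contrapositive formulation via \cite[Theorem 3.13]{roberson_oddomorphisms_2022} (relating weak oddomorphisms to $\hom(F,G_0)\neq\hom(F,G_1)$) and argue at the level of homomorphism counts and the CFI construction, where deleting a vertex of the base graph $G$ has a transparent effect on $G_0,G_1$. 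A secondary, milder obstacle is confirming surjectivity of weak oddomorphisms onto $V(G)$ (or otherwise arranging small fibres over the deleted set); I believe this is either part of the definition or an easy consequence, but it should be stated explicitly as a preliminary observation before the four cases.
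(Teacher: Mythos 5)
There is a fundamental gap: your argument runs in the wrong direction. In this paper, a class $\mathcal{F}$ is \emph{closed under weak oddomorphisms} if $F\in\mathcal{F}$ and $\varphi\colon F\to G$ a weak oddomorphism imply $G\in\mathcal{F}$ — the hypothesis is on the \emph{source} and the conclusion is about the \emph{target} (cf.\ \cref{lem:oddo-planar-degree}: $F$ planar $\Rightarrow$ $G$ planar, $\tw(F)\ge\tw(G)$, etc.). Your proposal throughout assumes that $G$ decomposes (deletion set, elimination order, $1$- or $2$-sum), pulls the decomposition back along $\varphi$, and concludes that $F$ lies in the target class. That proves the converse implication, which is both not what \cref{lem:oddo-overview} asserts and false in general (e.g.\ take $G$ planar of deletion distance $0$ to the planar graphs; there exist non-planar $F$ admitting a weak oddomorphism to $G$, since by Roberson's Theorem~3.13 every graph distinguishing the CFI graphs of $G$ yields one, and such graphs need not be planar). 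Your intermediate steps inherit the same inversion: from $G-X\in\mathcal{F}$ and a weak oddomorphism $F-\varphi^{-1}(X)\to G-X$ you cannot conclude $F-\varphi^{-1}(X)\in\mathcal{F}$, because closure transports membership from source to target, not back. The paper's \cref{lem:deletion-distance-oddo} does the opposite: it takes a deletion set $S\subseteq V(F)$ witnessing $F-S\in\mathcal{F}$, pushes it forward to $T=\varphi(S)$ (so $|T|\le|S|$, with no fibre-size issue), restricts via \cref{lem:oddo-connected} to get a weak oddomorphism from a subgraph of $F-S$ onto $G-T$, and concludes $G-T\in\mathcal{F}$.

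A secondary point: even after reversing the direction, your treatment of \labelcref{it:overview-oddo3,it:overview-oddo4} would not go through as sketched. The paper does not split $G$ along a separator of $G$; instead it shows that the minimal excluded subgraphs (resp.\ minors) of $\mathcal{F}^{\oplus 1}$ (resp.\ $\mathcal{F}^{\oplus 2}$) are $2$- (resp.\ $3$-)connected, and then, given a decomposition of the \emph{source} $F=F_1\oplus_S F_2$ mapping into such a highly connected target, it must produce a \emph{smaller} graph in the class still admitting an oddomorphism. This is the content of \cref{lem:cutvertex,lem:separators}: restricting an oddomorphism to one side of a separator of $F$ disturbs the parities of the separator vertices and of the fibres, and the paper needs a genuine argument (an $\mathbb{F}_2$ linear-algebra selection of a proper subset of the components of $F-S$, plus a quotient construction on $S$) to repair this. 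The observation you flag as your "main obstacle" — that restriction to $\varphi^{-1}(G')$ for a subgraph $G'$ of $G$ preserves weak oddomorphisms — is indeed true and is \cref{lem:oddo-connected}\ref{item:oddo-minors-1}, but it addresses restriction over a subgraph of the \emph{target}, not the separator analysis in the \emph{source} that the clique-sum cases actually require.
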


Several of our main results follow as corollaries from this lemma.
For example, \Cref{it:overview-oddo1} yields that the class of disjoint unions of $k$-apex planar graphs is homomorphism distinguishing closed, and \Cref{it:overview-oddo2} gives a self-contained combinatorial proof of the fact that the class of graphs of treedepth $\leq k$ is homomorphism distinguishing closed \cite{fluck_going_2024}.

A crucial tool in graph minor theory are clique-sums, i.e., the operation of gluing together two graphs at cliques.
A key technical result of this paper is \cref{lem:separators}, which allows us to deal with weak oddomorphisms in light of clique-sums.
Intuitively, it asserts that, if $F_1 \oplus F_2 \to G$ is a weak oddomorphism from a clique-sum $F_1 \oplus F_2$ into a sufficiently connected graph $G$,
then there exists a minor $F'$ of $F_1$ or $F_2$ admitting a weak oddomorphism $F' \to G$.
\cref{it:overview-oddo3,it:overview-oddo4} of \cref{lem:oddo-overview} follow from this observation.
For higher-arity clique-sums, we give an approximate result in \cref{lem:oddo-to-kn-clique-sum} strong enough to yield \cref{thm:main-vortex-free-hadwiger,thm:genus-strictly-refining,thm:had-strictly-refining}.

By studying the combinatorial properties of oddomorphisms, we also contribute to the study of CFI graphs.
This line of work has recently received much attention \cite{lichter_separating_2021,grohe_compressing_2025} and enjoys connections to various other areas such as proof complexity \cite{de_rezende_truly_2025}, counting complexity \cite{Curticapean24},
and algebraic complexity \cite{dawar_symmetric_2020,dawar_symmetric_2025,dwivedi_lower_2026}. 	
	
	\section{Preliminaries}
	\label{sec:preliminaries}
	\subsection{Graphs}

We use standard graph notation (see, e.g., \cite{diestel_graph_2025}).
All graphs in this article are finite, undirected, loopless, and without multiple edges.
We write $V(G)$ and $E(G)$ for the vertex and edge set of a graph $G$, respectively.
For $v \in V(G)$, we write $N_G(v)$ for the set of neighbours of $v$.
Also, for $X \subseteq V(G)$ we write $G[X]$ for the subgraph of $G$ induced by $X$, and $G - X \coloneqq G[V(G) \setminus X]$.
For $k \geq 1$,
a graph $G$ is \emph{$k$-connected} if $|V(G)| \geq k+1$ and the graph $G - S$ is connected for every subset $S \subseteq V(G)$ such that $|S| < k$.

A \emph{tree decomposition} of a graph $G$ is a pair $(T, \beta)$ of a tree $T$ and a map $\beta \colon V(T) \to 2^{V(G)}$ such that
\begin{enumerate}
    \item for every edge $uv \in E(G)$, there exists a $t \in V(T)$ such that $u,v \in \beta(t)$, and
    \item for every $v \in V(G)$, the subgraph of $T$ induced by the vertices $t \in V(T)$ such that $v \in \beta(t)$ is non-empty and connected.
\end{enumerate}
The \emph{width} of $(T, \beta)$ is $\max_{t\in V(T)} |\beta(t)| - 1$.
The \emph{treewidth} $\tw(G)$ of $G$ is the minimum width of a tree decomposition of~$G$.
For $tt' \in E(T)$ we define $\sigma(t,t') \coloneqq |\beta(t) \cap \beta(t')|$ to be the \emph{adhesion} of $tt'$.
The \emph{adhesion} of $(T, \beta)$ is $\max_{tt' \in E(T)} \sigma(t,t')$.

Let $G_1$ and $G_2$ be two graphs whose vertex sets intersect in the set $S$.
Suppose that $G_1[S]$ and $G_2[S]$ are cliques.
We write $G_1 \oplus_S G_2$ for any graph obtained from the union of $G_1$ and $G_2$ by possibly deleting edges between vertices from $S$.
Such a graph is called a \emph{clique-sum} or \emph{$|S|$-sum} of $G_1$ and~$G_2$.
Note that a $0$-sum is merely a disjoint union.

For a graph class $\mathcal{C}$, we write $\mathcal{C}^{\oplus s}$ denote the closure of $\mathcal{C}$ under $(\leq s)$-sums.
This can also be formalised via tree decompositions.
For $X \subseteq V(G)$, 
we define the \emph{torso of $G$ on~$X$} to be the graph $G\llbracket X \rrbracket$ with vertex set $V(G\llbracket X \rrbracket) \coloneqq X$ and edge set $E(G\llbracket X \rrbracket)$ given by
\begin{align*}
     & \{vw \in E(G) \mid v,w \in X\} \\
    & \cup \{xy
    \mid x,y \in N_G(C), C \text{ connected component of } G - X\}.
\end{align*}

\begin{definition}\label{def:clique-sum}
    Let $\mathcal{C}$ be a class of graphs and $s \geq 0$.
    We define $\mathcal{C}^{\oplus s}$ to be the class of graphs $G$ such that there is a tree decomposition $(T,\beta)$ of $G$ such that
    \begin{enumerate}
        \item $\sigma(t,t') \leq s$ for every $tt' \in E(T)$, and
        \item $G\llbracket \beta(t)\rrbracket \in \mathcal{C}$ for every $t \in V(T)$.
    \end{enumerate}
\end{definition}

\begin{fact}[label=lem:cliquesum-minor-closedness]
    Let $\mathcal{F}$ be a graph class and $k \geq 0$.
    If $\mathcal{F}$ is minor-closed, then so is $\mathcal{F}^{\oplus k}$.
\end{fact}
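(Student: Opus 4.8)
The plan is to prove this by reducing to single minor operations. Since every minor of $G$ arises from $G$ by a finite sequence of vertex deletions, edge deletions, and edge contractions, it suffices to show that $\mathcal{F}^{\oplus k}$ is closed under each of these three operations. So fix $G \in \mathcal{F}^{\oplus k}$ together with a tree decomposition $(T,\beta)$ witnessing this, i.e.\ of adhesion $\leq k$ and with $G\llbracket\beta(t)\rrbracket \in \mathcal{F}$ for all $t$. Throughout I will use that $\mathcal{F}$, being minor-closed, is in particular closed under subgraphs.

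For a vertex deletion $G - v$ or an edge deletion $G - e$, I would keep $T$ and set $\beta'(t) \coloneqq \beta(t)\setminus\{v\}$ (respectively $\beta' \coloneqq \beta$). It is immediate that $(T,\beta')$ is a tree decomposition of the smaller graph whose adhesion does not exceed that of $(T,\beta)$. The only substantive point is that each new torso is a subgraph, on the same vertex set, of the corresponding old torso: deleting $v$ or $e$ can only split or shrink the connected components of $G - \beta(t)$, so every clique that such a component contributes to the new torso was already contained in a clique of $G\llbracket\beta(t)\rrbracket$, and every ordinary edge of the new torso is an ordinary edge of the old one. Hence each new torso lies in $\mathcal{F}$ and the smaller graph lies in $\mathcal{F}^{\oplus k}$.

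The heart of the argument is the contraction $G/e$ of an edge $e = uw$, producing the identified vertex $z$. Here I would set $\beta'(t) \coloneqq (\beta(t)\setminus\{u,w\})\cup\{z\}$ whenever $\{u,w\}\cap\beta(t)\neq\emptyset$, and $\beta'(t)\coloneqq\beta(t)$ otherwise. That $(T,\beta')$ covers every edge of $G/e$ and has connected bag-supports is routine; the set of bags containing $z$ is $T_u \cup T_w$, which is connected because $T_u$ and $T_w$ are subtrees sharing any bag containing both $u$ and $w$. For the adhesion bound the key claim is: whenever $z \in \beta'(t)\cap\beta'(t')$ for an edge $tt'\in E(T)$, the edge $tt'$ has both endpoints in $T_u$ or both in $T_w$. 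Indeed, otherwise one may assume $t\in T_u\setminus T_w$ and $t'\in T_w\setminus T_u$; then, for any $t_0$ with $u,w\in\beta(t_0)$, the $t$--$t_0$ path lies in $T_u$ and the $t'$--$t_0$ path in $T_w$, so neither uses the edge $tt'$, forcing $t_0$ to lie on both sides of $tt'$ in $T$, a contradiction. Given the claim, adding $z$ to $\beta'(t)\cap\beta'(t')$ is compensated by the removal of $u$ or $w$ from $\beta(t)\cap\beta(t')$, so $|\beta'(t)\cap\beta'(t')|\leq|\beta(t)\cap\beta(t')|\leq k$.

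It remains to check $(G/e)\llbracket\beta'(t)\rrbracket\in\mathcal{F}$ for each bag, by case analysis on $|\{u,w\}\cap\beta(t)|$. If $u,w\notin\beta(t)$, the torso is literally unchanged. If $u,w\in\beta(t)$, the new torso is exactly the old torso with the edge $uw$ (which is an edge of it, since $uw\in E(G)$) contracted, hence a minor of it — this is the only place where minor-closedness of $\mathcal{F}$, rather than mere subgraph-closedness, is used. If exactly one of $u,w$, say $u$, lies in $\beta(t)$, I would show, after identifying $z$ with $u$, that the new torso is a subgraph of the old one: a vertex $x$ that becomes adjacent to $z$ only through an edge $wx$ or through a component of $G/e - \beta'(t)$ reaching $w$ already sees $u$ in $G\llbracket\beta(t)\rrbracket$, because $w$ lies in some component $C^\ast$ of $G - \beta(t)$ with $u\in N_G(C^\ast)$, so $u$ and $x$ both belong to the torso-clique $N_G(C^\ast)\cap\beta(t)$. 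I expect this last case — the bookkeeping of which edges of $(G/e)\llbracket\beta'(t)\rrbracket$ get "absorbed" by the clique of $C^\ast$ in $G\llbracket\beta(t)\rrbracket$ — to be the main obstacle to a clean writeup, even though each individual verification is elementary.
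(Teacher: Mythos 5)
Your argument is correct, but it takes a genuinely different route from the paper's. The paper proves the fact by structural induction on how graphs of $\mathcal{F}^{\oplus k}$ are built up via $k$-sums, and delegates the key combinatorial step to an external result (Lemma~4.14.2 of \cite{roberson_lasserre_2024}) asserting that any minor $M$ of a $k$-sum $F_1 \oplus_S F_2$ can itself be written as a $k$-sum of a minor of $F_1$ and a minor of $F_2$. You instead work directly with the tree-decomposition formulation of $\mathcal{F}^{\oplus k}$ from \cref{def:clique-sum}, reduce the general minor to a single elementary operation, and explicitly construct a witnessing tree decomposition of the smaller graph, verifying by hand that the adhesion stays $\leq k$ and that every new torso is a subgraph or a minor of a corresponding old torso. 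This is self-contained and has the instructive side benefit of making visible exactly where minor-closedness (as opposed to mere subgraph-closedness) is indispensable: only in the contraction case with both endpoints in a bag. The cost, as you anticipate, is the bookkeeping in the case where exactly one contracted endpoint lies in the bag, where one must identify the new torso (after identifying $z$ with $u$) inside the old one by tracing each edge either to a direct edge of $G$ or to the clique on $N_G(C^\ast)\cap\beta(t)$ for the component $C^\ast$ housing $w$. Both proofs are valid; the paper's is shorter by outsourcing, yours is longer but elementary and more transparent.
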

\begin{proof}The proof is by structural induction on the graphs in $\mathcal{F}^{\oplus k}$.
	The base case holds by assumption.
	Now assume that $M$ is a minor of $F_1 \oplus_S F_2$ for $F_1, F_2 \in \mathcal{F}^{\oplus k}$ of lesser complexity.
	It follows from \cite[Lemma~4.14.2]{roberson_lasserre_2024} that $M$ can be written as the $k$-sum of two graphs $M_1$ and $M_2$ such that $M_1$ is a minor of $F_1$ and $M_2$ is a minor of $F_2$.
	By the inductive hypothesis, $M_1, M_2 \in \mathcal{F}^{\oplus k}$ and hence $M \in \mathcal{F}^{\oplus k}$.
\end{proof}

\subsection{Homomorphism Indistinguishability}

A \emph{homomorphism} from a graph $F$ to a graph $G$ is a map $h \colon V(F) \to V(G)$ such that $h(u)h(v)\in E(G)$ for all $uv \in E(F)$.
Write $\hom(F, G)$ for the number of homomorphisms $F \to G$.
Two graphs $G$ and $H$ are \emph{homomorphism indistinguishable} over a graph class $\mathcal{F}$, in symbols $G \equiv_{\mathcal{F}} H$, if $\hom(F, G) = \hom(F, H)$ for every $F \in \mathcal{F}$.
The \emph{homomorphism distinguishing closure} \cite{roberson_oddomorphisms_2022} of a graph class $\mathcal{F}$ is the graph class $\cl(\mathcal{F})$ containing all graphs $F$ such that, for all graphs $G$ and $H$, if $G \equiv_{\mathcal{F}} H$, then $\hom(F, G) = \hom(F, H)$.
In other words, $\cl(\mathcal{F})$ is the maximal graph class such that $\equiv_{\mathcal{F}}$ and $\equiv_{\cl(\mathcal{F})}$ coincide.
A graph class $\mathcal{F}$ is \emph{homomorphism distinguishing closed} if $\mathcal{F} = \cl(\mathcal{F})$.
We note that $\cl(\cdot)$ is indeed a closure operator, i.e.,
\begin{align}
    \cl(\mathcal{F}) &= \cl(\cl(\mathcal{F})),\label{eq:double-closure} \\
    \cl(\mathcal{F}) & \subseteq \cl(\mathcal{F}')\label{eq:subset-closure}
\end{align}
for all graph classes $\mathcal{F} \subseteq \mathcal{F}'$.
We refer to \cite{seppelt_homomorphism_2024} for further background.

\subsection{Oddomorphisms}

As already indicated above, a key notion underlying many of our results is that of an \emph{oddomorphism} introduced by \textcite{roberson_oddomorphisms_2022} to give a combinatorial criterion for the homomorphism indistinguishability of CFI graphs.
We recall the essential definitions and properties; see \cref{fig:oddomorphism} for an example.

\begin{figure}
    	\centering
    	
    	\tikzset{vertex/.style={draw,circle,fill=lightgray,line width=1pt, inner sep=2.5},
    		odd/.style={vertex ,fill=gray,line width=1.5pt},
    		even/.style={vertex, fill=none, line width=1.5pt},
    		every node/.style={anchor=center}}
    	
    	\begin{tikzpicture}
    		
    		\node [vertex,rectangle] (g1) {};
    		\node [vertex, rectangle, right of=g1, xshift=1cm] (g2) {};
    		\node [vertex, rectangle, right of=g2, xshift=1cm] (g3) {};
    		\node [vertex,rectangle, right of=g3, xshift=1cm] (g4) {};
    		\node [vertex,rectangle, right of=g4, xshift=1cm] (g5) {};
    		
    		\draw [thick] (g1) -- (g2) -- (g3) -- (g4) -- (g5);

    		\node [odd, above of=g1, yshift=2.5cm] (f1) {}; 
    		\node [even, above of=g2, yshift=3cm] (f2)  {}; 
    		\node [even, above of=g1, yshift=1.5cm] (f3) {}; 
    		\node [even, above of=g2, yshift=2cm] (f4) {};
    		\node [even, above of=g1, yshift=.5cm] (f5) {}; 
    		\node [odd, above of=g2, yshift=1cm] (f6) {};
    		
    		\draw [ thick] (f1) -- (f2) -- (f3) -- (f4) -- (f5) -- (f6);
    		
    		\node [even, above of=g3, yshift=1.5cm] (f7) {};
    		
    		\node [odd, above of=g3, yshift=2.5cm] (f9) {};
    		\draw [ thick] (f6) -- (f7) -- (f4) -- (f9);
    		\node [even, above of=g4, yshift=2cm] (f8) {};
    		\draw [ thick] (f7) -- (f8) -- (f9);
    		
    		\node [odd, above of=g4, yshift=1cm] (f10) {};
    		
    		\draw [thick] (f7) -- (f10);
    		
    		\node [odd, above of=g5, yshift=2cm] (f11) {};
    		\node [odd, above of=g5, yshift=1cm] (f12) {};
    		\node [odd, above of=g5, yshift=0] (f13) {};
    		
    		\draw [thick] (f10) -- (f11);
    		\draw [thick] (f10) -- (f12);
    		\draw [thick] (f10) -- (f13);
    		
    		\draw [dashed, thin] (f1) -- (f3) -- (f5) edge [->] (g1);
    		\draw [dashed, thin] (f2) -- (f4) -- (f6) edge [->] (g2);
    		\draw [dashed, thin] (f9) -- (f7) edge [->] (g3);
    		\draw [dashed, thin] (f8) -- (f10) edge [->] (g4);
    		\draw [dashed, thin] (f11) -- (f12) -- (f13) edge [->] (g5);
    	\end{tikzpicture}
    	\caption[An oddomorphism]{An oddomorphism (\begin{tikzpicture}
    			\node (a) [inner sep=0] {};
    			\node (b) [right of=a,inner sep=0] {};
    			\draw [dashed, thin,->] (a) -- (b);
    		\end{tikzpicture}) to the $5$-vertex path. Even vertices are depicted as~\begin{tikzpicture}
    			\node [even] {};
    		\end{tikzpicture} and odd vertices as \begin{tikzpicture}
    			\node [odd] {};
    		\end{tikzpicture}.}
    	\label{fig:oddomorphism}
    \end{figure}

    \begin{definition}[{\cite[Definition~3.9]{roberson_oddomorphisms_2022}}] \label{def:oddomorphism}
    	Let $F$ and $G$ be graphs and $\phi \colon F\to G$ a homomorphism.
    	A vertex $a \in V(F)$ is \emph{odd/even with respect to $\phi$} if $\lvert N_F(a) \cap \phi^{-1}(v) \rvert$ is odd/even for every $v \in N_G(\phi(a))$.
    	The homomorphism~$\phi$ is an \emph{oddomorphism} if
    	\begin{enumerate}
    		\item every vertex of $F$ is even or odd with respect to $\phi$,
    		\item for every $v\in V(G)$, the set $\phi^{-1}(v)$ contains an odd number of odd vertices.
    	\end{enumerate}
    	The homomorphism~$\phi \colon F\to G$ is a \emph{weak oddomorphism} if there is a subgraph $F'$ of $F$ such that $\phi|_{F'}$ is an oddomorphism from $F'$ to~$G$.
    \end{definition}
    
    If a vertex~$a \in V(F)$ is odd or even with respect to $\phi$, it is referred to as \emph{$\phi$-odd} or \emph{$\phi$-even}, respectively.
    The sets $\phi^{-1}(v) \subseteq V(F)$ for $v \in V(G)$ are called the \emph{fibres}\index{fibre} of $\phi$.

    \begin{example}\label{ex:identity}
        For every graph $F$, the identity map $\id \colon F \to F$ is an oddomorphism.
    \end{example}

    Our motivation for studying (weak) oddomorphisms is the following \cref{thm:rob62}.
    We say a graph class $\mathcal{F}$ is \emph{closed under (weak) oddomorphisms} if, for every $F \in \mathcal{F}$ and (weak) oddomorphism $\phi \colon F \to G$, it holds that $G \in \mathcal{F}$.
    Also, a graph class $\mathcal{F}$ is \emph{componental} if, for all graphs $F_1$, $F_2$, it holds that $F_1 + F_2 \in \mathcal{F}$ if, and only if, $F_1, F_2 \in \mathcal{F}$.

    \begin{theorem}[{\cite[Theorem~6.2]{roberson_oddomorphisms_2022}}]\label{thm:rob62}
        Let $\mathcal{F}$ be a componental graph class.
        If $\mathcal{F}$ is closed under weak oddomorphisms, then $\mathcal{F}$ is homomorphism distinguishing closed.
    \end{theorem}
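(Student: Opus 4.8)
The plan is to establish $\cl(\mathcal{F}) \subseteq \mathcal{F}$ (the reverse inclusion being trivial), i.e., that every graph $F' \notin \mathcal{F}$ admits graphs $G \equiv_{\mathcal{F}} H$ with $\hom(F', G) \neq \hom(F', H)$; these $G, H$ will come from the CFI construction applied to a well-chosen connected component of $F'$. Since $\mathcal{F}$ is componental and $F' \notin \mathcal{F}$, some component of $F'$ is not in $\mathcal{F}$, and among those I would fix one, $C$, maximising $(|V(C)|, |E(C)|)$ lexicographically (the case where $C$ has no edge is easy and treated separately). Write $C_0$, $C_1$ for the even and odd CFI graphs of the connected graph $C$.

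The connected part is immediate: by \cref{ex:identity}, $\id\colon C \to C$ is an oddomorphism, so \cite[Theorem~3.13]{roberson_oddomorphisms_2022} gives $\hom(C, C_0) \neq \hom(C, C_1)$; and for $F \in \mathcal{F}$, if $\hom(F, C_0) \neq \hom(F, C_1)$ then that theorem supplies a weak oddomorphism $F \to C$, so closure of $\mathcal{F}$ under weak oddomorphisms would put $C \in \mathcal{F}$, a contradiction, whence $C_0 \equiv_{\mathcal{F}} C_1$.

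To distinguish $F'$ itself I would use that $\hom(F', -) = \prod_D \hom(D, -)$ ranges over the components $D$ of $F'$, and that $\hom(D, G_1 + G_2) = \hom(D, G_1) + \hom(D, G_2)$ for connected $D$. Set $K := K_n$ with $n$ the largest order of a component of $F'$ (so $\hom(D, K) \geq 1$ for every component $D$), and put $G := C_0 + K$, $H := C_1 + K$. Componentality makes $\equiv_{\mathcal{F}}$ a congruence for disjoint unions — for $F = F_1 + \dots + F_r \in \mathcal{F}$ each $F_i \in \mathcal{F}$, so $\hom(F_i, C_0) = \hom(F_i, C_1)$ and hence $\hom(F, G) = \prod_i(\hom(F_i, C_0) + \hom(F_i, K)) = \prod_i(\hom(F_i, C_1) + \hom(F_i, K)) = \hom(F, H)$ — so $G \equiv_{\mathcal{F}} H$. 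Now the $D$-th factor of $\hom(F', G) = \prod_D(\hom(D, C_0) + \hom(D, K))$ differs from that of $\hom(F', H)$ exactly when there is a weak oddomorphism $D \to C$; such a $D$ surjects onto $C$, hence $|V(D)| \geq |V(C)|$, and a short check that a vertex-bijective (weak) oddomorphism into $C$ is an isomorphism onto a spanning subgraph shows that if additionally $D \notin \mathcal{F}$ and $D \not\cong C$ then $(|V(D)|, |E(D)|)$ strictly exceeds $(|V(C)|, |E(C)|)$ lexicographically — impossible by the choice of $C$ — while $D \in \mathcal{F}$ is impossible as above. Thus the differing factors are exactly those of the $m \geq 1$ components isomorphic to $C$, and $\hom(F', G) = A(a + k)^m$, $\hom(F', H) = A(b + k)^m$ with $A > 0$, $k := \hom(C, K) \geq 1$, $a := \hom(C, C_0) \neq b := \hom(C, C_1)$; since $0 < a + k \neq b + k$ forces $(a + k)^m \neq (b + k)^m$, we conclude $\hom(F', G) \neq \hom(F', H)$, so $F' \notin \cl(\mathcal{F})$.

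The main obstacle is precisely this reduction from a single component to all of $F'$: a priori several components of $F'$ could have differing homomorphism counts into $C_0$ and $C_1$ that multiply out to the same value, and ruling this out is what forces the lexicographically maximal choice of $C$ — underpinned by the small combinatorial fact that a vertex-bijective oddomorphism is an isomorphism, so that only copies of $C$ can be problematic — together with passing from $(C_0, C_1)$ to $(C_0 + K, C_1 + K)$.
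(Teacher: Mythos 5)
The paper does not prove this statement---it is cited verbatim from Roberson's paper (\cite[Theorem~6.2]{roberson_oddomorphisms_2022})---so there is no in-paper proof to compare against. Taken on its own, your argument is correct. The backbone is the expected one: for a connected $C \notin \mathcal{F}$, the CFI graphs $C_0, C_1$ satisfy $C_0 \equiv_{\mathcal{F}} C_1$ (because a weak oddomorphism from some $F \in \mathcal{F}$ to $C$ would force $C \in \mathcal{F}$) while $\hom(C,C_0) \neq \hom(C,C_1)$ (identity oddomorphism), both via \cite[Theorem~3.13]{roberson_oddomorphisms_2022}. The nontrivial part, which you handle carefully, is passing from distinguishing the component $C$ to distinguishing the possibly disconnected $F'$: a priori the product $\prod_D \hom(D,\cdot)$ over components could conspire, or vanish. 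Your two devices---padding with $K_n$ to make every factor positive and to keep $\equiv_{\mathcal{F}}$ a congruence for disjoint union (which is exactly where componentality is used), and choosing $C$ with $|V(C)|$ maximal among components outside $\mathcal{F}$ so that every component admitting a weak oddomorphism to $C$ is isomorphic to $C$---close the gap cleanly. The resulting expressions $A(a+k)^m$ and $A(b+k)^m$ with $A>0$, $k\geq 1$, $a\neq b$ then separate.

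Two small remarks. First, your ``short check'' is even stronger than you state: a vertex-bijective weak oddomorphism $\phi\colon D \to C$ forces $D \cong C$ outright (not merely that $D$ contains a spanning copy of $C$), because $\phi$ is also a homomorphism on all of $D$, hence injective on $E(D)$, giving $|E(D)| \leq |E(C)| = |E(D')| \leq |E(D)|$ and so $D = D'$. Consequently the edge count in your lexicographic order is never actually invoked; maximising $|V(C)|$ alone suffices. Second, the edgeless case ($C \cong K_1$) is indeed genuinely degenerate---the CFI pair coincides there---but it is handled by the same $K_n$ padding without CFI: take $G = K_1 + K_n$ and $H = K_n$; componentality forces every $F\in\mathcal{F}$ to have minimum degree $\geq 1$, so $\hom(\cdot,K_1)$ vanishes on $\mathcal{F}$ and $G\equiv_{\mathcal{F}} H$, while the $K_1$-components of $F'$ contribute a factor $(1+n)/n$. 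It would be worth writing that out rather than waving at it, but the claim that it is ``easy'' is accurate.
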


    We recall the following lemma, which implies via \cref{thm:rob62} that the class of planar graphs, the class of graphs of treewidth $\leq k$,
    and the class of graphs $F$ of maximum degree $\Delta(F) \leq d$ are homomorphism distinguishing closed.
    While the first two statements are involved and rely on characterisations \cite{mancinska_quantum_2020,dvorak_recognizing_2010,dell_lovasz_2018} of homomorphism indistinguishability relations, 
    the third statement is rather immediate from \cref{def:oddomorphism}.
    For our purposes, we require the second assertion only for $\tw(F) \leq 2$,
    for which we give a self-contained proof in \cref{thm:tw2,thm:tw1}.

    \begin{lemma}[\protect{\cite[Lemmas 4.1 and 8.2]{roberson_oddomorphisms_2022}, \cite[Corollary~13]{neuen_homomorphism-distinguishing_2024}}]\label{lem:oddo-planar-degree}
        The following holds for every weak oddomorphism $\varphi\colon F \to G$.
        \begin{enumerate}[label = (\alph*)]
            \item\label{it:planar-oddo} If $F$ is planar, then $G$ is planar.
            \item $\tw(F) \geq \tw(G)$.
            \item $\Delta(F) \geq \Delta(G)$.
        \end{enumerate}
    \end{lemma}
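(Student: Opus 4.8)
The plan is to first reduce to the case of an \emph{oddomorphism} and then to handle the three items, starting with the elementary one. If $\varphi\colon F\to G$ is a weak oddomorphism, then by \cref{def:oddomorphism} there is a subgraph $F'\subseteq F$ such that $\varphi|_{F'}\colon F'\to G$ is an oddomorphism. Planarity, treewidth and maximum degree are all monotone under taking subgraphs, so $\Delta(F)\ge\Delta(F')$, $\tw(F)\ge\tw(F')$, and $F'$ is planar whenever $F$ is. Hence it suffices to prove all three statements for an oddomorphism $\varphi\colon F\to G$, which I assume henceforth.

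Item (c) follows directly from the definition. Fix $v\in V(G)$ with $\deg_G(v)=\Delta(G)$. By the second condition of \cref{def:oddomorphism}, the fibre $\varphi^{-1}(v)$ contains an odd, and in particular nonzero, number of $\varphi$-odd vertices, so we may pick a $\varphi$-odd vertex $a\in\varphi^{-1}(v)$. By the first condition, $|N_F(a)\cap\varphi^{-1}(w)|$ is odd, hence at least $1$, for every $w\in N_G(\varphi(a))=N_G(v)$. Since the fibres $\varphi^{-1}(w)$ for $w\in N_G(v)$ are pairwise disjoint, the vertex $a$ has at least $|N_G(v)|$ distinct neighbours in $F$, and therefore $\Delta(F)\ge\deg_F(a)\ge\deg_G(v)=\Delta(G)$.

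For items (a) and (b) I would route through CFI graphs, since these statements are not visible from \cref{def:oddomorphism} alone. Assume first that $G$ is connected; the general case reduces to this one by working componentwise. By \textcite{roberson_oddomorphisms_2022}, the existence of a weak oddomorphism $F\to G$ is equivalent to $\hom(F,G_0)\neq\hom(F,G_1)$, where $G_0$ and $G_1$ are the even and odd CFI graphs of $G$. I would then invoke two known facts: if $G$ is non-planar, then $G_0$ and $G_1$ are quantum isomorphic \cite{atserias_quantum_2019,mancinska_quantum_2020}, hence homomorphism indistinguishable over all planar graphs; and if $\tw(G)>k$, then $G_0$ and $G_1$ are homomorphism indistinguishable over all graphs of treewidth $\le k$ \cite{dvorak_recognizing_2010,dell_lovasz_2018}. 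Contraposing the first fact with the witness $F$ gives (a); contraposing the second with $k=\tw(F)$ and the witness $F$ gives (b).

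The main obstacle is item (b): unlike (c), it genuinely requires the CFI machinery together with the characterisation of treewidth-bounded homomorphism indistinguishability \cite{dvorak_recognizing_2010,dell_lovasz_2018}. For the applications in this paper only the case $\tw(F)\le 2$ is needed, and there a self-contained combinatorial argument is available and is carried out in \cref{thm:tw1,thm:tw2}: one takes a tree decomposition of $F$ of width $\le 2$, tracks how the fibres of an oddomorphism meet an individual bag and its adhesion sets, and assembles from this a tree decomposition of $G$ of width $\le 2$.
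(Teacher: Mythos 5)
Your proof is correct and matches the paper's treatment: the paper does not reprove this lemma but recalls it from \cite{roberson_oddomorphisms_2022,neuen_homomorphism-distinguishing_2024}, explicitly noting that item (c) is immediate from \cref{def:oddomorphism} (your direct argument via a $\varphi$-odd vertex in a maximum-degree fibre is exactly that) and that items (a) and (b) rely on the CFI characterisations of \cite{mancinska_quantum_2020,atserias_quantum_2019,dvorak_recognizing_2010,dell_lovasz_2018}, which is precisely your route via \cite[Theorem~3.13]{roberson_oddomorphisms_2022}. One small inaccuracy in your closing remark: the paper's self-contained argument for the $\tw(F)\le 2$ case (\cref{thm:tw1,thm:tw2}) does not track fibres across bags of a tree decomposition but instead proceeds via clique-sum closure, i.e., \cref{lem:cutvertex,lem:separators} together with the fact that minimal excluded subgraphs/minors of $\mathcal{F}^{\oplus 1}$ and $\mathcal{F}^{\oplus 2}$ are $2$- and $3$-connected, respectively.
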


    We recall the following properties of oddomorphisms.

    \begin{lemma}[{\cite[Lemmas~5.2, 5.6, 5.8, and Theorem~3.13]{roberson_oddomorphisms_2022}}]
        \label{lem:oddo-connected}\label{lem:oddo-minors}
        Let $\phi \colon F \to G$ be a weak oddomorphism.
        \begin{enumerate}[label = (\alph*)]
            \item\label{item:oddo-minors-1} For every subgraph $G'$ of $G$,
                    the induced map $\phi|_{\phi^{-1}(G')} \colon \allowbreak F[\phi^{-1}(G')] \to G'$ is a weak oddomorphism.
                    If $\phi$ is an oddomorphism, then so is $\phi|_{\phi^{-1}(G')}$.
            \item\label{item:oddo-minors-2} For every connected subgraph $G'$ of $G$,
                    there exists a connected subgraph $F'$ of $F[\phi^{-1}(G')]$ admitting an oddomorphism $F' \to G'$.
            \item\label{item:oddo-minors-3} 
            For every minor $G'$ of $G$, there exists a minor $F'$ of $F$ admitting an oddomorphism $F' \to G'$.
            If $G'$ has less vertices than $G$,
            then $F'$ has less vertices than $F$.
        \end{enumerate}
    \end{lemma}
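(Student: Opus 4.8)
The plan is to establish the three assertions in the stated order, bootstrapping \ref{item:oddo-minors-2} from \ref{item:oddo-minors-1} and \ref{item:oddo-minors-3} from both, and to first reduce the weak case to the genuine one: if $\phi\colon F\to G$ is a weak oddomorphism, fix a subgraph $F_0\subseteq F$ with $\phi|_{F_0}\colon F_0\to G$ an oddomorphism, and note that condition~(2) of \cref{def:oddomorphism} forces $\phi|_{F_0}$ to be \emph{surjective} onto $V(G)$ (used repeatedly below); since every subgraph or minor of $F_0$ is one of $F$, it suffices to argue for $\phi|_{F_0}$. For \ref{item:oddo-minors-1}, let $\phi\colon F\to G$ be an oddomorphism and $G'\subseteq G$, and let $F'=F[\phi^{-1}(G')]$ be the subgraph of $F$ on $\phi^{-1}(V(G'))$ with edge set $\{ab\in E(F)\mid\phi(a)\phi(b)\in E(G')\}$, so $\phi|_{F'}\colon F'\to G'$ is a homomorphism. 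The key point is that being odd or even with respect to $\phi$ is \emph{local in the target}: for $a\in V(F')$ and $v\in N_{G'}(\phi(a))$, every edge $ab\in E(F)$ with $\phi(b)=v$ already lies in $F'$ (it maps to $\phi(a)v\in E(G')$), so $N_{F'}(a)\cap\phi^{-1}(v)=N_F(a)\cap\phi^{-1}(v)$; combined with $N_{G'}(\phi(a))\subseteq N_G(\phi(a))$ this shows a $\phi$-odd vertex is $\phi|_{F'}$-odd and a $\phi$-even vertex is $\phi|_{F'}$-even, giving condition~(1). Condition~(2) is then immediate since for $v\in V(G')$ the fibre $(\phi|_{F'})^{-1}(v)=\phi^{-1}(v)$ and its odd vertices are unchanged. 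The weak case follows by applying this to $\phi|_{F_0}$, whose restriction is an oddomorphism living on a subgraph of $F[\phi^{-1}(G')]$.

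For \ref{item:oddo-minors-2}, by \ref{item:oddo-minors-1} I may start from an oddomorphism $\psi\colon F_1\to G'$ with $F_1\subseteq F[\phi^{-1}(G')]$ and induct on the number of connected components of $F_1$, the goal being, when $F_1$ is disconnected, to produce another oddomorphism onto $G'$ with domain still inside $F[\phi^{-1}(G')]$ and with strictly fewer components. Two ingredients should drive this. First, a \emph{boundary argument}: if a component $C$ has $\psi(C)$ a proper subgraph of the connected graph $G'$, pick $z\in V(\psi(C))$ with a $G'$-neighbour $z'\notin V(\psi(C))$; then every $a\in\psi^{-1}(z)\cap C$ is $\psi$-even, since all its neighbours lie in $C$ and hence none maps to $z'$, so $|N(a)\cap\psi^{-1}(z')|=0$ and the parities of $|N(a)\cap\psi^{-1}(\cdot)|$ over $N_{G'}(z)$ cannot all be odd. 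Second, the $\mathbb{F}_2$-identity that for each $v\in V(G')$ the number of $\psi$-odd vertices in $\psi^{-1}(v)$ -- a disjoint union over the components -- is odd. If some component $C$ surjects onto $G'$, the locality from \ref{item:oddo-minors-1} shows $\psi|_C$ is an oddomorphism onto $G'$ unless some partial fibre $\psi^{-1}(v)\cap C$ carries an even number of $\psi$-odd vertices, and one plays this against the identity and the boundary argument for the other components; if no component surjects, the boundary argument confines every $\psi$-odd vertex over the ``interior'' of its component's image, which one again contradicts using the identity together with, for each edge of $G'$, the component covering it. Making this reduction precise -- turning a disconnected witness into a connected one inside $F[\phi^{-1}(G')]$, not merely inside $F_1$ -- is the technical heart of the lemma and the step I expect to be the main obstacle; \ref{item:oddo-minors-1} and \ref{item:oddo-minors-3} are comparatively routine.

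For \ref{item:oddo-minors-3}, it suffices to treat a single minor operation and iterate, since a minor of a minor of $F$ is a minor of $F$ and each step outputs a weak oddomorphism. Deleting a vertex or an edge yields a subgraph $G'\subseteq G$, and \ref{item:oddo-minors-1} already gives the weak oddomorphism $\phi|_{\phi^{-1}(G')}\colon F[\phi^{-1}(G')]\to G'$ onto the subgraph (hence minor) $F[\phi^{-1}(G')]$ of $F$; if a vertex $u$ was deleted, surjectivity of the oddomorphism witness of $\phi$ places a vertex of $F$ over $u$ outside $F[\phi^{-1}(G')]$, so the vertex count strictly decreases. For contracting an edge $e=xy\in E(G)$ into a new vertex $z$ of $G'=G/e$, apply \ref{item:oddo-minors-2} to the connected subgraph $G[\{x,y\}]\cong K_2$ of $G$ to get a connected subgraph $F^*\subseteq F[\phi^{-1}(\{x,y\})]$ with $\phi|_{F^*}\colon F^*\to K_2$ an oddomorphism; let $F'$ be the minor of $F$ obtained by deleting $\phi^{-1}(\{x,y\})\setminus V(F^*)$ and contracting the connected set $V(F^*)$ to a single vertex $z^*$, and let $\phi'$ send $z^*\mapsto z$ and agree with $\phi$ elsewhere. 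This is a homomorphism $F'\to G'$ because every $F$-edge meeting $\phi^{-1}(\{x,y\})$ maps to a $G$-edge meeting $\{x,y\}$, which becomes a $G'$-edge at $z$; one checks $\phi'$ is a weak oddomorphism by a parity computation at $z^*$, whose $F'$-neighbourhood is the union of the neighbourhoods in $F$ of the vertices of $F^*$ and whose relevant parities are governed by the oddomorphism $F^*\to K_2$ together with the data inherited from $\phi$ on $F-\phi^{-1}(\{x,y\})$. The vertex count drops since the witness of $\phi$ puts vertices over both $x$ and $y$, so $|\phi^{-1}(\{x,y\})|\geq2$ and $|V(F')|=|V(F)|-|\phi^{-1}(\{x,y\})|+1<|V(F)|$. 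Iterating, if the resulting minor $G'$ of $G$ has fewer vertices than $G$, then some step was a vertex deletion or an edge contraction, which strictly decreased the vertex count, so the resulting minor of $F$ has fewer vertices than $F$.
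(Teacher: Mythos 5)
The paper does not reprove this lemma --- it is cited verbatim from Roberson (Lemmas~5.2, 5.6, 5.8, and Theorem~3.13 of \cite{roberson_oddomorphisms_2022}) --- so there is no internal proof to compare against, but the proposal can still be assessed on its merits, and it has two genuine gaps.

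Part~\ref{item:oddo-minors-1} is fine: the locality observation $N_{F'}(a)\cap\phi^{-1}(v)=N_F(a)\cap\phi^{-1}(v)$ for $v\in N_{G'}(\phi(a))$, together with $N_{G'}(\phi(a))\subseteq N_G(\phi(a))$ and the fact that the fibres are literally unchanged, is exactly the right argument.

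Part~\ref{item:oddo-minors-2} is not proven; you yourself flag it as ``the main obstacle.'' The sketch via a ``boundary argument'' plus an $\mathbb{F}_2$-identity does not close, and the clean route is more direct than either ingredient. Write $C_1,\dots,C_n$ for the connected components of the domain $F_1$ of the oddomorphism $\psi$ to $G'$. For a fixed $i$ and adjacent $v,w\in V(G')$, double-count the edges between $\psi^{-1}(v)\cap C_i$ and $\psi^{-1}(w)\cap C_i$ (all edges of $F_1$ incident to $C_i$ stay inside $C_i$): modulo $2$ this count equals the number of $\psi$-odd vertices in $\psi^{-1}(v)\cap C_i$ and also the number in $\psi^{-1}(w)\cap C_i$. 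By connectedness of $G'$, a single bit $b_i$ records the parity of $\psi$-odd vertices in $\psi^{-1}(v)\cap C_i$ simultaneously for all $v$. Since $\sum_i b_i\equiv 1\pmod 2$ for each fibre, some $b_i=1$, and then $\psi|_{C_i}$ is the desired oddomorphism (it is in particular surjective onto $V(G')$). This is exactly the $S=\emptyset$ special case of \cref{cl:select-c-simple} used in the proof of \cref{lem:cutvertex}, so the paper implicitly contains it.

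Part~\ref{item:oddo-minors-3}: the vertex/edge deletion cases via \ref{item:oddo-minors-1} are fine, and the vertex-count observation is correct. The edge-contraction construction, however, does not work as stated, and ``one checks $\phi'$ is a weak oddomorphism by a parity computation at $z^*$'' glosses over the step where the proof actually breaks. Concretely: take $a\in V(F')\setminus\{z^*\}$ with $\phi(a)\in N_G(x)$, so $\phi'(a)\in N_{G'}(z)$. The relevant count is $\lvert N_{F'}(a)\cap(\phi')^{-1}(z)\rvert=\lvert N_{F'}(a)\cap\{z^*\}\rvert\in\{0,1\}$, i.e.\ the \emph{indicator} of whether $a$ has a neighbour in $V(F^*)$. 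For $\phi$-odd $a$ one knows $\lvert N_F(a)\cap\phi^{-1}(x)\rvert$ is odd, but $F^*\cap\phi^{-1}(x)$ may be a proper subset missing every neighbour of $a$; then the count at $z$ is $0$, $a$ fails to be $\phi'$-odd, and (after deleting $\phi^{-1}(\{x,y\})\setminus V(F^*)$) $a$ may not even be $\phi'$-even at its other neighbours. There is also a second problem you do not address: if $\phi(a)$ is adjacent to \emph{both} $x$ and $y$ in $G$, the natural parity at $z$ is the sum of two equal parities, hence even, so every such $\phi$-odd $a$ would become even at $z$ regardless of which $F^*$ is chosen. Repairing this requires a more careful edge-removal scheme around $\phi^{-1}(\{x,y\})$ before contracting, which is precisely the content of Roberson's Lemma~5.8 and is not recovered by your sketch.
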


    In order to simplify some subsequent arguments, we make the following observation.

    \begin{observation}\label{lem:oddo-woddo}
        For every graph class $\mathcal{F}$ closed under taking subgraphs,
        the following are equivalent:
        \begin{enumerate}
            \item $\mathcal{F}$ is closed under weak oddomorphisms,
            \item $\mathcal{F}$ is closed under oddomorphisms.
        \end{enumerate}
    \end{observation}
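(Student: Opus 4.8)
The plan is to simply unfold the definitions: both implications are essentially immediate, and the subgraph-closedness hypothesis is only needed for one of the two directions.

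First I would handle the implication from closure under weak oddomorphisms to closure under oddomorphisms, which does not even use subgraph-closedness. If $\phi \colon F \to G$ is an oddomorphism, then taking the subgraph $F' \coloneqq F$ of $F$ in \cref{def:oddomorphism} witnesses that $\phi = \phi|_{F'}$ is a weak oddomorphism. Hence, if $\mathcal{F}$ is closed under weak oddomorphisms and $F \in \mathcal{F}$, then $G \in \mathcal{F}$, so $\mathcal{F}$ is closed under oddomorphisms.

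For the converse, suppose $\mathcal{F}$ is subgraph-closed and closed under oddomorphisms, and let $\phi \colon F \to G$ be a weak oddomorphism with $F \in \mathcal{F}$. By \cref{def:oddomorphism}, there is a subgraph $F'$ of $F$ such that $\phi|_{F'} \colon F' \to G$ is an oddomorphism. Since $\mathcal{F}$ is subgraph-closed, $F' \in \mathcal{F}$, and since $\mathcal{F}$ is closed under oddomorphisms, $G \in \mathcal{F}$. This gives the equivalence.

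The only points worth spelling out are that $\phi|_{F'}$ genuinely is a homomorphism $F' \to G$ (immediate from $E(F') \subseteq E(F)$) and that an oddomorphism maps onto all of $V(G)$ — guaranteed by condition (2) of \cref{def:oddomorphism}, as an odd number of odd vertices is at least one — so that $\phi|_{F'} \colon F' \to G$ is exactly the configuration covered by the definition of closure under oddomorphisms. I do not anticipate any real obstacle: the statement is purely definitional, and its role is just to let later arguments freely switch between the two closure notions.
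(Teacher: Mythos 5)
Your proof is correct and matches the paper's approach; the only difference is that the paper writes only the converse direction explicitly (treating the forward direction — that every oddomorphism is a weak oddomorphism — as too immediate to mention), whereas you spell out both directions and add a short remark on why $\phi|_{F'}$ targets all of $G$.
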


    \begin{proof}
        For the converse direction, let $F \to G$ be a weak oddomorphism and $F \in \mathcal{F}$.
        Then there exists an oddomorphism $F' \to G$ from some subgraph $F'$ of $F$ by \cref{def:oddomorphism}.
        In particular, $F' \in \mathcal{F}$ and thus $G \in \mathcal{F}$.
    \end{proof}

    Finally, we shall use the following lemma to separate homomorphism indistinguishability relations.

    \begin{lemma}[{\cite[Theorem~3.13 and Lemma~3.14]{roberson_oddomorphisms_2022}}]
        \label{lem:strictness-via-oddo}
        Let $\mathcal{F}$ and $\mathcal{F'}$ be graph classes.
        If there is a connected graph $F' \in \mathcal{F'}$ such that no graph $F \in \mathcal{F}$ admits a weak oddomorphism to $F'$,
        then there exist graphs $G$, $H$ such that $G \equiv_{\mathcal{F}} H$, but $G \not\equiv_{\mathcal{F'}} H$.
    \end{lemma}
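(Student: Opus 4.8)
The plan is to use the CFI construction applied to the connected witness graph $F'$ from the hypothesis, together with the correspondence between homomorphism counts into CFI graphs and weak oddomorphisms due to \textcite[Theorem~3.13]{roberson_oddomorphisms_2022} that was recalled in the introduction. Concretely, I would set $G \coloneqq F'_0$ and $H \coloneqq F'_1$, the even and odd CFI graphs of $F'$; these are well-defined since $F'$ is connected, and \cite[Theorem~3.13]{roberson_oddomorphisms_2022} applies with $F'$ playing the role of the (connected) target graph in both directions below.

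For the first conclusion $G \equiv_{\mathcal{F}} H$, I would take an arbitrary $F \in \mathcal{F}$. By hypothesis there is no weak oddomorphism $F \to F'$, so \cite[Theorem~3.13]{roberson_oddomorphisms_2022} gives $\hom(F, G) = \hom(F, H)$; since $F$ was arbitrary, $G \equiv_{\mathcal{F}} H$. For the second conclusion $G \not\equiv_{\mathcal{F}'} H$, I would use $F'$ itself as the distinguishing graph: by \cref{ex:identity} the identity map $\id \colon F' \to F'$ is an oddomorphism, hence a weak oddomorphism $F' \to F'$, so \cite[Theorem~3.13]{roberson_oddomorphisms_2022} yields $\hom(F', G) \neq \hom(F', H)$. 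As $F' \in \mathcal{F}'$, this witnesses $G \not\equiv_{\mathcal{F}'} H$, and the two choices $G \coloneqq F'_0$, $H \coloneqq F'_1$ finish the proof.

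The only subtlety---rather than a genuine obstacle, since all of the substantive work is packaged inside the cited CFI analysis---is that \cite[Theorem~3.13]{roberson_oddomorphisms_2022} requires its target graph to be connected. Both invocations above use $F'$ as the target, so this is exactly the place where the connectedness assumption on $F'$ in the statement is consumed. If one wanted to avoid citing Theorem~3.13 as a black box, the real difficulty would be re-establishing the full CFI/oddomorphism correspondence (both the existence of a weak oddomorphism forcing $\hom(F', F'_0) \neq \hom(F', F'_1)$ and its absence forcing equality), which is the genuinely hard part; within the present framework, however, the argument is immediate.
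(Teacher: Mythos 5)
Your proposal is correct and reconstructs exactly what the cited combination of \cite[Theorem~3.13 and Lemma~3.14]{roberson_oddomorphisms_2022} packages together: the paper itself gives no proof and simply imports the result. You take the CFI graphs of the connected witness $F'$, use one direction of Theorem~3.13 (absence of a weak oddomorphism $F \to F'$ forces equal homomorphism counts) to get $G \equiv_{\mathcal F} H$, and the other direction together with the identity oddomorphism $\id \colon F' \to F'$ (\cref{ex:identity}) to get $\hom(F', G) \neq \hom(F', H)$; this matches the intended derivation.
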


	\section{Deletion and Elimination Distance}
	In this section, we show that, if a graph class $\mathcal{F}$ is closed under weak oddomorphisms, then so are the classes of graphs of bounded deletion and elimination distance to $\mathcal{F}$.
As a corollary, we show that the classes of $k$-apex planar graphs and 
of graphs of vertex cover or feedback vertex set number $\leq k$ are homomorphism distinguishing closed (\cref{it:apex-planar,it:vertex-cover,it:fvs} of \cref{thm:strong-roberson}).
We also give a self-contained combinatorial argument for the fact that the graphs of treedepth $\leq k$ are homomorphism distinguishing closed \cite{fluck_going_2024}.

\label{sec:deletion-distence}

    Deletion distance was introduced by \textcite{bulian_graph_2016}.

    \begin{definition}[\cite{bulian_graph_2016}]
		Let $\mathcal{F}$ be a graph class.
		The \emph{deletion distance $\dd_{\mathcal{F}}(F)$ to $\mathcal{F}$} of a graph $F$ is defined as the least number $k$ such that there exist vertices $v_1, \dots, v_k \in V(F)$ such that $F - \{v_1, \dots, v_k\} \in \mathcal{F}$.
	\end{definition}

    For example, the \emph{vertex cover number} of a graph is equal to its deletion distance to the class of edge-less graphs.
    The \emph{feedback vertex set number} of a graph is equal to its deletion distance to forests.

    \begin{lemma}\label{lem:deletion-distance-oddo}
        Let $\mathcal{F}$ be closed under weak oddomorphisms and  taking subgraphs.
        If $F \to G$ is a weak oddomorphism,
        then $\dd_{\mathcal{F}}(F) \geq \dd_{\mathcal{F}}(G)$.
    \end{lemma}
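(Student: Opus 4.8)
The plan is to show that from a weak oddomorphism $\phi \colon F \to G$ and an optimal deletion set for $F$, one can extract a deletion set for $G$ of no larger size. Let $k = \dd_{\mathcal{F}}(F)$ and fix vertices $v_1, \dots, v_k \in V(F)$ with $F - \{v_1,\dots,v_k\} \in \mathcal{F}$. The natural candidate for a deletion set of $G$ is the image set $W \coloneqq \{\phi(v_1), \dots, \phi(v_k)\}$, which has size at most $k$. So it suffices to prove that $G - W \in \mathcal{F}$.

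First I would observe that $G - W$ is an induced subgraph of $G$, hence in particular a subgraph, so by \cref{lem:oddo-connected}\ref{item:oddo-minors-1} the restriction $\psi \coloneqq \phi|_{\phi^{-1}(G - W)} \colon F[\phi^{-1}(V(G) \setminus W)] \to G - W$ is again a weak oddomorphism. Now the key point is that $\phi^{-1}(V(G)\setminus W)$ contains no $v_i$: if some $v_i$ were in this set, then $\phi(v_i) \in V(G) \setminus W$, contradicting $\phi(v_i) \in W$ by definition of $W$. Hence $F[\phi^{-1}(V(G)\setminus W)]$ is a subgraph of $F - \{v_1,\dots,v_k\}$, which lies in $\mathcal{F}$; since $\mathcal{F}$ is closed under taking subgraphs, $F[\phi^{-1}(V(G)\setminus W)] \in \mathcal{F}$. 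Applying closedness of $\mathcal{F}$ under weak oddomorphisms to $\psi$ then gives $G - W \in \mathcal{F}$, so $\dd_{\mathcal{F}}(G) \leq |W| \leq k = \dd_{\mathcal{F}}(F)$, as required.

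I do not expect any serious obstacle here; the only subtlety is the bookkeeping that deleting the fibres over $W$ on the $G$-side removes exactly (a superset of) the chosen deletion vertices on the $F$-side, which is immediate once one notes $v_i \in \phi^{-1}(\phi(v_i)) \subseteq \phi^{-1}(W)$. One should perhaps also remark that $|W| \le k$ may be strict (if $\phi$ identifies some $v_i$), but this only helps. The argument is uniform in $\mathcal{F}$ and uses nothing beyond \cref{lem:oddo-connected}\ref{item:oddo-minors-1} and the two closure hypotheses on $\mathcal{F}$.
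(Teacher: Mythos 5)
Your proposal is correct and follows essentially the same route as the paper: take the image of an optimal deletion set, restrict the weak oddomorphism to the preimage of the complement via \cref{lem:oddo-connected}\ref{item:oddo-minors-1}, and use both closure hypotheses on $\mathcal{F}$. The only difference is that you spell out the bookkeeping ($S \subseteq \phi^{-1}(\phi(S))$) that the paper leaves implicit.
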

    \begin{proof}
        Let $S \subseteq V(F)$ be such that $F - S \in \mathcal{F}$.
        Let $\phi \colon F \to G$ be a weak oddomorphism.
        Let $T \coloneqq \phi(S)$.
        Clearly, $|T| \leq |S|$.
        We show that $G - T \in \mathcal{F}$.
        By \cref{lem:oddo-connected},
        there exists a weak oddomorphism $F' \to G- T$ for some subgraph $F'$ of $F - S$.
        Since $\mathcal{F}$ is closed under taking subgraphs, it holds that $F' \in \mathcal{F}$ and hence $G - T \in \mathcal{F}$.
    \end{proof}

    \cref{lem:deletion-distance-oddo} has the following corollary,
    which implies \cref{it:apex-planar,it:vertex-cover,it:fvs} of \cref{thm:strong-roberson} via \cref{lem:oddo-planar-degree}.

    \begin{corollary}
        Let $\mathcal{F}$ be closed under weak oddomorphisms and taking subgraphs.
        For every $k \geq 0$,
        the class of graphs whose connected components have deletion distance at most~$k$ to $\mathcal{F}$ is closed under weak oddomorphisms.
        In particular, it is homomorphism distinguishing closed.
    \end{corollary}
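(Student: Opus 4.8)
The plan is to prove directly that the class $\mathcal{G}$ of all graphs whose connected components each have deletion distance at most $k$ to $\mathcal{F}$ is closed under weak oddomorphisms. Since $\mathcal{G}$ is visibly componental---membership is a condition on connected components, and the components of $F_1 + F_2$ are exactly those of $F_1$ together with those of $F_2$---the ``in particular'' part then follows at once from \cref{thm:rob62}.

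First I would record two pieces of monotonicity. Because $\mathcal{F}$ is closed under taking subgraphs, deletion distance to $\mathcal{F}$ is monotone under subgraphs: if $F' \subseteq F$ and $F - S \in \mathcal{F}$, then $F' - S$ is a subgraph of $F - S$ and hence lies in $\mathcal{F}$, so $\dd_{\mathcal{F}}(F') \leq \dd_{\mathcal{F}}(F)$. Second, \cref{lem:deletion-distance-oddo} already provides that a weak oddomorphism can only decrease the deletion distance to $\mathcal{F}$.

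Now let $\phi \colon F \to G$ be a weak oddomorphism with $F \in \mathcal{G}$, and fix an arbitrary connected component $C$ of $G$. Applying \cref{lem:oddo-connected}\ref{item:oddo-minors-2} to the connected subgraph $C$ of $G$ yields a connected subgraph $F'$ of $F[\phi^{-1}(C)]$ admitting an oddomorphism $F' \to C$. Since $F'$ is connected, it is contained in a single connected component $D$ of $F$; as $F \in \mathcal{G}$ we have $\dd_{\mathcal{F}}(D) \leq k$, and monotonicity under subgraphs gives $\dd_{\mathcal{F}}(F') \leq \dd_{\mathcal{F}}(D) \leq k$. Feeding the oddomorphism $F' \to C$ into \cref{lem:deletion-distance-oddo} gives $\dd_{\mathcal{F}}(C) \leq \dd_{\mathcal{F}}(F') \leq k$. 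As $C$ was arbitrary, every connected component of $G$ has deletion distance at most $k$ to $\mathcal{F}$, i.e.\ $G \in \mathcal{G}$, so $\mathcal{G}$ is closed under weak oddomorphisms.

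The only genuinely delicate point is that deletion distance to $\mathcal{F}$ need not be subadditive over disjoint unions when $\mathcal{F}$ is not union-closed, so one cannot simply apply \cref{lem:deletion-distance-oddo} to $\phi$ and $G$ directly. The fix---and the reason this works---is \cref{lem:oddo-connected}\ref{item:oddo-minors-2}, which lets us pull each component $C$ of $G$ back to a \emph{connected} subgraph of $F$; such a subgraph lies inside one component of $F$ and therefore inherits its deletion-distance bound. Everything else is bookkeeping.
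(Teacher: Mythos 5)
Your proof is correct and follows essentially the same route as the paper's: reduce to a single connected component of $G$ via \cref{lem:oddo-connected}\ref{item:oddo-minors-2}, then apply \cref{lem:deletion-distance-oddo}. You spell out the subgraph-monotonicity step (needed because the pulled-back connected subgraph $F'$ need not be a full component of $F$) that the paper's terser ``we may suppose $F$ and $G$ are connected'' leaves implicit, which is a welcome clarification rather than a deviation.
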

    \begin{proof}
        Let $F \to G$ be an oddomorphism and suppose that $F$ is the disjoint union of graphs whose deletion distance to $\mathcal{F}$ is at most~$k$.
        Towards concluding that $G$ is the disjoint union of graphs whose deletion distance to $\mathcal{F}$ is at most $k$,
        we may suppose, by \cref{lem:oddo-connected},
        that $F$ and $G$ are connected.
        By \cref{lem:deletion-distance-oddo}, 
        $\dd_{\mathcal{F}}(G) \leq \dd_{\mathcal{F}}(F) \leq k$, as desired.
    \end{proof}

    Next, we consider the more general notion of elimination distance, which is inspired by treedepth \cite{nesetril_tree-depth_2006}.
    See \cite{jansen_vertex_2021} for computational aspects.
    
	\begin{definition}[\cite{bulian_graph_2016}]
		Let $\mathcal{F}$ be a graph class.
		  The \emph{elimination distance to $\mathcal{F}$} of a graph $F$ is defined as $\ed_{\mathcal{F}}(F) \coloneqq$
		\[
			\begin{cases}
				0, & F \in \mathcal{F} \\
				1 + \min_{v \in V(F)} \ed_{\mathcal{F}}(F -v), & F \text{ connected},\\
				\max_i \ed_{\mathcal{F}}(F_i), & F = \sum F_i \text{ disconnected}.
			\end{cases}
		\]
	\end{definition}

    The \emph{treedepth} \cite{nesetril_tree-depth_2006} of a graph is equal to its elimination distance to the graph class containing only the empty graph.
    We recall the following lemma.

    \begin{lemma}[{\cite[154]{bulian_fixed-parameter_2017}}]\label{lem:elimdistance-monotone}
        Let $\mathcal{F}$ be closed under taking subgraphs.
        If $F'$ is a subgraph of $F$, then
        $\ed_{\mathcal{F}}(F') \leq \ed_{\mathcal{F}}(F)$.
    \end{lemma}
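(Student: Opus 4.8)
The plan is to prove the statement by induction on $n \coloneqq |V(F)|$, using the three-way structure of the definition of $\ed_{\mathcal{F}}$. Fix a subgraph $F'$ of $F$. (We may assume $\mathcal{F} \neq \emptyset$; since $\mathcal{F}$ is subgraph-closed, the empty graph then lies in $\mathcal{F}$, so all values $\ed_{\mathcal{F}}(\cdot)$ occurring below are finite.)

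First, the easy cases. If $F \in \mathcal{F}$, then $F' \in \mathcal{F}$ since $\mathcal{F}$ is closed under taking subgraphs, so $\ed_{\mathcal{F}}(F') = 0 \leq \ed_{\mathcal{F}}(F)$; this also covers the base case $n = 0$. If $F$ is disconnected, say $F = \sum_i F_i$ with the $F_i$ its connected components, then every connected component $C$ of $F'$ is (being connected) a subgraph of exactly one $F_i$; since $F$ has at least two components, $|V(F_i)| < n$, so the induction hypothesis gives $\ed_{\mathcal{F}}(C) \leq \ed_{\mathcal{F}}(F_i) \leq \max_i \ed_{\mathcal{F}}(F_i) = \ed_{\mathcal{F}}(F)$. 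Taking the maximum over all components $C$ of $F'$ yields $\ed_{\mathcal{F}}(F') \leq \ed_{\mathcal{F}}(F)$.

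It remains to treat the case that $F$ is connected and $F \notin \mathcal{F}$, so $\ed_{\mathcal{F}}(F) = 1 + \min_{v \in V(F)} \ed_{\mathcal{F}}(F - v)$; fix a vertex $v$ attaining this minimum, so $\ed_{\mathcal{F}}(F - v) = \ed_{\mathcal{F}}(F) - 1$. If $v \notin V(F')$, then $F'$ is a subgraph of $F - v$ and the induction hypothesis (note $|V(F-v)| < n$) directly gives $\ed_{\mathcal{F}}(F') \leq \ed_{\mathcal{F}}(F - v) \leq \ed_{\mathcal{F}}(F)$. If $v \in V(F')$, I argue componentwise: let $C$ be any connected component of $F'$. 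If $v \notin V(C)$, then $C$ is a subgraph of $F - v$, so $\ed_{\mathcal{F}}(C) \leq \ed_{\mathcal{F}}(F - v) \leq \ed_{\mathcal{F}}(F)$ by the induction hypothesis. If $v \in V(C)$, then $C - v$ is a subgraph of $F - v$, so $\ed_{\mathcal{F}}(C - v) \leq \ed_{\mathcal{F}}(F - v) = \ed_{\mathcal{F}}(F) - 1$ by the induction hypothesis, and since $C$ is connected the definition of $\ed_{\mathcal{F}}$ gives $\ed_{\mathcal{F}}(C) \leq 1 + \min_{w \in V(C)} \ed_{\mathcal{F}}(C - w) \leq 1 + \ed_{\mathcal{F}}(C - v) \leq \ed_{\mathcal{F}}(F)$ (and $\ed_{\mathcal{F}}(C) = 0$ if $C \in \mathcal{F}$). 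Taking the maximum over all components $C$ of $F'$ completes the induction.

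The argument is an elementary structural induction, so there is no serious obstacle. The only point requiring a little care is that a subgraph $F'$ of a connected graph $F$ need not be connected, which forces the componentwise bookkeeping in the last case — together with the observation that the eliminated vertex $v$ lies in at most one connected component of $F'$, so the remaining components are untouched and fall directly under the induction hypothesis.
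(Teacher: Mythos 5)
Your induction is correct and complete; the only delicate point — that the deleted vertex $v$ meets at most one component of $F'$, forcing the componentwise bookkeeping in the connected case — is handled properly, and the degenerate cases (empty graph, $F' \in \mathcal{F}$) are covered. Note that the paper does not prove this lemma at all: it is cited from Bulian's thesis, so there is no in-paper argument to compare against; your structural induction on $|V(F)|$ following the three cases of the definition of $\ed_{\mathcal{F}}$ is the standard proof of this fact.
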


    The following \cref{thm:eliminsation-distance} generalises a result from \cite{fluck_going_2024} asserting that the class of graphs of treedepth $\leq q$ is closed under weak oddomorphisms.
    Furthermore, in contrast to \cite{fluck_going_2024}, our proof of \cref{thm:eliminsation-distance} is purely combinatorial and does not rely on Ehrenfeucht--Fraïssé or graph searching games.

	\begin{theorem}\label{thm:eliminsation-distance}
		Let $d \geq 0$
		and $\mathcal{F}$ be closed under taking subgraphs.
		If $\mathcal{F}$ is closed under weak oddomorphisms,
		then so is the class of all graphs $F$ with $\ed_{\mathcal{F}}(F) \leq d$.
	\end{theorem}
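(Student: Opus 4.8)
The plan is to prove, by induction on $d$, that $\mathcal{G}_d \coloneqq \{\,G : \ed_{\mathcal{F}}(G) \leq d\,\}$ is closed under weak oddomorphisms. Since $\mathcal{G}_d$ is closed under taking subgraphs by \cref{lem:elimdistance-monotone}, \cref{lem:oddo-woddo} lets us work with oddomorphisms in place of weak oddomorphisms throughout. For the base case $d = 0$, note that $\mathcal{G}_0$ is exactly the class of graphs all of whose connected components lie in $\mathcal{F}$. Given an oddomorphism $\phi \colon F \to G$ with $F \in \mathcal{G}_0$ and a connected component $G'$ of $G$, \cref{lem:oddo-connected} supplies an oddomorphism onto $G'$ from a connected subgraph of $F$; being connected, this subgraph sits inside a single component of $F$, hence inside $\mathcal{F}$ (as $\mathcal{F}$ is subgraph-closed), so $G' \in \mathcal{F}$ by closure of $\mathcal{F}$ under weak oddomorphisms. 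Thus every component of $G$ lies in $\mathcal{F}$, i.e.\ $G \in \mathcal{G}_0$.

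For the inductive step, fix $d \geq 1$ and an oddomorphism $\phi \colon F \to G$ with $F \in \mathcal{G}_d$; we must show $\ed_{\mathcal{F}}(G) \leq d$. First reduce to the connected case: by \cref{lem:oddo-connected}, every connected component $G'$ of $G$ admits an oddomorphism $F' \to G'$ from a connected subgraph $F' \subseteq F$, and $\ed_{\mathcal{F}}(F') \leq \ed_{\mathcal{F}}(F) \leq d$ by \cref{lem:elimdistance-monotone}; since $\ed_{\mathcal{F}}(G) = \max_{G'} \ed_{\mathcal{F}}(G')$, it suffices to bound $\ed_{\mathcal{F}}(G')$ for each such pair, so we may assume $F$ and $G$ are both connected. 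If $G \in \mathcal{F}$ there is nothing to prove. Otherwise $F \notin \mathcal{F}$ as well (else $G \in \mathcal{F}$ by closure of $\mathcal{F}$), so $\ed_{\mathcal{F}}(F) = 1 + \min_{v \in V(F)} \ed_{\mathcal{F}}(F - v) \leq d$, and we may fix $v \in V(F)$ with $\ed_{\mathcal{F}}(F - v) \leq d - 1$.

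Now set $w \coloneqq \phi(v)$. The crucial observation is that deleting the single vertex $w$ from $G$ pulls back under $\phi$ to deleting the whole fibre $\phi^{-1}(w)$ from $F$: we have $F[\phi^{-1}(V(G) \setminus \{w\})] = F - \phi^{-1}(w)$, and this is a subgraph of $F - v$ because $v \in \phi^{-1}(w)$. Hence $\ed_{\mathcal{F}}(F - \phi^{-1}(w)) \leq \ed_{\mathcal{F}}(F - v) \leq d - 1$ by \cref{lem:elimdistance-monotone}, i.e.\ $F - \phi^{-1}(w) \in \mathcal{G}_{d-1}$. By \cref{lem:oddo-connected}, $\phi$ restricts to an oddomorphism $F - \phi^{-1}(w) \to G - w$, so the induction hypothesis applied to $\mathcal{G}_{d-1}$ yields $G - w \in \mathcal{G}_{d-1}$, that is $\ed_{\mathcal{F}}(G - w) \leq d - 1$. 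Since $G$ is connected and $G \notin \mathcal{F}$, we conclude $\ed_{\mathcal{F}}(G) \leq 1 + \ed_{\mathcal{F}}(G - w) \leq d$, which closes the induction. I expect the only genuine subtlety to be the bookkeeping around disconnectedness: the recursion defining $\ed_{\mathcal{F}}$ behaves differently on connected and disconnected graphs, so one has to apply \cref{lem:oddo-connected} in the right order to bring both $F$ and $G$ into connected form before peeling off $v$ and $w$; everything else is routine, following from monotonicity of $\ed_{\mathcal{F}}$ under subgraphs and the standard deletion/restriction properties of oddomorphisms.
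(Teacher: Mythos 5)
Your proof is correct and follows essentially the same strategy as the paper's: induct on $d$, reduce to the case where $F$ and $G$ are connected via \cref{lem:oddo-connected}, pick $v \in V(F)$ realising the minimum in the recursion for $\ed_{\mathcal{F}}(F)$, observe that $F - \phi^{-1}(\phi(v))$ is a subgraph of $F - v$, and apply the induction hypothesis to the restricted oddomorphism $F - \phi^{-1}(\phi(v)) \to G - \phi(v)$. The only organizational difference is that you perform the reduction to connected $F$ and $G$ upfront, whereas the paper treats the connected case first and then reduces the disconnected case to it at the end; both orderings work and lean on the same lemmas.
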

	\begin{proof}
		The proof is by induction on $d$.
		First consider the case $d = 0$.
		Let $F$ be such that $\ed_{\mathcal{F}}(F) = 0$
		and $F \to G$ a weak oddomorphism.
		Let $G'$ be a connected component of $G$.
		By \cref{lem:oddo-connected},
		there exists a connected subgraph $F'$ of $F$
		admitting an oddomorphism $F' \to G'$.
		By assumption, $F' \in \mathcal{F}$
		and hence $G' \in \mathcal{F}$.
		Since the elimination distance to $\mathcal{F}$
		does not increase under taking disjoint unions,
		it follows that $G \in \mathcal{F}$.
		
		For the inductive step, first suppose that $F$ is connected and that $|V(F)| \geq 2$.
		Since surjective homomorphisms map connected graphs to connected graphs, 
		it follows that $G$ is connected.
		Without loss of generality, it may be supposed that $|V(G)| \geq 2$.

		It holds that $\ed_{\mathcal{F}}(F) = 1 + \min_{v\in V(F)} \ed_{\mathcal{F}}(F- v)$.
		Let $v \in V(F)$ be such that $\ed_{\mathcal{F}}(F) = 1 + \ed_{\mathcal{F}}(F-v)$.
		Define $G' \coloneqq G - \phi(v)$ and $F' \coloneqq \phi^{-1}(G) =  F - \phi^{-1}(\phi(v))$.
		By \cref{lem:oddo-connected},
		the map $\phi' \colon F' \to G'$ induced by $\phi$ is a weak oddomorphism.
		Note that $F'$ is a subgraph of $F-v$.
		By \cref{lem:elimdistance-monotone}, $\ed_{\mathcal{F}}(F') \leq \ed_{\mathcal{F}}(F -v) = \ed_{\mathcal{F}}(F) -1$.
		By the induction hypothesis,
		$\ed_{\mathcal{F}}(G') \leq \ed_{\mathcal{F}}(F')$.
		Combining the above,
		\[
		\ed_{\mathcal{F}}(G) \leq 1 + \ed_{\mathcal{F}}(G') 
		\leq 1 + \ed_{\mathcal{F}}(F') \leq \ed_{\mathcal{F}}(F).
		\]

		It remains to consider the case when $F$ is disconnected.
		Let $G_i$ be a connected component of $G$.
		Note that it may be the case that $G$ is connected and consists of a single connected component.
		Let $F' \coloneqq \phi^{-1}(G_i)$.
		By \cref{lem:oddo-connected},
            $\phi'$ induces an oddomorphism $\phi'' \colon F'' \to G_i$
		from some connected subgraph $F''$ of $F'$.
		By \cref{lem:elimdistance-monotone}, $\ed_{\mathcal{F}}(F'') \leq \ed_{\mathcal{F}}(F') \leq \ed_{\mathcal{F}}(F)$.
		Since $F''$ is connected, the previous case applies.
		It follows that $\ed_{\mathcal{F}}(G_i) \leq \ed_{\mathcal{F}}(F'')$.
		Hence,
		\(
		\ed_{\mathcal{F}}(G) =\max_i \ed_{\mathcal{F}}(G_i) \leq \ed_{\mathcal{F}}(F'') \leq \ed_{\mathcal{F}}(F).
		\)
	\end{proof}

	\section{Separators}
	\label{sec:separators}
	This section is concerned with 
the interaction of oddomorphisms $F \to G$ with separators in the graph $F$.
The main technical contribution is \cref{lem:separators}.
As corollaries, we show \cref{it:tree,it:k33,it:k5,it:cactus} of \cref{thm:strong-roberson}. 

\begin{lemma}[Separator lemma] \label{lem:separators}
    Let $\phi \colon F \to G$ be an oddomorphism.
    For a set of vertices $S \subseteq V(F)$, write $C_1, \dots, C_n$ for the connected components of $F - S$
    and $D_1, \dots, D_m$ for the connected components of $G - \phi(S)$.

    If $n > m$,
    then there exists a minor $F'$ of the graph obtained from $F[C \cup S]$ by adding a clique on $S$ such that 
    $F'$ admits an oddomorphism to $G$.
    Here, $C \coloneqq \bigcup_{i \in I} C_i$ for some proper subset $I \subsetneq [n]$.
\end{lemma}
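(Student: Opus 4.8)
The plan is to locate one connected component of $F-S$ whose $\phi$-image is entirely absorbed into $\phi(S)$, discard it (this is the component we leave out of $I$), and then repair the resulting parity defects using the freshly added clique on $S$.

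\emph{Step 1 (finding the redundant component).} I would first record a structural fact: whenever $ab\in E(G)$, every $\phi$-odd vertex $a'\in\phi^{-1}(a)$ — which exists since $\phi$ is an oddomorphism — satisfies $|N_F(a')\cap\phi^{-1}(b)|$ odd, so $a'$ has a neighbour in $\phi^{-1}(b)$; hence the fibres of adjacent vertices of $G$ are adjacent in $F$. Chaining this along a path inside a component $D_j$ of $G-\phi(S)$ — all of whose vertices lie outside $\phi(S)$, so all of whose fibres lie in $V(F)\setminus S$ — shows that $\phi^{-1}(D_j)$ is non-empty and contained in a single component of $F-S$. This gives a map $i\colon [m]\to[n]$, which is not surjective since $n>m$; pick $i_0\notin i([m])$. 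Then $\phi(C_{i_0})\cap D_j=\emptyset$ for every $j$, i.e.\ $\phi(C_{i_0})\subseteq\phi(S)$. Put $I\coloneqq[n]\setminus\{i_0\}$ and $C\coloneqq\bigcup_{i\in I}C_i$; note $\phi$ still maps $C\cup S$ onto $V(G)$ because $\phi$ is surjective and $\phi(C_{i_0})\subseteq\phi(S)$.

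\emph{Step 2 (the candidate for $F'$ and what it must satisfy).} The candidate is $F'\coloneqq F[C\cup S]$ with the edges inside $S$ replaced by a suitable set $E'\subseteq\binom{S}{2}$; since $E'\subseteq\binom{S}{2}$, this $F'$ is a spanning subgraph of $F[C\cup S]$ plus a clique on $S$, hence a minor of it. I claim $\psi\coloneqq\phi|_{C\cup S}$ is an oddomorphism $F'\to G$ for the right choice of $E'$. For $a\in C$ one has $N_F(a)\subseteq C\cup S$, so $a$'s neighbourhood, and hence its $\phi$-parity, is untouched; the possible defects are only (1) some $s\in S$ ceases to be $\phi$-odd or $\phi$-even, and (2) for some $v\in\phi(S)$ the odd-count of $\phi^{-1}(v)$ changes parity because $C_{i_0}\cap\phi^{-1}(v)$ is removed. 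Working over $\mathbb{F}_2$: condition~(1) for $s$ asks that the ``lost'' quantities $|N_F(s)\cap S\cap\phi^{-1}(v)|$ and $|N_F(s)\cap C_{i_0}\cap\phi^{-1}(v)|$ be compensated by $|E'(s)\cap\phi^{-1}(v)|$ in a way that is constant over $v\in N_G(\phi(s))$, and moreover equal to $0$ whenever $\phi(s)$ has a neighbour outside $\phi(S)$ — which is forced, because for $t\in C_{i_0}$ we have $N_F(t)\subseteq C_{i_0}\cup S$ mapping into $\phi(S)$, so $C_{i_0}$ contributes no $\phi$-odd vertex to such a fibre (so, writing $\Pi(v)$ for the parity of the number of $\phi$-odd vertices in $C_{i_0}\cap\phi^{-1}(v)$, one gets $\Pi(v)=0$ for such $v$). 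Condition~(2) for $v$ asks that the number of vertices of $S_v\coloneqq\phi^{-1}(v)\cap S$ whose $\psi$-parity differs from its $\phi$-parity be $\equiv\Pi(v)\pmod 2$.

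\emph{Step 3 (solving for $E'$), and the main obstacle.} Because $E'\subseteq\binom{S}{2}$ must keep $\psi$ a homomorphism, it splits along the edges $uv$ of $G[\phi(S)]$ into bipartite graphs between the classes $S_u$ and $S_v$; a bipartite graph with prescribed degree parities exists exactly when its total row-parity equals its total column-parity. Packaging the requirements of Step~2 as prescribed degree parities, and using a handshake computation (via the $\phi$-parity identities applied to vertices of $C_{i_0}$) to see that the symmetric ``$F[S]$-edge'' and ``$C_{i_0}$-internal-edge'' contributions cancel, the realizability conditions reduce to a single function $g$ on $\phi(S)$ — built from the chosen parity-flips and from the $\Pi(v)$ — being constant on each connected component of $G[\phi(S)]$; one checks that components meeting the rest of $G$ force $g\equiv 0$ while the remaining ones are entire components of $G$ on which each fibre can be matched freely, so $g\equiv 0$ is attainable. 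Thus a valid $E'$ exists, $\psi\colon F'\to G$ is an oddomorphism, and $I\subsetneq[n]$ is proper. I expect this last step — the $\mathbb{F}_2$ bookkeeping and the verification of the handshake parity identities — to be the main obstacle; Steps~1 and~2 are routine once the relevant quantities are named. One should also check the degenerate cases $C=\emptyset$ (which is forced when $n=1$, $m=0$) and $C_{i_0}$ disjoint from $S$, where the argument only simplifies.
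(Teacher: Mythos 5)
Your Step~1 is where the proposal breaks down, and the error is fatal to the overall strategy. You claim that $\phi^{-1}(D_j)$ is contained in a single connected component of $F-S$: this is false. The chaining argument only shows that \emph{some} $\phi$-odd vertex of $\phi^{-1}(a)$ has a neighbour in $\phi^{-1}(b)$ for adjacent $a,b\in D_j$; it does not show that all of $\phi^{-1}(D_j)$ is connected inside $F-S$, since different odd vertices of the same fibre may lie in different components. Consequently your map $i\colon[m]\to[n]$ is not well-defined, and the weaker conclusion you draw from it — that some component $C_{i_0}$ satisfies $\phi(C_{i_0})\subseteq\phi(S)$ — is also false. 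A concrete counterexample: let $G=K_2$ on $\{s,v\}$, $F=K_{1,3}$ with centre $a$ and leaves $c_1,c_2,c_3$, $S=\{a\}$, $\phi(a)=s$, $\phi(c_i)=v$. This is an oddomorphism with $n=3>m=1$, yet $\phi(C_i)=\{v\}\not\subseteq\phi(S)=\{s\}$ for every $i$. So no single "redundant component" exists. What the hypothesis $n>m$ actually buys is a linear-algebraic statement: over $\mathbb{F}_2$, form the $m\times n$ matrix $P$ whose entry $P_{ji}$ is the parity of the number of $\phi$-odd vertices in $\phi^{-1}(x)\cap C_i$ for $x\in D_j$ (this is well-defined by a handshake argument along $D_j$). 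One has $P\boldsymbol{1}=\boldsymbol{1}$, and since $n>m$ the columns of $P$ are linearly dependent, giving a proper $I\subsetneq[n]$ with $\sum_{i\in I}P_{ji}\equiv 1$ for all $j$. This is the correct selection of $C$; discarding a single component need not preserve the fibre parities outside $\phi(S)$.

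There is a secondary concern even if the selection of $I$ were repaired. Your candidate $F'$ keeps all of $S$ as vertices and tries to choose a new edge set $E'\subseteq\binom{S}{2}$ to fix both the parities of individual vertices of $S$ and the odd-vertex counts of the fibres over $\phi(S)$. It is far from clear that this system is always solvable, and your Step~3 does not actually establish it. The cleaner construction (which the paper uses) is to replace $S$ by the set $\mathfrak{S}$ of classes $S\cap\phi^{-1}(y)$ — contracting each class to one vertex — and to declare edges inside $\mathfrak{S}$ and between $C$ and $\mathfrak{S}$ according to edge-count parities in $F$. Because each fibre over $\phi(S)$ then contains exactly one vertex of $\mathfrak{S}$, the fibre parity constraint reduces to a single computation for that vertex, and one verifies it is automatically odd. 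This sidesteps the constrained optimization your Step~3 attempts.
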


\Cref{lem:separators} will be applied in \cref{sec:clique-sums,sec:extensions}
for showing that a minor-closed graph class $\mathcal{F}$ formed by taking clique-sums is closed under weak oddomorphisms.
Its prototypical application is the following \cref{lem:oddo-to-kn-clique-sum}.

\begin{corollary}
    \label{lem:oddo-to-kn-clique-sum}
    Let $s \geq 0$ and let $\mathcal{F}$ be a minor-closed.
    Let $G$ be an $(s+1)$-connected graph.
    If there exists a graph $F \in \mathcal{F}^{\oplus s}$ admitting an oddomorphism to $G$,
    then there exists a graph $F' \in \mathcal{F}$ admitting an oddomorphism to $G$.
\end{corollary}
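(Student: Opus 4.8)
The plan is to induct on the structure of $F \in \mathcal{F}^{\oplus s}$, i.e., on a tree decomposition $(T,\beta)$ witnessing $F \in \mathcal{F}^{\oplus s}$ as in \cref{def:clique-sum}, with induction measure the number of bags of $T$. If $T$ has a single bag, then $F = F\llbracket\beta(t)\rrbracket \in \mathcal{F}$ and we are done. Otherwise pick a leaf $t$ of $T$ with neighbour $t'$, and set $S \coloneqq \beta(t) \cap \beta(t')$, so $|S| \leq s$ and $F[\beta(t)]$ meets the rest of $F$ only through the clique on $S$. Writing $F = F_1 \oplus_S F_2$ where $F_1$ is the torso-bag at $t$ (which lies in $\mathcal{F}$) and $F_2$ is the union of the remaining bags (which lies in $\mathcal{F}^{\oplus s}$ via the smaller tree $T - t$), the goal is to replace $F$ by a single graph in $\mathcal{F}^{\oplus s}$ with fewer bags that still admits an oddomorphism to $G$; applying the inductive hypothesis then finishes.

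The core step is the reduction across the separator $S$. Let $\phi \colon F \to G$ be an oddomorphism. Consider the components $C_1,\dots,C_n$ of $F - S$. Because $G$ is $(s+1)$-connected and $|\phi(S)| \leq |S| \leq s$, the graph $G - \phi(S)$ is connected, so it has exactly $m = 1$ component. If $n \geq 2$ (which holds as soon as $F$ genuinely decomposes), then $n > m$ and \cref{lem:separators} applies: there is a proper subset $I \subsetneq [n]$ with $C \coloneqq \bigcup_{i \in I} C_i$, and a minor $F'$ of the graph $F[C \cup S]$-plus-clique-on-$S$, such that $F'$ admits an oddomorphism to $G$. The key observation is that by choosing which leaf to reduce, we can arrange that the bag $\beta(t)$ together with the clique on $S$ is exactly one of the pieces one wants to peel off; more precisely, $F[C\cup S]$ with a clique added on $S$ is, for a suitable choice of $I$, a subgraph of a clique-sum of some of the $\mathcal{F}^{\oplus s}$-pieces of $F$ together with the clique-bag $F\llbracket\beta(t)\rrbracket \in \mathcal{F}$, hence itself in $\mathcal{F}^{\oplus s}$ (using that $\mathcal{F}^{\oplus s}$ is closed under subgraphs when $\mathcal{F}$ is minor-closed — \cref{lem:cliquesum-minor-closedness} — and that adding a clique on an adhesion set keeps us inside the class). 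Since $\mathcal{F}^{\oplus s}$ is minor-closed by \cref{lem:cliquesum-minor-closedness}, we get $F' \in \mathcal{F}^{\oplus s}$, and $F'$ has strictly fewer vertices than $F$ (it omits at least one nonempty $C_j$), so in particular a tree decomposition of $F'$ with strictly fewer bags, letting the induction go through.

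The main obstacle I anticipate is the bookkeeping around which side of the clique-sum survives: \cref{lem:separators} hands us a minor of $F[C\cup S]+K_S$ for \emph{some} proper index set $I$, not necessarily the one containing the leaf bag $\beta(t)$, so one cannot directly say "the leaf got removed." The right way to handle this is to not commit to a particular leaf in advance: argue instead that whatever $C$ comes out of \cref{lem:separators}, the graph $F[C\cup S] + K_S$ is an induced-subgraph-plus-clique of $F$ along the separator $S$, and $S$ has size $\leq s$ and induces (after adding the clique) a bag one can attach; then re-derive a valid $\mathcal{F}^{\oplus s}$-decomposition of $F'$ from that of $F$ by restricting $(T,\beta)$ to the sub-forest meeting $C$ and adjusting the torso bags, which strictly decreases $|V(T)|$ since at least one bag (hosting a vertex of an omitted component) is discarded. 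A secondary point to verify carefully is that adding a clique on $S$ to a bag keeps its torso in $\mathcal{F}$ — this is where minor-closedness of $\mathcal{F}$ is used, since the torso already contains all edges forced by connected components of $F$ outside the bag, and $S$ has size $\leq s \leq$ the adhesion bound, so the clique on $S$ is covered by an adjacent bag and does not change membership.
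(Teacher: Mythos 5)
Your proposal is correct and follows essentially the same route as the paper: reduce along a small adhesion set of the $\mathcal{F}^{\oplus s}$-decomposition, use $(s{+}1)$-connectedness of $G$ to make $G - \phi(S)$ connected, invoke \cref{lem:separators}, and observe that the resulting minor stays in $\mathcal{F}^{\oplus s}$ (via \cref{lem:cliquesum-minor-closedness}) while strictly shrinking. The paper phrases this as a minimal-counterexample argument over the number of vertices rather than induction over the number of bags, which sidesteps the bookkeeping you flag about whether a bag actually disappears; otherwise the arguments coincide.
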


\begin{proof}[Proof (assuming \cref{lem:separators})]
    Let $F \in \mathcal{F}^{\oplus s}$ have the minimal number of vertices of any graph in $\mathcal{F}^{\oplus s}$ admitting an oddomorphism to $G$.
    Write $\phi \colon F \to G$ for the oddomorphism.
    
    If $F \not\in \mathcal{F}$, then there exists a set $S \subseteq V(F)$ such that $|S| \leq s$, 
    the graph $F - S$ is disconnected, and $F = F_1 \oplus_S F_2$ for some $F_1, F_2 \in \mathcal{F}^{\oplus s}$.
    Since $G$ is $(s+1)$-connected and $|\phi(S)| \leq |S| \leq s$,
    the graph $G - \phi(S)$ is connected.
    By  \cref{lem:separators},
    there exists a graph $F' \in \mathcal{F}^{\oplus s}$ on less vertices than $F$ admitting an oddomorphism to $G$, contradicting the minimality of $F$.
    Hence, $F \in \mathcal{F}$.
\end{proof}

As a warm-up for \cref{lem:separators}, we prove the following lemma for oddomorphism and cut vertices.
It will be used in the proof of \cref{lem:top-minor-tree}.

\begin{lemma}[Cut vertex lemma]\label{lem:cutvertex}
    Let $\phi \colon F\to G$ be an oddomorphism.
    Let $s \in V(F)$ and write $C_1, \dots, C_n$ for the connected components of $F - s$.
    If the graph $G - \phi(s)$ is connected,
    then there exists an oddomorphism $\phi_i \colon F[C_i \cup \{s\}] \to G$ for some $i \in [n]$ such that
    \begin{enumerate}
        \item $\varphi_i(a) = \varphi(a)$ for all $a \in C_i \cup \{s\}$,
        \item for all $a \in C_i$, the vertex $a$ is $\phi$-odd if, and only if, it is $\phi_i$-odd,
        \item if $s$ is $\phi_i$-odd, then there exists a $\phi$-odd vertex $s^* \in V(F) \setminus C_i$ such that $\phi(s) = \phi(s^*)$.
    \end{enumerate}
\end{lemma}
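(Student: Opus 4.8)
The plan is to view the single cut vertex $s$ as the degenerate case $S = \{s\}$ of the Separator lemma setup, but argue it directly to keep the statement self-contained, since here we need to track the extra bookkeeping items (2) and (3) about parities of individual vertices, which the general \cref{lem:separators} does not spell out. First I would observe that, since $\phi^{-1}(\phi(s))$ may contain vertices of several components $C_i$ (and $s$ itself), the fibres of $\phi$ over $G - \phi(s)$ decompose along the $C_i$: precisely, each connected component $D_j$ of $G - \phi(s)$ has $\phi^{-1}(D_j)$ contained in $\bigsqcup_{i} (C_i \cap \phi^{-1}(D_j))$, and since $G - \phi(s)$ is connected ($m = 1$), every $C_i$ that meets some fibre over $G - \phi(s)$ must, together with its attachment to $s$, already carry the whole fibre information locally. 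The key counting point is the parity condition (ii) of \cref{def:oddomorphism}: for each $v \in V(G)$, the fibre $\phi^{-1}(v)$ contains an odd number of $\phi$-odd vertices. I would then set, for each $i \in [n]$, the candidate map $\phi_i \coloneqq \phi|_{C_i \cup \{s\}} \colon F[C_i \cup \{s\}] \to G$, note it is a homomorphism, and compute which vertices are $\phi_i$-odd: a vertex $a \in C_i$ has $N_{F[C_i \cup \{s\}]}(a) = N_F(a)$ (all neighbours of $a$ lie in $C_i \cup \{s\}$ since $C_i$ is a component of $F - s$), so $a$ is $\phi_i$-odd iff it is $\phi$-odd; this gives item (2) for free, and also shows every vertex of $C_i$ is $\phi_i$-even or $\phi_i$-odd. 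The only vertex whose parity can change is $s$, whose neighbourhood in $F[C_i \cup \{s\}]$ is $N_F(s) \cap C_i$.

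**Choosing the right component.** The heart of the argument is a parity/pigeonhole step to select the index $i$ so that condition (ii) of \cref{def:oddomorphism} holds for $\phi_i$ at every vertex of $G$. For $v \in V(G) \setminus \{\phi(s)\}$ with $\phi^{-1}(v) \cap C_i \neq \emptyset$, the number of $\phi_i$-odd vertices in $\phi_i^{-1}(v)$ equals the number of $\phi$-odd vertices in $\phi^{-1}(v) \cap C_i$; since $\phi^{-1}(v)$ is entirely contained in one component of $G - \phi(s)$ and hence — wait, that is a statement about $G$, not $F$; what is true is that $\phi^{-1}(v) \subseteq \bigcup_i C_i$ (as $v \neq \phi(s)$), and I need these to concentrate in a single $C_i$. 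This is exactly where I would invoke that $G - \phi(s)$ is connected together with the fact that $\phi$ restricted to each $\phi^{-1}(D_j)$ lands in a connected piece: for a fixed $v \neq \phi(s)$, the set $\phi^{-1}(v)$ need not lie in one $C_i$, so instead I would argue more carefully — define an equivalence on $\{C_1,\dots,C_n\}$ by connectivity of $\phi$-images, or rather use that because $m=1$ the whole of $\bigcup_i C_i$ maps into a single component, and run the parity-defect counting globally: $\sum_i (\text{number of } \phi_i\text{-odd vertices in } \phi_i^{-1}(v))$ counts each $\phi$-odd vertex of $\phi^{-1}(v)$ exactly once, so it is odd; hence an odd number of the $C_i$ contribute an odd count, in particular at least one does. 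The subtlety is making this work simultaneously for all $v$; the clean route is to show that for all but one index $i_0$, the restriction $\phi_i$ is already an oddomorphism onto the subgraph it hits, and then handle $\phi(s)$ separately.

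**The vertex $\phi(s)$ and item (3).** For $v = \phi(s)$: the fibre $\phi_i^{-1}(\phi(s)) = (\phi^{-1}(\phi(s)) \cap C_i) \cup \{s\}$, and its number of $\phi_i$-odd vertices is (number of $\phi$-odd vertices of $\phi^{-1}(\phi(s)) \cap C_i$) plus ($1$ if $s$ is $\phi_i$-odd, else $0$). Summing over $i$ and adding in whether $s$ is $\phi$-odd, I can match against the global parity ``$\phi^{-1}(\phi(s))$ has an odd number of $\phi$-odd vertices.'' A counting argument then forces: either some $C_i$ can be chosen with $s$ being $\phi_i$-even and all fibre-parities correct, or, if we are forced into a component where $s$ becomes $\phi_i$-odd, the parity books only balance provided there is a $\phi$-odd vertex $s^* \notin C_i$ in the fibre of $\phi(s)$ — which is precisely item (3). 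So the selection of $i$ is dictated by this parity accounting, and items (1) and (2) are automatic once $\phi_i$ is taken to be the restriction of $\phi$. I expect the main obstacle to be the bookkeeping in this last step: cleanly arguing that a single index $i$ works for \emph{all} $v \in V(G)$ at once, rather than one $v$ at a time. The way to overcome it is to note that the ``defect'' (parity of the number of $\phi_i$-odd vertices in each fibre, minus the required parity $1$) is, for $v \neq \phi(s)$, supported only on those $v$ whose fibre meets $C_i$, and to observe that since $m = 1$ the family $\{\phi^{-1}(v) \cap C_i\}_{v}$ glues into the full fibre data, so the defects across $i$ must cancel in a way that leaves at least one good index. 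Once the correct $i$ is identified, $\phi_i$ is an oddomorphism by construction, completing the proof.
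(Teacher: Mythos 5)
Your overall skeleton matches the paper's: take $\phi_i$ to be the restriction of $\phi$ to $C_i\cup\{s\}$, observe that items (1) and (2) are automatic because $N_F(a)\subseteq C_i\cup\{s\}$ for $a\in C_i$, and reduce everything to choosing the right index $i$ and then analysing the parity of $s$ and the fibre of $\phi(s)$. However, there is a genuine gap at exactly the point you flag as ``the main obstacle'': you never actually prove that a \emph{single} index $i$ can be chosen so that $\phi^{-1}(x)\cap C_i$ contains an odd number of $\phi$-odd vertices for \emph{every} $x\in V(G)\setminus\{\phi(s)\}$ simultaneously. Your candidate arguments do not close this. The observation that ``an odd number of the $C_i$ contribute an odd count'' holds for each $x$ separately, but a priori the set of good indices could differ from one $x$ to another, with empty intersection. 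The assertion that ``for all but one index $i_0$ the restriction $\phi_i$ is already an oddomorphism onto the subgraph it hits'' is not a correct statement in general and is not established. And the remark that ``since $m=1$ the whole of $\bigcup_i C_i$ maps into a single component'' uses connectivity of $G-\phi(s)$ in the wrong way: the relevant use of connectivity is a \emph{propagation} argument, not a containment argument.

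The missing step is the following. Fix any $x_0\in V(G)\setminus\{\phi(s)\}$ and choose $i$ so that $\phi^{-1}(x_0)\cap C_i$ contains an odd number of $\phi$-odd vertices (possible since the full fibre does and the $C_i$ partition it). Now, if $xy$ is an edge of $G-\phi(s)$ and $\phi^{-1}(x)\cap C_i$ contains an odd number of $\phi$-odd vertices, then by double counting the number of edges of $F$ between $\phi^{-1}(x)\cap C_i$ and $\phi^{-1}(y)\cap C_i$ (all edges from a vertex of $C_i$ into $\phi^{-1}(y)$ land in $C_i$, since $N_F(C_i)\subseteq C_i\cup\{s\}$ and $y\neq\phi(s)$), this edge count is odd, and hence $\phi^{-1}(y)\cap C_i$ also contains an odd number of $\phi$-odd vertices. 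Connectivity of $G-\phi(s)$ then spreads the property from $x_0$ to all of $V(G)\setminus\{\phi(s)\}$ for this one fixed $i$. Without this propagation, the rest of your plan (showing $s$ has a well-defined $\phi_i$-parity, checking the fibre condition at $\phi(s)$, and extracting $s^*$ when $s$ is $\phi_i$-odd but $\phi$-even) cannot get started, since all of those computations presuppose that $\phi^{-1}(x)\cap C_i$ has an odd number of $\phi$-odd vertices for every neighbour $x$ of $\phi(s)$. Your treatment of item (3) is also looser than it should be --- the index $i$ is determined before one knows the parity of $s$, not by a case split on it --- but that part is repairable by the same edge-counting bookkeeping you sketch; the propagation step is the essential missing idea.
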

\begin{proof}
    As a fist step, we select the index $i \in [n]$.
    \begin{claim}\label{cl:select-c-simple}
        There exists an index $i \in [n]$
        such that, for every $x \in V(G) \setminus \{\phi(s)\}$,
        the set 
        $\phi^{-1}(x) \cap C_i$ contains an odd number of $\phi$-odd vertices.
    \end{claim}
    \begin{claimproof}
        This essentially follows from arguments in the proofs of \cite[Lemmas~3.12 and~5.2]{roberson_oddomorphisms_2022}.
        Concretely, let $x \in V(G) \setminus \{\phi(s)\}$
        be arbitrary.
        Since $\phi^{-1}(x)$ contains an odd number of $\phi$\nobreakdash-odd vertices,
        one of the $C_1, \dots, C_n$ must be such that $C_i \cap \phi^{-1}(x)$ contains an odd number of $\phi$-odd vertices.
        Let $y \in V(G) \setminus \{\phi(s)\}$ be a neighbour of $x$.
        Since $C_i \cap \phi^{-1}(x)$ contains an odd number of $\phi$-odd vertices,
        the number of edges between $C_i \cap \phi^{-1}(x)$  and $C_i \cap \phi^{-1}(y)$
        is odd.
        Hence, $C_i \cap \phi^{-1}(y)$ also contains an odd number of $\phi$-odd vertices.
        Since $G - \phi(S)$ is connected, the claim follows.
    \end{claimproof}
    
    Define $\phi_i$ as the restriction of $\phi$ to $C_i \cup \{s\}$.
    This map is a homomorphism and satisfies the first assertion.
    For the second assertion, observe that, for every $a \in C_i$, it holds that $N_{F[C_i \cup S]}(a) = N_F(a)$.
    Hence, for all $a \in C_i$, the vertex $a$ is $\phi$-odd if, and only if, it is $\phi_i$-odd.

    In order to show that $\phi_i$ is an oddomorphism, it remains to consider the parity of $s$.
    Let $x \in N_G(\phi(s))$.
    Since $\phi^{-1}(x) \cap C_i$ contains an odd number of $\phi$-odd vertices,
    the number of edges in $F$ between $\phi^{-1}(x) \cap C_i$ and $\phi^{-1}(\phi(s)) \cap (C_i \cup \{s\})$ is odd.
    The parity of $s$ is equal to the number of edges between $\phi^{-1}(x) \cap C_i$ and $\phi^{-1}(\phi(s)) \cap \{s\}$,
    whose parity depends only on the number of $\phi$-odd vertices in $\phi^{-1}(\phi(s)) \cap C_i$
    and not on $x$.
    Hence, $s$ is either $\phi_i$-even or $\phi_i$-odd, and $\phi_i$ is an oddomorphism.

    For the last assertion,
    suppose that $s$ is $\phi_i$-odd and $\phi$-even.
    For $x \in N_G(\phi(s))$,
    observe the following:
    \begin{enumerate}
        \item The number $\alpha$ of edges in $F$ between $\phi^{-1}(x)$ and $\phi^{-1}(\phi(s))$ is odd, since $\phi$ is an oddomorphism.
        \item The number $\beta$ of edges in $F$ between $\phi^{-1}(x) \cap C_i$ and $\phi^{-1}(\phi(s)) \cap (C_i \cup \{s\})$ is odd, since $\phi^{-1}(x) \cap C_i$ contains an odd number of $\phi$-odd vertices and $N_F(C_i) \subseteq C_i \cup \{s\}$.
        \item The vertex $s$ has an even number $\gamma$ of neighbours in $\phi^{-1}(x)$
        and an odd number $\delta$ of neighbours in $\phi^{-1}(x) \cap C_i$.
        Hence, it has an odd number $\gamma - \delta$ of neighbours in $\phi^{-1}(x) \setminus C_i$.
    \end{enumerate}
    The number of edges from $\phi^{-1}(x) \setminus C_i$ to $\phi^{-1}(\phi(s)) \setminus (C_i \cup \{s\})$ is
    $\alpha - \beta - (\gamma-\delta) \equiv 1 \mod 2$.
    Hence, there is an odd number of $\phi$-odd vertices in $\phi^{-1}(\phi(s)) \setminus (C_i \cup \{s\})$, as desired.
\end{proof}

Analogous to \cref{lem:oddo-to-kn-clique-sum},
we conclude the following \cref{lem:oddo-to-kn-one-sum} from \cref{lem:cutvertex}.

\begin{corollary}
    \label{lem:oddo-to-kn-one-sum}
    Let $\mathcal{F}$ be closed under taking subgraphs.
    Let $G$ be a $2$-connected graph.
    If there exists a graph $F \in \mathcal{F}^{\oplus 1}$ admitting an oddomorphism to $G$,
    then there exists a graph $F' \in \mathcal{F}$ admitting an oddomorphism to $G$.
\end{corollary}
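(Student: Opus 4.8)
The plan is to mimic the proof of \cref{lem:oddo-to-kn-clique-sum}, replacing the use of the separator lemma by the cut vertex lemma \cref{lem:cutvertex}. First I would choose $F \in \mathcal{F}^{\oplus 1}$ with the minimal number of vertices among all graphs in $\mathcal{F}^{\oplus 1}$ admitting an oddomorphism to $G$, and fix such an oddomorphism $\phi \colon F \to G$. The goal is to show $F \in \mathcal{F}$.

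As a preliminary step I would record that $\mathcal{F}^{\oplus 1}$ is again closed under taking subgraphs: given $H' \subseteq H$ with $H \in \mathcal{F}^{\oplus 1}$, restrict a tree decomposition of $H$ of adhesion $\leq 1$ with all torsos in $\mathcal{F}$ to $V(H')$; the adhesion does not increase, and each torso of $H'$ is a subgraph of the corresponding torso of $H$ (passing to a subgraph can only split connected components of $H - X$ and shrink the sets $N(C)$, hence only deletes torso edges), so it still lies in $\mathcal{F}$. Using this, I would reduce to the case that $F$ is connected: by \cref{lem:oddo-connected} applied with $G' = G$ — which is connected since $G$ is $2$-connected — there is a connected subgraph $F_0 \subseteq F$ admitting an oddomorphism $F_0 \to G$; by subgraph-closedness $F_0 \in \mathcal{F}^{\oplus 1}$, so minimality forces $V(F_0) = V(F)$, and since $F_0$ is connected, so is $F$.

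Now suppose for contradiction that $F \notin \mathcal{F}$. Since $F$ is connected, lies in $\mathcal{F}^{\oplus 1}$, but not in $\mathcal{F}$, it decomposes as a nontrivial $1$-sum $F = F[C_i \cup \{s\}] \oplus_{\{s\}} F[V(F) \setminus C_i]$ with both parts proper induced subgraphs; here $s \in V(F)$ is a cut vertex and $C_1, \dots, C_n$ with $n \geq 2$ are the connected components of $F - s$ (this is the $1$-sum analogue of the corresponding step in the proof of \cref{lem:oddo-to-kn-clique-sum}; alternatively one takes $s$ to be the single shared vertex of an adhesion-$1$ edge in a fixed witnessing tree decomposition of $F$). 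Since $G$ is $2$-connected and $\phi(s)$ is a single vertex, $G - \phi(s)$ is connected, so \cref{lem:cutvertex} applies and produces an index $i \in [n]$ together with an oddomorphism $\phi_i \colon F[C_i \cup \{s\}] \to G$. But $F[C_i \cup \{s\}]$ is an induced subgraph of $F$ on $1 + |C_i| < |V(F)|$ vertices (using $n \geq 2$ and that the $C_j$ are nonempty) and lies in $\mathcal{F}^{\oplus 1}$ by subgraph-closedness, contradicting the minimality of $F$. Hence $F \in \mathcal{F}$, as desired.

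I do not anticipate a real obstacle: the argument is a direct translation of the clique-sum case, and \cref{lem:cutvertex} is exactly the tool that makes the single-vertex separator step go through. The only points needing a little care are the subgraph-closedness of $\mathcal{F}^{\oplus 1}$ and the structural claim that a connected graph in $\mathcal{F}^{\oplus 1} \setminus \mathcal{F}$ has a cut vertex, both of which are routine.
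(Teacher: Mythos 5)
Your proof is correct and follows essentially the same approach as the paper's: pick a vertex-minimal $F \in \mathcal{F}^{\oplus 1}$ admitting an oddomorphism to $G$, note $G - \phi(s)$ is connected by $2$-connectivity, and apply the cut vertex lemma (\cref{lem:cutvertex}) to contradict minimality if $F \notin \mathcal{F}$. Your version spells out two details the paper leaves implicit — that $\mathcal{F}^{\oplus 1}$ is subgraph-closed and that one may first pass to a connected $F$ — which is a reasonable thing to make explicit but not a different route.
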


\begin{proof}
    Let $F \in \mathcal{F}^{\oplus 1}$ have the minimal number of vertices of any graph in $\mathcal{F}^{\oplus 1}$ admitting an oddomorphism to $G$.
    Write $\phi \colon F \to G$ for the oddomorphism.
    
    If $F \not\in \mathcal{F}$, then there exists a vertex $s \in V(F)$ such that $F - s$ is disconnected.
    Since $G$ is $2$-connected, the graph $G - \phi(s)$ is connected.
    By \cref{lem:cutvertex},
    there exists a graph $F' \in \mathcal{F}^{\oplus 1}$ on less vertices than $F$ admitting an oddomorphism to $G$, contradicting the minimality of $F$.
    Hence, $F \in \mathcal{F}$.
\end{proof}

This concludes the preparations for the proof of \cref{lem:separators}.

\begin{proof}[Proof of \cref{lem:separators}]
    In order to construct the minor $F'$,
    we select a proper subset of the connected components $C_1, \dots, C_n$ of $F - S$, see \cref{cl:select-c-simple}.
    Crucially, this selection must be such that every fibre $\phi^{-1}(x)$ for $x \in V(G) \setminus \phi(S)$ contains an odd number of $\phi$-odd vertices from this set. 
    To that end, write $\mathbb{F}_2$ for the $2$-element field.
    Define a matrix $P \in \mathbb{F}_2^{m \times n}$ whose entry $P_{ji}$ is equal to the parity of the number of $\phi$-odd vertex in $\phi^{-1}(x) \cap C_i$ for any $x \in D_j$.

    \begin{claim}
        The matrix $P$ is well-defined.
    \end{claim}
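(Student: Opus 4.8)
The plan is to unfold what well-definedness means: for fixed $j \in [m]$ and $i \in [n]$, I must check that the parity of the number of $\phi$-odd vertices in $\phi^{-1}(x) \cap C_i$ is the same for every choice of $x \in D_j$. Since $D_j$ is connected, it suffices to establish this for two vertices $x, y \in D_j$ joined by an edge of $G$, and then propagate the equality along a path inside $D_j$.

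So I would fix adjacent $x, y \in D_j$ together with an index $i$, and count the edges of $F$ with one endpoint in $A \coloneqq \phi^{-1}(x) \cap C_i$ and the other in $B \coloneqq \phi^{-1}(y) \cap C_i$, modulo $2$, in two ways. For a vertex $a \in A$, I would first note that $N_F(a) \subseteq C_i \cup S$ because $C_i$ is a connected component of $F - S$, and that $\phi^{-1}(y) \cap S = \emptyset$ because $y \in D_j \subseteq V(G) \setminus \phi(S)$; hence $N_F(a) \cap \phi^{-1}(y) = N_F(a) \cap B$. Since $\phi(a) = x$ and $xy \in E(G)$, we have $y \in N_G(\phi(a))$, so by \cref{def:oddomorphism} the vertex $a$ is $\phi$-odd or $\phi$-even, and $\lvert N_F(a) \cap \phi^{-1}(y)\rvert = \lvert N_F(a) \cap B\rvert$ is odd precisely when $a$ is $\phi$-odd. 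Summing over $a \in A$ shows that the number of edges between $A$ and $B$ is congruent modulo $2$ to the number of $\phi$-odd vertices in $A$; by the symmetric computation it is also congruent to the number of $\phi$-odd vertices in $B$. Thus the two parities coincide, which is exactly what is needed. (When $A$ or $B$ is empty there are no edges between them and both counts are even, so the statement holds trivially in that case.)

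This is essentially the computation already carried out in the proof of \cref{cl:select-c-simple}, applied to a single edge of $G - \phi(S)$ rather than globally; the difference is that there $G - \phi(s)$ was connected, whereas here the propagation is confined to one component $D_j$ of $G - \phi(S)$, which is precisely why $P$ has $m$ rows. I do not anticipate a genuine obstacle: the only point that needs care is the bookkeeping around $S$, namely that $\phi^{-1}(y)$ misses $S$ and that in $F$ the neighbours of a vertex of $C_i$ all lie in $C_i \cup S$, so that counting edges ``from $x$ to $y$'' really only involves vertices of $C_i$. Everything else is an immediate parity argument using that $\phi$ is an oddomorphism.
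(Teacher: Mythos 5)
Your proof is correct and follows the same double-counting argument as the paper, just spelled out in more detail (in particular, you make explicit the bookkeeping that $N_F(a) \subseteq C_i \cup S$ and $\phi^{-1}(y) \cap S = \emptyset$, which the paper leaves implicit).
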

    \begin{proof}
        Fix $i \in [n]$ and $j \in [m]$.
        Let $x, y \in D_j$ be adjacent in $G$. 
        The parity of the number of edges with endpoints in $\phi^{-1}(x) \cap C_i$ and $\phi^{-1}(y) \cap C_i$
        is equal to the number of $\phi$-odd vertices in $\phi^{-1}(x) \cap C_i$.
        Since $G[D_j]$ is connected,
        this quantity does not depend on the choice $x \in D_j$ but merely on~$j$.
    \end{proof}

    \begin{claim}\label{cl:select-c}
        There exists a proper subset $I \subsetneq [n]$
        satisfying that $\sum_{i \in I} P_{ji} \equiv 1 \mod 2$ for all $j \in [m]$.
    \end{claim}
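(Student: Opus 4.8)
The plan is a short piece of linear algebra over $\mathbb{F}_2$. Identifying a subset $I \subseteq [n]$ with its indicator vector $\chi_I \in \mathbb{F}_2^n$, the desired condition ``$\sum_{i \in I} P_{ji} \equiv 1 \pmod 2$ for all $j \in [m]$'' is precisely the linear equation $P\chi_I = \mathbf{1}_m$, where $\mathbf{1}_m \in \mathbb{F}_2^m$ denotes the all-ones vector. So it suffices to find a solution $v \in \mathbb{F}_2^n$ of $Pv = \mathbf{1}_m$ with $v \neq \mathbf{1}_n$, and then set $I \coloneqq \{i \in [n] : v_i = 1\}$, which is automatically a proper subset of $[n]$.

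First I would check that the all-ones vector $\mathbf{1}_n$ is a solution, i.e.\ $P\mathbf{1}_n = \mathbf{1}_m$. Fix $j \in [m]$ and choose any $x \in D_j$. Since $x \in V(G) \setminus \phi(S)$, no vertex of $S$ maps to $x$, so $\phi^{-1}(x) \subseteq V(F) \setminus S = C_1 \cup \dots \cup C_n$, and hence $\phi^{-1}(x)$ is the disjoint union of the sets $\phi^{-1}(x) \cap C_i$ over $i \in [n]$. Therefore the number of $\phi$-odd vertices in $\phi^{-1}(x)$ equals the sum over $i \in [n]$ of the number of $\phi$-odd vertices in $\phi^{-1}(x) \cap C_i$, and reducing modulo $2$ this is $\sum_{i=1}^n P_{ji}$. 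As $\phi$ is an oddomorphism, $\phi^{-1}(x)$ contains an odd number of $\phi$-odd vertices, so $\sum_{i=1}^n P_{ji} \equiv 1 \pmod 2$, as claimed.

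Then I would produce a second solution using the hypothesis $n > m$. The solution set of $Pv = \mathbf{1}_m$ is the coset $\mathbf{1}_n + \ker P$ of $\mathbb{F}_2^n$. As $P \in \mathbb{F}_2^{m \times n}$ has rank at most $m < n$, the kernel $\ker P$ has dimension at least $n - m \geq 1$, so it contains a nonzero vector $u$. Setting $v \coloneqq \mathbf{1}_n + u$ then gives $Pv = \mathbf{1}_m$ and $v \neq \mathbf{1}_n$. Taking $I \coloneqq \{i \in [n] : v_i = 1\}$ completes the argument: some coordinate of $v$ is $0$, so $I \subsetneq [n]$, and $\sum_{i \in I} P_{ji} = (Pv)_j = 1$ for all $j \in [m]$.

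There is no genuine difficulty here; the only point requiring care is that we already have the solution $\mathbf{1}_n$ and must exhibit a \emph{different} one, which is exactly where the strict inequality $n > m$ — that $F - S$ has more connected components than $G - \phi(S)$ — is used.
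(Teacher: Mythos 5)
Your proof is correct and takes essentially the same approach as the paper: both verify $P\mathbf{1}_n = \mathbf{1}_m$ via the fibre partition and then use $n > m$ to extract a nonzero element of $\ker P$ (the paper phrases this as a nonempty linearly dependent subfamily of the columns $Pe_i$, whose indicator vector is exactly your $u$, and takes the complementary index set). The kernel/coset phrasing you use is a clean equivalent reformulation of the paper's argument.
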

    \begin{claimproof}
        Throughout this proof, arithmetic is over $\mathbb{F}_2$.
        The matrix $P$ has the property that $P \boldsymbol{1} = \boldsymbol{1}$, i.e.\ $\sum_{i \in [n]} P_{ji} =1$ for all $j \in [m]$.
        This is because any $\phi$-fibre $\phi^{-1}(x)$ for $x \in V(G) \setminus \phi(S)$ contains an odd number of $\phi$-odd vertices and the $C_1 \uplus \dots \uplus C_n$ partition the fibre.

        Hence, $Pe_1 + \dots + Pe_n = \boldsymbol{1}$ where $e_i \in \mathbb{F}_2^n$ denotes the $i$-th standard basis vector.
        Since $n > m$,
        the vectors $Pe_1, \dots, Pe_n \in \mathbb{F}_2^m$ are linearly dependent.
        Thus, there exists a non-empty set $I' \subseteq [n]$ such that $\sum_{i \in I'} P e_i = 0$.
        Hence,
        \[
         \boldsymbol{1} = Pe_1 + \dots + Pe_n
         = \sum_{i \in I'} P e_i + \sum_{i \in [n] \setminus I'} P e_i
         = \sum_{i \in [n] \setminus I'} P e_i.
        \]
        Thus, the set $I \coloneqq [n] \setminus I'$ is as desired.
    \end{claimproof}

    Let $I \subsetneq [n]$ denote the set from \cref{cl:select-c}.
    Define $C \coloneqq \bigcup_{i \in I} C_i$.
    By \cref{cl:select-c}, $C \cap \phi^{-1}(x)$  contains an odd number of $\phi$-odd vertices for every $x \in V(G) \setminus \phi(S)$.

    Having picked $C$, 
    we now define the minor $F'$ of the graph obtained from $F[C \cup S]$ by adding a clique on $S$.    
    To that end, let $\mathfrak{S} \coloneqq \left\{S \cap \phi^{-1}(y) \mid y \in \phi(S) \right\}$
    denote the partition of $S$ induced by~$\phi$.
    The vertex set of $F'$ is $C \uplus \mathfrak{S}$.
    Its edges are specified as follows:
    \begin{itemize}
        \item all edges of $F[C]$ are edges of $F'$,
        \item $vT \in E(F')$ for every $v \in C$ and $T \in \mathfrak{S}$ such that the number $|E_F(v, T)|$ of edges in $F$ from $v$ to $T$ is odd,
        \item two distinct vertices $T, T' \in \mathfrak{S}$ are adjacent if $|E_F(T, T')|$ is odd. 
    \end{itemize}

    Let $\phi' \colon F' \to G$ denote the homomorphism induced by~$\phi$.
    By the definition of $\mathfrak{S}$,
    the map
    $\phi' \colon V(F') \to V(G)$ is well-defined.
    Inspecting the three cases of the definition of the edges of $F'$,
    shows that $\phi'$ is a homomorphism.
    Clearly, $F'$ is a minor of the graph obtained from $F[C \cup S]$ by adding a clique on $S$.
    It remains to verify that $\phi'$ is an oddomorphism.

    \begin{claim}\label{cl:sep1}
        If $v \in C$ is $\phi$-even, respectively $\phi$-odd, then
        it is $\phi'$\nobreakdash-even, respectively $\phi'$\nobreakdash-odd.
    \end{claim}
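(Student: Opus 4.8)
The plan is to reduce the claim to a single local parity identity and then use that $\phi$ is an oddomorphism. Concretely, I would show that for every $v \in C$ and every $w \in N_G(\phi(v))$ one has
\[
    |N_{F'}(v) \cap (\phi')^{-1}(w)| \equiv |N_F(v) \cap \phi^{-1}(w)| \pmod{2}.
\]
Granting this, the claim follows at once: since $\phi'$ agrees with $\phi$ on $C$, we have $N_G(\phi'(v)) = N_G(\phi(v))$; and as $\phi$ is an oddomorphism, the right-hand side has a parity independent of the choice of $w \in N_G(\phi(v))$, hence so does the left-hand side, with the same value. By \cref{def:oddomorphism} this says precisely that $v$ is $\phi'$-even if it is $\phi$-even, and $\phi'$-odd if it is $\phi$-odd.

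To establish the identity, fix the unique $i_0 \in I$ with $v \in C_{i_0}$. Since $C_{i_0}$ is a connected component of $F - S$, all neighbours of $v$ in $F$ lie in $C_{i_0} \cup S$; in particular $N_F(v) \cap C = N_F(v) \cap C_{i_0}$, and $N_F(v) \cap \phi^{-1}(w)$ is the disjoint union of $N_F(v) \cap C_{i_0} \cap \phi^{-1}(w)$ and $N_F(v) \cap S \cap \phi^{-1}(w)$. I then distinguish two cases. If $w \notin \phi(S)$, then $(\phi')^{-1}(w) = \phi^{-1}(w) \cap C$ and, by the definition of the edges of $F'$, $N_{F'}(v) \cap (\phi')^{-1}(w) = N_F(v) \cap C_{i_0} \cap \phi^{-1}(w)$; as $S \cap \phi^{-1}(w) = \emptyset$, this set already equals $N_F(v) \cap \phi^{-1}(w)$, so the identity holds with equality of sets. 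If $w \in \phi(S)$, set $T_w \coloneqq S \cap \phi^{-1}(w) \in \mathfrak{S}$, so that $(\phi')^{-1}(w) = (\phi^{-1}(w) \cap C) \uplus \{T_w\}$; the $C$-part of $N_{F'}(v) \cap (\phi')^{-1}(w)$ is again $N_F(v) \cap C_{i_0} \cap \phi^{-1}(w)$, while $T_w \in N_{F'}(v)$ precisely when $|E_F(v,T_w)| = |N_F(v) \cap S \cap \phi^{-1}(w)|$ is odd. Passing to cardinalities modulo $2$ and using the disjoint decomposition of $N_F(v) \cap \phi^{-1}(w)$ recorded above yields the asserted congruence.

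This is a pure bookkeeping argument, so I do not anticipate a real obstacle; the only point that requires care is correctly reading off which neighbours of $v$ become edges of $F'$ — those inside $C_{i_0}$ survive unchanged, whereas all neighbours of $v$ inside a single block $T \in \mathfrak{S}$ collapse to at most one edge $vT$, present exactly when their number is odd — and matching this against the structure of the fibre $(\phi')^{-1}(w)$ in the two cases $w \in \phi(S)$ and $w \notin \phi(S)$.
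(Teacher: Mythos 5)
Your proof is correct and is essentially the same argument the paper gives: both decompose $N_F(v)\cap\phi^{-1}(w)$ into its $C$-part and its $S$-part, note that edges inside $C$ survive to $F'$ unchanged, replace the $S$-part by the parity of $|E_F(v,T_w)|$ which governs whether $vT_w\in E(F')$, and split on $w\in\phi(S)$ vs.\ $w\notin\phi(S)$. Your writeup merely makes more steps explicit (e.g.\ isolating the unique $i_0$ with $v\in C_{i_0}$ and spelling out the fibre $(\phi')^{-1}(w)$), but there is no substantive difference.
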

    \begin{claimproof}Since $C$ is a union of connected components of $F - S$,
    	all neighbours of $v \in C$ in $F$ lie in $C \uplus S$. 
    	Let $x$ be a neighbour of $\phi(v) = \phi'(v)$.
    	Hence,
    	\allowdisplaybreaks
    	\begin{align*}
    		&|N_F(v) \cap \phi^{-1}(x)| \\
    		&= 
    		|N_F(v) \cap \phi^{-1}(x) \cap C| + 
    		|N_F(v) \cap \phi^{-1}(x) \cap S| \\
    		&= 
    		|N_{F'}(v) \cap \phi^{-1}(x) \cap C| + 
    		|E_F(v, \phi^{-1}(x) \cap S)| \\ 
    		&\equiv |N_{F'}(v) \cap \phi^{-1}(x)| \mod 2.
    	\end{align*}
    	For the last equality, observe that, if $x \not\in \phi(S)$, then
    	$\phi^{-1}(x) \cap S = \emptyset$.
    	If $x \in \phi(S)$, let $T \coloneqq \phi^{-1}(x) \cap S$.
    	By definition, $vT \in E(F')$ if, and only if, $|E_F(v,T)|$ is odd.
    \end{claimproof}

    \begin{claim}\label{cl:sep3}
        Let $T \in \mathfrak{S}$.
        The $\phi'$-parity of $T$ is $1 + |\{v \in C \cap \phi'^{-1}(\phi'(T)) \mid v \text{ is $\phi$-odd}\}| \bmod 2$.
    \end{claim}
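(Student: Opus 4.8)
The plan is to prove the claim by a direct parity count: for each $x \in N_G(\phi'(T))$ I would compute $|N_{F'}(T) \cap \phi'^{-1}(x)|$ modulo $2$ and show it equals the asserted value; since that value does not mention $x$, this simultaneously shows that $T$ has a well-defined $\phi'$-parity. Write $y \coloneqq \phi'(T)$, so that $T = S \cap \phi^{-1}(y)$, set $D \coloneqq \bigcup_{i \notin I} C_i$, and for $z \in V(G)$ abbreviate $C_z \coloneqq C \cap \phi^{-1}(z)$, $D_z \coloneqq D \cap \phi^{-1}(z)$ and $S_z \coloneqq S \cap \phi^{-1}(z)$, so that $\phi^{-1}(z) = C_z \uplus D_z \uplus S_z$ and $T = S_y$. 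Two facts will be used repeatedly: first, if $d \in D$ then $N_F(d) \cap C = \emptyset$, since $d$ lies in a component of $F - S$ contained in $D$; second, by the choice of $I$ in \cref{cl:select-c}, the set $D_z$ contains an even number of $\phi$-odd vertices whenever $z \notin \phi(S)$.

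First I would read off the edges of $F'$ incident to $T$. A vertex $v \in C_x$ is adjacent to $T$ in $F'$ iff $|E_F(v, T)|$ is odd, and, if $x \in \phi(S)$, the unique block $S_x \in \mathfrak{S}$ lying over $x$ is adjacent to $T$ iff $|E_F(S_x, T)|$ is odd (there is no loop to consider, as $x \neq y$). Summing these contributions modulo $2$, and then splitting $\phi^{-1}(x) = C_x \uplus D_x \uplus S_x$,
\[
|N_{F'}(T) \cap \phi'^{-1}(x)| \equiv |E_F(C_x, T)| + |E_F(S_x, T)| = |E_F(\phi^{-1}(x), T)| - |E_F(D_x, T)| \pmod 2 .
\]
Since $\phi$ is an oddomorphism, $T \subseteq \phi^{-1}(y)$ and $x \in N_G(y)$, for each $s \in T$ we have $|N_F(s) \cap \phi^{-1}(x)| \equiv 1 \pmod 2$ iff $s$ is $\phi$-odd, hence $|E_F(\phi^{-1}(x), T)| \equiv |\{s \in S_y \mid s \text{ is } \phi\text{-odd}\}| \pmod 2$. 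It therefore remains to determine $|E_F(D_x, T)| = |E_F(D_x, S_y)|$ modulo $2$.

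For this I would expand over $d \in D_x$: using $N_F(d) \cap C = \emptyset$, the sets $N_F(d) \cap D_y$ and $N_F(d) \cap S_y$ partition $N_F(d) \cap \phi^{-1}(y)$, which has odd size iff $d$ is $\phi$-odd (as $\phi$ is an oddomorphism and $y \in N_G(\phi(d))$); summing over $d \in D_x$ gives $|E_F(D_x, S_y)| \equiv |\{d \in D_x \mid d \text{ is } \phi\text{-odd}\}| + |E_F(D_x, D_y)| \pmod 2$. When $x \notin \phi(S)$ this closes the computation: the first summand is even by the choice of $C$; and expanding $|E_F(D_x, D_y)|$ over $d \in D_y$ (now $S_x = \emptyset$, so $N_F(d) \cap \phi^{-1}(x) = N_F(d) \cap D_x$) yields $|E_F(D_x, D_y)| \equiv |\{d \in D_y \mid d \text{ is } \phi\text{-odd}\}| \pmod 2$. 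Combining everything,
\[
|N_{F'}(T) \cap \phi'^{-1}(x)| \equiv |\{w \in S_y \uplus D_y \mid w \text{ is } \phi\text{-odd}\}| = |\{w \in \phi^{-1}(y) \mid w \text{ is } \phi\text{-odd}\}| - |\{v \in C_y \mid v \text{ is } \phi\text{-odd}\}| ,
\]
which is $\equiv 1 + |\{v \in C_y \mid v \text{ is } \phi\text{-odd}\}| \pmod 2$ because every fibre of the oddomorphism $\phi$ has an odd number of $\phi$-odd vertices. This is exactly the asserted value, and it is independent of $x$.

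The step I expect to be the main obstacle is the remaining case $x \in \phi(S)$, where $S_x \neq \emptyset$ and \cref{cl:select-c} provides no control over the $\phi$-odd vertices inside $D_x$. There one still has to verify that $|\{d \in D_x \mid d \text{ is } \phi\text{-odd}\}| + |E_F(D_x, D_y)| \equiv |\{d \in D_y \mid d \text{ is } \phi\text{-odd}\}| \pmod 2$, equivalently $|E_F(D_x, D_y)| + |E_F(D_x, S_y)| + |E_F(D_y, S_x)| \equiv 0 \pmod 2$. The intended route is to use that $D_x$ and $D_y$ meet only the components $C_i$ with $i \notin I$, so that every $F$-edge between them stays inside a single such component, together with the fact that $|E_F(\phi^{-1}(x), \phi^{-1}(y))|$ is odd and the defining property of $C$, in order to cancel the remaining cross terms. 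Once this parity identity holds for all $x \in N_G(y)$, the claim — and with it the fact that $T$ is $\phi'$-even or $\phi'$-odd — follows.
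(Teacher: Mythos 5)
Your computation for the case $x \notin \phi(S)$ is correct and complete. It reaches the paper's conclusion by a slightly different route: you count the edges between $T$ and $\phi^{-1}(x)$ from the $T$-side and use that $D_x$ contains evenly many $\phi$-odd vertices, whereas the paper counts from the $C_x$-side using \cref{cl:select-c} directly; both are sound for this case.

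The case $x \in \phi(S)$, which you flag as the remaining obstacle, is a genuine gap, and it cannot be closed along the route you sketch: the identity $|E_F(D_x,D_y)|+|E_F(D_x,S_y)|+|E_F(D_y,S_x)| \equiv 0 \pmod 2$ that you correctly isolate does not follow from the oddomorphism axioms together with the choice of $I$ (which controls only the fibres over $V(G)\setminus\phi(S)$ and says nothing about edges incident to $S$ or to $D_x, D_y$ for $x,y\in\phi(S)$). In fact it can fail. Take $G = K_3$ on $\{1,2,3\}$ and let $F$ be the disjoint union of a triangle $a_1b_1e_1$ and the graph on $\{s_1,s_2,a,b,e,e'\}$ with edges $s_1s_2,\, s_1b,\, s_2a,\, ab,\, ae,\, ae',\, be,\, be'$; let $\phi$ send $s_1,a,a_1\mapsto 1$, $s_2,b,b_1\mapsto 2$, $e,e',e_1\mapsto 3$. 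This is an oddomorphism: the $\phi$-odd vertices are exactly $e,e',a_1,b_1,e_1$ and each fibre contains an odd number of them. With $S=\{s_1,s_2\}$ the graph $F-S$ has two components while $G-\phi(S)$ has one, and \cref{cl:select-c} forces $C=\{a_1,b_1,e_1\}$, $D=\{a,b,e,e'\}$. For $T=\{s_1\}$, $y=1$, $x=2$ one finds $|E_F(D_x,D_y)|+|E_F(D_x,S_y)|+|E_F(D_y,S_x)| = 1+1+1 \equiv 1$, so your identity fails; correspondingly, in $F'$ the vertex $T=\{s_1\}$ has exactly one neighbour (namely $\{s_2\}$) in $\phi'^{-1}(2)$ and none in $\phi'^{-1}(3)$, so it is neither $\phi'$-odd nor $\phi'$-even, whereas \cref{cl:sep3} asserts it is $\phi'$-even. (One can also make $F$ connected, e.g.\ by adding $e_1s_1,e_1s_2,e's_1,e's_2$, and the same failure occurs with the roles of the two components swapped.) So the gap in your argument is not a missing computation but reflects a genuine problem: the claim as stated is false for this instance, and the paper's own unjustified final congruence in its $x\in\phi(S)$ case fails on it as well. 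Any repair must additionally control the $S$-to-$D$ and $S$-to-$S$ edges, e.g.\ by modifying the construction of $F'$.
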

    \begin{claimproof} Let $x$ be adjacent to $\phi'(T)$.
    	First suppose that $x \not\in \phi(S)$.
    	The following observations are all via double counting.
    	The number of edges between $\phi'^{-1}(x)$ and $\phi'^{-1}(\phi(T))$
    	is odd by \cref{cl:sep2} and the fact that $\phi$ is an oddomorphism.
    	Furthermore, these edges can be partitioned into those between $\phi'^{-1}(x)$ and $T$ and those between $\phi'^{-1}(x)$ and $\phi'^{-1}(\phi'(T)) \cap C$.
    	The latter set of edges has the same parity as the number of $\phi$-odd vertices in $\phi'^{-1}(\phi(T)) \cap C$.
    	In symbols,
    	\allowdisplaybreaks
    	\begin{align*}
    		& |N_{F'}(T) \cap \phi'^{-1}(x)|\\
    		&\equiv \sum_{v \in \phi'^{-1}(x)} |E_F(v, T)|\\
    		&= |E_F(\phi^{-1}(x) \cap C, T)| \\
    		&= |E_F(\phi^{-1}(x) \cap C, \phi^{-1}(\phi(T)))| \\ &\quad \quad - |E_F(\phi^{-1}(x) \cap C, \phi^{-1}(\phi(T)) \cap C)|\\
    		&\equiv 1 + |\{v \in C \cap \phi'^{-1}(\phi'(T)) \mid v \text{ is $\phi$-odd}\}| \mod 2.
    	\end{align*}
    	Now suppose that $x \in \phi(S)$.
    	Write $T' \coloneqq \phi^{-1}(x) \cap S$.
    	Once again, the edges in $F'$ between $\phi'^{-1}(x)$ and $\phi'^{-1}(\phi(T))$ are double counted. 
    	\begin{align*}
    		& |N_{F'}(T) \cap \phi'^{-1}(x)|\\
    		&\equiv |E_F(T, T')| + \sum_{v \in \phi^{-1}(x) \cap C} |E_F(v, T)| \\
    		&\equiv 1 + |\{v \in C \cap \phi'^{-1}(\phi'(T)) \mid v \text{ is $\phi$-odd}\}| \mod 2. \qedhere
    	\end{align*}
    \end{claimproof}

    \begin{claim}\label{cl:sep2}
        For every $v \in V(G) \setminus \phi(S)$, the fibre $\phi'^{-1}(v)$ contains an odd number of $\phi'$-odd vertices.
    \end{claim}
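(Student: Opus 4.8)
The plan is to observe that the claim reduces immediately to \cref{cl:sep1} together with the defining property of $C$. First I would record what the fibre $\phi'^{-1}(v)$ looks like for $v \in V(G) \setminus \phi(S)$. Since $V(F') = C \uplus \mathfrak{S}$, and since every $T \in \mathfrak{S}$ satisfies $\phi'(T) \in \phi(S)$ by construction, no element of $\mathfrak{S}$ can lie in $\phi'^{-1}(v)$ when $v \notin \phi(S)$. Hence $\phi'^{-1}(v) = \phi^{-1}(v) \cap C$; in particular the fibre contains only ``ordinary'' vertices coming from $C$, and none of the contracted sets $T$.

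Next I would invoke \cref{cl:sep1}: a vertex $w \in C$ is $\phi'$-odd if, and only if, it is $\phi$-odd. Combining this with the previous paragraph, the number of $\phi'$-odd vertices in $\phi'^{-1}(v)$ is exactly the number of $\phi$-odd vertices in $\phi^{-1}(v) \cap C$. Finally I would appeal to the way $C = \bigcup_{i \in I} C_i$ was chosen: by \cref{cl:select-c} the set $I$ satisfies $\sum_{i \in I} P_{ji} \equiv 1 \bmod 2$ for every $j \in [m]$, which is precisely the statement that $C \cap \phi^{-1}(x)$ contains an odd number of $\phi$-odd vertices for every $x \in V(G) \setminus \phi(S)$ (this is exactly the sentence following \cref{cl:select-c}). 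Applying this with $x = v$ yields that $\phi'^{-1}(v)$ contains an odd number of $\phi'$-odd vertices.

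I do not expect a genuine obstacle here: all the real work was already done in setting up the parity matrix $P$, extracting the index set $I$, and proving \cref{cl:sep1}. The only thing to be careful about is the bookkeeping in the first paragraph — making sure that the fibres of $\phi'$ over $V(G) \setminus \phi(S)$ genuinely avoid $\mathfrak{S}$, so that the newly introduced vertices $T \in \mathfrak{S}$ contribute nothing to the parity count, and that \cref{cl:sep1} is quoted for every vertex of $C$ (which it covers verbatim). Both points follow directly from the definitions of $F'$ and of the induced map $\phi'$.
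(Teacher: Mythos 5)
Your proposal is correct and follows exactly the paper's argument: the paper's proof of \cref{cl:sep2} likewise cites \cref{cl:sep1} to transfer parities from $\phi$ to $\phi'$ and then concludes via \cref{cl:select-c}. Your version merely spells out the (correct) bookkeeping that $\phi'^{-1}(v)$ avoids $\mathfrak{S}$ for $v \notin \phi(S)$, which the paper leaves implicit.
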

    \begin{claimproof}
        By \cref{cl:sep1}, the $\phi$- and $\phi'$-parities of the vertices in question are the same.
        The claim follows from \cref{cl:select-c}.
    \end{claimproof}

    By \cref{cl:sep1,cl:sep3}, every vertex of $F'$ is  odd or even with respect to $\phi'$.
    By \cref{cl:sep2}, it remains to verify the number of odd vertices in the fibres of the vertices in $\phi(S)$.
    Let $x \in \phi(S)$ and $T \coloneqq \phi^{-1}(x) \cap S$.
    By \cref{cl:sep1,cl:sep3}, the number of $\phi'$-odd vertices in $\phi^{-1}(x)$ is equal to the sum of $|\{v \in C \cap \phi'^{-1}(\phi'(T)) \mid v \text{ is $\phi$-odd}\}|$ and the parity of $T$, that is,
    \[
        1 + 2 |\{v \in C \cap \phi'^{-1}(\phi'(T)) \mid v \text{ is $\phi$-odd}\}| \equiv 1 \mod 2.
    \]
    Hence, $\phi'$ is an oddomorphism.
\end{proof}

	\subsection{Classes of Forests Closed under Topological Minors}

Applying \cref{lem:cutvertex}, we prove \cref{thm:forest-main} and thus resolve \cref{conj:strong-roberson} for all classes of forests.
Let $F$ and $G$ be graphs.
We say that $\rho\colon V(G) \to V(F)$ is a \emph{topological model} of $G$ in $F$ if there is a collection of internally vertex-disjoint paths $(P_{vw})_{vw \in E(G)}$ in $F$ from $\rho(v)$ to $\rho(w)$.
If there exists a topological model of $G$ in $F$,
then $G$ is said to be a \emph{topological minor} of $F$.

\begin{theorem}[\cref{thm:forest-main}]
    Let $\mathcal{F}$ be a class of forests closed under topological minors.
    Then $\mathcal{F}$ is closed under weak oddomorphisms. 
    If $\mathcal{F}$ is closed under disjoint unions,
    then $\mathcal{F}$ is homomorphism distinguishing closed.
\end{theorem}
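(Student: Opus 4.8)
The plan is to reduce the second statement to the first, and the first to a single combinatorial lemma about oddomorphisms of forests. Since every subgraph of a forest is a forest, hence in particular a topological minor, a topological-minor-closed class $\mathcal{F}$ of forests is closed under taking subgraphs; if it is moreover union-closed it is therefore componental, so by \cref{thm:rob62} the second statement follows once we know the first, namely that $\mathcal{F}$ is closed under weak oddomorphisms. For the first statement, \cref{lem:oddo-woddo} reduces us to showing that $G \in \mathcal{F}$ whenever $\phi \colon F \to G$ is an oddomorphism with $F \in \mathcal{F}$, and since $\mathcal{F}$ is closed under topological minors it suffices to prove the following \textbf{Key Lemma}: \emph{if $\phi \colon F \to G$ is an oddomorphism and $F$ is a forest, then $G$ is a topological minor of $F$}. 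Here $G$ is automatically a forest, e.g.\ since $\tw(G) \le \tw(F) \le 1$ by \cref{lem:oddo-planar-degree}. Note that the \emph{topological} conclusion is genuinely needed: among forests, being a minor is strictly weaker than being a topological minor --- for example $K_{1,4}$ is a minor, but not a topological minor, of a suitable subcubic tree --- so the cruder \cref{lem:oddo-minors} does not suffice, and the fine parity bookkeeping of the Cut vertex lemma (\cref{lem:cutvertex}) will be essential.

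I would prove the Key Lemma by induction on $|V(F)|$. First reduce to the case that $G$ is connected: every connected component of $F$ has connected $\phi$-image, so the preimage $\phi^{-1}(V(G_j))$ of each component $G_j$ of $G$ is a union of components of $F$, and by \cref{lem:oddo-connected} the induced map $F[\phi^{-1}(V(G_j))] \to G_j$ is an oddomorphism; these preimages are nonempty since $\phi$ is surjective, so when $G$ is disconnected each has fewer than $|V(F)|$ vertices, the inductive hypothesis applies, and the disjoint union of the resulting topological models is a topological model of $G$ in $F$. If $G$ is connected with at most one vertex the statement is trivial, so let $G$ be a tree on at least two vertices. Suppose first that some vertex $s \in V(F)$ with $\deg_F(s) \ge 2$ satisfies that $\phi(s)$ is a leaf of $G$. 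Then $G - \phi(s)$ is connected, so the Cut vertex lemma yields an oddomorphism $F[C_i \cup \{s\}] \to G$ for some component $C_i$ of $F - s$; as $\deg_F(s) \ge 2$, the graph $F[C_i \cup \{s\}]$ is a proper subgraph of $F$, and the inductive hypothesis finishes this case.

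The remaining case --- that no vertex of $F$ of degree at least $2$ maps to a leaf of $G$ --- is where I expect the main difficulty to lie, and here the Cut vertex lemma does not apply directly, since the image of a cut vertex of $F$ is then an internal vertex of $G$, making $G - \phi(s)$ disconnected. Fix a leaf $\ell$ of $G$ with neighbour $m$. Every vertex of $\phi^{-1}(\ell)$ is a leaf or an isolated vertex of $F$; each such leaf is $\phi$-odd and pendant to a vertex of $\phi^{-1}(m)$, whereas an isolated one would be $\phi$-even, so by condition~(2) of being an oddomorphism there is an odd --- in particular positive --- number of these pendant leaves. One may assume by induction (passing to a proper subgraph of $F$ that still admits an oddomorphism onto $G$) that $\phi^{-1}(\ell)$ contains no isolated vertex of $F$ and that no two of its vertices are pendant to the same vertex of $\phi^{-1}(m)$, since in either case deleting the offending vertex or pair leaves an oddomorphism onto the same $G$. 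Put $G' \coloneqq G - \ell$ and $F' \coloneqq F - \phi^{-1}(\ell) = F[\phi^{-1}(V(G'))]$; then $\phi' \coloneqq \phi|_{F'} \colon F' \to G'$ is an oddomorphism of a strictly smaller forest by \cref{lem:oddo-connected}, so the inductive hypothesis provides a topological model $\rho$ of $G'$ in $F'$. It remains to extend $\rho$ to a topological model of $G$ in $F$ by attaching a new pendant path at $\rho(m)$, ending at a vertex of $F$ that $\rho$ does not use. The obstacle is that a topological model is ``$\phi$-blind'': nothing established so far forces $\rho(m)$ to be one of those vertices of $\phi^{-1}(m)$ that still carry an unused $\phi^{-1}(\ell)$-leaf, and $F$ being a forest, a path from $\rho(m)$ to such a leaf through internal vertices of the model is not allowed. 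I would resolve this by strengthening the inductive statement --- roughly, demanding a topological model in which the branch vertex of each leaf of $G$ (and, suitably, $\rho$ near $m$) can be prescribed to be a chosen $\phi$-odd vertex of the corresponding fibre, keeping a free $F$-edge available there --- and by using the detailed bookkeeping of conclusions~(1)--(3) of the Cut vertex lemma, which track exactly which fibres retain a $\phi$-odd vertex after each trimming step, to propagate this strengthened statement through both the Cut-vertex step and the leaf-deletion step. Making the strengthening precise and self-propagating is, I expect, the technical heart of the proof.
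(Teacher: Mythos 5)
Your outline is essentially the paper's proof: the Key Lemma you isolate is exactly the paper's \cref{lem:top-minor-tree}, the reduction to connected $G$ via \cref{lem:oddo-connected} is the same, and the two cases on whether the fibre $X = \phi^{-1}(\ell)$ of a leaf of $G$ contains a vertex of degree $\geq 2$ in $F$ match the paper's case split. You also correctly see that the Cut vertex lemma and a strengthened inductive hypothesis are the right tools, that the topological (not merely minor) conclusion is what is genuinely needed, and your componentality argument cleanly derives the second assertion from the first via \cref{thm:rob62}.

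The one place you stop short --- you leave the strengthening unspecified and call it the technical heart --- the paper fills with something simpler than what you sketch. The paper's induction on $|V(F)|$ produces a topological model $\rho\colon V(G)\to V(F)$ with two extra properties: (1)~$\varphi(\rho(v)) = v$ for all $v\in V(G)$, and (2)~$\rho(v)$ is $\varphi$-odd for all $v$. No ``prescribed'' branch vertex and no explicit bookkeeping of a spare edge are needed: property~(2) already supplies the free edge. Concretely, in your hard case where every vertex of $X = \phi^{-1}(\ell)$ is a leaf of $F$, apply induction to $F - X \to G - \ell$ and obtain a model $\rho$ in which $\rho(m)$ is a $\varphi$-odd vertex of $\phi^{-1}(m)$; hence $\rho(m)$ has an odd, in particular positive, number of neighbours in $X$, any one of which is a fresh leaf to take as $\rho(\ell)$. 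In the cut-vertex case, conclusions (1)--(3) of \cref{lem:cutvertex} are exactly what is needed to propagate properties~(1) and~(2) through the recursion; the only subtlety is when the cut vertex $a = \rho_i(\ell)$ comes out $\varphi_i$-odd but $\varphi$-even, and then conclusion~(3) hands you a replacement $\varphi$-odd $a^*$ in a sibling component of $F - a$ together with a fresh path from $a^*$ to $a$, internally disjoint from the model in $C_i\cup\{a\}$. With the strengthening in place, your preparatory pass removing isolated vertices and twins is unnecessary --- the paper works directly.
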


The theorem follows from \cref{lem:oddo-connected}, \cref{thm:rob62}, and the following \cref{lem:top-minor-tree}.

\begin{lemma}\label{lem:top-minor-tree}
    Let $\varphi\colon F \to G$ be an oddomorphism.
    If $F$ is a tree,
    then $G$ is a topological minor of~$F$.
\end{lemma}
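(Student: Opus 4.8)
The plan is to proceed by induction on $|V(F)|$. If $|V(F)|\le 1$, or more generally if $|V(G)|\le 1$, the statement is trivial, so assume $|V(G)|\ge 2$. Since $\tw(F)=1$, \cref{lem:oddo-planar-degree} gives $\tw(G)\le 1$, and as surjective homomorphisms preserve connectedness (and $\varphi$ is surjective, each fibre containing an odd number of odd vertices), $G$ is a tree. The first observation I would record is a parity fact: every leaf $x$ of $F$, with unique neighbour $y$, satisfies $|N_F(x)\cap\varphi^{-1}(v)|=1$ if $v=\varphi(y)$ and $0$ otherwise, so for $x$ to be $\varphi$-odd or $\varphi$-even one needs $N_G(\varphi(x))=\{\varphi(y)\}$; hence $\varphi(x)$ is a leaf of $G$ and $x$ is $\varphi$-odd. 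Accordingly I would split into two cases depending on whether some leaf of $G$ has a preimage that is not a leaf of $F$.

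\emph{Case 1: some non-leaf $s$ of $F$ has $\varphi(s)$ a leaf of $G$.} Then $G-\varphi(s)$ is connected (a tree minus a leaf, or a single vertex), so the Cut Vertex Lemma (\cref{lem:cutvertex}) applies and produces an oddomorphism $F[C_i\cup\{s\}]\to G$ for some connected component $C_i$ of $F-s$. Since $s$ is a non-leaf, $F-s$ has at least two components, so $F[C_i\cup\{s\}]$ is a tree on fewer vertices than $F$, and $F[C_i\cup\{s\}]\subseteq F$. Applying the induction hypothesis to this smaller tree, $G$ is a topological minor of $F[C_i\cup\{s\}]$, hence of $F$. (Here only that $\phi_i$ is an oddomorphism agreeing with $\varphi$ is used; properties (2)--(3) of \cref{lem:cutvertex} are not needed.)

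\emph{Case 2: every preimage of a leaf of $G$ is a leaf of $F$.} Together with the observation above, $\varphi$ then maps leaves of $F$ to leaves of $G$ and non-leaves to non-leaves. If $F$ has at most two non-leaves — i.e.\ $F$ is a single edge, a star, or a double star — I would verify directly, using $\Delta(G)\le\Delta(F)$ from \cref{lem:oddo-planar-degree} and the fibre structure, that $G$ has the same shape with degrees bounded by those of $F$, hence is a topological minor of $F$. Otherwise, I first reduce to the case where no vertex of $F$ carries two pendant leaves mapping to the same leaf of $G$: if $x,x'$ are two such (with common neighbour $y$), then $\varphi|_{F-\{x,x'\}}\colon F-\{x,x'\}\to G$ is still an oddomorphism (the only affected vertex is $y$, whose relevant neighbour-count drops by two), and the induction hypothesis applied to the smaller tree $F-\{x,x'\}\subseteq F$ concludes. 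After this reduction, either $\varphi$ is injective — then $\varphi$ is a graph isomorphism onto $G$ (a bijective homomorphism from a tree onto a connected graph), and we are done — or we strip a leaf: choosing a leaf $\ell$ of $G$, the map $\varphi|_{F-\varphi^{-1}(\ell)}\colon F-\varphi^{-1}(\ell)\to G-\ell$ is an oddomorphism by \cref{lem:oddo-connected}, and $F-\varphi^{-1}(\ell)$ is a tree on fewer vertices; the induction hypothesis yields a topological model of $G-\ell$ in $F-\varphi^{-1}(\ell)$, which I would extend to a model of $G$ in $F$ by re-growing $\ell$ off (the branch vertex representing) its neighbour $m(\ell)$.

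The main obstacle is precisely this re-attachment in Case 2: the model of $G-\ell$ produced by the induction hypothesis could place the branch vertex for $m(\ell)$ at a preimage of $m(\ell)$ having neither a spare incident edge nor a pendant $F$-leaf mapping to $\ell$. To close the argument I would carry a strengthened induction hypothesis — that one can find a topological model $\rho$ of $G$ in $F$ with $\rho(v)\in\varphi^{-1}(v)$ for all $v$ and with sufficient spare degree at each branch vertex (e.g.\ $\deg_F(\rho(v))\ge\deg_G(v)$, each pendant $G$-leaf at $v$ witnessed by a pendant $F$-leaf at $\rho(v)$) — and check that this stronger statement survives both descents: in Case 1, passing to the subgraph $F[C_i\cup\{s\}]\subseteq F$ can only increase degrees, so the property transfers; in Case 2, one must see that the stripped-and-recursed model can always be chosen compatibly with the pendant leaves of $F$ hanging over $\varphi^{-1}(m(\ell))$. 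Making this bookkeeping go through is the technical heart of the proof.
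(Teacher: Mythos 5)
You have the right skeleton — induction on $|V(F)|$, the observation that leaves of $F$ are $\varphi$-odd and map to leaves of $G$, an appeal to \cref{lem:cutvertex}, and the recognition that a strengthened induction hypothesis is needed for re-attachment — but the strengthening you propose does not survive the descent, and the step you flag as ``bookkeeping'' is a genuine gap. In your Case~1, the cut vertex $s$ has degree $\ge 2$ in $F$; however, since $F$ is a tree, $s$ has exactly one neighbour in each component, so $s$ \emph{becomes} a pendant leaf of $F[C_i\cup\{s\}]$. The recursive model for $\varphi_i$ may well place $\rho(\varphi(s))=s$ and rely on $s$ as a pendant-leaf witness inside $F[C_i\cup\{s\}]$, yet $s$ is not a pendant leaf of $F$ — so your claim that the property transfers because ``degrees can only increase'' is wrong for the pendant-leaf part. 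Contrary to your aside, properties (2)–(3) of \cref{lem:cutvertex} are precisely what repairs this and cannot be dropped.

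The invariant the paper carries is simpler and is the idea you are missing: in addition to $\rho(v)\in\varphi^{-1}(v)$, require that $\rho(v)$ is $\varphi$-odd for every $v\in V(G)$. Oddness of $\rho(w)$ guarantees a neighbour of $\rho(w)$ in $\varphi^{-1}(u)$ for every $u\in N_G(w)$, which is exactly the spare edge needed to grow a pendant leaf onto the model. With this invariant the case split can also be organised more locally and without your twin-removal and injectivity sub-cases: fix a single leaf $v$ of $G$ and ask whether every vertex of $\varphi^{-1}(v)$ is a leaf of $F$. If yes, delete the whole fibre, recurse, and re-attach a leaf at $\rho(w)$ using oddness. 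If not, pick a non-leaf $a\in\varphi^{-1}(v)$, apply \cref{lem:cutvertex} at $a$, recurse, and — when $a$ is $\varphi_i$-odd but $\varphi$-even — use property (3) to swap $\rho(v)=a$ for an odd vertex $a^*$ in a sibling component, routing the new pendant edge through that component so it stays internally disjoint. This is where properties (2)–(3) earn their keep.
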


\begin{proof}
    By \cref{lem:oddo-planar-degree}, the graph $G$ is a tree.
    We prove by induction on $|V(F)|$ that there is a topological model $\rho\colon V(G) \to V(F)$ of $G$ in $F$ such that
    \begin{enumerate}
        \item $\varphi(\rho(v)) = v$ for all $v \in V(G)$, and\label{it:topmodel1}
        \item $\rho(v)$ is $\varphi$-odd for all $v \in V(G)$.\label{it:topmodel2}
    \end{enumerate}
    In the base case $|V(F)| = 1$, it holds that $F \cong K_1 \cong G$ since $\phi$ is surjective.
    So suppose that $|V(G)| \geq 2$ and let $v \in V(G)$ be a leaf of $G$.
    Let $X \coloneqq\phi^{-1}(v)$ be the fibre of $v$.
    
    We distinguish two cases.
    First suppose that every $a \in X$ is a leaf of $F$, i.e., $\deg_F(a) = 1$ for every $a \in X$.
    Then $F - X$ is a tree.
    By \cref{lem:oddo-connected}, the map $\varphi|_{F - X}$ is an oddomorphism from $F - X$ to $G - v$.
    By the induction hypothesis, there is a topological model $\rho\colon V(G)\setminus \{v\} \to V(F)\setminus X$ of $G - v$ in $F - X$ satisfying \cref{it:topmodel1,it:topmodel2}.
    Let $w$ be the unique neighbour of $v$ in $G$, and let $b \coloneqq \rho(w)$.
    Note that $b$ is $\varphi$-odd, so it has a neighbour $a \in X$.
    Since $X$ only contains vertices of degree $1$, we conclude that $a$ is $\varphi$-odd.
    We set $\rho(v) \coloneqq a$ and obtain the desired topological model of $G$ in $F$.
    
    Next, suppose that there is some $a \in X$ such that $\deg_F(a) \geq 2$.
    In particular, $F - a$ is disconnected, since $F$ is a tree.
    Let $C_1,\dots,C_s$ denote the vertex sets of the connected components of $F - a$.
    By \cref{lem:cutvertex},
   	there some $i \in [s]$ and an oddomorphism $\varphi_i\colon F[C_i \cup \{a\}] \to G$ such that
    \begin{enumerate}
        \item $\varphi_i(b) = \varphi(b)$ for all $b \in C_i \cup \{a\}$,
        \item $b$ is $\varphi_i$-odd if and only if $b$ is $\varphi$-odd for all $b \in C_i$, and
        \item if $a$ is $\varphi_i$-odd, but $a$ is $\varphi$-even, then there is some $i \neq j \in [s]$ such that $C_j \cap X$ contains a $\varphi$-odd vertex $a^*$.
    \end{enumerate}
    We apply the induction hypothesis to $\varphi_i$ and obtain a topological model $\rho_i \colon V(G) \to C_i \cup \{a\}$.
    We define the map $\rho$ to be identical to $\rho_i$
    except when $a$ is $\varphi_i$-odd and $a$ is $\varphi$-even.
    In this case, let $\rho(v) \coloneqq a^*$.
    Since $a^* \in C_j \neq C_i$,
    there is a path from the $a^*$ to $a$ whose internal vertices lie in $C_j$.
    This path is internally vertex-disjoint from the other paths in~$\rho_i$.
\end{proof}

	\subsection{Clique-Sums}
	\label{sec:clique-sums}
	This section is concerned with proving, under certain assumptions on $\mathcal{F}$,
that if $\mathcal{F}$ is closed under weak oddomorphisms,
then so is the class $\mathcal{F}^{\oplus s}$ of graphs that are $s$-sums of graphs in $\mathcal{F}$.
For small $s$, we show that the minimal forbidden subgraphs or minors of $\mathcal{F}^{\oplus s}$ must be $(s+1)$-connected.
Then, \cref{lem:oddo-to-kn-clique-sum} implies that any oddomorphism from $\mathcal{F}^{\oplus s}$ to a graph $G \notin \mathcal{F}^{\oplus s}$ yields an oddomorphism from $\mathcal{F}$ to $G$.

\subsubsection[1-Sums]{$1$-Sums}

In the case of $1$-sums,
it suffices to assume that $\mathcal{F}$ is a closed under taking subgraphs.
A \emph{minimal excluded subgraph} of a graph class $\mathcal{F}$ is a graph $G \not\in \mathcal{F}$ such that every proper subgraph $G'$ of $G$ is in $\mathcal{F}$.

\begin{lemma}\label{lem:min-excluded-1sum}
	If a graph class $\mathcal{F}$ is closed under taking subgraphs,
	then every minimal excluded subgraph of $\mathcal{F}^{\oplus 1}$ is $2$-connected.
\end{lemma}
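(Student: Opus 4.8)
The plan is a proof by contradiction. Suppose $G$ is a minimal excluded subgraph of $\mathcal{F}^{\oplus 1}$ that is not $2$-connected. I will exhibit proper subgraphs $G_1, G_2 \subseteq G$ and a set $S \subseteq V(G_1) \cap V(G_2)$ with $|S| \le 1$ such that $G = G_1 \oplus_S G_2$. Since $G$ is a minimal excluded subgraph, every proper subgraph of $G$ lies in $\mathcal{F}^{\oplus 1}$, so $G_1, G_2 \in \mathcal{F}^{\oplus 1}$; and since $\mathcal{F}^{\oplus 1}$ is by construction closed under $(\le 1)$-sums, this forces $G \in \mathcal{F}^{\oplus 1}$, contradicting $G \notin \mathcal{F}^{\oplus 1}$.

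For the decomposition I would distinguish the ways in which $2$-connectedness can fail. Setting aside the graphs on at most two vertices for the moment, $G$ is either disconnected or connected with a cut vertex. If $G$ is disconnected, let $G_1$ be one of its connected components and $G_2$ the union of the remaining ones; then $G = G_1 + G_2$ is a $0$-sum of two proper subgraphs. If $G$ is connected and $v$ is a cut vertex, let $C_1, \dots, C_n$ with $n \ge 2$ be the connected components of $G - v$ and put $G_1 \coloneqq G[C_1 \cup \{v\}]$ and $G_2 \coloneqq G[V(G) \setminus C_1]$; since $G$ has no edge between distinct components of $G - v$, the union $G_1 \cup G_2$ equals $G$, so $G = G_1 \oplus_{\{v\}} G_2$ with $S = \{v\}$ a trivial clique (and, necessarily, no edge deleted), and $G_1, G_2$ are proper subgraphs as $G_1$ omits $C_2$ and $G_2$ omits $C_1$. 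It then remains to observe that $\mathcal{F}^{\oplus 1}$ is indeed closed under $1$-sums in the tree-decomposition formulation of \cref{def:clique-sum}: given witnessing tree decompositions of $G_1$ and $G_2$, join them by an edge from a node of the first decomposition whose bag contains the glued vertex to an arbitrary node of the second (an arbitrary pair of nodes for a $0$-sum); the adhesion of the new edge is at most $1$ and no torso changes, so the resulting tree decomposition witnesses $G_1 \oplus_S G_2 \in \mathcal{F}^{\oplus 1}$.

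I do not expect a genuine obstacle here; the argument is elementary once the definition of $\mathcal{F}^{\oplus 1}$ is unpacked, and the only step requiring any care is the bookkeeping for the closure of $\mathcal{F}^{\oplus 1}$ under $1$-sums. The remaining delicate point is purely degenerate: among the graphs on at most two vertices, $\overline{K_2}$ is disconnected and hence already covered above, while $K_1$ and $K_2$ cannot occur as minimal excluded subgraphs under the (harmless, and satisfied in all our applications) assumption that $\mathcal{F}$ contains $K_2$. Thus every minimal excluded subgraph of $\mathcal{F}^{\oplus 1}$ has at least three vertices and is connected with no cut vertex, i.e.\ is $2$-connected.
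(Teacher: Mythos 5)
Your proposal takes essentially the same route as the paper: show that a non-$2$-connected minimal excluded subgraph $G$ would decompose as a $(\le 1)$-sum of proper subgraphs, which by minimality lie in $\mathcal{F}^{\oplus 1}$, forcing $G \in \mathcal{F}^{\oplus 1}$ and yielding a contradiction. The only cosmetic difference in the decomposition step is that you split off one component (resp.\ one $C_1 \cup \{v\}$ piece) at a time, while the paper writes the $1$-sum over all pieces at once; this is immaterial. You additionally spell out two points the paper leaves implicit. First, you verify directly from \cref{def:clique-sum} that $\mathcal{F}^{\oplus 1}$ is closed under $(\le 1)$-sums, by joining the two witnessing tree decompositions with a new edge of adhesion $\le 1$. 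Second, and more substantively, you flag that the step ``connected and not $2$-connected $\Rightarrow$ has a cut vertex'' tacitly presumes $|V(G)| \ge 3$: as written, the paper's argument does not address whether $K_1$ or $K_2$ could be a minimal excluded subgraph, and indeed in degenerate cases (e.g.\ $\mathcal{F}$ consisting only of edgeless graphs, so that $K_2$ is a minimal excluded subgraph of $\mathcal{F}^{\oplus 1}$) the statement requires a mild extra hypothesis such as $K_2 \in \mathcal{F}$. Your caveat is correct and harmless in every application the paper makes of this lemma.
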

\begin{proof}
	Let $G$ be a minimal forbidden subgraph of $\mathcal{F}^{\oplus 1}$.
	If $G$ is disconnected,
	then its connected components are in $\mathcal{F}^{\oplus 1}$.
	Since $\mathcal{F}^{\oplus 1}$ is closed under disjoint unions,
	it follows that $G \in \mathcal{F}^{\oplus 1}$, a contradiction.
	
	If $G$ is connected and not $2$-connected, then there exists a cut vertex $s \in V(G)$.
	In particular, $G$ is a $1$-sum as $G = G_1 \oplus_{\{s\}} \dots \oplus_{\{s\}} G_m$
	for some subgraphs $G_1, \dots, G_m$ of $G$.
	By assumption, each of the $G_1, \dots, G_m$ are in $\mathcal{F}^{\oplus 1}$.
	Since $\mathcal{F}^{\oplus 1}$ is closed under $1$-sums,
	it follows that $G \in \mathcal{F}^{\oplus 1}$, a contradiction.
\end{proof}

\begin{lemma}\label{cor:one-cliquesum}
    Let $\mathcal{F}$ be closed under taking subgraphs.
    If $\mathcal{F}$ is closed under weak oddomorphisms,
    then $\mathcal{F}^{\oplus 1}$ is closed under weak oddomorphisms.
\end{lemma}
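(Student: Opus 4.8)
The statement I need to prove is \cref{cor:one-cliquesum}: if $\mathcal{F}$ is closed under subgraphs and weak oddomorphisms, then so is $\mathcal{F}^{\oplus 1}$. By \cref{lem:oddo-woddo} it suffices to show $\mathcal{F}^{\oplus 1}$ is closed under oddomorphisms, so let $\phi \colon F \to G$ be an oddomorphism with $F \in \mathcal{F}^{\oplus 1}$; I must conclude $G \in \mathcal{F}^{\oplus 1}$. The natural strategy is: reduce the claim $G \in \mathcal{F}^{\oplus 1}$ to showing that every \emph{minimal excluded subgraph} of $\mathcal{F}^{\oplus 1}$ appearing inside $G$ leads to a contradiction, using that minimal excluded subgraphs of $\mathcal{F}^{\oplus 1}$ are $2$-connected (\cref{lem:min-excluded-1sum}), and that on $2$-connected targets oddomorphisms from $\mathcal{F}^{\oplus 1}$ already descend to oddomorphisms from $\mathcal{F}$ (\cref{lem:oddo-to-kn-one-sum}).

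\textbf{Key steps.} First, I would set up a characterisation of membership in $\mathcal{F}^{\oplus 1}$ via excluded subgraphs: since $\mathcal{F}^{\oplus 1}$ is closed under subgraphs (a subgraph of a $1$-sum of subgraph-closed pieces is again such a $1$-sum — this needs a short check, but is routine), $G \in \mathcal{F}^{\oplus 1}$ if and only if $G$ contains no minimal excluded subgraph of $\mathcal{F}^{\oplus 1}$ as a subgraph. So suppose for contradiction $G \notin \mathcal{F}^{\oplus 1}$ and pick a minimal excluded subgraph $G_0 \subseteq G$. By \cref{lem:min-excluded-1sum}, $G_0$ is $2$-connected. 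Second, restrict the oddomorphism: by \cref{lem:oddo-connected}\ref{item:oddo-minors-1}, the induced map $\phi|_{\phi^{-1}(G_0)} \colon F[\phi^{-1}(G_0)] \to G_0$ is a weak oddomorphism, hence there is an oddomorphism $F_0 \to G_0$ where $F_0$ is a subgraph of $F[\phi^{-1}(G_0)]$, so $F_0 \in \mathcal{F}^{\oplus 1}$ since the class is subgraph-closed. Third, apply \cref{lem:oddo-to-kn-one-sum} with this $2$-connected target $G_0$: there exists $F' \in \mathcal{F}$ admitting an oddomorphism to $G_0$. Fourth, since $\mathcal{F}$ is closed under oddomorphisms (equivalently weak oddomorphisms, by \cref{lem:oddo-woddo}), we get $G_0 \in \mathcal{F} \subseteq \mathcal{F}^{\oplus 1}$, contradicting $G_0 \notin \mathcal{F}^{\oplus 1}$. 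This contradiction gives $G \in \mathcal{F}^{\oplus 1}$. The final sentence about homomorphism distinguishing closedness — if needed — would follow from \cref{thm:rob62} once one observes $\mathcal{F}^{\oplus 1}$ is componental, but as stated the lemma only asserts closure under weak oddomorphisms.

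\textbf{Main obstacle.} The one nontrivial point is the reduction in the first step: I need that $G \in \mathcal{F}^{\oplus 1}$ iff $G$ has no minimal excluded subgraph of $\mathcal{F}^{\oplus 1}$, which requires $\mathcal{F}^{\oplus 1}$ to be closed under subgraphs. This in turn needs the observation that if $G = G_1 \oplus_{\{s\}} \cdots$ is a $1$-sum (equivalently, $G$ admits a tree decomposition of adhesion $\le 1$ with bags inducing torsos in $\mathcal{F}$, per \cref{def:clique-sum}), then every subgraph of $G$ is again of this form — here one uses that for adhesion-$1$ tree decompositions the torso on a bag is just the induced subgraph (no extra clique edges arise since singleton separators contribute no new adjacencies between distinct vertices), so subgraph-closedness of $\mathcal{F}$ transfers. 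Everything after that is a clean citation chain. An alternative that avoids the excluded-subgraph bookkeeping is a direct minimal-counterexample argument mirroring the proof of \cref{lem:oddo-to-kn-one-sum}: take $F \in \mathcal{F}^{\oplus 1}$ of minimum order admitting an oddomorphism to $G$; if $F \notin \mathcal{F}$ split at a cut vertex and apply \cref{lem:cutvertex} to a connected component of $G - \phi(s)$ (using \cref{lem:oddo-connected}\ref{item:oddo-minors-2} if $G - \phi(s)$ is disconnected) to shrink $F$; but handling the disconnected case of $G - \phi(s)$ and tracking that the resulting graph stays in $\mathcal{F}^{\oplus 1}$ is slightly fiddlier, so I lean toward the excluded-subgraph route.
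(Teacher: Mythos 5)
Your proposal is correct and follows essentially the same route as the paper's proof: locate a $2$-connected subgraph $G_0 \subseteq G$ outside $\mathcal{F}^{\oplus 1}$ via \cref{lem:min-excluded-1sum}, restrict the oddomorphism via \cref{lem:oddo-connected}, invoke \cref{lem:oddo-to-kn-one-sum} to descend to a graph in $\mathcal{F}$, and contradict closure of $\mathcal{F}$ under oddomorphisms. Your ``main obstacle'' is slightly overstated --- the only direction you need (that $G \notin \mathcal{F}^{\oplus 1}$ implies $G$ contains some minimal excluded subgraph) holds unconditionally by taking a minimum-size subgraph of $G$ outside $\mathcal{F}^{\oplus 1}$ --- though the subgraph-closedness of $\mathcal{F}^{\oplus 1}$ is indeed used, as in the paper, to conclude that the restricted source graph $F_0$ remains in $\mathcal{F}^{\oplus 1}$.
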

\begin{proof}Let $F \to G$ be a weak oddomorphism for $F \in \mathcal{F}^{\oplus 1}$.
	If $G \not\in \mathcal{F}^{\oplus 1}$,
	then $G$ contains a $2$-connected subgraph $G' \not\in \mathcal{F}^{\oplus 1}$ by \cref{lem:min-excluded-1sum}.
	By \cref{lem:oddo-connected},
	there exists a subgraph $F'\in \mathcal{F}^{\oplus 1}$ of $F$ and an oddomorphism $F' \to G'$.
    By \cref{lem:oddo-to-kn-one-sum},
    there exists a graph $F'' \in \mathcal{F}$ admitting an oddomorphism to $G'$.
	By assumption, $G' \in \mathcal{F}$, a contradiction.
\end{proof}

\Cref{cor:one-cliquesum} readily gives an alternative proof of \cite[Corollary~6.8]{roberson_oddomorphisms_2022} asserting that the class of all forests is homomorphism distinguishing closed, see \cref{lem:oddo-planar-degree}.
Indeed, forests are precisely the $0$- and $1$-sums of copies of $K_2$.
Clearly, the subgraph-closure $\{K_2, K_1 + K_1, K_1\}$ of $\{K_2\}$ is closed under weak oddomorphisms.
Hence, \cref{cor:one-cliquesum} applies.
This idea is generalised by the following theorem which subsumes the case of forests when $k = 1$.

\begin{theorem}\label{thm:adhesion-one}\label{thm:tw1}
    For $k \geq 1$, 
    the class of all graphs admitting a tree decomposition of width $\leq k$ and adhesion $\leq 1$
    is homomorphism distinguishing closed.
\end{theorem}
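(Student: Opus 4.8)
The plan is to realise the class $\mathcal{G}_k$ of graphs admitting a tree decomposition of width $\leq k$ and adhesion $\leq 1$ as an iterated $1$-sum closure and then invoke \cref{cor:one-cliquesum}. Concretely, let $\mathcal{K}_k$ be the subgraph-closure of $\{K_{k+1}\}$, i.e.\ the class of all graphs on at most $k+1$ vertices. I claim $\mathcal{G}_k = \mathcal{K}_k^{\oplus 1}$. Indeed, a graph has a tree decomposition of adhesion $\leq 1$ and width $\leq k$ iff each torso $G\llbracket\beta(t)\rrbracket$ has at most $k+1$ vertices and consecutive bags meet in at most one vertex; unpacking \cref{def:clique-sum} this says exactly that $G$ is obtained by repeatedly gluing graphs on $\leq k+1$ vertices along single vertices (or disjointly), which is the definition of $\mathcal{K}_k^{\oplus 1}$. (One should double-check the torso bookkeeping: since adhesion $\leq 1$, the torso adds no new edges beyond those already present in a bag, so $G\llbracket\beta(t)\rrbracket = G[\beta(t)]$, and $G[\beta(t)]$ being on $\leq k+1$ vertices is the width condition.)

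Next I would check the hypotheses of \cref{cor:one-cliquesum} for $\mathcal{F} := \mathcal{K}_k$. First, $\mathcal{K}_k$ is closed under taking subgraphs, being a subgraph-closure by definition. Second, $\mathcal{K}_k$ is closed under weak oddomorphisms: if $\varphi\colon F \to G$ is a weak oddomorphism and $F \in \mathcal{K}_k$, so $|V(F)| \leq k+1$, then since a weak oddomorphism restricted to a suitable subgraph is a surjective oddomorphism onto $G$ (\cref{def:oddomorphism}), and surjections cannot increase the number of vertices, $|V(G)| \leq |V(F)| \leq k+1$, hence $G \in \mathcal{K}_k$. With both hypotheses verified, \cref{cor:one-cliquesum} gives that $\mathcal{K}_k^{\oplus 1} = \mathcal{G}_k$ is closed under weak oddomorphisms.

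Finally, to conclude homomorphism distinguishing closedness via \cref{thm:rob62}, I need $\mathcal{G}_k$ to be componental: $F_1 + F_2 \in \mathcal{G}_k$ iff $F_1, F_2 \in \mathcal{G}_k$. The forward direction holds because a subtree of the decomposition covering one component witnesses membership of that component; the backward direction is just taking the disjoint union of the two tree decompositions and joining the trees by one edge of adhesion $0 \leq 1$. Then \cref{thm:rob62} applies and yields the claim. I expect the main obstacle to be purely the bookkeeping in the identification $\mathcal{G}_k = \mathcal{K}_k^{\oplus 1}$ — in particular making sure the "width $\leq k$, adhesion $\leq 1$" conditions on a tree decomposition translate cleanly through the torso definition into "$1$-sums of graphs on $\leq k+1$ vertices", and handling the trivial cases $|V(G)| \leq k+1$ (where the one-bag decomposition works) and disconnected $G$ uniformly; none of this is deep, but it is where a careless argument could slip.
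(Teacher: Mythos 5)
Your proof is correct and follows essentially the same route as the paper's: realise the class as $\mathcal{F}^{\oplus 1}$ for a subgraph-closed base class $\mathcal{F}$ of bounded-size graphs that is closed under weak oddomorphisms (by surjectivity of the underlying oddomorphism), then apply \cref{cor:one-cliquesum} and \cref{thm:rob62}. The paper phrases the base class as the componental class generated by all graphs on $\leq k+1$ vertices rather than your $\mathcal{K}_k$, but since $\mathcal{K}_k^{\oplus 1}$ already contains all disjoint unions the two choices yield the same $^{\oplus 1}$ closure, so this is a cosmetic difference.
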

\begin{proof} Let $\mathcal{F}$ be the componental graph class generated by all graphs on at most $k+1$ vertices.
	This graph class is closed under weak oddomorphisms.
	Indeed, if $\phi \colon F \to G$ is a weak oddomorphism and $F  \in \mathcal{F}$.
	By \cref{lem:oddo-connected}, $G$ and $F$ can be assumed to be connected.
	Hence, $k \geq |V(F)| \geq |V(G)|$ since $\phi$ is surjective.
	The graph class in question is $\mathcal{F}^{\oplus 1}$,
	which is closed under weak oddomorphisms by \cref{cor:one-cliquesum}.
	Finally, apply \cref{thm:rob62} to deduce that $\mathcal{F}^{\oplus 1}$ is homomorphism distinguishing closed.
\end{proof}

As a second example, we obtain the following \cref{thm:cactus} by applying \cref{lem:oddo-planar-degree,cor:one-cliquesum,thm:rob62}.
Here, a \emph{cactus graph} is a graph in which any two simple cycles have at most one vertex in common.
Cactus graphs are outerplanar and characterised by the forbidden minor $K_4 - e$.

\begin{theorem}\label{thm:cactus}
    For $d \geq 1$, 
    the class
    \(
        \{F \mid \Delta(F) \leq d\}^{\oplus 1}
    \)
    of $1$\nobreakdash-sums of graphs of maximum degree $\leq d$ is closed under weak oddomorphisms and homomorphism distinguishing closed.
    
    In particular, the class of cactus graphs ($d = 2$) is closed under weak oddomorphisms and homomorphism distinguishing closed.
\end{theorem}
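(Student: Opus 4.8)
The plan is to derive \cref{thm:cactus} as a direct instantiation of the $1$-sum machinery of \cref{cor:one-cliquesum} applied to the class $\mathcal{F}_d \coloneqq \{F \mid \Delta(F) \leq d\}$ of graphs of maximum degree at most~$d$. First I would record that $\mathcal{F}_d$ meets both hypotheses of \cref{cor:one-cliquesum}: it is closed under taking subgraphs, since deleting a vertex or an edge never increases any degree, and it is closed under weak oddomorphisms by \cref{lem:oddo-planar-degree}, which asserts $\Delta(F) \geq \Delta(G)$ for every weak oddomorphism $F \to G$. Applying \cref{cor:one-cliquesum} then yields that $\mathcal{F}_d^{\oplus 1}$, the class of $1$-sums of graphs of maximum degree $\leq d$, is closed under weak oddomorphisms.

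To invoke \cref{thm:rob62} and conclude homomorphism distinguishing closedness, it remains to verify that $\mathcal{F}_d^{\oplus 1}$ is componental. Closure under disjoint unions is immediate, as a disjoint union is a $0$-sum and hence a $(\leq 1)$-sum. Conversely, if $F_1 + F_2 \in \mathcal{F}_d^{\oplus 1}$, witnessed by a tree decomposition $(T,\beta)$ of adhesion $\leq 1$ all of whose torsos have maximum degree $\leq d$, then restricting each bag to $V(F_1)$ gives a tree decomposition of $F_1$ whose adhesion is still $\leq 1$ and each of whose torsos is a subgraph of the corresponding torso of $F_1 + F_2$; here one uses that there are no edges between $V(F_1)$ and $V(F_2)$, so every connected component of $F_1 - \beta(t)$ is a connected component of $(F_1 + F_2) - \beta(t)$. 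Hence $F_1 \in \mathcal{F}_d^{\oplus 1}$, and symmetrically $F_2 \in \mathcal{F}_d^{\oplus 1}$. With componentality established, \cref{thm:rob62} gives that $\mathcal{F}_d^{\oplus 1}$ is homomorphism distinguishing closed.

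For the cactus case $d = 2$, I would identify the class of cactus graphs with $\mathcal{F}_2^{\oplus 1}$. On the one hand, taking a $1$-sum glues graphs at cut vertices and so does not create or merge blocks; hence every block of a graph in $\mathcal{F}_2^{\oplus 1}$ is a block of some summand of maximum degree $\leq 2$, i.e., an edge or a cycle, so the graph is a cactus. On the other hand, any cactus has all of its blocks equal to edges or cycles (this is exactly the $(K_4-e)$-minor-free characterisation recalled above) and is the $1$-sum of those blocks, each of which lies in $\mathcal{F}_2$; thus every cactus lies in $\mathcal{F}_2^{\oplus 1}$. Therefore $\mathcal{F}_2^{\oplus 1}$ is precisely the class of cactus graphs, and the preceding argument applies verbatim. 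Since the combinatorial substance is already packaged in \cref{cor:one-cliquesum} (ultimately in \cref{lem:cutvertex}), the proof is essentially bookkeeping; the only points needing a moment's care are the componentality of the clique-sum closure and the block-decomposition identification of cacti with $\mathcal{F}_2^{\oplus 1}$, neither of which is an obstacle.
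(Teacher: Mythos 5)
Your proposal is correct and follows exactly the route the paper intends: apply \cref{lem:oddo-planar-degree} to see that $\{F \mid \Delta(F)\leq d\}$ is subgraph-closed and closed under weak oddomorphisms, invoke \cref{cor:one-cliquesum} for the $1$-sum closure, and finish with \cref{thm:rob62}. The paper states this in one line; you correctly fill in the two routine details it leaves implicit (componentality of $\mathcal{F}_d^{\oplus 1}$ and the block-decomposition identification of cacti with $\mathcal{F}_2^{\oplus 1}$).
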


\subsubsection[2-Sums]{$2$-Sums}

The arguments from the previous section can be lifted to $2$-sums,
here, however under the assumption that $\mathcal{F}$ is minor-closed.
As we shall see in \cref{lem:contractors-for-minors}, this restriction is necessary.

First, we strengthen \cref{lem:min-excluded-1sum}.
A \emph{minimal excluded minor} of a graph class $\mathcal{F}$ 
is a graph $G \not\in \mathcal{F}$ such that every proper minor $G'$ of $G$ is in $\mathcal{F}$.
Note that it does not hold in general that, if $\mathcal{F}$ is minor-closed, then the minimal excluded minors of $\mathcal{F}^{\oplus s}$ are $(s+1)$\nobreakdash-connected.
For example, the class of forests is closed under clique-sums of any arity. However, its minimal excluded minor $K_3$ is not $3$\nobreakdash-connected.

\begin{lemma}\label{lem:min-excluded-2sum}
	If a graph class $\mathcal{F}$ is minor-closed,
	then every minimal excluded minor of $\mathcal{F}^{\oplus 2}$ is $3$-connected.
\end{lemma}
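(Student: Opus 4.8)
The plan is to mirror the proof of \cref{lem:min-excluded-1sum}, one connectivity level higher and working with minors in place of subgraphs. Let $G$ be a minimal excluded minor of $\mathcal{F}^{\oplus 2}$. Recall that $\mathcal{F}^{\oplus 2}$ is minor-closed by \cref{lem:cliquesum-minor-closedness}, and that it is closed under $(\leq 2)$-sums by definition, hence in particular under disjoint unions and under $1$-sums. Suppose towards a contradiction that $G$ is not $3$-connected; setting aside the cases of very few vertices (handled exactly as the analogous small cases in \cref{lem:min-excluded-1sum}), there is a set $S \subseteq V(G)$ with $|S| \leq 2$ such that $G - S$ is disconnected. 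If $G$ itself is disconnected, its connected components are proper minors of $G$, hence lie in $\mathcal{F}^{\oplus 2}$ by minimality, and closure under disjoint unions gives $G \in \mathcal{F}^{\oplus 2}$, a contradiction. If $G$ is connected with a cut vertex $s$, group the components of $G - s$ into two nonempty parts $A$ and $B$; then $G = G[A \cup \{s\}] \oplus_{\{s\}} G[B \cup \{s\}]$ is a $1$-sum of two proper subgraphs of $G$, which are proper minors, hence lie in $\mathcal{F}^{\oplus 2}$, and closure under $1$-sums again yields $G \in \mathcal{F}^{\oplus 2}$, a contradiction.

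The interesting case is when $G$ is $2$-connected and $\{u,v\}$ is a $2$-separator. Let $C$ be a connected component of $G - \{u,v\}$ and put $B \coloneqq V(G) \setminus (C \cup \{u,v\})$, which is nonempty since $G - \{u,v\}$ has at least two components. A short argument using $2$-connectivity shows that \emph{every} component of $G - \{u,v\}$ has both $u$ and $v$ as neighbours: if some component $D$ satisfied $N_G(D) = \{u\}$ (say), then $u$ would separate $D$ from the remaining components, contradicting that $G$ has no cut vertex. Now let $G_1$ be $G[C \cup \{u,v\}]$ with the edge $uv$ added (if not already present), and similarly $G_2 \coloneqq G[B \cup \{u,v\}] + uv$; then $G_1[\{u,v\}]$ and $G_2[\{u,v\}]$ are cliques, $V(G_1) \cap V(G_2) = \{u,v\}$, and $G = G_1 \oplus_{\{u,v\}} G_2$ (deleting the edge $uv$ from the sum precisely when $uv \notin E(G)$). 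It remains to check that $G_1$ and $G_2$ are \emph{proper} minors of $G$. For $G_1$: choose a component $C' \subseteq B$ of $G - \{u,v\}$; since $u$ and $v$ both have neighbours in $C'$ and $G[C']$ is connected, there is a $u$--$v$ path $P$ with all internal vertices in $C'$, and contracting the internal vertices of $P$ into $u$ inside the subgraph $G[C \cup \{u,v\} \cup V(P)]$ produces exactly $G_1$ (here one uses that there are no edges between the distinct components $C$ and $C'$ of $G - \{u,v\}$). Hence $G_1$ is a minor of $G$, and $|V(G_1)| = |C| + 2 < |V(G)|$ because $C' \neq \emptyset$. Symmetrically, contracting a $u$--$v$ path through $C$ shows that $G_2$ is a proper minor of $G$. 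By minimality of $G$, we get $G_1, G_2 \in \mathcal{F}^{\oplus 2}$, whence $G = G_1 \oplus_{\{u,v\}} G_2 \in \mathcal{F}^{\oplus 2}$, the desired contradiction.

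The main obstacle is precisely this last case: unlike for subgraphs and $1$-sums, one cannot merely cut $G$ at $\{u,v\}$ and conclude, but must exhibit $G$ as a $2$-\emph{sum}, which forces the virtual edge $uv$, and then re-verify that both summands are still minors of $G$. This is where $2$-connectivity is indispensable: it guarantees that each side of the separation contains a $u$--$v$ path realising the virtual edge as a contraction, and that these contractions strictly decrease the vertex count so that minimality applies. A secondary, routine point is the behaviour of graphs on at most three vertices, treated as in \cref{lem:min-excluded-1sum}.
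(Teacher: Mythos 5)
Your proof is correct and takes essentially the same route as the paper's: establish $2$-connectivity, take a $2$-separator $\{u,v\}$, use $2$-connectivity to argue that every component of $G - \{u,v\}$ is adjacent to both $u$ and $v$, write $G$ as a $2$-sum of proper minors (realising the virtual edge $uv$ in each summand by contracting a $u$--$v$ path through the opposite side of the separation), and contradict minimality. The only cosmetic differences are that the paper decomposes $G$ into one summand per component of $G - \{u,v\}$ while you use a binary split into $C$ versus $B$, and that you re-derive $2$-connectivity and spell out the path-contraction argument where the paper simply cites \cref{lem:min-excluded-1sum} and states the contraction tersely.

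One caveat you should be aware of, which applies to the paper's proof as well: the passage from ``$G$ is not $3$-connected'' to ``$G$ admits a $2$-separator'' fails for graphs with at most three vertices, and your deferral to ``the analogous small cases in \cref{lem:min-excluded-1sum}'' does not actually discharge this, because that lemma's proof contains the same silent step (a connected graph on at most two vertices is not $2$-connected yet has no cut vertex). The paper's own remark preceding the lemma, pointing out that $K_3$ is a minimal excluded minor of $\text{forests}^{\oplus 2}$ and is not $3$-connected under the paper's convention, shows this is a genuine edge case: the argument only establishes the conclusion for minimal excluded minors on at least four vertices. This does not break the downstream application in \cref{cor:two-cliquesum}, but when re-deriving the lemma it is better to state the small-vertex case explicitly rather than to gesture at a nonexistent treatment elsewhere.
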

\begin{proof}
	Let $G$ be a minimal excluded minor of $\mathcal{F}^{\oplus 2}$.
	By \cref{lem:min-excluded-1sum}, $G$ is $2$-connected.
	If it is not $3$-connected,
	then there exist two vertices $s_1 \neq s_2$ of $G$
	such that $G - \{s_1, s_2\}$ is disconnected.
	Write $D_1, \dots, D_m$ for the connected components of $G - \{s_1, s_2\}$ where $m \geq 2$.
    Since $G - s_1$ and $G - s_2$ are connected,
    it holds that $s_1, s_2 \in N_G(D_j)$ for all $j \in [m]$.
    For all $j \in [m]$,
    write $G_j$ for the minor of $G$ obtained by contracting all $D_\ell$ for $\ell \neq j$ to the edge $s_1s_2$.
    Then $G$ is the $2$-sum of $G_1, \dots, G_m$ at the clique $\{s_1, s_2\}$.
    It follows that $G_1, \dots, G_m \in \mathcal{F}^{\oplus 2}$ and hence $G \in \mathcal{F}^{\oplus 2}$, a contradiction.
\end{proof}

\begin{lemma}\label{cor:two-cliquesum}
    Let $\mathcal{F}$ be minor-closed.
    If $\mathcal{F}$ is closed under weak oddomorphisms,
    then $\mathcal{F}^{\oplus 2}$ is closed under weak oddomorphisms.
\end{lemma}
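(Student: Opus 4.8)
The plan is to mirror the proof of \cref{cor:one-cliquesum} for $1$-sums, but replace \enquote{subgraph} by \enquote{minor} throughout, which is the reason the hypothesis that $\mathcal{F}$ is minor-closed enters here and did not for $1$-sums. So let $\phi \colon F \to G$ be a weak oddomorphism with $F \in \mathcal{F}^{\oplus 2}$, and suppose towards a contradiction that $G \notin \mathcal{F}^{\oplus 2}$. By \cref{lem:cliquesum-minor-closedness} the class $\mathcal{F}^{\oplus 2}$ is again minor-closed, so among all minors of $G$ that do not lie in $\mathcal{F}^{\oplus 2}$ there is a minor-minimal one $G'$. Every proper minor of $G'$ is again a minor of $G$, hence lies in $\mathcal{F}^{\oplus 2}$ by the choice of $G'$; therefore $G'$ is in fact a minimal excluded minor of $\mathcal{F}^{\oplus 2}$, and \cref{lem:min-excluded-2sum} shows that $G'$ is $3$-connected.

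Next I would transfer the oddomorphism down to $G'$. Since $G'$ is a minor of $G$, \cref{lem:oddo-minors}\ref{item:oddo-minors-3} yields a minor $F'$ of $F$ admitting an oddomorphism $F' \to G'$; note this is a genuine (not merely weak) oddomorphism, which is exactly the shape of hypothesis required below. Because $\mathcal{F}^{\oplus 2}$ is minor-closed and $F \in \mathcal{F}^{\oplus 2}$, we have $F' \in \mathcal{F}^{\oplus 2}$. Now apply \cref{lem:oddo-to-kn-clique-sum} with $s = 2$: the graph $G'$ is $(s+1)$-connected, and $F' \in \mathcal{F}^{\oplus s}$ admits an oddomorphism to $G'$, so there exists $F'' \in \mathcal{F}$ admitting an oddomorphism to $G'$. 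As every oddomorphism is in particular a weak oddomorphism and $\mathcal{F}$ is closed under weak oddomorphisms, it follows that $G' \in \mathcal{F} \subseteq \mathcal{F}^{\oplus 2}$, contradicting the choice of $G'$. Hence $G \in \mathcal{F}^{\oplus 2}$.

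The only points of care in this assembly are that the minor-minimal bad minor of $G$ is really a minimal excluded minor of $\mathcal{F}^{\oplus 2}$ (using that minors of minors are minors) and that $\mathcal{F}^{\oplus 2}$ inherits minor-closedness (\cref{lem:cliquesum-minor-closedness}); neither presents a difficulty. I do not expect a genuine obstacle inside this proof, since the structural work has been offloaded: the $3$-connectivity of minimal excluded minors comes from \cref{lem:min-excluded-2sum} via the standard contraction argument of graph minor theory, and the reduction of an oddomorphism from a $2$-sum into a $3$-connected graph to an oddomorphism from a factor rests on the Separator Lemma \cref{lem:separators} through \cref{lem:oddo-to-kn-clique-sum}. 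The one conceptual subtlety worth flagging in the write-up is why \cref{lem:oddo-minors}\ref{item:oddo-minors-3} (minors), rather than \cref{lem:oddo-connected}\ref{item:oddo-minors-2} (subgraphs), is needed: minimal excluded \emph{subgraphs} of $\mathcal{F}^{\oplus 2}$ need not be $3$-connected (e.g.\ $K_3$ for forests), so we cannot cut $G$ down to a $3$-connected subgraph and must instead pass to a $3$-connected minor — which is precisely what forces the minor-closedness assumption on $\mathcal{F}$.
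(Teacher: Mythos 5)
Your argument is correct and follows essentially the same route as the paper's proof: pass to a $3$-connected minor $G' \notin \mathcal{F}^{\oplus 2}$ (justified via \cref{lem:min-excluded-2sum}), transfer the oddomorphism to a minor $F' \in \mathcal{F}^{\oplus 2}$ of $F$, and invoke \cref{lem:oddo-to-kn-clique-sum} to land in $\mathcal{F}$ and derive a contradiction. The paper states these steps more tersely; your write-up merely makes explicit the minimality argument and the role of \cref{lem:cliquesum-minor-closedness}, both of which are correct.
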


\begin{proof}
		Let $F \to G$ be a weak oddomorphism for $F \in \mathcal{F}^{\oplus 2}$.
		If $G \not\in \mathcal{F}^{\oplus 2}$,
		then there exists a $3$-connected minor $G' \not\in \mathcal{F}^{\oplus 2}$ of $G$.
		By \cref{lem:oddo-connected},
		there exists an oddomorphism $F' \to G'$ from some minor $F' \in \mathcal{F}^{\oplus 2}$ of $F$.
		By \cref{lem:oddo-to-kn-clique-sum},
		there exists a graph $F'' \in \mathcal{F}$ admitting an oddomorphism to  $G'$.
		Hence, $G' \in \mathcal{F}$ contradicting that $G' \not\in \mathcal{F}^{\oplus 2}$.
\end{proof}

As in the case of $1$-sums,
it follows readily that the class of graphs of treewidth $\leq 2$ is closed under weak oddomorphisms
as first shown in \cite[Corollary~6.9]{roberson_oddomorphisms_2022}, see \cref{lem:oddo-planar-degree}.
More generally, \cref{thm:tw2} follows via \cref{cor:two-cliquesum} as in \cref{thm:adhesion-one}.

\begin{theorem}\label{thm:tw2}
    For $k \geq 1$, 
    the class of all graphs admitting a tree decomposition of width $\leq k$ and adhesion $\leq 2$
    is homomorphism distinguishing closed.
\end{theorem}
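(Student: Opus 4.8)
The plan is to mirror exactly the structure of the proof of \cref{thm:adhesion-one}, now invoking \cref{cor:two-cliquesum} in place of \cref{cor:one-cliquesum}. First I would let $\mathcal{F}$ be the componental graph class generated by all graphs on at most $k+1$ vertices, i.e.\ the class of all graphs each of whose connected components has at most $k+1$ vertices. The class of all graphs admitting a tree decomposition of width $\leq k$ and adhesion $\leq 2$ is precisely $\mathcal{F}^{\oplus 2}$: a tree decomposition with bags of size $\leq k+1$ and adhesion $\leq 2$ is, by \cref{def:clique-sum}, a witness that the graph is a $(\leq 2)$-sum of its torsos, and each torso lies on $\leq k+1$ vertices hence in $\mathcal{F}$; conversely any graph in $\mathcal{F}^{\oplus 2}$ has such a decomposition by definition.

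Next I would verify the two hypotheses of \cref{cor:two-cliquesum} for $\mathcal{F}$. That $\mathcal{F}$ is minor-closed is immediate, since taking a minor of a graph does not increase the size of any connected component (in fact it does not increase the number of vertices of any component it meets). That $\mathcal{F}$ is closed under weak oddomorphisms follows as in \cref{thm:adhesion-one}: given a weak oddomorphism $\phi\colon F\to G$ with $F\in\mathcal{F}$, by \cref{lem:oddo-connected} it suffices to treat the case where $F$ and $G$ are connected, and then surjectivity of $\phi$ gives $|V(G)|\leq |V(F)|\leq k+1$, so $G\in\mathcal{F}$. Since $\mathcal{F}$ is closed under taking subgraphs as well (a subgraph cannot enlarge components), \cref{cor:two-cliquesum} applies and yields that $\mathcal{F}^{\oplus 2}$ is closed under weak oddomorphisms.

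Finally, $\mathcal{F}^{\oplus 2}$ is componental: it is closed under disjoint unions by \cref{def:clique-sum} (glue the two tree decompositions by an edge of adhesion $0$), and conversely a disjoint union lies in $\mathcal{F}^{\oplus 2}$ only if each component does, since restricting a tree decomposition to one component preserves width and adhesion. Hence \cref{thm:rob62} applies and shows that $\mathcal{F}^{\oplus 2}$ is homomorphism distinguishing closed, which is the claim. I do not anticipate a genuine obstacle here: every ingredient has been prepared in the preceding sections, and the only mild point of care is the identification of the class in the statement with $\mathcal{F}^{\oplus 2}$, which is a direct unwinding of \cref{def:clique-sum} together with the fact that torsos of bags of size $\leq k+1$ have at most $k+1$ vertices.
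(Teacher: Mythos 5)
Your proposal is correct and follows exactly the route the paper takes: it instantiates the argument of \cref{thm:adhesion-one} with $\mathcal{F}$ the componental class generated by graphs on at most $k+1$ vertices, checks the extra minor-closedness hypothesis, and invokes \cref{cor:two-cliquesum} in place of \cref{cor:one-cliquesum} before applying \cref{thm:rob62}. The identification of the stated class with $\mathcal{F}^{\oplus 2}$ via \cref{def:clique-sum} is handled correctly as well.
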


Employing a result of \textcite{wagner_erweiterung_1937}, we furthermore prove the following, that is, \cref{it:k33} of \cref{thm:strong-roberson}.

\begin{theorem}
    The class of $K_{3,3}$-minor-free graphs is closed under weak oddomorphisms and homomorphism distinguishing closed.
\end{theorem}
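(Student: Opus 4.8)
The plan is to apply Wagner's structure theorem for $K_{3,3}$-minor-free graphs, which states that a graph is $K_{3,3}$-minor-free if, and only if, it can be obtained by $0$-, $1$-, and $2$-sums starting from planar graphs and copies of $K_5$. In the notation of this paper, writing $\mathcal{P}$ for the class of planar graphs, this says that the class $\mathcal{K}$ of $K_{3,3}$-minor-free graphs equals $(\mathcal{P} \cup \{K_5\})^{\oplus 2}$. Since the class $\mathcal{P} \cup \{K_5\}$ is not minor-closed ($K_5$ has non-planar minors only trivially, but e.g.\ $K_{3,3}$ is not a minor of $K_5$, so actually $\mathcal{P}\cup\{K_5\}$ \emph{is} minor-closed — one should double-check this), I would instead work with $\mathcal{F} \coloneqq \mathcal{K}$ directly, or with a minor-closed class generating it, and invoke \cref{cor:two-cliquesum} carefully.

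Concretely, first I would verify that $\mathcal{K} = (\mathcal{P} \cup \{K_5\})^{\oplus 2}$ using Wagner's theorem, and that $\mathcal{P} \cup \{K_5\}$ is minor-closed: every minor of a planar graph is planar, and every proper minor of $K_5$ is planar, so $\mathcal{P} \cup \{K_5\}$ is minor-closed. Next I would show $\mathcal{P} \cup \{K_5\}$ is closed under weak oddomorphisms: given a weak oddomorphism $\phi\colon F \to G$ with $F \in \mathcal{P} \cup \{K_5\}$, by \cref{lem:oddo-woddo} it suffices to treat oddomorphisms, and by \cref{lem:oddo-connected}\ref{item:oddo-minors-2} we may assume $G$ connected; if $F$ is planar then $G$ is planar by \cref{lem:oddo-planar-degree}\ref{it:planar-oddo}; if $F = K_5$ then $\phi$ is surjective so $|V(G)| \leq 5$, and since $K_5$ is the only $5$-vertex graph that is not planar, either $G$ is planar or $G = K_5$ — in both cases $G \in \mathcal{P} \cup \{K_5\}$. (Strictly one must rule out $G$ being a non-planar graph on fewer than $5$ vertices, but all such graphs are planar, and confirm that an oddomorphism $K_5 \to G$ with $G = K_5$ exists, which it does by \cref{ex:identity} after possibly composing with an automorphism; this direction only needs the closure property, not existence.)

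Then I would apply \cref{cor:two-cliquesum} to the minor-closed, weak-oddomorphism-closed class $\mathcal{P} \cup \{K_5\}$ to conclude that $(\mathcal{P} \cup \{K_5\})^{\oplus 2} = \mathcal{K}$ is closed under weak oddomorphisms. Finally, $\mathcal{K}$ is componental (it is minor-closed, hence closed under disjoint union, and closed under taking components), so \cref{thm:rob62} yields that $\mathcal{K}$ is homomorphism distinguishing closed.

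The main obstacle I anticipate is purely bookkeeping: ensuring that the generating class to which \cref{cor:two-cliquesum} is applied is genuinely minor-closed and closed under weak oddomorphisms, and that Wagner's decomposition is stated in exactly the $(\leq 2)$-sum form matching \cref{def:clique-sum} (in particular that the torsos of the tree decomposition lie in $\mathcal{P} \cup \{K_5\}$, not merely that $G$ is built by pairwise $2$-sums). A secondary subtlety is that \cref{cor:two-cliquesum} is stated for weak oddomorphisms landing in \emph{arbitrary} $G$, whereas \cref{lem:oddo-to-kn-clique-sum} needs $(s+1)$-connectivity; this is already handled inside the proof of \cref{cor:two-cliquesum} via passing to a $3$-connected minimal excluded minor (\cref{lem:min-excluded-2sum}), so no new work is needed there, but it is worth a sentence to reassure the reader that the minor-closedness of $\mathcal{P} \cup \{K_5\}$ is exactly the hypothesis that makes \cref{lem:min-excluded-2sum} applicable.
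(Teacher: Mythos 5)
Your proposal is correct and follows essentially the same route as the paper: Wagner's theorem to express the $K_{3,3}$-minor-free graphs as $(\mathcal{P} \cup \{K_5\})^{\oplus 2}$, verification that $\mathcal{P} \cup \{K_5\}$ is minor-closed and closed under weak oddomorphisms, then an appeal to \cref{cor:two-cliquesum} and \cref{thm:rob62}. The only cosmetic difference is in the $F \cong K_5$ case, where the paper notes that a surjective homomorphism out of the complete graph $K_5$ is automatically injective (hence $G \cong K_5$), whereas you argue via $\lvert V(G)\rvert \leq 5$ and the fact that $K_5$ is the only non-planar graph on at most five vertices — both are fine, and your detour through \cref{lem:oddo-connected} is unnecessary but harmless since $G$ is automatically connected as a surjective image of the connected graph $K_5$.
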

\begin{proof}
    \textcite{wagner_erweiterung_1937} proved that the $K_{3,3}$-minor-free graphs are precisely those that are $2$-sums of planar graphs and $K_5$, see \cite[Table~1]{kriz_clique-sums_1990}.
    Let $\mathcal{F}$ denote the union of the class of planar graphs with $\{K_5\}$.
    In order to apply \cref{cor:two-cliquesum},
    it must be argued that $\mathcal{F}$ is closed under weak oddomorphisms and taking minors.
    Let $F \in \mathcal{F}$ and $ \phi \colon F \to G$ be a weak oddomorphism.
    If $F$ is planar, then so is $G$ by \cref{lem:oddo-planar-degree}.
    If $F$ is non-planar, then $F \cong K_5$.
    Since $\phi$ is a surjective homomorphism, $G \cong K_5 \in \mathcal{F}$.
    Hence, the claim follows from \cref{cor:two-cliquesum,thm:rob62}.
\end{proof}

\subsubsection[3-Sums]{$3$-Sums}\label{sec:k5}
We finally give an application of \cref{lem:separators} for $3$\nobreakdash-sums by showing that the class of $K_5$-minor-free graphs is homomorphism distinguishing closed.
\textcite{wagner_uber_1937} showed that these graphs are precisely the $3$-sums of planar graphs $\mathcal{P}$ and the Wagner graph $V_8$, see \cref{fig:wagner-graph}, \cite{kriz_clique-sums_1990}.
All graphs subject to \cref{lem:kfive3} have at most $8$ vertices, so the next lemma can be verified by exhaustive search. 

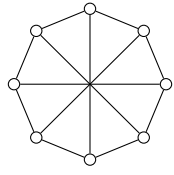
\begin{figure}
	\centering
	\begin{tikzpicture}[scale=1, every node/.style={circle,draw,fill=white,inner sep=1.5pt}]
\foreach \i in {0,...,7}{
			\node (v\i) at ({cos(360/8*\i)},{sin(360/8*\i)}) {};
		}
\foreach \i in {0,...,7}{
			\pgfmathtruncatemacro{\j}{mod(\i+1,8)}
			\draw (v\i) -- (v\j);
		}
\draw (v0) -- (v4);
		\draw (v1) -- (v5);
		\draw (v2) -- (v6);
		\draw (v3) -- (v7);
	\end{tikzpicture}
	
	\caption{The Wagner graph $V_8$.}
	\label{fig:wagner-graph}
\end{figure}

\begin{lemma}[restate=lemWagner,label=lem:kfive3]
    No minor of $V_8$ admits an oddomorphism to $K_5$.
\end{lemma}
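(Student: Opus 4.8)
The plan is to avoid the suggested exhaustive search and argue directly, playing the parity constraints of an oddomorphism into $K_5$ against the fact that $V_8$ is $3$-regular on $8$ vertices.

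First I would extract two elementary consequences of \cref{def:oddomorphism}. Let $\phi\colon M \to K_5$ be an oddomorphism. Every $\phi$-odd vertex $a \in V(M)$ has $\deg_M(a) \geq 4$: its image $\phi(a)$ has exactly four neighbours in $K_5$, and for each such neighbour $v$ the set $N_M(a) \cap \phi^{-1}(v)$ has odd, hence nonzero, size, and these four sets are pairwise disjoint. Furthermore, each of the five fibres $\phi^{-1}(v)$, $v \in V(K_5)$, contains an odd, hence positive, number of $\phi$-odd vertices. Hence $M$ has at least five vertices of degree $\geq 4$. Since a weak oddomorphism $M \to K_5$ restricts to an oddomorphism $M' \to K_5$ on some subgraph $M'$ of $M$, and a subgraph of a minor of $V_8$ is again a minor of $V_8$, it suffices to prove that no minor of $V_8$ has at least five vertices of degree $\geq 4$.

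To see this, take a minor $M$ of $V_8$ and fix a minor model, that is, pairwise disjoint connected branch sets $B_u \subseteq V(V_8)$ for $u \in V(M)$. A singleton branch set $B_u = \{x\}$ has at most $\deg_{V_8}(x) = 3$ neighbours in $V_8$ since $V_8$ is $3$-regular, so the corresponding vertex satisfies $\deg_M(u) \leq 3$; thus every $u$ with $\deg_M(u) \geq 4$ has $|B_u| \geq 2$. Five such vertices would yield five pairwise disjoint branch sets of size at least $2$, forcing $|V(V_8)| \geq 10 > 8$, a contradiction. The one step needing a little care is the reduction from weak oddomorphisms in the previous paragraph, which is immediate from \cref{def:oddomorphism} and transitivity of the minor relation; the main obstacle here is really only the observation that $3$-regularity of $V_8$ caps the number of degree-$\geq 4$ vertices in any of its minors at four, exactly one short of the five forced by an oddomorphism onto $K_5$. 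For completeness I would also note the alternative via \cref{lem:oddo-planar-degree}: an oddomorphism $M \to K_5$ would force both $\Delta(M) \geq \Delta(K_5) = 4$ and $\tw(M) \geq \tw(K_5) = 4$, contradicting $\Delta(V_8) = 3$ together with the fact that $V_8$ is a minor-minimal graph of treewidth $4$, so every proper minor of $V_8$ has treewidth at most $3$; and, as the statement itself notes, all graphs involved have at most eight vertices, so a finite check is available too.
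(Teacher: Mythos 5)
Your proof is correct and follows essentially the same route as the paper: every $\phi$-odd vertex must have degree at least $4$, each of the five fibres contributes at least one such vertex, and this clashes with $V_8$ being $3$-regular on $8$ vertices. Your branch-set accounting (a vertex of degree $\geq 4$ in a minor of a $3$-regular graph needs a branch set of size $\geq 2$, so five such vertices would need $\geq 10 > 8$ vertices of $V_8$) makes explicit the final step that the paper only gestures at, and the treewidth alternative via \cref{lem:oddo-planar-degree} is a correct cross-check but not needed.
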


\begin{proof}
	Let $F$ be a minor of $V_8$
	and $\phi \colon F \to K_5$ be an oddomorphism.
	Each $\phi$-odd vertex has degree at least $4$.
	Hence, $F$ must contain at least $5$ vertices of degree at least $4$.
	This contradicts the fact that $V_8$ is a $3$-regular $8$-vertex graph.
\end{proof}

\begin{theorem}\label{thm:k5}
    The class of $K_5$-minor-free graphs is closed under weak oddomorphisms and homomorphism distinguishing closed.
\end{theorem}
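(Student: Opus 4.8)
The plan is to deduce this from \cref{lem:oddo-to-kn-clique-sum} and \cref{lem:kfive3} via Wagner's decomposition theorem, in close analogy with the treatment of $K_{3,3}$-minor-free graphs above. As recalled before the statement, \textcite{wagner_uber_1937} (see also \cite{kriz_clique-sums_1990}) showed that the class of $K_5$-minor-free graphs equals $(\mathcal{P} \cup \{V_8\})^{\oplus 3}$, where $\mathcal{P}$ denotes the class of planar graphs. Since $\{V_8\}$ is not minor-closed, I would not apply \cref{lem:oddo-to-kn-clique-sum} with $\mathcal{F} = \mathcal{P} \cup \{V_8\}$ directly, but instead set $\mathcal{F} \coloneqq \mathcal{P} \cup \{M \mid M \text{ is a minor of } V_8\}$. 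This class is minor-closed, being the union of two minor-closed classes. Moreover $\mathcal{F}^{\oplus 3}$ is still exactly the class of $K_5$-minor-free graphs: it contains $(\mathcal{P} \cup \{V_8\})^{\oplus 3}$ since $V_8 \in \mathcal{F}$, and conversely every graph in $\mathcal{F}$ is $K_5$-minor-free, while the class of $K_5$-minor-free graphs is closed under clique-sums of order $\leq 3$ (immediate from Wagner's theorem, since $\bigl(\mathcal{C}^{\oplus 3}\bigr)^{\oplus 3} = \mathcal{C}^{\oplus 3}$ for every class $\mathcal{C}$).

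The key claim I would establish is that no $K_5$-minor-free graph admits an oddomorphism to $K_5$. Indeed, suppose $\phi \colon F \to K_5$ is an oddomorphism with $F$ being $K_5$-minor-free, i.e.\ $F \in \mathcal{F}^{\oplus 3}$. Since $K_5$ is $4$-connected, \cref{lem:oddo-to-kn-clique-sum} applied with $s = 3$ and the minor-closed class $\mathcal{F}$ above yields a graph $F' \in \mathcal{F}$ admitting an oddomorphism to $K_5$. If $F'$ is planar, then $K_5$ would be planar by \cref{lem:oddo-planar-degree}\ref{it:planar-oddo}, which is absurd. Otherwise $F'$ is a minor of $V_8$, contradicting \cref{lem:kfive3}. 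This proves the claim. Note that it is precisely here that \cref{lem:kfive3} is needed for \emph{all} minors of $V_8$, not just $V_8$ itself: this is the price of replacing $\mathcal{P} \cup \{V_8\}$ by its (minor-closed) superclass $\mathcal{F}$ so that \cref{lem:oddo-to-kn-clique-sum} becomes applicable.

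It remains to show that the class of $K_5$-minor-free graphs is closed under weak oddomorphisms; homomorphism distinguishing closedness then follows from \cref{thm:rob62}, since this class is evidently componental. So let $\phi \colon F \to G$ be a weak oddomorphism with $F$ being $K_5$-minor-free, and suppose for contradiction that $G$ has a $K_5$-minor. Applying \cref{lem:oddo-minors}\ref{item:oddo-minors-3} to the minor $K_5$ of $G$ produces a minor $F^{\ast}$ of $F$ admitting an oddomorphism to $K_5$; but $F^{\ast}$ is $K_5$-minor-free, contradicting the key claim. Hence $G$ is $K_5$-minor-free, as desired. The only genuine choice in this argument is the passage to the minor-closed class $\mathcal{F}$ in the first paragraph, which simultaneously makes \cref{lem:oddo-to-kn-clique-sum} and \cref{lem:kfive3} applicable; the substantive combinatorics is already carried by \cref{lem:separators} (underpinning \cref{lem:oddo-to-kn-clique-sum}) and by Wagner's decomposition theorem, so I expect no further obstacle.
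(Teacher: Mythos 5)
Your proposal is correct and follows essentially the same route as the paper's proof: Wagner's decomposition of $K_5$-minor-free graphs as $3$-sums of planar graphs and $V_8$, passage to the minor closure $\mathcal{P} \cup \mathcal{V}$ so that \cref{lem:oddo-to-kn-clique-sum} applies (using $4$-connectivity of $K_5$), and the final dichotomy via \cref{lem:oddo-planar-degree} and \cref{lem:kfive3}. The only difference is presentational — you isolate the "key claim" and spell out that $\mathcal{F}^{\oplus 3}$ coincides with the $K_5$-minor-free graphs, which the paper leaves implicit.
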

\begin{proof}By \cite{wagner_uber_1937},
    the class of $K_5$-minor-free graphs is $(\mathcal{P} \cup \mathcal{V})^{\oplus 3}$
    where $\mathcal{V}$ denotes the minor closure of $V_8$.
    Let $F \in (\mathcal{P} \cup \mathcal{V})^{\oplus 3}$
    and $F \to G$ be a weak oddomorphism.
    If $K_5$ is a minor of~$G$,
    then there exists, by \cref{lem:oddo-minors},
    a graph $F' \in (\mathcal{P} \cup \mathcal{V})^{\oplus 3}$
    admitting an oddomorphism to $K_5$.
    By definition,
    the graph class $\mathcal{P} \cup \mathcal{V}$ is minor-closed.
    Since $K_5$ is $4$\nobreakdash-connected,
    there exists, by \cref{lem:oddo-to-kn-clique-sum},
    a graph $F'' \in \mathcal{P} \cup \mathcal{V}$
    admitting an oddomorphism to $K_5$.
    By \cref{lem:oddo-planar-degree},
    the graph
    $F''$ is non-planar and thus a minor of~$V_8$,
    contradicting \cref{lem:kfive3}.
\end{proof}

	\subsection{Outerplanar Graphs}
	\label{sec:k2h}
	As another application of \cref{lem:separators}, we show, for every $h \geq 3$, that the class of graphs of treewidth at most two that do not have the complete bipartite graph~$K_{2,h}$ as a minor is homomorphism distinguishing closed.
The best known example of such a graph class is the class of outerplanar graphs, which is characterised \cite{syslo_characterisations_1979} by the forbidden minors $K_4$ and $K_{2,3}$.

\begin{lemma} \label{thm:k2h-main}
	Let $h \geq 3$.
	If $F$ is a graph of treewidth at most two admitting an oddomorphism $\phi \colon F \to K_{2,h}$, 
	then $K_{2,h}$ is a minor of $F$.
\end{lemma}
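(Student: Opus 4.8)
The plan is a double induction: an outer induction on $h$ and, for fixed $h$, an inner induction on $\abs{V(F)}$. Write $a_1,a_2$ for the two degree-$h$ vertices of $K_{2,h}$ and $b_1,\dots,b_h$ for the rest, and set $A_i\coloneqq\phi^{-1}(a_i)$, $B_j\coloneqq\phi^{-1}(b_j)$; since $\phi$ is a homomorphism, $F$ is bipartite with parts $A_1\cup A_2$ and $B_1\cup\dots\cup B_h$. First I would carry out the usual reductions. If $F$ is disconnected, \cref{lem:oddo-connected} gives a connected subgraph $F'\subseteq F$ with an oddomorphism to $K_{2,h}$; as $F'$ is connected but $F$ is not, $\abs{V(F')}<\abs{V(F)}$, so the inner induction yields a $K_{2,h}$-minor of $F'$, hence of $F$. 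If $F$ has a cut vertex $s$, then $K_{2,h}-\phi(s)$ is connected (deleting a single vertex from $K_{2,h}$ leaves it connected, using $h\geq 3$), so \cref{lem:cutvertex} supplies an oddomorphism from some $F[C_i\cup\{s\}]$, which has fewer vertices, to $K_{2,h}$, and the inner induction applies. So I may assume $F$ is $2$-connected; since $\phi$ is surjective, $\abs{V(F)}\geq h+2\geq 5$, and as $\tw(F)\leq 2$ the graph $F$ is $K_4$-minor-free, hence $2$-degenerate, so it has a vertex of degree exactly $2$ and, in particular, a $2$-separator.

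Next I would use \cref{lem:separators} to pin down the $2$-separators. If some $2$-separator $S=\{s_1,s_2\}$ satisfies $\{\phi(s_1),\phi(s_2)\}\neq\{a_1,a_2\}$, then $K_{2,h}-\phi(S)$ is connected ($m=1$) whereas $F-S$ has $n\geq 2$ components, so \cref{lem:separators} yields a minor $F'$ of $F[C\cup S]+s_1s_2$ (for a proper sub-union $C$ of the components) admitting an oddomorphism to $K_{2,h}$. By $2$-connectivity, the left-out component meets both $s_1$ and $s_2$, hence carries an $s_1$--$s_2$ path realising $F[C\cup S]+s_1s_2$ as a minor of $F$; so $F'$ is a minor of $F$ with $\tw(F')\leq 2$ and $\abs{V(F')}<\abs{V(F)}$, and the inner induction gives $K_{2,h}\preceq F'\preceq F$. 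Therefore I may assume that \emph{every} $2$-separator of $F$ maps onto $\{a_1,a_2\}$. Fixing a degree-$2$ vertex $c$ with $N_F(c)=\{u,v\}$, the pair $\{u,v\}$ is then a $2$-separator, so after relabelling $\phi(u)=a_1$, $\phi(v)=a_2$; consequently $\phi(c)=b_{j_0}$ for some $j_0$, and $c$ is $\phi$-odd (it has exactly one neighbour in $A_1$ and one in $A_2$).

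For the base case $h=3$: since $K_{2,3}$ has maximum degree $3$, it is a minor of $F$ iff it is a topological minor of $F$, i.e.\ iff $F$ contains two vertices joined by three internally disjoint paths. I claim $u,v$ form such a pair. Any $u$--$v$ separator $W\subseteq V(F)\setminus\{u,v\}$ must contain $c$ (otherwise $u$-$c$-$v$ reconnects them), and $W\neq\{c\}$ since $F-c$ is connected; if $\abs W=2$ then $W$ would be a $2$-separator of $F$ with $\phi(W)\not\supseteq\{a_1,a_2\}$, which is excluded. Hence $\abs W\geq 3$, and Menger's theorem finishes the base case.

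For the inductive step $h\geq 4$, the idea is to delete the fibre $B_{j_0}$. A short parity computation shows that $\phi$ restricted to $F-B_{j_0}$ is an oddomorphism onto $K_{2,h}-b_{j_0}=K_{2,h-1}$: deleting $B_{j_0}$ changes neither the fibres $A_1,A_2$ nor the fibres $B_j$ with $j\neq j_0$, and it changes no surviving vertex's parity class with respect to the smaller target (for $w\in A_i$ one simply drops the condition coming from $B_{j_0}$, and vertices in $B_j$ with $j\neq j_0$ keep all their neighbours in $A_1\cup A_2$). By the outer induction $K_{2,h-1}\preceq F-B_{j_0}$, and one then wants to re-attach a $\phi$-odd vertex of $B_{j_0}$ — for instance $c$, with neighbours $u\in A_1$ and $v\in A_2$ — as an $h$-th leaf branch set. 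The hard part, and the reason a bare inductive hypothesis does not suffice, is that this re-attachment demands the $K_{2,h-1}$-minor to be placed so that its two centre branch sets contain the two neighbours of the re-used fibre vertex. I therefore expect the argument to run on a strengthened inductive statement that also controls the location of the centre branch sets (the base case already produces singleton centres $\{u\},\{v\}$), with the bulk of the work being to propagate this strengthening through the reductions above and through the deletion of $B_{j_0}$; this is the main obstacle.
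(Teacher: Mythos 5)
Your reductions are sound and partially parallel the paper's: the use of \cref{lem:oddo-connected}, \cref{lem:cutvertex}, and \cref{lem:separators} to force $F$ to be $2$-connected with every $2$-separator mapping onto $\{a_1,a_2\}$ is correct (note that $K_{2,h}-\phi(S)$ is indeed connected whenever $\phi(S)\neq\{a_1,a_2\}$, and the clique edge $s_1s_2$ added by \cref{lem:separators} is realised as a minor of $F$ through the discarded component, so the inner induction applies). The base case $h=3$ also goes through, since $uv\notin E(F)$ by bipartiteness of the fibres, so Menger yields three internally disjoint $u$--$v$ paths of length at least two, i.e.\ a $K_{2,3}$-subdivision. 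However, the inductive step on $h$ is not a proof sketch with a routine verification left open — it is the entire content of the lemma, and you concede as much. Knowing that $F-B_{j_0}$ contains \emph{some} $K_{2,h-1}$-minor gives you no control over where its two centre branch sets sit, and the strengthened inductive statement you would need (``the minor can be chosen with its centres at the prescribed vertices $u\in A_1$, $v\in A_2$'') is not formulated, let alone propagated through your reductions or through the deletion of $B_{j_0}$ (which may destroy $2$-connectivity and create new small separators not mapping to $\{a_1,a_2\}$). As it stands, the argument establishes the lemma only for $h=3$.

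For comparison, the paper avoids induction on $h$ entirely. It takes a vertex-minimal counterexample, shows (via \cref{lem:isolated}, \cref{lem:remove-twins}, and \cref{lem:separators}) that all degree-$2$ vertices are $\phi$-odd and lie in the fibres of the $b_j$'s, contracts them to obtain a minor $F'$, and uses a weight-minimal tree decomposition (\cref{lem:tw-degree}) to find a non-cut vertex $v^*$ of degree at most two in $F'$, which is forced to lie over $\{a_1,a_2\}$. A case analysis on the parity of $v^*$ then exhibits the $K_{2,h}$-minor directly, the key tool being parity/Eulerian arguments (\cref{cl:eulerian}) that supply the connecting paths your re-attachment step is missing. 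If you want to salvage your approach, that is the kind of global parity argument you would need in place of the unproven placement-controlled inductive hypothesis.
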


\Cref{thm:k2h-main} yields the following corollary via \cref{thm:tw2,lem:oddo-minors,thm:rob62}, i.e.\ \cref{it:outerplanar} of \cref{thm:strong-roberson}.

\begin{theorem} \label{cor:outerplanar-closed}
	Let $h \geq 3$.
	The class of $\{K_4, K_{2,h} \}$-minor-free graphs is closed under weak oddomorphisms and homomorphism distinguishing closed.
	In particular, the class of outerplanar graphs is closed under weak oddomorphisms and homomorphism distinguishing closed.
\end{theorem}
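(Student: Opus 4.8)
The plan is to obtain \cref{cor:outerplanar-closed} directly from \cref{thm:k2h-main} together with the pull-back behaviour of oddomorphisms under minors (\cref{lem:oddo-minors}), the already-established closure of the treewidth-$\leq 2$ class (\cref{thm:tw2}, via \cref{cor:two-cliquesum}), and \cref{thm:rob62}. Write $\mathcal{C}$ for the class of $\{K_4, K_{2,h}\}$-minor-free graphs. This class is minor-closed, hence closed under taking subgraphs, so by \cref{lem:oddo-woddo} it is enough to prove that $\mathcal{C}$ is closed under \emph{oddomorphisms}; and $\mathcal{C}$ is componental since it is minor-closed and $K_4$, $K_{2,h}$ are connected, so \cref{thm:rob62} will then give that $\mathcal{C}$ is homomorphism distinguishing closed.

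So let $\varphi\colon F \to G$ be an oddomorphism with $F \in \mathcal{C}$; the goal is to show $G \in \mathcal{C}$. First, since $F$ is $K_4$-minor-free it has treewidth at most two, and therefore so does $G$: the class of graphs of treewidth $\leq 2$ is closed under weak oddomorphisms (this is the treewidth part of \cref{lem:oddo-planar-degree}, established self-contained in the proof of \cref{thm:tw2} via \cref{cor:two-cliquesum} applied to the minor-closed class of graphs on at most three vertices). Hence $G$ is already $K_4$-minor-free, and it remains to exclude $K_{2,h}$ as a minor of $G$. Suppose, for contradiction, that $K_{2,h}$ is a minor of $G$. By \cref{lem:oddo-minors}, there is a minor $F'$ of $F$ admitting an oddomorphism $F' \to K_{2,h}$. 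As $F'$ is a minor of $F \in \mathcal{C}$, it is in particular $K_4$-minor-free, i.e.\ $\tw(F') \leq 2$, so \cref{thm:k2h-main} applies and yields that $K_{2,h}$ is a minor of $F'$, hence of $F$ — contradicting $F \in \mathcal{C}$. Thus $G \in \mathcal{C}$, completing the proof that $\mathcal{C}$ is closed under oddomorphisms, and hence under weak oddomorphisms and homomorphism distinguishing closed as explained above.

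For the final assertion, I would simply recall that the class of outerplanar graphs coincides with the class of $\{K_4, K_{2,3}\}$-minor-free graphs \cite{syslo_characterisations_1979}, so it is the instance $h = 3$.

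I do not expect a genuine obstacle here, since all the combinatorial content has been front-loaded into \cref{thm:k2h-main} (itself proved via \cref{lem:separators}). The only two points requiring a little care are: (i) verifying that the minor $F'$ obtained by pulling the $K_{2,h}$-minor back along the oddomorphism still has treewidth at most two, so that \cref{thm:k2h-main} is genuinely applicable — this is automatic because minors of $K_4$-minor-free graphs are $K_4$-minor-free; and (ii) handling the $K_4$-exclusion in $G$ by invoking the already-known closure of the treewidth-$\leq 2$ class under weak oddomorphisms rather than trying to re-prove it.
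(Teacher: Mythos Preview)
Your proof is correct and follows essentially the same approach as the paper, which simply records that the theorem follows from \cref{thm:k2h-main} via \cref{thm:tw2}, \cref{lem:oddo-minors}, and \cref{thm:rob62}. You have merely spelled out the details of that derivation, including the two points you flag as requiring care, both of which you handle correctly.
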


\subsubsection{Vertices of Low Degree Which Do Not Cut}

Towards the proof of \cref{thm:k2h-main}, several lemmas  are devised which provide vertices of low degree in graphs of bounded treewidth  that are not cut vertices.
For a tree decomposition $(T, \beta)$ of some graph $F$, define $\lVert (T, \beta) \rVert \coloneqq \sum_{t \in V(T)} |\beta(t)|^2$.
We will consider tree decompositions which are minimal with respect to this quantity.

\begin{lemma} \label{lem:tw-weight}
	Let $F = F_1 + F_2$ be a   graph with tree decomposition $(T, \beta)$.
	Define $\beta_i(t) \coloneqq \beta(t) \cap V(F_i)$ for $i \in [2]$.
	Then $(T, \beta_i)$ is a tree decomposition of $F_i$ for $i \in [2]$
	and $\lVert (T, \beta_1) \rVert + \lVert (T, \beta_2) \rVert \leq \lVert (T, \beta) \rVert$.
	Furthermore, if there exists $r \in V(T)$ such that both $\beta(r) \cap V(F_1)$ and $\beta(r) \cap V(F_2)$ are non-empty, then this inequality is strict.
\end{lemma}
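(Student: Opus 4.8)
The plan is to verify all three assertions directly from the definitions, the key structural fact being that $F = F_1 + F_2$ is a \emph{disjoint} union, so $V(F) = V(F_1) \uplus V(F_2)$ and hence every bag splits as a disjoint union $\beta(t) = \beta_1(t) \uplus \beta_2(t)$.

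First I would check that $(T,\beta_i)$ is a tree decomposition of $F_i$. For the edge condition: given $uv \in E(F_i) \subseteq E(F)$, pick $t \in V(T)$ with $u,v \in \beta(t)$; since $u,v \in V(F_i)$ this yields $u,v \in \beta(t) \cap V(F_i) = \beta_i(t)$. For the connectedness condition: for any $v \in V(F_i)$ one has $v \in \beta_i(t)$ if and only if $v \in \beta(t)$ (the intersection with $V(F_i)$ is vacuous for such $v$), so the set $\{t \in V(T) : v \in \beta_i(t)\}$ is literally equal to $\{t \in V(T) : v \in \beta(t)\}$, which is non-empty and induces a connected subtree of $T$ because $(T,\beta)$ is a tree decomposition of $F$.

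Next, the weight inequality. For each $t \in V(T)$, since $\beta_1(t)$ and $\beta_2(t)$ partition $\beta(t)$, we have $|\beta(t)| = |\beta_1(t)| + |\beta_2(t)|$, and therefore
\[
  |\beta(t)|^2 = |\beta_1(t)|^2 + |\beta_2(t)|^2 + 2\,|\beta_1(t)|\,|\beta_2(t)| \;\geq\; |\beta_1(t)|^2 + |\beta_2(t)|^2 .
\]
Summing this over all $t \in V(T)$ gives $\lVert (T,\beta) \rVert \geq \lVert (T,\beta_1) \rVert + \lVert (T,\beta_2) \rVert$. For the strictness claim, suppose $r \in V(T)$ satisfies $\beta_1(r) \neq \emptyset \neq \beta_2(r)$. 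Then $|\beta_1(r)| \geq 1$ and $|\beta_2(r)| \geq 1$, so the cross term $2\,|\beta_1(r)|\,|\beta_2(r)| \geq 2$ is strictly positive; hence the displayed inequality is strict at $t = r$, while every other bag contributes a non-negative amount, so the summed inequality is strict. I do not expect any genuine obstacle here; the only points requiring care are that the union $F_1 + F_2$ is disjoint (so the bags partition cleanly and the cross term really appears) and that the vertex-to-bag incidence is unchanged by the restriction, which is what makes the connectedness axiom transfer for free.
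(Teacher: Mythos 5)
Your proof is correct and takes essentially the same approach as the paper: the paper likewise splits each bag into $\beta_1(t) \uplus \beta_2(t)$ and bounds $|\beta_1(t)|^2 + |\beta_2(t)|^2 \le |\beta(t)|^2$ per node, merely leaving the cross-term expansion and the tree-decomposition axiom check implicit ("it is easily verified"), which you spell out in full. No gaps.
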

\begin{proof}
	It is easily verified that $(T, \beta_i)$ is a tree decomposition of $F_i$. For the second claim,
    \allowdisplaybreaks
	\begin{align*}
		& \lVert (T, \beta_1) \rVert + \lVert (T, \beta_2) \rVert \\
		&= \sum_{t \in V(T)} |\beta_1(t)|^2 + \sum_{t \in V(T)} |\beta_1(t)|^2 \\
		&= \sum_{t \in V(T)} \left( |\beta(t) \cap V(F_1)|^2 + |\beta(t) \cap V(F_2)|^2 \right) \\
		&\leq \sum_{t \in V(T)} |\beta(t)|^2.
	\end{align*}
	If $\beta(r) \cap V(F_1)$ and $\beta(r) \cap V(F_2)$ are non-empty, then the inequality is strict.
\end{proof}

A \emph{cut vertex} $v \in V(F)$ is a vertex such that $F-v$ has more connected components than $F$.
The following \cref{fact:cutvertex} is well-known.

\begin{fact} \label{fact:cutvertex}
	Every  graph $F$ contains a vertex $v \in V(F)$ which is not a cut vertex.
\end{fact}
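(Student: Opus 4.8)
The plan is to build the vertex from a spanning tree. First I would reduce to the connected case: pick any connected component of $F$, say the one induced on a vertex set $C \subseteq V(F)$, and observe that deleting a vertex of $C$ leaves every other component of $F$ untouched. Hence a vertex $v \in C$ that is not a cut vertex of $F[C]$ is automatically not a cut vertex of $F$ (the number of components of $F-v$ equals the number of components of $F[C]-v$ plus the number of remaining components, which does not exceed the number of components of $F$). So it suffices to treat $F$ connected.

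For connected $F$, I would split into two cases. If $|V(F)| = 1$, then the unique vertex $v$ satisfies that $F - v$ has zero components whereas $F$ has one, so $v$ is not a cut vertex by the stated definition. Otherwise, fix a spanning tree $T$ of $F$, which exists because $F$ is finite and connected. Since $T$ is a tree on at least two vertices, it has a leaf $v$; then $T - v$ is a spanning tree of the graph on vertex set $V(F) \setminus \{v\}$, in particular $T-v$ is connected, and since $T - v$ is a subgraph of $F - v$, the graph $F - v$ is connected as well. Thus $F-v$ has exactly one component, the same as $F$, so $v$ is not a cut vertex. (An equivalent route would be to take an endpoint of a longest path in $F$, or the last vertex explored by a depth-first search, but the spanning-tree argument is the cleanest.)

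There is essentially no obstacle here: the statement is a classical fact. The only points requiring a moment's care are the reduction from the disconnected to the connected case and the degenerate single-vertex component, both handled above.
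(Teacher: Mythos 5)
Your proof is correct, but it takes a different route from the paper's. The paper also reduces to the connected case, but then picks a pair $v,w \in V(F)$ at maximal distance and argues that $v$ cannot be a cut vertex: any $a$ can still reach $w$ in $F-v$, since a shortest $a$--$w$ path passing through $v$ would force $\dist_F(a,w) > \dist_F(v,w)$, contradicting maximality. You instead take a leaf of a spanning tree $T$ of $F$; removing a leaf leaves $T-v$ a spanning tree of $F-v$, so $F-v$ stays connected. Both arguments are standard and equally valid; yours makes the single-vertex degenerate case and the reduction to the connected case fully explicit (the paper glosses over both with a ``without loss of generality''), while the paper's distance argument avoids invoking the existence of spanning trees. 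The alternative you mention in passing (endpoint of a longest path) is essentially the paper's idea. No gap either way.
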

\begin{proof}
Suppose without loss of generality that $F$ is connected.
	Let $v, w \in V(F)$ be vertices such that their distance $\dist_F(v, w)$ is maximal.
	As usual, the distance of two vertices is the length of the shortest walk between them.
	Let $a \in V(F) \setminus \{v,w\}$ be arbitrary.
	Then there exists a path in $F$ from $a$ to $w$ avoiding $v$. Hence, $F - v$ is connected.
\end{proof}

The following \cref{lem:tw-degree} yields vertices of low degree which do not cut.

\begin{lemma} \label{lem:tw-degree}
	Let $k \geq 0$.
	Let $F$ be a  graph of treewidth at most~$k$. 
	Then there exists a vertex $v \in V(F)$ of degree at most~$k$ which is not a cut vertex.
\end{lemma}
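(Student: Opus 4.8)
The plan is to combine the block decomposition of $F$ with the following strengthening of the fact that a graph of treewidth at most $k$ is $k$-degenerate: \emph{every graph of treewidth at most $k$ on at least two vertices has at least two vertices of degree at most $k$}. First I would clear away the degenerate cases. If $|V(F)| \le 1$ the statement is immediate. If $F$ is disconnected, I would pass to a connected component $C$, which satisfies $\tw(C) \le \tw(F) \le k$, find the desired vertex $v$ inside $C$, and observe that $v$ is still not a cut vertex of $F$ and that $\deg_F(v) = \deg_C(v)$. So I may assume $F$ is connected with $|V(F)| \ge 2$; in particular $F$ has an edge, so $k \ge \tw(F) \ge 1$.

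Next I would prove the strengthened statement for a graph $G$ with $\tw(G) \le k$ and $|V(G)| \ge 2$. Start from any tree decomposition of $G$ of width at most $k$ and repeatedly contract an edge $tt'$ of the decomposition tree whenever $\beta(t) \subseteq \beta(t')$, i.e., delete $t$ and reattach its remaining neighbours to $t'$ (this keeps the width at most $k$), until no such edge remains; the resulting tree decomposition $(T,\beta)$ is then \emph{reduced} in the sense that $\beta(t) \not\subseteq \beta(t')$ for every $tt' \in E(T)$. If $|V(T)| = 1$, then $|V(G)| \le k+1$ and all (at least two) vertices of $G$ have degree at most $k$. If $|V(T)| \ge 2$, then $T$ has at least two leaves; for a leaf $t$ with unique neighbour $t'$, since $(T,\beta)$ is reduced there is a vertex $v_t \in \beta(t) \setminus \beta(t')$, and since the bags containing $v_t$ form a connected subtree of $T$, this $v_t$ lies in no bag other than $\beta(t)$, so $N_G(v_t) \subseteq \beta(t) \setminus \{v_t\}$ and $\deg_G(v_t) \le |\beta(t)| - 1 \le k$; two distinct leaves give two distinct such vertices.

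To finish, I would argue as follows. If $F$ has no cut vertex, the strengthened statement applied to $F$ itself already yields a vertex of degree at most $k$, which is automatically not a cut vertex. If $F$ has a cut vertex, I would look at the block-cut tree of $F$: its leaves are blocks, and there are at least two of them, so I fix a leaf block $B$, which meets the set of cut vertices of $F$ in exactly one vertex $c$ and has $|V(B)| \ge 2$ because $F$ is connected on at least two vertices. Every $v \in V(B) \setminus \{c\}$ lies in $B$ and in no other block of $F$; hence $v$ is not a cut vertex of $F$, and all edges of $F$ at $v$ lie in $B$, so $\deg_F(v) = \deg_B(v)$. Since $\tw(B) \le \tw(F) \le k$ and $|V(B)| \ge 2$, the strengthened statement supplies two vertices of $B$ of degree at most $k$; at least one of them is different from $c$, and that vertex is the one we want.

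The only genuinely delicate point --- and the reason the one-line argument ``$\tw(F) \le k$, so $F$ is $k$-degenerate, pick a vertex of degree $\le k$'' does not suffice --- is that the low-degree vertex handed over by degeneracy may well be the cut vertex $c$ of the chosen leaf block, whose deletion disconnects $F$. Promoting the degeneracy fact from one to two guaranteed low-degree vertices is exactly what creates the room to avoid $c$. Beyond that, the proof only uses routine facts about blocks and block-cut trees, and the small cases ($k = 0$, $|V(B)| = 2$, $B \cong K_2$) cause no trouble once one observes that $F$, and hence every block of $F$, has an edge, so that $k \ge 1$ throughout.
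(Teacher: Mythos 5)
Your proof is correct, but it takes a genuinely different route from the paper's. The paper works with a single tree decomposition chosen to minimise the potential $\lVert(T,\beta)\rVert = \sum_{t}|\beta(t)|^2$ (introduced in \cref{lem:tw-weight}), picks a leaf bag, extracts a low-degree vertex $v$ from it, and then, assuming $v$ is a cut vertex, derives a contradiction by a two-case analysis on how the components of $F-v$ meet the neighbouring bag: either one component sits entirely inside a single bag (and the non-cut vertex of \cref{fact:cutvertex} is found there directly), or the tree decomposition can be split into two strictly cheaper pieces, contradicting minimality. You instead factor the problem through two clean structural facts: a ``reduced'' tree decomposition (no bag contained in a neighbouring bag) yields, when $T$ has $\geq 2$ nodes, at least two distinct vertices of degree $\leq k$, one per leaf; and the block-cut tree has a leaf block $B$ meeting the cut vertices of $F$ in a single vertex $c$, so that every vertex of $B$ other than $c$ is a non-cut vertex of $F$ with $\deg_F = \deg_B$. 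Applying the two-vertex degeneracy bound to $B$ and discarding at most one vertex ($c$) finishes it. Your route avoids the potential-function machinery and the case analysis on how $F-v$ interacts with bags, at the cost of invoking the block decomposition and the slightly stronger ``two low-degree vertices'' claim. Both are valid; yours is arguably more modular, while the paper's keeps the argument self-contained within tree decompositions (which it then reuses, via \cref{lem:tw-weight}, only for this lemma). One small remark for polish: when you say the reduced decomposition satisfies $\beta(t) \not\subseteq \beta(t')$ for every $tt' \in E(T)$, what you actually produce and need is that for every leaf $t$ with neighbour $t'$ one has $\beta(t) \not\subseteq \beta(t')$; the symmetric condition is not required and is what you get after contractions terminate in any case.
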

\begin{proof}
	Let $(T, \beta)$ be a tree decomposition of $F$ of width at most~$k$ such that $\lVert (T, \beta) \rVert$ is minimal.
	If $T$ has only one vertex, then $F$ has at most $k+1$ vertices and \cref{fact:cutvertex} yields the desired vertex.
	Otherwise, let $\ell \in V(T)$ be a leaf of $T$ and let $r \in V(T)$ denote the adjacent vertex.
	
	By minimality, $\beta(\ell) \not\subseteq \beta(r)$ and hence there exists a vertex $v \in \beta(\ell) \setminus \beta(r)$.
	This vertex has $\deg_F(v) \leq k$.
	Suppose that $v$ is a cut vertex of $F$ and write $F -v = F_1 + F_2$. Distinguish two cases:
	
	If $V(F_1) \cap \beta(r) = \emptyset$, then $F_1$ is a subgraph of $F[\beta(\ell)]$.
	Without loss of generality, we may suppose that $F_1$ is connected.
	If $|N_F(v) \cap V(F_1)| \geq 2$, there exists, by \cref{fact:cutvertex}, a vertex $w \in V(F_1)$  which is not a cut vertex of $F_1$.
	This vertex is not a cut vertex of $F$ since there are two neighbours of $v$ in $F_1$.
	If $|N_F(v) \cap V(F_1)| = 1$, then apply the same argument to the vertex $w \in N_F(v) \cap V(F_1)$ and $F'_1 \coloneqq F_1 - w$ instead of $F_1$.
	
	If $V(F_1) \cap \beta(r) \neq \emptyset$ and $V(F_2) \cap \beta(r) \neq \emptyset$,
	consider the tree decomposition $(T', \beta')$ for the subgraph $F - (\beta(\ell) \setminus \beta(r))$ of $F-v$  obtained by deleting the leaf~$\ell$ from $(T, \beta)$.
	Write $(T', \beta'_1)$ and $(T', \beta'_2)$ for the tree decomposition for $F_1 - (\beta(\ell) \setminus \beta(r)) \cap V(F_1)$ and $F_2 - (\beta(\ell) \setminus \beta(r)) \cap V(F_2)$ defined for $(T', \beta')$ as in \cref{lem:tw-weight}.
	Since $\beta'(r)$ contains elements from both $F_1$ and $F_2$, it holds that $\lVert (T', \beta'_1) \rVert + \lVert (T', \beta'_2) \rVert < \lVert (T', \beta') \rVert$.
	Let $S$ denote the tree obtained by taking two copies $T_1$ and $T_2$ of $T'$ and connecting both copies of $r$ to a new vertex $s$.
	Define a new tree decomposition $(S, \gamma)$ for $F$ by letting
	\[
	\gamma(x) \coloneqq \begin{cases}
		\beta(\ell), & \text{if } x = s,\\
		\beta'_1(x), & \text{if } x \in V(T_1), \\
		\beta'_2(x), & \text{if } x \in V(T_2).
	\end{cases}
	\]
	By \cref{lem:tw-weight},
	\begin{align*}
		\lVert (S, \gamma) \rVert &= |\beta(\ell)|^2 + \lVert (T, \beta'_1) \rVert + \lVert (T, \beta'_2) \rVert \\
		&< |\beta(\ell)|^2 + \lVert (T', \beta') \rVert \\
		&= \lVert (T, \beta) \rVert.
	\end{align*}
	This contradicts the minimality of $(T, \beta)$.
\end{proof}

\subsubsection{Reducing Oddomorphisms}

In the proof of \cref{thm:k2h-main}, the aim is to find a $K_{2,h}$-minor in $F$ given an oddomorphism from $F$ to $K_{2,h}$. 
To that end, several lemmas are proven which fall short to yield such a minor in $F$ but succeed to guarantee other useful features of~$F$.

\begin{lemma} \label{lem:isolated}
	Let $F$ and $G$ be  graphs admitting an oddomorphism $\phi \colon F \to G$. 
	If $v \in V(F)$ is an isolated vertex, then $\phi|_{F - v} \colon F - v \to G$ is an oddomorphism.
\end{lemma}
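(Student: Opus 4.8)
The plan is to verify directly that $\psi \coloneqq \phi|_{F-v}$ satisfies the two conditions of \cref{def:oddomorphism} as a map $F-v \to G$, exploiting that deleting an isolated vertex changes neither the relevant neighbourhoods nor the intersection cardinalities that define parity. Note that $\psi$ is automatically a homomorphism, being a restriction of one.

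First I would record the elementary observations underlying everything. Since $v$ is isolated, $v \notin N_F(a)$ for every $a \in V(F) \setminus \{v\}$, so $N_{F-v}(a) = N_F(a)$; moreover $\psi^{-1}(x) = \phi^{-1}(x) \setminus \{v\}$ for every $x \in V(G)$, and from $v \notin N_F(a)$ we get $N_F(a) \cap \phi^{-1}(x) = N_{F-v}(a) \cap \psi^{-1}(x)$. Consequently, for every $a \in V(F) \setminus \{v\}$ and every $x \in N_G(\psi(a)) = N_G(\phi(a))$, the cardinalities $\lvert N_F(a) \cap \phi^{-1}(x) \rvert$ and $\lvert N_{F-v}(a) \cap \psi^{-1}(x) \rvert$ agree. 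Hence $a$ is $\psi$-odd (respectively $\psi$-even) if, and only if, it is $\phi$-odd (respectively $\phi$-even); in particular, every vertex of $F-v$ is $\psi$-odd or $\psi$-even, which is the first condition of an oddomorphism.

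Next I would treat the fibre condition. Because $N_F(v) = \emptyset$, for every $x \in N_G(\phi(v))$ we have $\lvert N_F(v) \cap \phi^{-1}(x) \rvert = 0$, which is even; hence $v$ is $\phi$-even and, crucially, not $\phi$-odd. Now fix $x \in V(G)$. If $x \neq \phi(v)$, then $\psi^{-1}(x) = \phi^{-1}(x)$, and by the previous paragraph the $\psi$-odd vertices in this fibre are exactly its $\phi$-odd vertices, whose number is odd since $\phi$ is an oddomorphism. If $x = \phi(v)$, then $\psi^{-1}(x) = \phi^{-1}(x) \setminus \{v\}$, and since $v$ is not $\phi$-odd, removing it leaves the number of odd vertices unchanged, hence still odd (and in particular nonzero, so $\psi$ remains surjective). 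This verifies the second condition, so $\psi$ is an oddomorphism. The whole argument is bookkeeping; the only point requiring a little care is the claim that the deleted vertex contributes nothing to the parity count of its own fibre, which is precisely the observation that the empty neighbourhood of $v$ forces each relevant cardinality to be $0$, so that $v$ is $\phi$-even. (This uses that $\phi(v)$ has a neighbour in $G$, which holds whenever $G$ has no isolated vertices, and in particular in every application of this lemma, e.g.\ for $G = K_{2,h}$.)
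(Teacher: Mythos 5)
Your proof is correct and follows essentially the same route as the paper's: the restriction preserves all neighbourhoods and fibre intersections because $v$ is isolated, so parities are unchanged, and $v$ itself is $\phi$-even, so deleting it does not disturb the odd-vertex count in its fibre. You are in fact somewhat more careful than the paper, which simply asserts that $v$ is $\phi$-even without noting the degenerate case where $\phi(v)$ is isolated in $G$ (irrelevant in all applications, as you observe).
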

\begin{proof}
	Clearly, $\psi \coloneqq \phi|_{F - v} $ is a homomorphism.
	Furthermore, for every $a \in V(F-v)$, $N_F(a) \cap \phi^{-1}(v) = N_{F -v}(a) \cap \psi^{-1}(v)$ for every $v \in N_G(\phi(a))$.
	Hence, all vertices in $F-v$ are even or odd with respect to $\psi$. The vertex $v$ is even with respect to $\phi$. Hence, $\psi$ is an oddomorphism.
\end{proof}

\begin{lemma} \label{lem:remove-twins}
	Let $F$ and $G$ be  graphs admitting an oddomorphism $\phi \colon F \to G$.
	If there exist vertices $v \neq w \in V(F)$ such that $\phi(v) = \phi(w)$ and $N_F(v) = N_F(w)$, then $\phi|_{F - \{v,w\}}$ is an oddomorphism from $F - \{v,w\}$ to $G$.
\end{lemma}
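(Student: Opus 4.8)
The plan is to show that $\psi \coloneqq \phi|_{F - \{v,w\}}$ inherits both defining properties of an oddomorphism from $\phi$; the point is that deleting the twin pair $\{v,w\}$ changes every relevant counting quantity only by an even amount. First, some preliminary observations. Since $N_F(v) = N_F(w)$ and $F$ is loopless, $v$ and $w$ are non-adjacent. Because $\phi(v) = \phi(w)$ and $N_F(v) = N_F(w)$, for every $x \in N_G(\phi(v))$ we have $|N_F(v) \cap \phi^{-1}(x)| = |N_F(w) \cap \phi^{-1}(x)|$; hence $v$ is $\phi$-odd if and only if $w$ is $\phi$-odd (and likewise for $\phi$-even). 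As $\phi$ is an oddomorphism, every vertex is $\phi$-odd or $\phi$-even, so $v$ and $w$ have the same $\phi$-parity. Also, $\psi$ is a homomorphism since $F - \{v,w\}$ is an induced subgraph of $F$ and $G$ is unchanged.

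Second, I would prove the key parity-transfer claim: for every $a \in V(F - \{v,w\})$ and every $x \in N_G(\phi(a))$,
\[
    |N_{F - \{v,w\}}(a) \cap \psi^{-1}(x)| \equiv |N_F(a) \cap \phi^{-1}(x)| \pmod 2 .
\]
Indeed, the two sets differ exactly by $\{v,w\} \cap N_F(a) \cap \phi^{-1}(x)$, and since $N_F(v) = N_F(w)$, the vertex $v$ lies in $N_F(a)$ if and only if $w$ does; so this intersection has size $0$ or $2$. Consequently every vertex $a$ of $F - \{v,w\}$ is $\psi$-odd or $\psi$-even, with exactly the same parity it had with respect to $\phi$. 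This establishes the first condition of \cref{def:oddomorphism} for $\psi$.

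Third and finally, I would check the fibre condition. Fix $y \in V(G)$. If $y \neq \phi(v)$, then $\psi^{-1}(y) = \phi^{-1}(y)$, and by the previous step the $\psi$- and $\phi$-parities agree on it, so $\psi^{-1}(y)$ contains an odd number of $\psi$-odd vertices because $\phi^{-1}(y)$ contains an odd number of $\phi$-odd vertices. If $y = \phi(v) = \phi(w)$, then $\psi^{-1}(y) = \phi^{-1}(y) \setminus \{v,w\}$, and since $v$ and $w$ have the same $\phi$-parity, the number of $\phi$-odd vertices among $\{v,w\}$ is $0$ or $2$; hence the number of $\psi$-odd vertices in $\psi^{-1}(y)$ is congruent mod $2$ to the number of $\phi$-odd vertices in $\phi^{-1}(y)$, which is odd. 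Thus $\psi$ satisfies both conditions of \cref{def:oddomorphism} and is an oddomorphism from $F - \{v,w\}$ to $G$.

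There is no serious obstacle here; the only point requiring care is the second condition, where a priori removing two vertices from a single fibre could flip the parity of its odd-vertex count. This is precisely neutralised by the elementary observation that two vertices sharing both their $F$-neighbourhood and their $\phi$-image necessarily share their $\phi$-parity, so that the removed pair contributes an even number of $\phi$-odd vertices.
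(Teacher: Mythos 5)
Your proposal is correct and follows essentially the same route as the paper's proof: the set $N_{F-\{v,w\}}(a)\cap\psi^{-1}(x)$ differs from $N_F(a)\cap\phi^{-1}(x)$ by an even number of elements because $v\in N_F(a)$ iff $w\in N_F(a)$, and the fibre of $\phi(v)$ loses an even number of odd vertices because $v$ and $w$ share their $\phi$-parity. The only difference is that you spell out explicitly why $v$ and $w$ have the same parity, which the paper leaves implicit.
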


\begin{proof}
	Clearly, $\psi \coloneqq \phi|_{F - \{v,w\}}$ is a homomorphism.
	Furthermore, for every vertex $a \in V(F) \setminus \{v,w\}$ and $x \in N_G(\psi(a))$,
	\[
	N_{F - \{v, w\}}(a) \cap \psi^{-1}(x)
	= 
	(N_F(a) \cap \phi^{-1}(x)) \setminus \{v, w\}.
	\]
	The parities of both sets are the same since $v \in N_F(a)$ if, and only if, $w \in N_F(a)$.
	Hence, if a vertex in $V(F) \setminus \{v,w\}$ is $\phi$-odd ($\phi$-even), then it is $\psi$-odd ($\psi$-even).
	Furthermore, $v$ and $w$ have the same parity with respect to $\phi$. Hence, $\psi^{-1}(\phi(v))$ contains an odd number of odd vertices.
	All other fibres are unaffected.
\end{proof}

\subsubsection{Oddomorphisms to Complete Bipartite Graphs}

Equipped with the preceding lemmas, we conduct the proof of \cref{thm:k2h-main}.

\begin{proof}[Proof of \cref{thm:k2h-main}]
	Towards a contradiction, let $F$ be a minimal $K_{2,h}$-minor-free  graph of treewidth at most two admitting an oddomorphism $\phi \colon F \to K_{2, h}$. 
	Let $V(K_{2,h}) = \{x_1, \dots, x_h, y_1, y_2\}$ and 
	write $X_i \coloneqq \phi^{-1}(x_i)$ for $i \in [h]$ and $Y_j \coloneqq \phi^{-1}(y_j)$ for $j \in \{1,2\}$ for the corresponding fibres along~$\phi$.
	Let $X \coloneqq \bigcup_{i \in [h]} X_i$ and $Y \coloneqq Y_1 \cup Y_2$.
	
	\begin{claim} \label{cl:k2h-main1}
		Every vertex in $F$ has degree at least two.
	\end{claim}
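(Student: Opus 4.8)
The plan is to argue by contradiction within the minimal counterexample $F$: suppose some $v \in V(F)$ has $\deg_F(v) \le 1$, and split into the cases $\deg_F(v)=0$ and $\deg_F(v)=1$.

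First I would dispatch the degree-one case, which does not even use minimality. Let $u$ be the unique neighbour of $v$. Since $\phi$ is a homomorphism, $\phi(u)\phi(v)\in E(K_{2,h})$, so $\phi(u)\in N_{K_{2,h}}(\phi(v))$. The key point is that, because $h\ge 3$, every vertex of $K_{2,h}$ has degree at least two (a vertex in the part of size $2$ has degree $h\ge 3$, a vertex in the part of size $h$ has degree $2$). Hence there is a vertex $z\in N_{K_{2,h}}(\phi(v))$ with $z\ne\phi(u)$. As $N_F(v)=\{u\}$, we get $\lvert N_F(v)\cap\phi^{-1}(\phi(u))\rvert = 1$ (odd) while $\lvert N_F(v)\cap\phi^{-1}(z)\rvert = 0$ (even), so $v$ is neither $\phi$-odd nor $\phi$-even, contradicting the defining property of an oddomorphism.

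Next I would handle the isolated case $\deg_F(v)=0$ using minimality. By \cref{lem:isolated}, the restriction $\phi|_{F-v}\colon F-v\to K_{2,h}$ is again an oddomorphism. Since $F-v$ is a subgraph of $F$, it still has treewidth at most two, and $K_{2,h}$-minor-freeness is inherited by subgraphs, so $F-v$ is a $K_{2,h}$-minor-free graph of treewidth at most two admitting an oddomorphism to $K_{2,h}$. As $F-v$ has strictly fewer vertices than $F$, this contradicts the minimality of $F$.

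Combining the two cases proves the claim. I do not expect any genuine obstacle here: the degree-one case is a direct parity computation from \cref{def:oddomorphism}, and the isolated case is immediate from \cref{lem:isolated} together with the monotonicity of treewidth and of $K_{2,h}$-minor-freeness under subgraphs. The role of this claim is simply to rule out low-degree vertices so that the subsequent structural analysis of $F$ can proceed.
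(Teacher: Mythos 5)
Your proof is correct and follows essentially the same route as the paper: isolated vertices are ruled out via \cref{lem:isolated} and minimality, and degree-one vertices are ruled out by the parity argument using that every vertex of $K_{2,h}$ has degree at least two (the paper phrases this as ``a degree-one vertex must be $\phi$-odd, hence has at least two neighbours,'' which is the same observation as your ``neither odd nor even'' contradiction).
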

	\begin{claimproof}
		By \cref{lem:isolated}, all vertices in $F$ have degree at least one. 
		Any vertex $v$ of degree one in $F$ must be $\phi$-odd. Since $\phi(v)$ is of degree at least two in $K_{2,h}$, 
		the vertex $v$ must have at least two neighbours. 
	\end{claimproof}
	Write $V_2 \subseteq V(F)$ for the set of all vertices of degree two.
    \begin{claim}
        All vertices in $V_2$ are $\phi$-odd.
    \end{claim}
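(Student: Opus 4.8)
The plan is to argue by contradiction using the minimality of $F$ together with the Separator Lemma. Suppose some $v \in V_2$ is $\phi$-even, and write $N_F(v) = \{w_1, w_2\}$. The first step is to pin down the local structure of such a $v$: since $\phi$ is a homomorphism, $\phi(w_1)$ and $\phi(w_2)$ are neighbours of $\phi(v)$ in $K_{2,h}$, and $\phi$-evenness means $\lvert N_F(v) \cap \phi^{-1}(u)\rvert$ is even — hence $0$ or $2$, as $\deg_F(v) = 2$ — for every $u \in N_{K_{2,h}}(\phi(v))$. As these numbers sum to $2$, there is a (unique) neighbour $u^\ast$ of $\phi(v)$ with $\phi^{-1}(u^\ast) \cap N_F(v) = \{w_1, w_2\}$; in particular $\phi(w_1) = \phi(w_2) = u^\ast$. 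Note this uniformly covers both possibilities $\phi(v) = x_i$ (so $u^\ast = y_\ell$) and $\phi(v) = y_j$ (so $u^\ast = x_i$).

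Next I would apply \cref{lem:separators} with $S \coloneqq \{w_1, w_2\}$. Since $\phi(S) = \{u^\ast\}$ is a single vertex and $K_{2,h}$ is $2$-connected, $K_{2,h} - \phi(S)$ is connected, so $m = 1$ in the notation of \cref{lem:separators}. On the other hand $N_F(v) = S$, so $v$ is isolated in $F - S$; as $\phi$ is surjective we have $\lvert V(F)\rvert \geq \lvert V(K_{2,h})\rvert = h+2 \geq 5$, whence $F - S$ has at least two connected components, i.e.\ $n \geq 2 > m$. Thus \cref{lem:separators} produces a proper $I \subsetneq [n]$, with $C \coloneqq \bigcup_{i \in I} C_i$, and a minor $F'$ of the graph obtained from $F[C \cup S]$ by adding a clique on $S$ such that $F'$ admits an oddomorphism to $K_{2,h}$. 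The only potentially new edge here is $w_1 w_2$, which is not an edge of $F$ because $\phi(w_1) = \phi(w_2)$. Since $\lvert V(F')\rvert = \lvert C\rvert + \lvert\{S\}\rvert = \lvert C\rvert + 1 < \lvert V(F)\rvert$ (as at least one nonempty component of $F-S$ is excluded), $F'$ has strictly fewer vertices than $F$.

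The main obstacle — and the point where one has to be careful — is that \cref{lem:separators} only yields a minor of $F[C\cup S]$ \emph{plus a clique on $S$}, not a minor of $F$ itself, so a priori $F'$ need not be $K_{2,h}$-minor-free or of treewidth $\leq 2$. I would resolve this by observing that in the construction underlying \cref{lem:separators} the vertices of $S$ that share a $\phi$-fibre are identified; here $\phi(w_1) = \phi(w_2) = u^\ast$, so $w_1$ and $w_2$ become a \emph{single} vertex of $F'$ and the added edge $w_1w_2$ is contracted away. Hence $F'$ is a minor of the graph obtained from $F[C\cup S]$ by identifying $w_1$ and $w_2$, and since $w_1 - v - w_2$ is a path in $F$ this identification is realised simply by contracting $v$ into $w_1$ inside $F[(C\cup S)\cup\{v\}]\subseteq F$. (If $v \in C$, then $v$ is in fact isolated in $F'$, because $\lvert E_F(v,S)\rvert = 2$ is even, so by \cref{lem:isolated} we may delete it and work with $C\setminus\{v\}$.) Either way, $F'$ — after deleting isolated vertices — is a minor of $F$, hence $K_{2,h}$-minor-free and of treewidth $\leq 2$, admits an oddomorphism to $K_{2,h}$, and has fewer vertices than $F$, contradicting the minimality of $F$. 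Therefore every vertex of $V_2$ is $\phi$-odd.
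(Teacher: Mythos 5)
Your proof is correct and follows the same route as the paper's: a $\phi$-even degree-two vertex $v$ forces both neighbours $w_1, w_2$ into a single fibre, after which \cref{lem:separators} is applied to $S=\{w_1,w_2\}$, using that $K_{2,h}-\phi(S)$ is connected (so $m=1$) while $v$ is isolated in $F-S$ and $|V(F)|\geq h+2\geq 5$ (so $n\geq 2>m$), contradicting minimality. The one place your write-up goes beyond the paper's terse ``Now \cref{lem:separators} implies that $F$ is not minimal'' is the observation that since $\phi(w_1)=\phi(w_2)$, the separator-lemma construction identifies $w_1$ and $w_2$ into a single vertex of $F'$, so $F'$ is in fact a minor of $F$ (realised by contracting the path $w_1-v-w_2$, or, if $v\in C$, by first deleting the then-isolated $v$ via \cref{lem:isolated}); this is precisely what is needed so that $F'$ remains $K_{2,h}$-minor-free and of treewidth at most two, making the minimality contradiction sound. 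That is a legitimate and correctly executed clarification of a step the paper leaves implicit, not a different argument.
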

    \begin{claimproof}
        If $v \in V_2$ is $\phi$-even,
        then both its neighbours $w_1, w_2$ lie in the same fibre.
        By \cref{lem:oddo-connected},
        $F$ is wlog connected.
        Since $F$ must have at least $2+h \geq 5$ vertices,
        the graph $F - \{w_1, w_2\}$ has at least two connected components.
        Write $t \coloneqq \phi(w_1) = \phi(w_2)$.
        Since $K_{2,h}$ is $2$-connected,
        $G - t$ is connected.
        Now
        \cref{lem:separators}
        implies that $F$ is not minimal. 
    \end{claimproof}
    
	This implies that $V_2 \subseteq X$ because every odd vertex in $Y$ is of degree at least $h \geq 3$. 
	In particular, no two vertices in $V_2$ are adjacent.
	Consider the graph $F'$ with $V(F') \coloneqq V(F) \setminus  V_2$ and 
	\begin{equation} \label{eq:def-f-prime}
		E(F') \coloneqq \left\{uv \ \middle|\ \begin{minipage}{4.5cm}
		    $u, v \in V(F')$, $uv \in E(F) \lor \left(\exists x \in V_2.\ ux, xv \in E(F) \right)$
		\end{minipage}\right\}.
	\end{equation} 
	Note that $F'$ is a minor of $F$, i.e.\  it is obtained from $F$ by contracting, for each $w \in V_2$, one of the edges incident to $w$. 
	Hence, $F'$ does not contain any isolated vertices.
	
	For every vertex $w \in V(F')$, consider the map $p_w \colon N_F(w) \to N_{F'}(w)$ sending $x \in N_F(w) \cap V_2$ to the unique vertex $y \neq w$ such that $yx \in E(F)$ and $x \in N_F(w) \setminus V_2$ to $x$.
	By definition of~$F'$, $p_w$ is surjective.
	Observe that $p_w|_{N_F(w) \setminus V_2}$ is injective.
	By \cref{lem:tw-degree}, $F'$ contains a vertex $v^*$ with $\deg_{F'}(v^*) \in \{1,2\}$ which is not a cut vertex in $F'$.
	In preparation for a case distinction, consider the following claims.
	
	\begin{claim}
		$v^* \in Y$.
	\end{claim}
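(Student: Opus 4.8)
The plan is to argue by contradiction. Suppose $v^* \notin Y$. Since $\phi$ is surjective --- every fibre contains a positive (indeed odd) number of $\phi$-odd vertices --- we have $V(F) = X \uplus Y$, so $v^* \in X$, say $v^* \in X_i$. First I would note that any neighbour of $v^*$ in $F$ is mapped by the homomorphism $\phi$ to a neighbour of $x_i$ in $K_{2,h}$, and the only neighbours of $x_i$ are $y_1$ and $y_2$; hence $N_F(v^*) \subseteq Y$. Using the already established inclusion $V_2 \subseteq X$, this yields $N_F(v^*) \cap V_2 = \emptyset$: the vertex $v^*$ has no neighbour among the degree-two vertices that are suppressed when passing from $F$ to the minor $F'$.

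The second and final step is to conclude that $\deg_{F'}(v^*) = \deg_F(v^*)$, which is impossible. Indeed, by the definition of $F'$ in \cref{eq:def-f-prime}, a vertex $u \in V(F')$ is adjacent to $v^*$ in $F'$ exactly when $uv^* \in E(F)$ or there is some $x \in V_2$ with $ux, xv^* \in E(F)$; the latter case cannot occur since $N_F(v^*) \cap V_2 = \emptyset$. Moreover $N_F(v^*) \subseteq Y \subseteq V(F')$, so every $F$-neighbour of $v^*$ survives as a neighbour in $F'$. Hence $N_{F'}(v^*) = N_F(v^*)$. But $v^* \in V(F') = V(F) \setminus V_2$, so $\deg_F(v^*) \neq 2$, while $\deg_F(v^*) \geq 2$ by \cref{cl:k2h-main1}; therefore $\deg_F(v^*) \geq 3$, contradicting $\deg_{F'}(v^*) \in \{1,2\}$.

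I do not expect a genuine obstacle here: the statement reduces to a short degree count, the crux being that vertices of $X$ see only $Y$ and that all suppressed vertices $V_2$ lie in $X$, so passing to $F'$ changes neither the neighbourhood nor the degree of a vertex of $X$. The only point to handle carefully is verifying that forming $F'$ neither deletes an existing edge at $v^*$ nor introduces a new one, which is precisely what $N_F(v^*) \cap V_2 = \emptyset$ guarantees; note that the hypothesis that $v^*$ is not a cut vertex of $F'$ is not needed for this particular claim (it will be used in the subsequent case distinction).
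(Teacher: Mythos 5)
Your proof is correct and uses essentially the same underlying insight as the paper — that a degree drop at $v^*$ when passing from $F$ to $F'$ must be witnessed by a neighbour in $V_2 \subseteq X$ — just framed as a proof by contradiction (assume $v^* \in X$, show the neighbourhood is unchanged, so the degree cannot drop below $3$) rather than the paper's direct deduction via non-injectivity of $p_{v^*}$. Both rely on the same prior facts ($V_2 \subseteq X$, $\deg_F(v^*) \geq 3$, $\deg_{F'}(v^*) \leq 2$) and the definition of $F'$; no gap.
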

	\begin{claimproof}
		Since $p_{v^*}$ is surjective, it holds that $\deg_{F'}(v^*) \leq \deg_{F}(v^*)$. By \cref{cl:k2h-main1} and since $v^* \not\in V_2$, it holds that $\deg_F(v^*) \geq 3$. Hence, there exist $x \neq y \in N_F(v^*)$ such that $p_{v^*}(x) = p_{v^*}(y)$. Since $p_w|_{N_F(w) \setminus V_2}$ is injective, without loss of generality, $x \in V_2 \subseteq X$. This implies that $v^* \in Y$.
	\end{claimproof}
	\begin{claim} \label{cl:cutvertex}
		If $v \in V(F')$ is a cut vertex of $F$, then it is a cut vertex of $F'$.
		In particular, $v^*$ is not a cut vertex of $F$.
	\end{claim}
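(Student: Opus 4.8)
The plan is to use that $F'$ is nothing but $F$ with the degree-two vertices of $V_2$ \emph{suppressed}: since $V_2$ is an independent set disjoint from $V(F')$, every $w \in V_2$ has both its neighbours in $V(F')$, and $F'$ arises from $F$ by replacing, for each such $w$, the path $u_1 - w - u_2$ by the edge $u_1 u_2$ (see \eqref{eq:def-f-prime}). I would first isolate the following connectivity correspondence and then read the claim off it: \emph{for every $v \in V(F')$ and all $a, b \in V(F') \setminus \{v\}$, the vertices $a$ and $b$ lie in the same connected component of $F - v$ if, and only if, they lie in the same connected component of $F' - v$; and the same holds with $F-v$, $F'-v$ replaced by $F$, $F'$.} For the ``only if'' direction, a path in $F - v$ is turned into a walk in $F' - v$ by replacing each traversal $u_1 - w - u_2$ of a vertex $w \in V_2$ (whose flanking vertices $u_1, u_2$ lie in $V(F')$ as $V_2$ is independent) by the edge $u_1 u_2 \in E(F')$; every other edge of the path already lies in $F'$ by \eqref{eq:def-f-prime}, and no vertex involved equals $v$ since $v \notin V_2$. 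For the ``if'' direction, a path in $F' - v$ is lifted back to a walk in $F - v$ by expanding each $F'$-edge that is not already an edge of $F$ into the corresponding length-two path through a vertex of $V_2$, which again avoids $v$.

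Granting this, I would argue as follows. Suppose $v \in V(F')$ is a cut vertex of $F$, so there are $a, b \in V(F) \setminus \{v\}$ lying in one component of $F$ but in distinct components of $F - v$. If $a \in V_2$, then $a$ has a neighbour $a' \in V(F') \setminus \{v\}$ (at most one of its two neighbours can be $v$), and $aa' \in E(F)$ forces $a'$ into the same component of $F - v$ as $a$; replacing $a$ by $a'$ and, symmetrically, $b$ by some $b' \in V(F') \setminus \{v\}$, we may assume $a, b \in V(F') \setminus \{v\}$. The correspondence now yields that $a$ and $b$ lie in the same component of $F'$ but in distinct components of $F' - v$, i.e.\ deleting $v$ strictly increases the number of components of $F'$; hence $v$ is a cut vertex of $F'$. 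The ``in particular'' is the contrapositive: $v^*$ was chosen via \cref{lem:tw-degree} not to be a cut vertex of $F'$, so it is not a cut vertex of $F$.

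I do not expect a genuine obstacle here; the routine-but-essential point is the bookkeeping that keeps every auxiliary vertex of $V_2$ — both those witnessing edges of $F'$ and those reintroduced when lifting paths — distinct from $v$, which is exactly where $V_2 \cap V(F') = \emptyset$ enters, together with the endpoint clean-up moving $a, b$ out of $V_2$, which uses that each vertex of $V_2$ has both neighbours in $V(F')$. The only further thing to be careful about is that ``same component of $F'$, different components of $F'-v$'' really does certify $v$ as a cut vertex under the definition in use, which holds because deleting a vertex never merges components and only ever splits the component containing it.
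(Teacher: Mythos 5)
Your proposal is correct and uses essentially the same argument as the paper: a shortcutting/expansion correspondence between paths in $F$ (resp.\ $F-v$) and in $F'$ (resp.\ $F'-v$), together with the endpoint clean-up that replaces a $V_2$-endpoint by one of its $F'$-neighbours avoiding $v$. The only difference is presentational — the paper runs the argument by contraposition (assuming $v$ is not a cut vertex of $F'$ and deducing it is not one of $F$), while you run it directly and package the shortcutting/expansion step as an explicit connectivity correspondence; both rest on the same facts about the suppression of the degree-two vertices in $V_2$.
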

	\begin{claimproof}
		By contraposition, suppose that, for all $a, b\in V(F') \setminus \{v\}$ which are connected in $F'$, there exists a path connecting them in $F' - v$.
		Let $a, b \in V(F) \setminus \{v\}$ be arbitrary vertices in the same connected component of $F$.
		Set $a'$ to $a$ if $a \in V(F')$ and to any $x \in N_F(a)$ such that $x \neq v$ if $a \in V_2$.
		Define $b'$ analogously. Observe that $a'$ and $b'$ lie in the same connected component of $F$.
		Hence, there exist a path connecting them. This path can be turned into a path in $F'$ by shortcutting every vertex in $V_2$ appearing on it.
		Hence, there exists a path connecting them in $F' - v$. This path can be transformed into a path in $F - v$ by replacing shortcut edges $wy \in E(F' - v)$ with walks via any associated vertex in $x \in V_2$, i.e.\ $wx, xy \in E(F - v)$, cf.\ \cref{eq:def-f-prime}.
	\end{claimproof}
	\begin{claim} \label{cl:eulerian}
		Let $I \subseteq [h]$ be a set of even size. 
		Let $H \coloneqq F[Y_1 \cup Y_2 \cup \bigcup_{i \in I} X_i]$. Then $\deg_H(v)$ is even for every $v \in V(H)$.
	\end{claim}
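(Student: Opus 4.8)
The plan is to exploit the bipartite structure of $K_{2,h}$, which forces $F$ to be ``bipartite along~$\phi$'': since $\phi$ is a homomorphism and the only edges of $K_{2,h}$ run between $\{x_1,\dots,x_h\}$ and $\{y_1,y_2\}$, every edge of $F$ joins some $X_i$ to some $Y_j$. Consequently $H$ is bipartite with parts $\bigcup_{i \in I} X_i$ and $Y_1 \cup Y_2$, and it suffices to check the parity of $\deg_H(v)$ for vertices on each of the two sides separately, using the fact (part of the definition of oddomorphism) that every vertex of $F$ is $\phi$-odd or $\phi$-even.

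First I would treat a vertex $v \in X_i$ with $i \in I$. Since $\phi(v) = x_i$ and $N_{K_{2,h}}(x_i) = \{y_1, y_2\}$, every neighbour of $v$ in $F$ lies in $Y_1 \cup Y_2 \subseteq V(H)$, so $N_H(v) = N_F(v)$ and hence $\deg_H(v) = |N_F(v) \cap Y_1| + |N_F(v) \cap Y_2|$. As $v$ is $\phi$-odd or $\phi$-even, the two summands $|N_F(v) \cap \phi^{-1}(y_1)|$ and $|N_F(v) \cap \phi^{-1}(y_2)|$ have the same parity, so their sum is even.

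Next I would treat a vertex $v \in Y_j$. Since $\phi(v) = y_j$ and $N_{K_{2,h}}(y_j) = \{x_1, \dots, x_h\}$, every neighbour of $v$ in $F$ lies in $X = \bigcup_{i \in [h]} X_i$, and therefore $\deg_H(v) = \sum_{i \in I} |N_F(v) \cap X_i|$. Because $v$ is $\phi$-odd or $\phi$-even, all the counts $|N_F(v) \cap X_i|$ for $i \in [h]$ share a common parity $\pi \in \{0,1\}$, whence $\deg_H(v) \equiv |I| \cdot \pi \pmod 2$; this is even since $|I|$ is even.

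Since every vertex of $H$ falls into one of these two cases, the claim follows. I do not expect any genuine obstacle: the statement is engineered so that evenness of $|I|$ absorbs the possibly odd fibre-degrees on the $Y$-side, while the size-two part $\{y_1,y_2\}$ handles the $X$-side automatically. The only points requiring care are invoking the right half of the oddomorphism definition (that every vertex is classified as odd or even) and using the correct neighbourhoods $N_{K_{2,h}}(x_i) = \{y_1,y_2\}$ and $N_{K_{2,h}}(y_j) = \{x_1,\dots,x_h\}$.
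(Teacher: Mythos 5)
Your proof is correct and is essentially the paper's argument spelled out: the paper's one-line justification ("the number of neighbours of $\phi(v)$ among $y_1,y_2$ and $x_i$ for $i\in I$ is even") is exactly your observation that $\deg_H(v)$ is a sum, over an even number of neighbours of $\phi(v)$ in the relevant vertex set, of fibre-intersection counts that all share the same parity because $v$ is $\phi$-odd or $\phi$-even. No gaps.
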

	\begin{claimproof}
		For every vertex $v \in V(H)$, the number of neighbours of $\phi(v)$ among $y_1, y_2$ and $x_i$ for $i \in I$ is even.
	\end{claimproof}

	Without loss of generality, assume that $v^* \in Y_1$.
	Recall that $\deg_{F'}(v^*) \leq 2$.
	Hence, the image of $p_{v^*}$ is of size at most two.
	Observe that $p_{v^*}(N_F(v^*) \cap V_2) \subseteq Y$ and $p_{v^*}(N_F(v^*) \setminus V_2) \subseteq X$.
	By \cref{cl:k2h-main1} and  since $v^* \not\in V_2$, the degree of $v^*$ in $F$ is at least $3$.
	Hence, there must be at least one vertex from $X$  in the image of $p_{v^*}$.
	Distinguish cases based on the parity of $v^*$ with respect to $\phi$:
	
	\begin{enumerate}
		\item $v^*$ is $\phi$-even.

	Distinguish cases:
	\begin{enumerate}
		\item The image of $p_{v^*}$ has size one.
		
		Then there exists a fibre $X_i$ containing two neighbours $u_1, u_2$ of $v^*$.
		Since $p_{v^*}(u_1) = p_{v^*}(u_2)$, it holds that $u_1, u_2 \in V_2$ and that they share their second neighbour, i.e.\  $N_F(u_1) = N_F(u_2)$.
		Hence, \cref{lem:remove-twins} applies and $F$ is not minimal.

		\item The image of $p_{v^*}$ contains one vertex in $X$ and one vertex in $Y$.
		
		By injectivity of $p_{v^*}$ on $N_F(v^*) \setminus V_2$, all but one neighbour of $v^*$ are in $V_2$.
		Since every fibre in $X$ contains an even number of neighbours of $v^*$, there must be a fibre containing two neighbours $u_1, u_2$ of $v^*$ from $V_2$.
		As in the previous case, $N_F(u_1) = N_F(u_2)$ and $F$ is not minimal by \cref{lem:remove-twins}.

		\item The image of $p_{v^*}$ contains two vertices $v_1, v_2$ in $Y$.
		
		In this case, $N_F(v^*) \subseteq V_2$. Hence, all neighbours of $v^*$ are $\phi$-odd and in particular $v_1, v_2 \in Y_2$ lie in the same fibre.
		The set $C \coloneqq N_F(v^*) \cup \{v^*\}$ is a connected component of $F - \{v_1, v_2\}$.
        Since $v^*$ is not a cut vertex of $F$,
        $F - \{v_1, v_2\}$ contains at least two connected components.
        The graph $G - \phi(v_1)$ is connected.
        Hence, by \cref{lem:separators}, $F$ is not minimal.

\end{enumerate}

	\item $v^*$ is $\phi$-odd.
	
	The vertex $v^*$ has at least one neighbour $u_i \in X_i$ in every fibre $i \in [h]$.
	In particular, it is of degree at least $h$.
	Distinguish further cases:
	\begin{enumerate}
		\item The image of $p_{v^*}$ has size one.
		
		In this case, the vertices $v^*, u_1, \dots, u_h$ together with the vertex in the image of $p_{v^*}$ induce a subgraph $K_{2,h}$ of $F$.
		
		\item The image of $p_{v^*}$ contains one vertex in $X$ and one vertex $v$ in $Y$.
		
By injectivity of $p_{v^*}$ on $N_F(v^*) \setminus V_2$, $v^*$ has at least $h-1$ neighbours $u_2, \dots, u_h$ in~$V_2$.
		\begin{enumerate}
			\item If $h$ is even, then $F$ is Eulerian by \cref{cl:eulerian}.
			The same remains true if $u_2, \dots, u_h$ (an odd number of vertices) are deleted and an edge between $v$ and $v^*$ is inserted. 
			Hence, there exists a path in $F$ from $v$ to $v^*$ avoiding the vertices $u_2, \dots, u_h$.
			This path can be contracted to a path of length two to yield a $K_{2, h}$-minor of $F$.
			\item If $h$ is odd, consider the subgraph $H \coloneqq F[Y_1 \cup Y_2 \cup X_1 \cup \dots \cup X_{h-1}]$.
			By \cref{cl:eulerian}, $H$ is Eulerian.
			Deleting $u_2, \dots, u_{h-1}$ (an odd number of vertices)  vertices from $H$ and adding an edge between $v$ and~$v^*$ yields another Eulerian graph.
			Thus, there exists a path in $H$ from $v$ to $v^*$ avoiding $u_2, \dots, u_{h-1}$.
			As above, this yields a minor $K_{2, h-1}$ in~$H$ and, 
			together with the neighbour $u_h \in X_h$ of $v^*$, a minor $K_{2, h}$ in $F$.
		\end{enumerate}

		\item The image of $p_{v^*}$ contains two vertices $v_1, v_2$ in $Y$.

		By \cref{cl:cutvertex}, $v^*$ is not a cut vertex of $F$.
		Hence, there exists a path in $F$ from $v_1$ to $v_2$ avoiding $v^*$.
		This path can be contracted to yield a minor $K_{2,h}$ of $F$.
		\qedhere
	\end{enumerate}
	\end{enumerate}
\end{proof}

	\section{Excluding  Minors}
	Next, we use the tools developed in previous sections to prove \cref{thm:main-vortex-free-hadwiger,thm:genus-strictly-refining,thm:had-strictly-refining}.
Towards this end, we first consider graph classes of bounded genus, and subsequently graph classes of bounded vortex-free Hadwiger number.

\subsection{Bounded Genus}
\label{sec:genus-subsection} 

We recall the following definitions from \cite{robertson_excluding_2024}.
A \emph{surface} $\Sigma$ is a non-null compact $2$-manifold with possibly null boundary which is possibly disconnected.
For a surface $\Sigma$, write $\widehat\Sigma$ for the surface obtained from $\Sigma$ by pasting a closed disc onto each component of the boundary.
If $\Sigma$ is a connected surface and $\Sigma$ is orientable,
then its \emph{genus} is defined as the number of handles one must add to $2$-sphere to obtain $\widehat\Sigma$.
If $\Sigma$ is non-orientable, then its \emph{genus} is the number of crosscaps one must add to  a $2$-sphere to obtain $\widehat\Sigma$.
The \emph{genus} of a disconnected surface is the sum of the genera of its components.
Finally, the \emph{genus} of a graph $G$ is the minimal $g$ such that $G$ can be drawn on a surface $\Sigma$ of genus $g$ without edge crossings.
Write $\mathcal{E}_g$ for the class of graphs of genus $\leq g$.

Since the forbidden minors characterising $\mathcal{E}_g$ for $g \geq 1$ are unknown \cite{myrvold_large_2018}, 
we do not work with $\mathcal{E}_g$ directly 
but rather with superclasses that are described by a concise list of forbidden minors.
Recall that we write $\mathcal{P}$ for the class of planar graphs.
Also recall that, by Wagner's theorem \cite{wagner_uber_1937,kuratowski_sur_1930}, a graph is planar if, and only if, it does not contain $K_5$ and $K_{3,3}$ as a minor.
For $k \geq 1$, a \emph{$k$\nobreakdash-Kuratowski graph} \cite{robertson_excluding_2024} is a graph with exactly $k$ connected components each of which is isomorphic to $K_5$ or $K_{3,3}$.
We write $\mathcal{P}^{(k)}$ for the class of graphs that exclude all $k$-Kuratowski graphs as a minor.
It is known that graphs of genus $g$ exclude all $(g+1)$\nobreakdash-Kuratowski graphs as a minor, see also \cite[(1.3)]{robertson_excluding_2024}.

\begin{lemma}[\protect{\cite[Theorem~1]{battle_additivity_1962}}]
    \label{lem:genus-k-kuratowski}
    For every $g \geq 0$,  $\mathcal{E}_g \subseteq \mathcal{P}^{(g+1)}$.
\end{lemma}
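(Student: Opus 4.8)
The plan is to argue by contradiction. Suppose some graph $G$ of genus at most $g$ contains a $(g+1)$-Kuratowski graph $H = H_1 + \dots + H_{g+1}$ as a minor, where each $H_i$ is a copy of $K_5$ or $K_{3,3}$. I would show that this forces the genus of $G$ to be at least $g+1$, which contradicts $G \in \mathcal{E}_g$; hence no such minor exists, i.e.\ $G \in \mathcal{P}^{(g+1)}$, which is exactly $\mathcal{E}_g \subseteq \mathcal{P}^{(g+1)}$.

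The deduction would combine three ingredients. First, the genus is minor-monotone: deleting a vertex or an edge, or contracting an edge, cannot increase the minimum genus of a surface in which a graph embeds, so the genus of $H$ is at most the genus of $G$, hence at most $g$. Second, by Wagner's theorem \cite{wagner_uber_1937,kuratowski_sur_1930} both $K_5$ and $K_{3,3}$ are non-planar, so each component $H_i$ has genus at least $1$. Third --- and this is the point where the statement genuinely uses \cite[Theorem~1]{battle_additivity_1962} --- the genus is additive over the blocks of a graph, and in particular over its connected components; applied to the disjoint union $H = H_1 + \dots + H_{g+1}$ this gives that the genus of $H$ equals $\sum_{i=1}^{g+1}(\text{genus of }H_i) \ge g+1$. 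Combining the first and third points yields $g \ge (\text{genus of }H) \ge g+1$, the desired contradiction. The base case $g = 0$ is just Wagner's theorem, since a $1$-Kuratowski graph is $K_5$ or $K_{3,3}$ and $\mathcal{E}_0 = \mathcal{P} = \mathcal{P}^{(1)}$.

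The only step needing real care is bookkeeping around the notion of genus. The inequality extracted from additivity is the ``$\ge$'' direction --- that an embedding of a \emph{disconnected} graph in a \emph{connected} surface of genus $g$ forces $g$ to be at least the sum of the genera of the pieces --- which is precisely the non-trivial half of the Battle--Harary--Kodama--Youngs theorem; the easy converse (embedding the pieces into a connected sum of their host surfaces) is not needed here. One should check that the additivity statement invoked matches the surface convention fixed at the start of this subsection following \cite{robertson_excluding_2024}; phrased instead in terms of non-orientable or Euler genus, the same three-line argument goes through verbatim with the corresponding additivity theorem in place of \cite[Theorem~1]{battle_additivity_1962}. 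Beyond this, no computation is involved.
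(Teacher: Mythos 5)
Your argument is correct and is exactly the intended justification: the paper states this lemma without proof, citing precisely the Battle--Harary--Kodama--Youngs additivity theorem that you combine with minor-monotonicity of genus and the non-planarity of $K_5$ and $K_{3,3}$. Your caveat about matching the surface convention of \cite{robertson_excluding_2024} is apt, and is why the paper also points to \cite[(1.3)]{robertson_excluding_2024}, where the statement is recorded in that setting.
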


We show that $\mathcal{P}^{(g)}$ is closed under weak oddomorphisms and homomorphism distinguishing closed by applying the following lemma.

    \begin{lemma}\label{thm:minor-disjoint-union}
        Let $M_1,\dots,M_\ell$ be connected, and let $\mathcal F$ be the class of $\{M_1,\dots,M_\ell\}$-minor-free graphs.
        For $k \geq 1$, 
        let $\mathcal F^{(k)}$ denote the class of graphs that exclude all minors that are the disjoint union of exactly $k$ graphs in $\{M_1,\dots,M_\ell\}$.
        If $\mathcal F$ is closed under weak oddomorphisms, then so is $\mathcal F^{(k)}$.
    \end{lemma}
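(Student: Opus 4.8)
The plan is to proceed by induction on $k$, using \cref{lem:oddo-connected} to decompose both source and target into connected pieces, and then counting how many pieces can "host" a copy of some $M_i$. First note that $\mathcal F^{(1)} = \mathcal F$, so the base case $k=1$ is the hypothesis. For the inductive step, suppose $\phi\colon F\to G$ is an oddomorphism (by \cref{lem:oddo-woddo} it suffices to handle oddomorphisms, since $\mathcal F^{(k)}$ is clearly subgraph-closed as each $M_i$ is connected and minor-exclusion of disconnected minors is subgraph-monotone) with $F\in\mathcal F^{(k)}$, and suppose towards a contradiction that $G\notin\mathcal F^{(k)}$. Then $G$ has a minor $G'$ which is the disjoint union of $k$ graphs from $\{M_1,\dots,M_\ell\}$. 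By \cref{lem:oddo-minors}\ref{item:oddo-minors-3}, there is a minor $\widetilde F$ of $F$ admitting an oddomorphism $\widetilde F\to G'$; and by \cref{lem:oddo-connected}\ref{item:oddo-minors-1} and \ref{item:oddo-minors-2}, for each connected component $M_{i_j}$ of $G'$ ($j\in[k]$) there is a connected subgraph $F_j$ of $\widetilde F[\phi^{-1}(M_{i_j})]$ with an oddomorphism $F_j\to M_{i_j}$. The fibres $\phi^{-1}(M_{i_j})$ are pairwise disjoint, so the $F_j$ are pairwise vertex-disjoint subgraphs of $F$; hence $F_1 + \dots + F_k$ is a subgraph of $F$ (in fact of $\widetilde F$, a minor of $F$, so of $F$).

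Now I want to conclude that each $F_j$ contains some $M_{i}$ as a minor, which would give $F$ a disjoint union of $k$ graphs from $\{M_1,\dots,M_\ell\}$ as a minor, contradicting $F\in\mathcal F^{(k)}$. This is exactly where $\mathcal F$ being closed under weak oddomorphisms enters: $F_j$ admits an oddomorphism onto the connected graph $M_{i_j}$, and $M_{i_j}\notin\mathcal F$ (it is one of the forbidden minors, hence not $\{M_1,\dots,M_\ell\}$-minor-free), so $F_j\notin\mathcal F$ because $\mathcal F$ is closed under oddomorphisms; therefore $F_j$ has some $M_i$ as a minor. Taking these $k$ disjoint minors, one inside each $F_j$, and using the disjointness of the $F_j$, yields a subgraph of $F$ — namely the disjoint union of the $F_j$ — which has the disjoint union of $k$ graphs from $\{M_1,\dots,M_\ell\}$ as a minor. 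Since minors of subgraphs are minors, $F$ has such a minor, contradiction. Hence $G\in\mathcal F^{(k)}$.

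The main subtlety I expect is bookkeeping the passage from $G'$ back to $F$: \cref{lem:oddo-minors}\ref{item:oddo-minors-3} only directly gives a minor $\widetilde F$ of $F$ with an oddomorphism to the \emph{whole} disconnected $G'$, and I then need \cref{lem:oddo-connected}\ref{item:oddo-minors-1}/\ref{item:oddo-minors-2} to peel off the components while keeping the relevant subgraphs of $F$ vertex-disjoint — this works because the preimages of distinct components under a homomorphism are disjoint, but one must be careful that "subgraph of $\widetilde F$, which is a minor of $F$" still yields "$F$ has $M$ as a minor", which holds since being a minor is transitive. One could alternatively avoid the detour through $G'$ altogether: directly, $G\notin\mathcal F^{(k)}$ means $G$ has $k$ pairwise-disjoint connected subgraphs contracting to members of $\{M_1,\dots,M_\ell\}$; pull back via $\phi$, apply \cref{lem:oddo-connected}\ref{item:oddo-minors-2} componentwise, and argue as above. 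Either way the single genuinely new ingredient is the appeal to closedness of $\mathcal F$ under weak oddomorphisms applied separately to each connected component; the rest is the (by now standard) fibre-disjointness argument for disjoint unions of connected minors.
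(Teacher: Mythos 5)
Your proof is correct and is essentially the paper's argument: reduce to oddomorphisms via \cref{lem:oddo-woddo}, pass to a minor $F'$ of $F$ with an oddomorphism onto $M_{i_1}+\dots+M_{i_k}$, peel off one connected subgraph $F_j'$ per component via \cref{lem:oddo-connected}, use closedness of $\mathcal{F}$ under oddomorphisms to force each $F_j'\notin\mathcal{F}$, and assemble the $k$ disjoint forbidden minors inside $F'$ (a minor of $F$) to contradict $F\in\mathcal{F}^{(k)}$. The only cosmetic difference is that you frame it as an induction on $k$, but the inductive hypothesis is never invoked — the argument is direct for every $k$, as in the paper.
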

    \begin{proof}
        Let $F \in \mathcal{F}^{(k)}$ and $\phi \colon F \to G$ be an oddomorphism, see \cref{lem:oddo-woddo}.
        If $G \not\in \mathcal{F}^{(k)}$,
        then there exists a minor $M_{i_1} + \dots + M_{i_k}$ of $G$.
        By \cref{lem:oddo-minors},
        there exists a minor $F'$ of $F$ admitting an oddomorphism $F' \to M_{i_1} + \dots + M_{i_k}$.
        By \cref{lem:oddo-connected},
        there exist pairwise disjoint subgraphs $F'_1, \dots, F'_k$ of $F'$ admitting oddomorphisms $F'_j \to M_{i_j}$ for $j \in [k]$.
        Since $\mathcal{F}$ is closed under oddomorphisms,
        it follows that $F'_j \not\in \mathcal{F}$ for all $j \in [k]$.
        Hence, each $F'_j$ contains at least one of the forbidden minors defining $\mathcal{F}$.
        In particular, $F' \not\in \mathcal{F}^{(k)}$
        which implies that $F \not\in \mathcal{F}^{(k)}$, as desired.
    \end{proof}

    By \cref{lem:oddo-planar-degree},
    the class of planar graphs is closed under oddomorphisms.
    Hence, the following \cref{cor:p-k-closed} follows from \cref{thm:minor-disjoint-union,lem:oddo-connected,thm:rob62}.
    
    \begin{corollary}
        \label{cor:p-k-closed}
        For every $k \geq 1$, the class $\mathcal{P}^{(k)}$ is closed under weak oddomorphisms.
        In particular, the disjoint union closure of 
        $\mathcal{P}^{(k)}$ is homomorphism distinguishing closed.
    \end{corollary}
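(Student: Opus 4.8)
I would derive this directly from \cref{thm:minor-disjoint-union} by taking $\mathcal{F}$ to be the class $\mathcal{P}$ of planar graphs. By the theorems of Kuratowski and Wagner, $\mathcal{P}$ coincides with the class of $\{K_5,K_{3,3}\}$-minor-free graphs, and both $K_5$ and $K_{3,3}$ are connected. By \cref{lem:oddo-planar-degree}, $\mathcal{P}$ is closed under weak oddomorphisms. Since a graph is a $k$-Kuratowski graph exactly when it is the disjoint union of $k$ graphs each isomorphic to $K_5$ or $K_{3,3}$, the class $\mathcal{F}^{(k)}$ produced by \cref{thm:minor-disjoint-union} is precisely $\mathcal{P}^{(k)}$. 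Hence \cref{thm:minor-disjoint-union} immediately gives that $\mathcal{P}^{(k)}$ is closed under weak oddomorphisms.

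For the ``in particular'' part, let $\mathcal{D}$ be the disjoint union closure of $\mathcal{P}^{(k)}$, that is, the class of graphs all of whose connected components lie in $\mathcal{P}^{(k)}$; this class is componental by construction. The goal is to apply \cref{thm:rob62}, so it remains to verify that $\mathcal{D}$ is closed under weak oddomorphisms. Given a weak oddomorphism $\phi\colon F\to G$ with $F\in\mathcal{D}$, I would fix a connected component $G_i$ of $G$; by \cref{lem:oddo-connected} there is a connected subgraph $F_i$ of $F$ admitting an oddomorphism $F_i\to G_i$. Being connected, $F_i$ is contained in a single connected component of $F$, which lies in $\mathcal{P}^{(k)}$, and since $\mathcal{P}^{(k)}$ is minor-closed, hence subgraph-closed, $F_i\in\mathcal{P}^{(k)}$. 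By the first part, $G_i\in\mathcal{P}^{(k)}$, and as this holds for every component of $G$ we conclude $G\in\mathcal{D}$. Thus $\mathcal{D}$ is componental and closed under weak oddomorphisms, and \cref{thm:rob62} shows that $\mathcal{D}$ is homomorphism distinguishing closed.

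Essentially every step is a direct citation, so I expect no genuine obstacle here; the only point that needs a moment's care is that $\mathcal{P}^{(k)}$ itself is \emph{not} componental for $k\ge 2$ — a disjoint union of a planar and a non-planar graph may acquire an extra Kuratowski piece as a minor — which is exactly why one passes to the disjoint union closure $\mathcal{D}$ before invoking \cref{thm:rob62}, and one must then observe, via the connectedness reductions of \cref{lem:oddo-connected}, that closure under weak oddomorphisms is preserved by this operation.
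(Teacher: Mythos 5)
Your proof is correct and follows the paper's own route precisely: \cref{thm:minor-disjoint-union} applied to the class of planar graphs (via Wagner/Kuratowski and \cref{lem:oddo-planar-degree}), and then \cref{lem:oddo-connected} together with \cref{thm:rob62} for the disjoint-union closure. One small correction to your closing aside: a disjoint union of a planar graph with a non-planar one acquires no new Kuratowski pieces (the planar factor contributes none); the reason $\mathcal{P}^{(k)}$ fails to be componental for $k\ge 2$ is rather that, e.g., each copy of $K_5$ lies in $\mathcal{P}^{(2)}$ while $K_5+K_5$ does not. Your handling of this point—passing to the disjoint-union closure and re-verifying closure under weak oddomorphisms componentwise—is exactly what is needed.
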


    \cref{lem:genus-k-kuratowski,cor:p-k-closed} imply that, for every $g \geq 0$,
    there exist non-isomorphic graphs $G$ and $H$ that are homomorphism indistinguishable over $\mathcal{E}_g$.
    Refining this, \cref{thm:genus-strictly-refining} shows that the relations $\equiv_{\mathcal{E}_g}$ for $g \geq 0$ form a strictly refining chain of graph isomorphism relaxations.

    \thmGenusStrictlyRefining*

    \begin{proof}

        Let $F$ be the graph obtained by taking $g+1$ copies of $K_5$ and connecting one of the vertices in each copy to a fresh vertex~$x$.
        Since $F$ contains a $(g+1)$\nobreakdash-Kuratowski graph as a minor, 
        $F \not\in \mathcal{P}^{(g+1)}$.
        In particular, by \cref{cor:p-k-closed,lem:genus-k-kuratowski}, we conclude that no graph in $\mathcal{E}_g$ admits a weak oddomorphism to $F$. 
        
        By \cite[Lemma~1]{battle_additivity_1962}, the genus of the graph $F'$ obtained from $F$ by contracting all edges incident to $x$ has genus $\leq g+1$.
        Hence, $F$ has genus $\leq g+1$.
        Now, the theorem follows  from \cref{lem:strictness-via-oddo}.
\end{proof}

\subsection{Bounded Vortex-Free Hadwiger Number}
\label{sec:extensions}

Next, we generalise \cref{thm:genus-strictly-refining} 
to graphs of bounded vortex-free Hadwiger number.
For $d,g \geq 0$, we write $\mathcal{C}_{g,d}$ for the class of all graphs $G$ such that $\dd_{\mathcal{E}_g}(G) \leq d$, i.e., there is a set $S \subseteq V(G)$ of size at most $d$ such that $G - S \in \mathcal{E}_g$.
Also, we write $\mathcal{C}_{g,d}^{\oplus s}$ for the closure of $\mathcal{C}_{g,d}$ under clique-sums of size at most $s$.
The \emph{vortex-free Hadwiger number} \cite{thilikos_killing_2022} of a graph $G$ is the minimum $k \geq 0$ such that $G \in \mathcal{C}_{k,k}^{\oplus k}$.
Recall that we write $\mathcal{H}_k \coloneqq \mathcal{C}_{k,k}^{\oplus k}$ for the class of graphs of vortex-free Hadwiger number at most $k$.

\cref{lem:genus-k-kuratowski} generalises to also allow for apices.
Alternatively, it is also possible to use the tools from \cref{sec:deletion-distence} to deal with apices.

\begin{lemma}
    \label{lem:genus-apices-k-kuratowski}
    For every $g, d \geq 0$,
    it holds that $\mathcal{C}_{g,d} \subseteq \mathcal{P}^{(g+1+d)}$.
\end{lemma}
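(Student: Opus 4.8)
The plan is to argue by contradiction, transferring a Kuratowski minor from $G$ down to $G - S$. Suppose $G \in \mathcal{C}_{g,d}$, witnessed by a set $S \subseteq V(G)$ with $|S| \leq d$ and $G - S \in \mathcal{E}_g$, but $G \notin \mathcal{P}^{(g+1+d)}$. Then $G$ contains some $(g+1+d)$-Kuratowski graph $M = M_1 + \dots + M_{g+1+d}$ as a minor, where each $M_i$ is isomorphic to $K_5$ or $K_{3,3}$. Fix a minor model, i.e.\ pairwise disjoint branch sets $(\mu(u))_{u \in V(M)}$ in $G$, each inducing a connected subgraph, with an edge of $G$ between $\mu(u)$ and $\mu(u')$ whenever $uu' \in E(M)$.

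For $i \in [g+1+d]$ set $W_i \coloneqq \bigcup_{u \in V(M_i)} \mu(u)$. These sets are pairwise disjoint, and since $M_i$ is connected, the restriction of $\mu$ to $V(M_i)$ is a minor model of $M_i$ inside $G[W_i]$. As $|S| \leq d$ and the $W_i$ are pairwise disjoint, $S$ meets at most $d$ of them; hence there are at least $g+1$ indices $i$ with $W_i \cap S = \emptyset$. Fix $g+1$ such indices $i_1, \dots, i_{g+1}$. For each of them $G[W_{i_j}] = (G-S)[W_{i_j}]$, so $M_{i_j}$ is a minor of $G - S$; moreover, as $W_{i_1}, \dots, W_{i_{g+1}}$ are pairwise disjoint, the disjoint union $M_{i_1} + \dots + M_{i_{g+1}}$ — which is a $(g+1)$-Kuratowski graph — is a minor of $G - S$.

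This contradicts \cref{lem:genus-k-kuratowski}: since $G - S \in \mathcal{E}_g \subseteq \mathcal{P}^{(g+1)}$, the graph $G - S$ excludes every $(g+1)$-Kuratowski graph as a minor. Therefore $G$ contains no $(g+1+d)$-Kuratowski graph as a minor, i.e.\ $G \in \mathcal{P}^{(g+1+d)}$, as desired. The only mildly technical point is the bookkeeping with minor models — grouping branch sets by the component of $M$ to which they belong and checking that deleting $S$ leaves sufficiently many of these component-models intact and disjoint — but this is routine, and I do not expect a genuine obstacle.
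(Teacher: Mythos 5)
Your proof is correct and is exactly the argument the paper sketches in one sentence: since $|S|\leq d$ and the component-models of a $(g{+}1{+}d)$-Kuratowski minor are pairwise disjoint, $S$ can touch at most $d$ of them, so a $(g{+}1)$-Kuratowski minor survives in $G-S$, contradicting \cref{lem:genus-k-kuratowski}. You have simply spelled out the routine bookkeeping with branch sets that the paper leaves implicit.
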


\begin{proof}
    This directly follows from \cref{lem:genus-k-kuratowski}.
    Indeed, if a graph $G \in \mathcal{C}_{g,d}$ contains a $(g+1+d)$-Kuratowski graph as a minor, and $S \subseteq V(G)$ is a set of size at most $d$, then $G - S$ contains a $(g+1)$\nobreakdash-Kuratowski graph as a minor.
\end{proof}

Now, we can put everything together to obtain the following.

\thmHadStrictlyRefining*

\begin{proof}
    Let $G' \coloneqq (2k+1)K_5$ denote the disjoint union of $2k+1$ many $K_5$, and let $G$ be the graph obtained from by adding universal vertices $u_1,\dots,u_{k+1}$, i.e., each $u_i$ is connected to all other vertices of $G$.
    Observe that $G$ is $(k+1)$-connected.
    Indeed, let $S \subseteq V(G)$ be a set of size at most $k$.
    Then there is some $i \in [k+1]$ such that $u_i \notin S$, and we obtain that $G - S$ is connected since $u_i$ is connected to all vertices to $G$.
    
    We first claim that there is no graph $F \in \mathcal{H}_k$ that admits an oddomorphism to $G$.
    Suppose towards a contradiction that such a graph $F \in \mathcal{H}_k$ exists.
    Since $G$ is $(k+1)$-connected, by \cref{lem:oddo-to-kn-clique-sum}, we may assume that $F \in \mathcal{C}_{k,k}$.
    So $F \in \mathcal{P}^{(2k+1)}$ by \cref{lem:genus-apices-k-kuratowski}.
    By \cref{cor:p-k-closed}, we conclude that $G \in \mathcal{P}^{(2k+1)}$.
    But this is a contradiction, since $(2k+1)K_5$ is a subgraph of $G$.

    Next, we argue that $G \in \mathcal{H}_{k+1}$.
    Indeed, $G$ is a $(k+1)$-sum of $2k+1$ many copies of $K_{k+6}$.
    Since $\mathcal{H}_{k+1}$ is closed under $(k+1)$-sums, it suffices to show that $K_{k+6} \in \mathcal{C}_{k+1,k+1}$.
    But this follows directly from the fact that $K_5 \in \mathcal{E}_1 \subseteq \mathcal{E}_{k+1}$, i.e.\ the graph $K_5$ can be drawn on the torus.
    Overall, the statement now follows from \cref{lem:strictness-via-oddo}.
\end{proof}

Finally, \cref{thm:main-vortex-free-hadwiger} is a direct consequence of \cref{thm:had-strictly-refining} since $G$, $H$ are in particular non-isomorphic. 	
	\section{Beyond Excluding Minors}
	In this section,
we give further evidence of the special role of minor-closed graph classes in the context of homomorphism indistinguishability, see \cite{seppelt_logical_2024}.
In particular, we show that \cref{conj:weak-roberson} fails when generalised to topological minors.
The key technical insight is the following \cref{lem:contractors-for-minors}, which also shows that the minor-closedness assumption in \cref{cor:two-cliquesum} is necessary. 

\begin{lemma}\label{lem:contractors-for-minors}
    If a graph class $\mathcal{F}$ is 
    closed under taking subgraphs and
    $2$-sums with $K_3$,
    then its homomorphism distinguishing closure $\cl(\mathcal{F})$
    is minor-closed.
\end{lemma}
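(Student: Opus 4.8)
The plan is to reduce to the claim $(\star)$: every graph that is a minor of some member of $\mathcal F$ lies in $\cl(\mathcal F)$. Granting $(\star)$, one gets $\mathcal F'\subseteq\cl(\mathcal F)$ for the minor‑closure $\mathcal F'$ of $\mathcal F$, hence $\cl(\mathcal F)=\cl(\mathcal F')$ by \eqref{eq:double-closure} and \eqref{eq:subset-closure}; moreover $\mathcal F'$ --- and, one checks, $\cl(\mathcal F)$ itself --- is again closed under subgraphs and $2$‑sums with $K_3$ (for $\mathcal F'$: given a minor model of $M$ in some $F\in\mathcal F$ and an edge $ab\in E(M)$, glue the triangle in $F$ onto an edge between the branch sets of $a$ and $b$ and make the new apex a singleton branch set, realising $M\oplus_{ab}K_3$ as a minor of an $\mathcal F$‑graph). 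Then $(\star)$ applied to $\cl(\mathcal F)$ together with idempotency shows that $\cl(\mathcal F)$ is minor‑closed. For $(\star)$ itself: since $\mathcal F$ is subgraph‑closed, every minor of $F\in\mathcal F$ has the form $F_0/T$ with $F_0\subseteq F$ (so $F_0\in\mathcal F$) and $T$ a forest of edges, so by induction on $|E(T)|$ it suffices to prove $F/e\in\cl(\mathcal F)$ for $F\in\mathcal F$ and $e=uv\in E(F)$.

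The heart of the matter is therefore: given $F\in\mathcal F$, an edge $e=uv\in E(F)$, and graphs $G\equiv_{\mathcal F}H$, show $\hom(F/e,G)=\hom(F/e,H)$. The starting point is the deletion--contraction identity $\hom(F-e,G)=\hom(F,G)+\hom(F/e,G)+\operatorname{Err}(G)$, where $\operatorname{Err}(G)$ counts the homomorphisms $F-e\to G$ whose restrictions send $u,v$ to two distinct non‑adjacent vertices. As $F$ and $F-e$ both lie in $\mathcal F$, the left side and the first right‑hand summand are preserved, so it remains to match $\operatorname{Err}(\cdot)$. Here one invokes the triangle gadgets available in $\mathcal F$: writing $F_k$ for $F$ with $k$ triangles glued onto $e$ (so $F_k\in\mathcal F$ and $F_k-e\in\mathcal F$), one has $\hom(F_k-e,G)-\hom(F_k,G)=\sum_h\operatorname{codeg}_G(h(u),h(v))^k$, the sum over all $h\colon F-e\to G$ with $h(u)\not\sim h(v)$ (including $h(u)=h(v)$, where the summand is $\deg_G(h(u))^k$). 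Since these quantities agree for $G$ and $H$ over all $k\ge 0$, the fact that the power sums of a finite multiset of nonnegative integers determine the multiset forces $G$ and $H$ to agree on the corresponding multiset of codegrees and degrees, weighted by the homomorphism counts of $F-e$; by also gluing triangles onto other edges one obtains refined, joint versions of this information.

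I expect the main obstacle to be exactly the final step: extracting $\hom(F/e,\cdot)$ (equivalently, $\operatorname{Err}(\cdot)$) from this weighted‑multiset data. The difficulty is that the ``diagonal'' contributions (from $h(u)=h(v)$, carrying the feature $\deg_G(h(u))$) are pooled with the off‑diagonal ones (from distinct non‑adjacent $h(u),h(v)$, carrying $\operatorname{codeg}_G(h(u),h(v))$), and these features can coincide, so no single gadget --- no ``contractor'' --- realises the identity matrix universally. Overcoming this should require either enriching the gadget with further decorations (walk‑length features, etc.) until the diagonal and off‑diagonal feature vectors are separated, together with a Lagrange‑interpolation argument tailored to the finitely many parameter values occurring in the two fixed graphs $G,H$; or --- exploiting that $\operatorname{Err}$ vanishes on complete target graphs and that operations such as adding universal vertices preserve $\equiv_{\mathcal F}$ (as $\mathcal F$ is subgraph‑closed) --- a reduction or induction on $|V(G)|+|V(H)|$. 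The remaining ingredients (the multiset/power‑sum extraction, the bookkeeping reducing a general minor to a single contraction, and the closure‑inheritance checks in the first paragraph) are routine.
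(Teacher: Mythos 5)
Your reduction to the single-contraction claim ($F/e\in\cl(\mathcal F)$ for $F\in\mathcal F$, $e=uv\in E(F)$) and your idea of gluing triangle gadgets onto $e$ are exactly the right skeleton, but the proposal stops precisely at the step that carries the whole lemma. Recovering $\hom(F/e,\cdot)$ from the counts $\hom(F_k,\cdot)$, $\hom(F_k-e,\cdot)$ is not achievable from power sums of codegrees alone, as you yourself observe: the diagonal contributions ($h(u)=h(v)$, feature $\deg$) and the off-diagonal non-adjacent ones (feature $\operatorname{codeg}$) cannot be separated by Schur powers of the codegree matrix. What is needed is a \emph{contractor} in the sense of Lovász and Szegedy: for the two fixed graphs $G,H$ there is a finite rational linear combination $\sum_{\boldsymbol S}\alpha_{\boldsymbol S}\boldsymbol S$ of \emph{series-parallel} bilabelled graphs with $\boldsymbol I_G=\sum\alpha_{\boldsymbol S}\boldsymbol S_G$ and $\boldsymbol I_H=\sum\alpha_{\boldsymbol S}\boldsymbol S_H$ (\cref{lem:lovasz-szegedy}, i.e.\ \cite[Theorem~1.4]{lovasz_contractors_2009}). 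This uses the full series-parallel algebra (matrix products as well as Schur products), not just the gadgets $(\boldsymbol A\cdot\boldsymbol A)^{\odot k}$ your $F_k$ realise; every series-parallel gadget can be attached to the edge $uv$ while staying in $\mathcal F$ because series-parallel bilabelled graphs are built by repeated $2$-sums with $K_3$ and subgraph deletions (\cref{obs:series-parallel}). Your paragraph three is an honest list of directions one might try, but none of them is carried out, so the central claim $(\star)$ remains unproven; the missing idea is exactly the invocation (or reproof) of the contractor theorem.

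There is a second, smaller gap in your wrap-up. To conclude that $\cl(\mathcal F)$ is minor-closed you apply $(\star)$ to $\cl(\mathcal F)$ itself, which requires $\cl(\mathcal F)$ to be closed under $2$-sums with $K_3$; you dismiss this with ``one checks'', but it is not clear how to check it: for $F\in\cl(\mathcal F)\setminus\mathcal F$ there is no evident reason why the functional $G\mapsto\hom(F\oplus_{uv}K_3,G)$ should be $\equiv_{\mathcal F}$-invariant, and applying $(\star)$ to the minor closure $\mathcal F'$ instead (which you do correctly verify is closed under subgraphs and $2$-sums with $K_3$) yields nothing new, since minors of graphs in $\mathcal F'$ already lie in $\mathcal F'$. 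The clean way to finish, once one knows $\mathcal F^*\subseteq\cl(\mathcal F)$ and hence $\cl(\mathcal F)=\cl(\mathcal F^*)$ for the minor closure $\mathcal F^*$, is to invoke the fact that the homomorphism distinguishing closure of a \emph{minor-closed} class is itself minor-closed \cite[Theorem~8]{seppelt_logical_2024}.
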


Moreover, if \cref{conj:strong-roberson} holds and $\mathcal{F}$ is union-closed,
then $\cl(\mathcal{F})$ is in fact equal to the minor closure of $\mathcal{F}$. 
Before proving \cref{lem:contractors-for-minors},
we first discuss some of its consequences including a proof of \cref{thm:k5-top-minor}.

Let $\mathcal{D}_3$ denote the class of graphs of maximum degree $\leq 3$.
We write $\mathcal{D}_3^*$ for the closure of $\mathcal{D}_3$ under repeated 2-sums with triangles.
Maybe more formally, a graph $G$ is contained in~$\mathcal{D}_3^*$ if, and only if, 
there is a tree decomposition $(T,\beta)$ of $G$ with a designated root node $r \in V(T)$ such that
\begin{enumerate}
    \item $G\llbracket \beta(r) \rrbracket$ has maximum degree $\leq 3$, and
    \item $|\beta(t)| \leq 3$ for every $r \neq t \in V(T)$.
\end{enumerate}

\begin{lemma}
    \label{lem:d3-star}
    Two graphs are isomorphic if, and only if, they are homomorphism indistinguishable over $\mathcal{D}_3^*$.
\end{lemma}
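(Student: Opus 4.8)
The plan is to apply \cref{lem:contractors-for-minors} to the class $\mathcal{F} \coloneqq \mathcal{D}_3^*$. First I would check that $\mathcal{D}_3^*$ is closed under taking subgraphs: given $G \in \mathcal{D}_3^*$ with a rooted tree decomposition $(T,\beta)$ as in the displayed characterisation, and a subgraph $G' \subseteq G$, restricting each bag to $V(G')$ yields a tree decomposition of $G'$ whose root torso still has maximum degree $\leq 3$ (deleting vertices and edges cannot increase degrees, and torso edges only disappear) and whose non-root bags still have size $\leq 3$; so $G' \in \mathcal{D}_3^*$. Next, $\mathcal{D}_3^*$ is by construction closed under $2$-sums with $K_3$: gluing a triangle onto an edge $vw$ of $G$ amounts to attaching a fresh leaf node with bag $\{v,w,x_e\}$ of size $3$ to the tree decomposition, which keeps all non-root bags of size $\leq 3$ and does not touch the root torso. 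Hence \cref{lem:contractors-for-minors} applies and $\cl(\mathcal{D}_3^*)$ is minor-closed.

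Now I would argue that $\cl(\mathcal{D}_3^*)$ contains every graph, i.e.\ equals the class of all graphs. Since $\cl(\mathcal{D}_3^*)$ is minor-closed, it suffices to show that every graph is a minor of some graph in $\mathcal{D}_3^*$. This is a standard fact: given an arbitrary graph $M$, replace each vertex $v$ of degree $d_v$ by a path (or small tree/cycle) on $d_v$ vertices, routing the $d_v$ edges incident to $v$ to distinct vertices of this gadget; the resulting graph $M'$ has maximum degree $\leq 3$, hence $M' \in \mathcal{D}_3 \subseteq \mathcal{D}_3^*$, and contracting each gadget back to a single vertex recovers $M$ as a minor of $M'$. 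Therefore $\cl(\mathcal{D}_3^*)$ is the class of all graphs.

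Finally, by the definition of the homomorphism distinguishing closure, $\equiv_{\mathcal{D}_3^*}$ and $\equiv_{\cl(\mathcal{D}_3^*)}$ coincide, and $\equiv_{\cl(\mathcal{D}_3^*)}$ is the homomorphism indistinguishability relation over all graphs, which by Lovász's theorem \cite{lovasz_operations_1967} is exactly the isomorphism relation. Hence two graphs are isomorphic if, and only if, they are homomorphism indistinguishable over $\mathcal{D}_3^*$. The only step requiring genuine care is the verification that $\mathcal{D}_3^*$ is subgraph-closed and closed under $2$-sums with $K_3$ directly from the rooted-tree-decomposition characterisation; everything else is bookkeeping or a textbook minor construction. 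I do not expect a real obstacle here, since the heavy lifting is done by \cref{lem:contractors-for-minors}.
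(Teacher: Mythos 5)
Your proposal is correct and follows essentially the same route as the paper: apply \cref{lem:contractors-for-minors} to $\mathcal{D}_3^*$ after checking closure under subgraphs and $2$-sums with $K_3$, note that every graph is a minor of a graph of maximum degree $3$ so $\cl(\mathcal{D}_3^*)$ is the class of all graphs, and conclude via Lovász's theorem. You merely spell out the verification steps the paper leaves implicit.
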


\begin{proof}
    The class $\mathcal{D}_3^*$ is closed under taking subgraphs and 2\nobreakdash-sums with $K_3$.
    Hence, $\cl(\mathcal{D}_3^*)$ is minor-closed by \cref{lem:contractors-for-minors}.
    Since $\mathcal{D}_3 \subseteq \mathcal{D}_3^* \subseteq \cl(\mathcal{D}_3^*)$ and every graph is a minor of a graph of maximum degree $3$, we conclude that $\cl(\mathcal{D}_3^*)$ contains all graphs.
    By \cite{lovasz_operations_1967}, $\equiv_{\mathcal{D}_3^*}$ is the isomorphism relation.
\end{proof}

Now, we directly obtain \cref{thm:k5-top-minor} by observing that every graph in $\mathcal{D}_3^*$ excludes $K_5$ as a topological minor.

\thmKfive*

Note that $K_5$ is optimal here since $K_4$-topological-minor-free graphs are precisely those of treewidth $\leq 2$, see \cref{thm:tw2}.
Moreover, it can also be easily verified that $\mathcal{D}_3^*$ has bounded $\infty$\nobreakdash-admis\-sibility (see \cite{dvorak_constant-factor_2013,siebertz_generalized_2025}) and bounded local treewidth (see \cite{eppstein_subgraph_1995,grohe_local_2003}), which implies that \cref{conj:weak-roberson} also cannot be generalised to those sparsity notions, see \cref{fig:graph-classes}.

The rest of this section is dedicated to the proof of \cref{lem:contractors-for-minors}.
Essentially, the proof is based on two ingredients:
First, we use properties of the homomorphism distinguishing closure established in \cite{seppelt_logical_2024} to reduce \cref{lem:contractors-for-minors} to the statement that, if $F \in \mathcal{F}$, then any graph $F'$ obtained from~$F$ by contracting edges is in the homomorphism distinguishing closure $\cl(\mathcal{F})$ of $\mathcal{F}$.
Second, 
we employ the so-called \emph{contractors} of \textcite{lovasz_contractors_2009} to simulate homomorphism counts from $F'$ by homomorphism counts from graphs obtained from $F$ by replacing the to-be-contracted edges by suitable series-parallel gadgets.
When attaching these gadgets, we rely on the assumption $\mathcal{F}$ is closed under $2$-sums with $K_3$.

In order to state these arguments formally,
we recall the framework of bilabelled graphs and homomorphism matrices from 
\cite{mancinska_quantum_2020,grohe_homomorphism_2025}.
We follow the notational conventions of \cite[Section~3.2]{seppelt_homomorphism_2024}.

A \emph{bilabelled graph} is a tuple $\boldsymbol{F} = (F, u, v)$ where $F$ is a graph and $u,v\in V(F)$.
Given a graph $G$, the \emph{homomorphism matrix} of $\boldsymbol{F}$ with respect to $G$ is $\boldsymbol{F}_G \in \mathbb{N}^{V(G) \times V(G)}$ where $\boldsymbol{F}_G(x,y)$ for $x,y \in V(G)$ 
is the number of homomorphisms $h \colon F \to G$ such that $h(u) = x$ and $h(v) = y$.
For example, the identity matrix is $\boldsymbol{I}_G$ where $\boldsymbol{I} = ((\{u\}, \emptyset), u, u)$
and the adjacency matrix is $\boldsymbol{A}_G$ where $\boldsymbol{A} = ((\{u,v\}, \{uv\}), u, v)$. 
The all-ones matrix is $\boldsymbol{J}_G$ where $\boldsymbol{J} = ((\{u,v\}, \emptyset), u, v)$, see \cref{fig:bilabelled}.

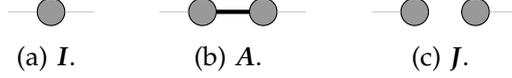
\begin{figure}
        \tikzset{
        	vertex/.style={draw,circle,fill=gray!80},
        	every node/.style={anchor=center},
        	lbl/.style={color=gray!40}
        }
        \centering
		\subcaptionbox{$\boldsymbol{I}$.}{
			\centering
			\begin{tikzpicture}[node distance=.7cm]
				\node (a) [vertex] {};
				\node (a1) [lbl,left of=a] {};
                \node (a2) [lbl,right of=a] {};
				\draw [lbl] (a2) -- (a) -- (a1);
			\end{tikzpicture}
		}
        \subcaptionbox{$\boldsymbol{A}$.}{
			\centering
			\begin{tikzpicture}[node distance=.7cm]
				\node (a) [vertex] {};
                \node (b) [vertex] at (.8,0) {};
				\node (a1) [lbl,left of=a] {};
                \node (a2) [lbl,right of=b] {};
				\draw [lbl] (a) -- (a1);
                \draw [lbl] (b) -- (a2);
                \draw [ultra thick] (a) -- (b);
 			\end{tikzpicture}
		}
        \subcaptionbox{$\boldsymbol{J}$.}{
			\centering
			\begin{tikzpicture}[node distance=.7cm]
				\node (a) [vertex] {};
                \node (b) [vertex] at (.8,0) {};
				\node (a1) [lbl,left of=a] {};
                \node (a2) [lbl,right of=b] {};
				\draw [lbl] (a) -- (a1);
                \draw [lbl] (b) -- (a2);
 			\end{tikzpicture}
		}
		
		\caption{Bilabelled graphs used in the proof of \cref{lem:contractors-for-minors} in wire notation of \cite{mancinska_quantum_2020}. The gray wires to the left/right indicate the position of the first/second label. Actual edges are depicted in black.}
		\label{fig:bilabelled}
	\end{figure}

Bilabelled graphs admit a wealth of combinatorial operations, which correspond to certain algebraic operations on their homomorphism matrices, see \cite[Section~3.2]{seppelt_homomorphism_2024}:

\emph{Unlabelling} a bilabelled graph $\boldsymbol{F} = (F, u, v)$ corresponds to the \emph{sum-of-entries} of the homomorphism matrix $\boldsymbol{F}_G$ in the sense that $\soe(\boldsymbol{F}_G) \coloneqq \sum_{x,y \in V(G)} \boldsymbol{F}_G(x,y) = \hom(F, G)$.
For this reason, we write $\soe(\boldsymbol{F}) \coloneqq F$ for the underlying unlabelled graph of $\boldsymbol{F}$.

For two bilabelled graphs $\boldsymbol{F} = (F, u, v)$ and $\boldsymbol{F'} = (F', u', v')$, the \emph{parallel composition} of $\boldsymbol{F}$ and $\boldsymbol{F'}$ is the bilabelled graph obtained from taking the disjoint of $F$ and $F'$, identifying $u,u'$ and $v,v'$, and placing the labels on the two resulting vertices.
We write $\boldsymbol{F} \odot \boldsymbol{F}'$ to denote the parallel composition of $\boldsymbol{F}$ and $\boldsymbol{F'}$.
This corresponds to taking \emph{entrywise product} of the homomorphism matrices, i.e., $(\boldsymbol{F} \odot \boldsymbol{F'})_G(x,y) = \boldsymbol{F}_G(x,y) \boldsymbol{F'}_G(x,y) \eqqcolon (\boldsymbol{F}_G \odot \boldsymbol{F'}_G)(x,y)$ for every graph $G$ and every $x,y \in V(G)$. 

The \emph{series composition} of $\boldsymbol{F}$ and $\boldsymbol{F'}$ is the bilabelled graph $\boldsymbol{K} = (K,u,v')$ where $K$ is obtained from taking the disjoint of $F$ and $F'$, and identifying the vertices $v,u'$.
We write $\boldsymbol{F} \cdot \boldsymbol{F}'$ to denote the series composition of $\boldsymbol{F}$ and $\boldsymbol{F'}$.
This corresponds to taking \emph{matrix product} of the homomorphism matrices, i.e., $(\boldsymbol{F} \cdot \boldsymbol{F'})_G(x,y) = \sum_{z \in V(G)} \boldsymbol{F}_G(x,z)\boldsymbol{F'}_G(z,y)  \eqqcolon (\boldsymbol{F}_G \cdot \boldsymbol{F'}_G)(x,y)$ for every graph $G$ and every $x,y \in V(G)$.

A bilabelled graph $\boldsymbol{S} = (S, u, v)$, where $u \neq v$, is \emph{series-parallel} if it can be constructed by a sequence of parallel and series compositions from the bilabelled graph $\boldsymbol{A}$.
We write $\mathcal{S}$ for the class of bilabelled series-parallel graphs.
It is well-known that, if $\boldsymbol{S} = (S, u, v)$ is series-parallel, there is a tree decomposition $(T,\beta)$ of $S$ of width $\leq 2$ such that $u,v \in \beta(t)$ for some node $t \in V(T)$.
In particular, we obtain the following:

\begin{observation}\label{obs:series-parallel}
    Let $\mathcal{F}$ be a graph class that is closed under taking subgraphs and $2$-sums with $K_3$.
    Let $F \in \mathcal{F}$ and $uv \in E(F)$, and set $\boldsymbol{F} = (F,u,v)$.
    For every $\boldsymbol{S} \in \mathcal{S}$,
    it holds that $\soe(\boldsymbol{F} \odot \boldsymbol{S}) \in \mathcal{F}$.
\end{observation}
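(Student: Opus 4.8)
The plan is to unwind the definition of $\boldsymbol{S}$ being series-parallel and induct on the construction sequence, tracking at each step which labelled vertices survive as a $2$-element bag in a tree decomposition of $\soe(\boldsymbol{F} \odot \boldsymbol{S})$ while the whole parallel gadget is glued onto $F$ only along the edge $uv$. Concretely, first I would recall that $\soe(\boldsymbol{F} \odot \boldsymbol{S})$ is exactly the graph obtained from $F$ by taking the disjoint union with $S$, identifying the label vertex $u$ of $F$ with the first label vertex of $S$ and $v$ with the second label vertex of $S$. Since $uv \in E(F)$ and, as noted in the excerpt, every series-parallel $\boldsymbol{S} = (S, u', v')$ admits a tree decomposition $(T_S, \beta_S)$ of width $\le 2$ with $u', v' \in \beta_S(t_0)$ for some node $t_0$, the idea is to build a tree decomposition of $\soe(\boldsymbol{F} \odot \boldsymbol{S})$ by taking a trivial one-bag tree decomposition $\{V(F)\}$ for $F$ (adding the single new node carrying $\beta = V(F)$) and attaching $(T_S, \beta_S)$ to it via the node $t_0$, legal because $\beta_S(t_0) \supseteq \{u', v'\} = \{u, v\} \subseteq V(F)$ and the adhesion of that new edge is $\le 2$.

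Next I would read off the torso structure of this tree decomposition. The root bag $\beta(r) = V(F)$ has torso equal to $F$ itself together with possibly one extra edge: namely, the vertices of $S$ lying outside $\{u,v\}$ form connected components of $\soe(\boldsymbol{F} \odot \boldsymbol{S}) - V(F)$ whose neighbourhoods into $V(F)$ are contained in $\{u, v\}$, so the torso $\soe(\boldsymbol{F}\odot\boldsymbol{S})\llbracket V(F)\rrbracket$ is $F$ with the edge $uv$ added. But $uv \in E(F)$ already, so the torso on the root is just $F \in \mathcal{F}$. Every other bag comes from $(T_S, \beta_S)$ and has size $\le 3$, so its torso has at most three vertices; a three-vertex graph is a subgraph of $K_3$, and since $\mathcal{F}$ is closed under $2$-sums with $K_3$ and under subgraphs, it follows that $K_3 \in \mathcal{F}$ (take any edge of any graph in $\mathcal{F}$ — assuming $\mathcal{F}$ is nonempty and contains a graph with an edge; if not, the statement is vacuous or handled trivially since then $\mathcal{S}$ gadgets cannot be attached) and hence every $\le 3$-vertex graph is in $\mathcal{F}$. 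Therefore all torsos lie in $\mathcal{F}$, and by \cref{def:clique-sum}, $\soe(\boldsymbol{F} \odot \boldsymbol{S})$ is expressible as a clique-sum of graphs in $\mathcal{F}$ of adhesion $\le 2$, i.e., it lies in $\mathcal{F}^{\oplus 2}$. Since $\mathcal{F}$ is closed under $2$-sums with $K_3$ — and, more to the point, the explicit tree decomposition exhibits $\soe(\boldsymbol{F}\odot\boldsymbol{S})$ as being obtained from $F$ by a sequence of $2$-sums with triangles (this is the ``repeated $2$-sums with triangles'' description of how series-parallel gadgets are attached, via the standard ear/series-parallel decomposition of $S$) — we conclude $\soe(\boldsymbol{F}\odot\boldsymbol{S}) \in \mathcal{F}$ directly without invoking a general $\mathcal{F}^{\oplus 2}$ argument.

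The cleanest way to make the last step airtight is an induction on the number of series and parallel compositions used to build $\boldsymbol{S}$ from $\boldsymbol{A}$. Base case: $\boldsymbol{S} = \boldsymbol{A}$, so $\soe(\boldsymbol{F} \odot \boldsymbol{A})$ is $F$ with a parallel edge at $uv$, which collapses to $F \in \mathcal{F}$. Inductive step for parallel composition $\boldsymbol{S} = \boldsymbol{S}_1 \odot \boldsymbol{S}_2$: attaching $\boldsymbol{S}$ to the edge $uv$ of $F$ is the same as attaching $\boldsymbol{S}_1$ to get $F_1 \coloneqq \soe(\boldsymbol{F}\odot\boldsymbol{S}_1) \in \mathcal{F}$ (by induction), noting $uv$ is still an edge of $F_1$, and then attaching $\boldsymbol{S}_2$ to that same edge; apply induction again with $F_1$ in place of $F$. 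For series composition $\boldsymbol{S} = \boldsymbol{S}_1 \cdot \boldsymbol{S}_2$: introduce a fresh vertex $w$, glue a triangle $uvw$ onto the edge $uv$ of $F$ (a $2$-sum with $K_3$, so the result $F' \coloneqq F \oplus_{uv} K_3 \in \mathcal{F}$), then attach $\boldsymbol{S}_1$ to the edge $uw$ and $\boldsymbol{S}_2$ to the edge $wv$, each time preserving membership in $\mathcal{F}$ by the inductive hypothesis; finally observe that the edges $uv, uw, vw$ can be deleted (subgraph-closedness of $\mathcal{F}$) to recover exactly $\soe(\boldsymbol{F}\odot\boldsymbol{S})$, since identifying the middle label of the series composition with $w$ reproduces the correct attachment. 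The main obstacle is bookkeeping: making sure that at each stage the vertex pair onto which we recurse really is an edge of the current graph (so that the inductive hypothesis applies) and that the final round of edge deletions yields precisely $\soe(\boldsymbol{F}\odot\boldsymbol{S})$ and not some graph with spurious edges among the label vertices. All of this is routine once the gluing is set up carefully, so I would state the induction, handle the two composition cases as above, and note subgraph-closedness takes care of the auxiliary triangle edges.
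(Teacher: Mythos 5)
Your overall route — structural induction on the build of $\boldsymbol{S}$, reducing the series case to a $2$-sum with $K_3$ followed by subgraph deletion — is the natural way to make this observation precise, and it matches what the paper's terse ``In particular, we obtain the following'' is pointing at (the paper gives no explicit proof). The parallel case and the base case are handled correctly, and you are right to notice early on that showing membership in $\mathcal{F}^{\oplus 2}$ alone would not be enough.

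There is one concrete slip in the series case. You write that ``the edges $uv, uw, vw$ can be deleted \dots to recover exactly $\soe(\boldsymbol{F}\odot\boldsymbol{S})$.'' That is not correct for $uv$: since $uv \in E(F)$ and the parallel composition $\boldsymbol{F}\odot\boldsymbol{S}$ only adds edges on top of $F$, the edge $uv$ is an edge of the target graph $\soe(\boldsymbol{F}\odot\boldsymbol{S})$ and must be kept. The only potentially spurious edges introduced by gluing the triangle $uvw$ and then attaching $\boldsymbol{S}_1$ to $uw$ and $\boldsymbol{S}_2$ to $wv$ are $uw$ and $wv$, and only those occurrences that are not already present as edges of $S_1$ (resp.\ $S_2$) after identification of the middle vertex with $w$. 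Deleting precisely those, and keeping $uv$, yields $\soe(\boldsymbol{F}\odot\boldsymbol{S})$, which then lies in $\mathcal{F}$ by subgraph-closedness. With that correction the argument is complete.
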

    
A key tool in the proof of \cref{lem:contractors-for-minors} is the notion of a \emph{contractor}.
Let $G$ be a graph.
A \emph{contractor} \cite{lovasz_contractors_2009} for $G$ is a finite linear combination $\sum_{\boldsymbol{F}} \alpha_{\boldsymbol{F}} \boldsymbol{F}$ of bilabelled graphs $\boldsymbol{F} = (F, u, v)$ whose labels do not coincide, i.e.\ $u \neq v$, such that $\boldsymbol{I}_G = \sum_{\boldsymbol{F}} \alpha_{\boldsymbol{F}} \boldsymbol{F}_G$.
Since parallel composition with $\boldsymbol{I}$ amounts to identifying the two labelled vertices, 
a contractor simulates label identification by homomorphism counts from patterns in which the labels are not contracted.
We rely on the following result by \textcite{lovasz_contractors_2009} proving the existence of series-parallel contractors.

\begin{lemma}[\protect{\textcite[Theorem 1.4]{lovasz_contractors_2009}, see also \cite[Supplement~6.29b]{lovasz_large_2012}}]\label{lem:lovasz-szegedy}
    For all graphs $G$ and $H$,
    there exists a finitely-supported coefficient vector $\alpha_{\boldsymbol{S}} \in \mathbb{Q}$ such that
    \[
        \boldsymbol{I}_G = \sum_{\boldsymbol{S} \in \mathcal{S}} \alpha_{\boldsymbol{S}} \boldsymbol{S}_G
        \quad \text{ and } \quad
        \boldsymbol{I}_H = \sum_{\boldsymbol{S} \in \mathcal{S}} \alpha_{\boldsymbol{S}} \boldsymbol{S}_H.
    \]
\end{lemma}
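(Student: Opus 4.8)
The plan is to follow the approach of \textcite{lovasz_contractors_2009}: first reduce the requirement that a \emph{single} coefficient vector work for both $G$ and $H$ to a statement about one graph, and then analyse the algebra of matrices generated by the adjacency matrix under the two compositions. For the reduction, I would use that every bilabelled series-parallel graph is connected, so that for the disjoint union $G \uplus H$ one has $\boldsymbol{S}_{G \uplus H} = \boldsymbol{S}_G \oplus \boldsymbol{S}_H$ (block-diagonal) for every $\boldsymbol{S} \in \mathcal{S}$, and likewise $\boldsymbol{I}_{G \uplus H} = \boldsymbol{I}_G \oplus \boldsymbol{I}_H$. Hence a finitely-supported $(\alpha_{\boldsymbol{S}})$ with $\boldsymbol{I}_{G \uplus H} = \sum_{\boldsymbol{S}} \alpha_{\boldsymbol{S}} \boldsymbol{S}_{G \uplus H}$ simultaneously witnesses the two identities in the statement; so it suffices to show that every graph $G$ admits a series-parallel contractor, i.e.\ $\boldsymbol{I}_G \in \mathcal{A}_G$, where $\mathcal{A}_G \subseteq \mathbb{Q}^{V(G) \times V(G)}$ is the $\mathbb{Q}$-span of $\{\boldsymbol{S}_G : \boldsymbol{S} \in \mathcal{S}\}$.

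The structural point I would then exploit is that, since series composition corresponds to matrix multiplication and parallel composition to the Hadamard product, and both are bilinear, $\mathcal{A}_G$ is a subspace closed under $\cdot$ and $\odot$ and generated, under these two operations, by the single matrix $\boldsymbol{A}_G$. Matrix-product closure already yields all walk-count matrices $\boldsymbol{A}_G^k$; Hadamard-product closure together with a Vandermonde/interpolation argument then produces, for each class of vertex pairs $(x,y)$ sharing a common ``profile'' $(\boldsymbol{A}_G^k(x,y))_{k \ge 1}$, the $0/1$-indicator matrix of that class — the only caveat being that an interpolating polynomial used here has zero constant term, so the class of the all-zero profile (pairs in distinct components) cannot receive a nonzero value. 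Summing the indicators of the profile classes that meet the diagonal of $V(G) \times V(G)$ recovers $\boldsymbol{I}_G = \mathrm{id}_{V(G)}$, provided the diagonal is a union of profile classes and is disjoint from the zero-profile class; the latter just amounts to $G$ having no isolated vertex, which may be assumed after deleting such vertices (a series-parallel pattern has no homomorphism fixing a label to an isolated vertex, so they contribute nothing on either side).

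The hard part will be to verify that walk counts — suitably refined by the nested Hadamard and series compositions, which yield strictly more than the powers $\boldsymbol{A}_G^k$ (for instance, Hadamard powers $(\boldsymbol{A}_G^k)^{\odot\ell}$ record pair-multiplicities rather than mere adjacency, and further compositions encode ``coloured walk'' statistics) — actually separate the diagonal from the off-diagonal. This can fail for \emph{false twins} $y \ne z$ with $N_G(y) = N_G(z)$: then the transposition $(y\,z)$ is an automorphism of $G$, so every homomorphism matrix is constant on $\{y,z\}^{2}$, and $\mathrm{id}_{V(G)}$ need not literally lie in $\mathcal{A}_G$. The standard resolution, as in \textcite[Theorem~1.4]{lovasz_contractors_2009}, is to pass to the twin quotient of $G$ — equivalently, to establish the contractor identity modulo the ideal of matrices annihilated by $\soe(\,\cdot\,\odot\boldsymbol{F}_G)$ for every bilabelled $\boldsymbol{F}$, which is the only way a contractor is ever used downstream — and to observe that this passage is harmless precisely because every homomorphism matrix $\boldsymbol{F}_G$ respects the automorphisms, and in particular the twins, of $G$. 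For a twin-free graph one then argues that the refined profile classes built from $\{(\boldsymbol{A}_G^k)^{\odot\ell}\}$ together with their matrix products never straddle the diagonal, twin-freeness being exactly what rules out the offending coincidences. Carrying out this separation cleanly and handling the quotient bookkeeping is the technical core of the proof; I would organise it along the lines of \cite{lovasz_contractors_2009}.
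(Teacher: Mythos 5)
First, note that the paper does not prove this lemma at all: it is imported verbatim from \textcite{lovasz_contractors_2009}, so there is no in-paper argument to measure your sketch against; what follows is an assessment of the sketch on its own terms. Your disjoint-union reduction is correct (every $\boldsymbol{S}\in\mathcal{S}$ is connected, so $\boldsymbol{S}_{G\uplus H}$ is block-diagonal), and your identification of the twin obstruction is not merely a technical caveat — it shows that the lemma \emph{as literally stated} is false. For false twins $y\neq z$ (non-adjacent, $N_G(y)=N_G(z)$) and \emph{any} bilabelled $\boldsymbol{F}=(F,u,v)$ with $u\neq v$, re-routing the image of $v$ from $y$ to $z$ is a bijection of homomorphism sets, so $\boldsymbol{F}_G(y,y)=\boldsymbol{F}_G(y,z)$; already for $G=K_{1,2}$ this puts $\boldsymbol{I}_G$ outside the span of $\{\boldsymbol{S}_G\}$. (Your automorphism argument only gives $\boldsymbol{F}_G(y,y)=\boldsymbol{F}_G(z,z)$, not the needed equality of a diagonal with an off-diagonal entry.) The honestly provable statement is therefore the congruence $\boldsymbol{I}_G\equiv\sum\alpha_{\boldsymbol{S}}\boldsymbol{S}_G$ modulo matrices $M$ with $\soe(M\odot\boldsymbol{F}_G)=0$ for all bilabelled $\boldsymbol{F}$ with distinct labels — which is what Lovász--Szegedy prove and the only form used in the proof of \cref{lem:contractors-for-minors}. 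You are right to aim at that version, but you should then say explicitly that you are proving a weaker statement than the one displayed.

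There are two genuine gaps. First, your treatment of isolated vertices is wrong: an isolated vertex $w$ does \emph{not} ``contribute nothing on either side'', since $\boldsymbol{I}_G(w,w)=1$ while $\boldsymbol{S}_G(w,w)=0$ for every $\boldsymbol{S}\in\mathcal{S}$ (every labelled vertex of a series-parallel pattern has a neighbour). So isolated vertices are another outright counterexample to the literal identity and cannot be deleted without changing the left-hand side; they too must be absorbed into the quotient. Second, and more seriously, the actual content of the theorem — that for a twin-free $G$ without isolated vertices the algebra generated under $\cdot$ and $\odot$ by $\boldsymbol{A}_G$ \emph{alone} (neither $\boldsymbol{I}$ nor $\boldsymbol{J}$ is available as a generator, which is what makes this delicate) contains $\boldsymbol{I}_G$, i.e., that your ``profile classes'' separate the diagonal — is asserted and deferred to the source rather than proved. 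As it stands the proposal is a plausible plan with a correct diagnosis of the obstructions, not a proof; to count as one it would need the separation argument carried out, the quotient statement formulated precisely, and the isolated-vertex case folded into it rather than waved away.
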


Equipped with \cref{lem:lovasz-szegedy}, we now conduct the proof of \cref{lem:contractors-for-minors}.

\begin{proof}[Proof of \cref{lem:contractors-for-minors}]
    We start by showing the following claim.
    
    \begin{claim}\label{claim:contractor}
        Let $\mathcal{F}'$ be a graph class that is closed under taking subgraphs and $2$-sums with $K_3$.
        Let $F \in \mathcal{F}'$, and let $F'$ be the graph obtained from $F$ by contracting an edge $uv \in E(F)$.
        Then $F' \in \cl(\mathcal{F}')$.
    \end{claim}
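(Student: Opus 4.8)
The plan is to verify $F'\in\cl(\mathcal F')$ directly from the definition: I must show $\hom(F',G)=\hom(F',H)$ whenever $G\equiv_{\mathcal F'}H$. The key point is that contracting the edge $uv$ of $F$ corresponds, in the homomorphism-matrix calculus, to a parallel composition of $(F-uv,u,v)$ with the identity $\boldsymbol I$; by \cref{lem:lovasz-szegedy} the factor $\boldsymbol I_G$ can be replaced by a finite rational combination of homomorphism matrices $\boldsymbol S_G$ of series-parallel graphs $\boldsymbol S$; and because $\mathcal F'$ is closed under subgraphs and under $2$-sums with $K_3$ --- so, by \cref{obs:series-parallel}, it absorbs series-parallel gadgets glued along an edge --- all graphs occurring in the resulting combination lie in $\mathcal F'$, whence their homomorphism counts into $G$ and into $H$ coincide.

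In detail, first I would set $\hat F\coloneqq F-uv$ and $\boldsymbol{\hat F}\coloneqq(\hat F,u,v)$ and record the bookkeeping identity $\soe(\boldsymbol{\hat F}\odot\boldsymbol I)=F'$: parallel composition with $\boldsymbol I=((\{z\},\emptyset),z,z)$ identifies the two labelled vertices $u,v$ of $\boldsymbol{\hat F}$, and since $uv\notin E(\hat F)$ no loop arises, so the underlying unlabelled graph is $\hat F$ with $u$ and $v$ merged, i.e.\ $F/uv=F'$. Hence $\hom(F',G)=\soe\bigl((\boldsymbol{\hat F}\odot\boldsymbol I)_G\bigr)=\soe\bigl(\boldsymbol{\hat F}_G\odot\boldsymbol I_G\bigr)$ for every graph $G$. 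Now fix $G,H$ with $G\equiv_{\mathcal F'}H$ and let $\alpha\colon\mathcal S\to\mathbb Q$ be the finitely supported vector furnished by \cref{lem:lovasz-szegedy}, so $\boldsymbol I_G=\sum_{\boldsymbol S}\alpha_{\boldsymbol S}\boldsymbol S_G$ and $\boldsymbol I_H=\sum_{\boldsymbol S}\alpha_{\boldsymbol S}\boldsymbol S_H$. Substituting and using bilinearity of $\odot$, linearity of $\soe$, the identity $(\boldsymbol{\hat F}\odot\boldsymbol S)_G=\boldsymbol{\hat F}_G\odot\boldsymbol S_G$, and $\soe(\boldsymbol K_G)=\hom(\soe(\boldsymbol K),G)$ for any bilabelled $\boldsymbol K$, I obtain
\[
\hom(F',G)=\sum_{\boldsymbol S\in\mathcal S}\alpha_{\boldsymbol S}\,\hom\bigl(\soe(\boldsymbol{\hat F}\odot\boldsymbol S),G\bigr),
\]
and, with the same coefficients, the analogous identity with $H$ in place of $G$.

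It then remains to check $\soe(\boldsymbol{\hat F}\odot\boldsymbol S)\in\mathcal F'$ for every $\boldsymbol S\in\mathcal S$; since the above sums are finite and $G\equiv_{\mathcal F'}H$, this makes them summand-by-summand equal, giving $\hom(F',G)=\hom(F',H)$ and hence $F'\in\cl(\mathcal F')$. For this, write $\boldsymbol F=(F,u,v)$ and observe that $\soe(\boldsymbol{\hat F}\odot\boldsymbol S)$ is a spanning subgraph of $\soe(\boldsymbol F\odot\boldsymbol S)$: the two graphs share the same vertex set, and the edge set $E(\hat F)\cup E(S)$ of the former is contained in the edge set $E(F)\cup E(S)$ of the latter, the two differing at most in the edge $uv$. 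Since $uv\in E(F)$, \cref{obs:series-parallel} yields $\soe(\boldsymbol F\odot\boldsymbol S)\in\mathcal F'$, and closure of $\mathcal F'$ under subgraphs then gives $\soe(\boldsymbol{\hat F}\odot\boldsymbol S)\in\mathcal F'$, as needed.

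I do not expect a genuine obstacle: the argument is essentially an application of the contractor machinery together with the closure hypotheses. The two points requiring care are (i) deleting the edge first, $\hat F=F-uv$, before identifying $u$ and $v$, so that $\soe(\boldsymbol{\hat F}\odot\boldsymbol I)$ is the contraction $F'$ rather than a graph carrying a loop; and (ii) applying \cref{obs:series-parallel} to the edge $uv$ of $F$, where it is genuinely present, rather than to $\hat F$, where it has been removed --- the passage between the two being exactly the spanning-subgraph observation above, which is where the assumption that $\mathcal F'$ is closed under $2$-sums with $K_3$ is used.
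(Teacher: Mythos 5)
Your proposal is correct and follows essentially the same route as the paper: delete the edge $uv$, realise the contraction as $\soe(\boldsymbol{\hat F}\odot\boldsymbol I)$, expand $\boldsymbol I$ via the series-parallel contractors of \cref{lem:lovasz-szegedy}, and use \cref{obs:series-parallel} together with subgraph-closedness to place every constituent graph in $\mathcal F'$. Your explicit spanning-subgraph step bridging $(F-uv,u,v)$ back to $(F,u,v)$ (so that \cref{obs:series-parallel} is applied where $uv$ is genuinely an edge) is a welcome precision that the paper's proof glosses over, but it is not a different argument.
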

    
    \begin{claimproof}
        Contrapositively,
        let $G$ and $H$ be two graphs such that $\hom(F',G) \neq \hom(F',H)$.
        Let $\alpha_{\boldsymbol{S}} \in \mathbb{Q}$ for $\boldsymbol{S} \in \mathcal{S}$ denote the finitely-supported coefficient vector from \cref{lem:lovasz-szegedy}.
        Write $F^-$  for the graph obtained from $F$ by deleting the edge $uv$ and let $\boldsymbol{F}^- \coloneqq (F, u, v)$.
        The parallel composition $\boldsymbol{F}^- \odot \boldsymbol{I}$ of $\boldsymbol{F}^-$ with $\boldsymbol{I}$ amounts to identifying the two vertices $u,v$.
        Hence, $\soe(\boldsymbol{F}^- \odot \boldsymbol{I}) \cong F'$.
        By \cref{lem:lovasz-szegedy},
        \[\hom(F', G) = \soe( \boldsymbol{F}^- \odot \boldsymbol{I})_G = \sum \alpha_{\boldsymbol{S}} \soe(\boldsymbol{F}^- \odot \boldsymbol{S})_G\]
        and analogously
        \[\hom(F', H) = \soe( \boldsymbol{F}^- \odot \boldsymbol{I})_H = \sum \alpha_{\boldsymbol{S}} \soe(\boldsymbol{F}^- \odot \boldsymbol{S})_H.\]
        Since $\hom(F',G) \neq \hom(F',H)$, there is some $\boldsymbol{S} \in \mathcal{S}$ such that
        \[\soe(\boldsymbol{F}^- \odot \boldsymbol{S})_G \neq \soe(\boldsymbol{F}^- \odot \boldsymbol{S})_H.\]
        Note that $\soe(\boldsymbol{F}^- \odot \boldsymbol{S}) \in \mathcal{F}$ by \cref{obs:series-parallel}.
        Hence, $G \not\equiv_{\mathcal{F}} H$.
        We conclude that $F' \in \cl(\mathcal{F}')$.
    \end{claimproof}

    For $\ell \geq 0$, 
    let $\mathcal{F}_\ell$ denote the class of graphs obtained from $\mathcal{F}$ by performing at most $\ell$ edge contractions.
    We show by induction that $\mathcal{F}_\ell \subseteq \cl(\mathcal{F})$ for all $\ell \geq 0$.
    The base case $\ell = 0$ is trivial since $\mathcal{F}_0 = \mathcal{F}$.

    So suppose $\ell \geq 0$ and let $F' \in \mathcal{F}_{\ell+1}$.
    By definition, there is some $F \in \mathcal{F}_\ell$ such that $F'$ be the graph obtained from $F$ by contracting an edge $uv \in E(F)$.
    Note that $\mathcal{F}_\ell$ is closed under taking subgraphs and $2$-sums with $K_3$.
    Hence, by \cref{claim:contractor}, we conclude that $F' \in \cl(\mathcal{F}_\ell)$.
    It follows that
    \[\mathcal{F}_{\ell+1} \subseteq \cl(\mathcal{F}_\ell) \subseteq \cl(\cl(\mathcal{F})) = \cl(\mathcal{F}),\]
    where the second inclusion holds by the induction hypothesis and \cref{eq:subset-closure}, and the last equality holds by \cref{eq:double-closure}.

    Now, let $\mathcal{F}^*$ be the minor closure of $\mathcal{F}$, i.e., $\mathcal{F}^*$ consists of the graphs $F^*$ that are minors of some graph $F \in \mathcal{F}$.
    Since, for every graph $F^* \in \mathcal{F}^*$, there is some $\ell \geq 0$ such that $F^* \in \mathcal{F}_\ell$, we conclude that $\mathcal{F}^* \subseteq \cl(\mathcal{F})$.
    In particular, by \cref{eq:subset-closure,eq:double-closure},
    \[ \cl(\mathcal{F})  \subseteq \cl(\mathcal{F}^*) \subseteq \cl(\cl(\mathcal{F})) = \cl(\mathcal{F}).\]
    Finally, by \cite[Theorem~8]{seppelt_logical_2024},
    the homomorphism distinguishing closure of a minor-closed graph class is itself minor-closed.
\end{proof}

	\section{Conclusion}
	We proved a series of results on the distinguishing power of homomorphism indistinguishability relations over sparse graph classes.
Most notably, we proved \cref{conj:weak-roberson} for all vortex-free graph classes.
In particular, homomorphism indistinguishability over graphs of bounded Euler genus is not the same as isomorphism.
As a negative result, we show that \cref{conj:weak-roberson} cannot be siginificantly generalised beyond graph classes with excluded minor.
Finally, 
we showed that several graph classes are homomorphism distinguishing closed 
and separated homomorphism indistinguishability relations defined via the genus or the vortex-free Hadwiger number.

Nevertheless, several questions remain open.
Most notably, \cref{conj:weak-roberson} remains unresolved.
With \cref{thm:main-vortex-free-hadwiger} in mind, it remains to understand the role of vortices.
Is homomorphism indistinguishability over planar graphs with one vortex the same as isomorphism?
Towards this end, it may be helpful to obtain a purely graph-theoretic proof of \cref{lem:oddo-planar-degree}\ref{it:planar-oddo}, for which the only known proof crucially relies on the characterisation via quantum isomorphisms \cite{mancinska_quantum_2020,atserias_quantum_2019}.

Beyond this, it is interesting to investigate homomorphism indistinguishability over classes of dense graphs.
To highlight only one possible question, are there non-isomorphic graphs that are  homomorphism indistinguishable over all graphs of cliquewidth~$\leq k$?

Finally, our results separating homomorphism indistinguishability relations $\equiv_{\mathcal{F}}$ from isomorphism
pave the way for studying the computational complexity of determining whether $G \equiv_{\mathcal{F}} H$ for input graphs $G$, $H$, and fixed $\mathcal{F}$.
This problem has been conjectured to be undecidable for every proper minor-closed graph class $\mathcal{F}$ of unbounded treewidth \cite[Conjecture 25]{seppelt_algorithmic_2024}.
However, up to this point, this is only known for the class of planar graphs 
\cite{slofstra_set_2019,atserias_quantum_2019,mancinska_quantum_2020}.

	\section{Acknowledgements}
	\textit{Tim Seppelt:} European Union (CountHom, 101077083). Views and opinions expressed are however those of the author(s) only and do not necessarily reflect those of the European Union or the European Research Council Executive Agency. Neither the European Union nor the granting
authority can be held responsible for them.

\printbibliography

\end{document}